\theoremstyle{plain}
\newtheorem{theorem}{Theorem}
\newtheorem{proposition}[theorem]{Proposition}
\newtheorem{lemma}[theorem]{Lemma}
\theoremstyle{definition}
\newtheorem{definition}[theorem]{Definition}
\newtheorem{example}[theorem]{Example}
\newtheorem{remark}[theorem]{Remark}
\numberwithin{figure}{section}
\numberwithin{table}{section}
\newcommand{\IN}{\ensuremath{\mathbb{N}}}
\newcommand{\kLoc}[1][k]{\ensuremath{\mathord{{#1}\text{-Loc}}}}
\newcommand{\tulee}{\ensuremath{\mathrel{\triangleleft}}}
\newcommand{\minorant}{\ensuremath{\leqq}}
\newcommand{\minor}{\ensuremath{\leq}}
\newcommand{\minmin}{\ensuremath{\sqsubseteq}}
\newcommand{\RelCl}[1]{\ensuremath{\mathrel{\mathrm{C}_{#1}}}}
\newcommand{\eqminmin}{\ensuremath{\equiv}}
\newcommand{\nset}[1]{\ensuremath{[{#1}]}}
\newcommand{\card}[1]{\ensuremath{\lvert{#1}\rvert}}
\newcommand{\gendefault}{}
\newcommand{\gen}[2][\gendefault]{\ensuremath{\langle{#2}\rangle_{#1}}}
\newcommand{\clonegen}[1]{\gen[]{#1}}
\newcommand{\vect}[1]{\ensuremath{\mathbf{#1}}}
\newcommand{\lhs}{\hspace{2em}&\hspace{-2em}}
\newcommand{\clIntVal}[3]{\ensuremath{#1_{\ifthenelse{\equal{#2}{}}{\mathord{*}}{#2}\ifthenelse{\equal{#3}{}}{\mathord{*}}{#3}}}}
\newcommand{\clIntEq}[1]{\ensuremath{#1_{=}}}
\newcommand{\clIntNeq}[1]{\ensuremath{#1_{\neq}}}
\newcommand{\clIntLeq}[1]{\ensuremath{#1_{\leq}}}
\newcommand{\clIntGeq}[1]{\ensuremath{#1_{\geq}}}
\newcommand{\clIntNeqOO}[1]{\ensuremath{#1_{{\neq},00}}}
\newcommand{\clIntNeqII}[1]{\ensuremath{#1_{{\neq},11}}}
\newcommand{\clAll}{\ensuremath{\mathsf{\Omega}}}
\newcommand{\clEioo}{\ensuremath{\clIntNeqII{\clAll}}}
\newcommand{\clEioi}{\ensuremath{\clIntGeq{\clAll}}}
\newcommand{\clEiio}{\ensuremath{\clIntLeq{\clAll}}}
\newcommand{\clEiii}{\ensuremath{\clIntNeqOO{\clAll}}}
\newcommand{\clEq}{\ensuremath{\clIntEq{\clAll}}}
\newcommand{\clOX}{\ensuremath{{\clIntVal{\clAll}{0}{}}}}
\newcommand{\clIX}{\ensuremath{{\clIntVal{\clAll}{1}{}}}}
\newcommand{\clXO}{\ensuremath{{\clIntVal{\clAll}{}{0}}}}
\newcommand{\clXI}{\ensuremath{{\clIntVal{\clAll}{}{1}}}}
\newcommand{\clOXC}{\ensuremath{\clOX \cup \clVak}}
\newcommand{\clIXC}{\ensuremath{\clIX \cup \clVak}}
\newcommand{\clXOC}{\ensuremath{\clXO \cup \clVak}}
\newcommand{\clXIC}{\ensuremath{\clXI \cup \clVak}}
\newcommand{\clOO}{\ensuremath{\clIntVal{\clAll}{0}{0}}}
\newcommand{\clII}{\ensuremath{\clIntVal{\clAll}{1}{1}}}
\newcommand{\clOI}{\ensuremath{\clIntVal{\clAll}{0}{1}}}
\newcommand{\clIO}{\ensuremath{\clIntVal{\clAll}{1}{0}}}
\newcommand{\clOOC}{\ensuremath{\clOO \cup \clVak}}
\newcommand{\clIIC}{\ensuremath{\clII \cup \clVak}}
\newcommand{\clOICO}{\ensuremath{\clOI \cup \clVako}}
\newcommand{\clOICI}{\ensuremath{\clOI \cup \clVaki}}
\newcommand{\clOIC}{\ensuremath{\clOI \cup \clVak}}
\newcommand{\clIOCO}{\ensuremath{\clIO \cup \clVako}}
\newcommand{\clIOCI}{\ensuremath{\clIO \cup \clVaki}}
\newcommand{\clIOC}{\ensuremath{\clIO \cup \clVak}}
\newcommand{\clS}{\ensuremath{\mathsf{S}}}
\newcommand{\clSc}{\ensuremath{\clIntVal{\clS}{0}{1}}}
\newcommand{\clSM}{\ensuremath{\mathsf{SM}}}
\newcommand{\clSmin}{\ensuremath{\clS^{-}}}
\newcommand{\clSmaj}{\ensuremath{\clS^{+}}}
\newcommand{\clSminOX}{\ensuremath{\clIntVal{\clSmin}{0}{}}}
\newcommand{\clSminXO}{\ensuremath{\clIntVal{\clSmin}{}{0}}}
\newcommand{\clSminOO}{\ensuremath{\clIntVal{\clSmin}{0}{0}}}
\newcommand{\clSminOI}{\ensuremath{\clIntVal{\clSmin}{0}{1}}}
\newcommand{\clSminIO}{\ensuremath{\clIntVal{\clSmin}{1}{0}}}
\newcommand{\clSminOICO}{\ensuremath{\clSminOI \cup \clVako}}
\newcommand{\clSminIOCO}{\ensuremath{\clSminIO \cup \clVako}}
\newcommand{\clM}{\ensuremath{\mathsf{M}}}
\newcommand{\clMo}{\ensuremath{\clIntVal{\clM}{0}{}}}
\newcommand{\clMi}{\ensuremath{\clIntVal{\clM}{}{1}}}
\newcommand{\clMc}{\ensuremath{\clIntVal{\clM}{0}{1}}}
\newcommand{\clMneg}{\ensuremath{{\overline{\clM}}}}
\newcommand{\clMoneg}{\ensuremath{\clIntVal{\clMneg}{1}{}}}
\newcommand{\clMineg}{\ensuremath{\clIntVal{\clMneg}{}{0}}}
\newcommand{\clMcneg}{\ensuremath{\clIntVal{\clMneg}{1}{0}}}
\newcommand{\clUk}[1]{\ensuremath{\mathsf{U}^{#1}}}
\newcommand{\clU}{\ensuremath{\clUk{2}}}
\newcommand{\clTcU}{\ensuremath{\clIntVal{\clU}{0}{1}}}
\newcommand{\clTcUk}[1]{\ensuremath{\clIntVal{\clUk{#1}}{0}{1}}}
\newcommand{\clTcUCO}{\ensuremath{\clTcU \cup \clVako}}
\newcommand{\clTcUkCO}[1]{\ensuremath{\clTcUk{#1} \cup \clVako}}
\newcommand{\clMU}{\ensuremath{\mathsf{M}\clU}}
\newcommand{\clMcU}{\ensuremath{\clIntVal{\clMU}{0}{1}}}
\newcommand{\clMUk}[1]{\ensuremath{\mathsf{M}\clUk{#1}}}
\newcommand{\clMcUk}[1]{\ensuremath{\clIntVal{\clMUk{#1}}{0}{1}}}
\newcommand{\clWk}[1]{\ensuremath{\mathsf{W}^{#1}}}
\newcommand{\clW}{\ensuremath{\clWk{2}}}
\newcommand{\clTcW}{\ensuremath{\clIntVal{\clW}{0}{1}}}
\newcommand{\clTcWk}[1]{\ensuremath{\clIntVal{\clWk{#1}}{0}{1}}}
\newcommand{\clMW}{\ensuremath{\mathsf{M}\clW}}
\newcommand{\clMcW}{\ensuremath{\clIntVal{\clMW}{0}{1}}}
\newcommand{\clMWk}[1]{\ensuremath{\mathsf{M}\clWk{#1}}}
\newcommand{\clMcWk}[1]{\ensuremath{\clIntVal{\clMWk{#1}}{0}{1}}}
\newcommand{\clWneg}{\ensuremath{\overline{\clW}}}
\newcommand{\clWkneg}[1]{\ensuremath{\overline{\clWk{#1}}}}
\newcommand{\clTcWneg}{\ensuremath{\clIntVal{\clWneg}{1}{0}}}
\newcommand{\clTcWkneg}[1]{\ensuremath{\clIntVal{\clWkneg{#1}}{1}{0}}}
\newcommand{\clTcWnegCO}{\ensuremath{\clTcWneg \cup \clVako}}
\newcommand{\clTcWknegCO}[1]{\ensuremath{\clTcWkneg{#1} \cup \clVako}}
\newcommand{\clMWneg}{\ensuremath{\overline{\clMW}}}
\newcommand{\clMWkneg}[1]{\ensuremath{\overline{\clMWk{#1}}}}
\newcommand{\clMcWneg}{\ensuremath{\clIntVal{\clMWneg}{1}{0}}}
\newcommand{\clMcWkneg}[1]{\ensuremath{\clIntVal{\clMWkneg{#1}}{1}{0}}}
\newcommand{\clUOO}{\ensuremath{\clIntVal{\clU}{0}{0}}}
\newcommand{\clUkOO}[1]{\ensuremath{\clIntVal{\clUk{#1}}{0}{0}}}
\newcommand{\clWnegOO}{\ensuremath{\clIntVal{\clWneg}{0}{0}}}
\newcommand{\clWknegOO}[1]{\ensuremath{\clIntVal{\clWkneg{#1}}{0}{0}}}
\newcommand{\clUWneg}{\ensuremath{\clU \cap \clWneg}}
\newcommand{\clRefl}{\ensuremath{\mathsf{R}}}  
\newcommand{\clReflOO}{\ensuremath{\clIntVal{\clRefl}{0}{0}}}
\newcommand{\clReflII}{\ensuremath{\clIntVal{\clRefl}{1}{1}}}
\newcommand{\clReflOOC}{\ensuremath{\clReflOO \cup \clVak}}
\newcommand{\clReflIIC}{\ensuremath{\clReflII \cup \clVak}}
\newcommand{\clVak}{\ensuremath{\mathsf{C}}}
\newcommand{\clVaka}[1]{\ensuremath{\clVak_{#1}}}
\newcommand{\clVako}{\ensuremath{\clVaka{0}}}
\newcommand{\clVaki}{\ensuremath{\clVaka{1}}}
\newcommand{\clEmpty}{\ensuremath{\mathsf{\emptyset}}}
\newcommand{\clKlik}[2]{\ensuremath{\mathsf{U}^{#1}_{#2}}}
\newcommand{\clIc}{\ensuremath{\mathsf{J}}}
\newcommand{\clAllleq}[1]{\ensuremath{\clAll^{[\leq {#1}]}}}
\newcommand{\posetAllleq}[1]{\ensuremath{\boldsymbol{\clAll}^{[\leq {#1}]}}}
\newcommand{\Ideals}{\ensuremath{\mathrm{Id}}}
\newcommand{\id}{\ensuremath{\mathrm{id}}}
\newcommand{\threshold}[2]{\ensuremath{\mathrm{th}^{#1}_{#2}}}
\DeclareMathOperator{\pr}{pr}
\newcommand{\arity}[1]{\ensuremath{\mathrm{ar}({#1})}}
\newcommand{\closys}[1]{\ensuremath{\mathcal{L}_{#1}}}
\newcommand{\clProj}[1]{\ensuremath{\mathsf{J}_{#1}}}
\DeclareMathOperator{\dom}{dom}
\begin{document}
\title{Near-unanimity-closed minions of Boolean functions}

\author{Erkko Lehtonen}

\address{%
    Department of Mathematics \\
    Khalifa University of Science and Technology \\
    P.O. Box 127788 \\
    Abu Dhabi \\
    United Arab Emirates
}

\date{28 June 2024}

\begin{abstract}
The near\hyp{}unanimity\hyp{}closed minions of Boolean functions, i.e., the clonoids whose target algebra contains a near\hyp{}unanimity function, are completely described.
The key concept towards this result is the minorant\hyp{}minor partial order and its order ideals.
\end{abstract}

\maketitle


\section{Introduction}
\label{sec:introduction}
\numberwithin{theorem}{section}

A set $K$ of functions of several arguments from a set $A$ to a set $B$ is called a \emph{minion} if it is closed under taking \emph{minors}, i.e., functions obtained by composing from the right with projections.
Given clones $C_1$ and $C_2$ on sets $A$ and $B$, respectively, a set $K$ of functions of several arguments from $A$ to $B$ is called a \emph{$(C_1,C_2)$\hyp{}clonoid} if $K$ is stable under right composition with $C_1$ and stable under left composition with $C_2$.
Thus, $(C_1,C_2)$\hyp{}clonoids are minions with additional closure conditions.

Minors, minions, and clonoids have been a topic of much research in the past years.
To the best of the author's knowledge, the term ``minion'' was coined by Opr\v{s}al around the year 2018 (see \cite{BulKroOpr} and \cite[Definition~2.20]{BarBulKroOpr}), and the term ``clonoid'' first appears in the 2016 paper of Aichinger and Mayr \cite{AicMay}.
These ideas have nevertheless appeared in the literature long before -- as an example of such earlier work, we would like to highlight Galois theories characterizing minions and $(C_1,C_2)$\hyp{}clonoids as sets of polymorphisms of relation pairs that are due to Pippenger~\cite{Pippenger} and Couceiro and Foldes \cite{CouFol-2005,CouFol-2009}.
Minions and clonoids arise naturally in universal algebra, and they have recently played an important role in the analysis of the computational complexity of constraint satisfaction problems (CSP), especially in the variant known as promise CSP (see the survey article by Barto et al.\ \cite{BarBulKroOpr}).

The problem of describing the lattice of $(C_1,C_2)$\hyp{}clonoids (for fixed clones $C_1$ and $C_2$), or at least parts thereof, or at least determining its cardinality, is most natural, in the same way as the main problem in clone theory is to describe all clones.
Such classification results, especially concerning clonoids between clones of term operations of modules, have been presented, e.g., in the recent papers of Fioravanti \cite{Fioravanti-AU,Fioravanti-IJAC}, Kreinecker \cite{Kreinecker}, and Mayr and Wynne \cite{MayWyn}.

The following remarkable result by Sparks has sparked further studies on clonoids.
Here, $\closys{(C_1,C_2)}$ denotes the lattice of $(C_1,C_2)$\hyp{}clonoids, and $\clProj{A}$ denotes the clone of projections on $A$.
Recall also that an operation $f \colon A^n \to A$ is a \emph{near\hyp{}unanimity operation} if it satisfies the identities $f(x, \dots, x, y, x, \dots, x) \approx x$, where the single $y$ may be at any argument position.
A ternary near\hyp{}unanimity operation is called a \emph{majority operation}.
A \emph{Mal'cev operation} is a ternary operation $f$ satisfying $f(x,x,y) \approx f(y,x,x) \approx y$.

\begin{theorem}[{Sparks~\cite[Theorem~1.3]{Sparks-2019}}]
\label{thm:Sparks}
Let $A$ be a finite set with $\card{A} > 1$, and let $B = \{0,1\}$.
Let $C$ be a clone on $B$.
Then the following statements hold.
\begin{enumerate}[label={\upshape(\roman*)}]
\item\label{thm:Sparks:finite} $\closys{(\clProj{A},C)}$ is finite if and only if $C$ contains a near\hyp{}unanimity operation.
\item\label{thm:Sparks:countable} $\closys{(\clProj{A},C)}$ is countably infinite if and only if $C$ contains a Mal'cev operation but no majority operation.
\item\label{thm:Sparks:uncountable} $\closys{(\clProj{A},C)}$ has the cardinality of the continuum if and only if $C$ contains neither a near\hyp{}unanimity operation nor a Mal'cev operation.
\end{enumerate}
\end{theorem}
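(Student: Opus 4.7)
The plan is to prove the three biconditionals of Theorem~\ref{thm:Sparks} separately by pairing cardinality upper and lower bounds on $\closys{(\clProj{A},C)}$, via the Pippenger/Couceiro--Foldes relational description of clonoids. Throughout I let $B = \{0,1\}$ and $\card{A} > 1$.

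For part~\ref{thm:Sparks:finite}, the ``if'' direction would go by a Baker--Pixley style reduction: if $C$ contains a near\hyp{}unanimity operation of arity $k$, then for any $(\clProj{A},C)$\hyp{}clonoid $K$, every $f \in K$ of arity $\geq k$ is recoverable from its $(k-1)$\hyp{}variable minors by applying the NU from the left. Hence $K$ is determined by its part of arity less than $k$, and only finitely many such parts exist because there are only $2^{\card{A}^{k-1}}$ Boolean functions on $A^{k-1}$. For the ``only if'' direction, if $C$ has no NU I would exhibit an infinite strictly ascending chain of finitely generated clonoids, for instance one generated by successively longer sequences of threshold\hyp{}like functions that the absence of NU keeps pairwise distinct.

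For part~\ref{thm:Sparks:uncountable}, the ``only if'' direction follows from part~\ref{thm:Sparks:finite} combined with the countable upper bound in part~\ref{thm:Sparks:countable}. The ``if'' direction is the combinatorial heart of the theorem, and I expect it to be the main obstacle: when $C$ has neither a NU nor a Mal'cev operation, construct an antichain $\{K_S : S \subseteq \IN\}$ of clonoids of size $2^{\aleph_0}$ by taking $K_S$ to be the clonoid generated by a chosen family $\{f_i : i \in S\}$ of tailored ``witness'' functions of growing arities. The crucial task is to choose the $f_i$ so that the absence of both NU and Mal'cev in $C$ prevents $K_S$ from containing any $f_j$ with $j \notin S$; both a NU and a Mal'cev operation would typically be exactly what is needed to synthesize such an $f_j$ from a sparse family.

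For part~\ref{thm:Sparks:countable}, the ``only if'' direction is immediate from parts~\ref{thm:Sparks:finite} and~\ref{thm:Sparks:uncountable}. For ``if'', the infinite lower bound comes from the chain construction in part~\ref{thm:Sparks:finite}. The countable upper bound is the second main difficulty: a Mal'cev operation on $B = \{0,1\}$ forces invariants of $C$, and therefore the relational description of each clonoid, to be intersections of affine conditions, so each clonoid is encodable by a countable amount of linear data, yielding at most $\aleph_0$ clonoids overall.
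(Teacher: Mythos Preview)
The paper does not prove Theorem~\ref{thm:Sparks}; it is quoted from Sparks~\cite[Theorem~1.3]{Sparks-2019} as background and motivation, and no argument for it appears anywhere in the present paper. There is therefore nothing in this paper to compare your proposal against.

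As a standalone sketch, your outline is plausible in its broad strokes but remains a plan rather than a proof. The Baker--Pixley argument for the finiteness direction of \ref{thm:Sparks:finite} is standard and essentially what Sparks uses. The serious work lies in the two places you flag as obstacles: constructing a continuum\hyp{}sized antichain when $C$ lacks both a near\hyp{}unanimity and a Mal'cev operation, and establishing the countable upper bound when $C$ contains a Mal'cev operation. For the latter, your remark that ``each clonoid is encodable by a countable amount of linear data, yielding at most $\aleph_0$ clonoids'' is not yet an argument: there are uncountably many countable linear data sets, so you must explain why only countably many actually arise as clonoids (over $B=\{0,1\}$, the Mal'cev operation is $\oplus_3$, and the relevant structure is that of subgroups of finite abelian $2$\hyp{}groups, which one can enumerate). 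For the antichain construction you give no candidate family $\{f_i\}$ and no mechanism for showing $f_j \notin K_S$ when $j \notin S$; this is where the real content of Sparks's proof lives, and your sketch does not yet engage with it.
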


Apart from the cardinality categorization, Theorem~\ref{thm:Sparks} unfortunately offers no information whatsoever about the structure of the lattice $\closys{(\clProj{A},C)}$, nor about the $(\clProj{A},C)$\hyp{}clonoids themselves.
In order to shed more light on this matter,
the current author put forward the idea of systematically classifying $(C_1,C_2)$\hyp{}clonoids for each pair of clones $C_1$ and $C_2$ of Boolean functions.
The case when $C_1$ is arbitrary and $C_2$ contains a Mal'cev operation (the triple sum operation) -- covering item \ref{thm:Sparks:countable} of Theorem~\ref{thm:Sparks} -- was fully treated in \cite{CouLeh-Lcstability}.
The same was done in \cite{Lehtonen-SM} for the case when $C_1$ is arbitrary and $C_2$ contains a \emph{majority operation} (a ternary near\hyp{}unanimity operation) -- this, however, covers only a small part of item \ref{thm:Sparks:finite} of Theorem~\ref{thm:Sparks}.
At the time of writing the paper \cite{Lehtonen-SM}, the author realized that the situation is considerably more complicated in the case when the clone $C_2$ contains a near\hyp{}unanimity operation of arity greater than $3$ but no majority operation -- even though $\closys{(C_1,C_2)}$ is finite in this case, according to Sparks's Theorem~\ref{thm:Sparks}.
This was left as an open problem.

It is the goal of the current paper to complete the description of the $(\clIc,C)$\hyp{}clonoids of Boolean functions in the cases when $C$ satisfies condition \ref{thm:Sparks:finite} of Theorem~\ref{thm:Sparks} and $\closys{(\clIc,C)}$ is finite.
After providing the basic notions and preliminary results that will be needed throughout the paper (Section~\ref{sec:preliminaries}), we define properties of Boolean functions (Section~\ref{sec:Boolean}), and we introduce our main tools: \emph{minorant minors}, i.e., minions closed under minorants (Section~\ref{sec:minmin}) and $(G,k)$\hyp{}semibisectable functions (Section~\ref{sec:semibisectable}).
Section~\ref{sec:IcMcUk} is dedicated to our main result and its proof: a complete description of the $(\clIc,\clMcUk{k})$\hyp{}clonoids of Boolean functions, for $k \geq 2$.
(Here, $\clMcUk{k}$ denotes the clone of idempotent monotone $1$\hyp{}separating functions of rank $k$; see Definition~\ref{def:separating}.)
With this result at hand, we then describe in Section~\ref{sec:C1C2} the $(\clIc,C)$\hyp{}clonoids, where $C$ is a clone containing $\clMcUk{k}$.
We conclude the paper with some final remarks and open problems in Section~\ref{sec:final}.


\section{Preliminaries}
\label{sec:preliminaries}

\numberwithin{theorem}{subsection}
\renewcommand{\gendefault}{(C_1,C_2)}

\subsection{General}

We denote by $\IN$ and $\IN_{+}$ the set of nonnegative integers and the set of positive integers, respectively.
For $n \in \IN$, let $\nset{n} := \{ \, i \in \nset{N} \mid 1 \leq i \leq n \, \}$.

Let $(P, \mathord{\leq})$ be a partially ordered set, and let $X \subseteq P$.
The order ideal and the order filter generated by $X$ in $P$ are
\begin{align*}
{\downarrow} X &:= \{ \, y \in P \mid \exists x \in X \colon y \leq x \, \}, &
{\uparrow} X &:= \{ \, y \in P \mid \exists x \in X \colon x \leq y \, \},
\end{align*}
respectively.
For $x \in P$, the principal ideal and the principal filter of $x$ are ${\downarrow} x := {\downarrow} \{x\}$ and ${\uparrow} x := {\uparrow} \{x\}$, respectively.

\subsection{Minors, minions, clones, clonoids}

A \emph{function \textup{(}of several arguments\textup{)}} from a set $A$ to a set $B$ is a mapping $f \colon A^n \to B$ for some $n \in \IN$, called the \emph{arity} of $f$.
In the case when $A = B$, we speak of \emph{operations} on $A$.
We denote by $\mathcal{F}^{(n)}_{AB}$ the set $B^{A^n}$ of all $n$\hyp{}ary functions from $A$ to $B$, and we let $\mathcal{F}_{AB} := \bigcup_{n \in \IN} \mathcal{F}^{(n)}_{AB}$.
Similarly, we denote by $\mathcal{O}^{(n)}_A$ the set $A^{A^n}$ of all $n$\hyp{}ary operations on $A$, and we let $\mathcal{O}_A := \bigcup_{n \in \IN} \mathcal{O}^{(n)}_A$.
For any $C \subseteq \mathcal{F}_{AB}$, the \emph{$n$\hyp{}ary part} of $C$ is $C^{(n)} := C \cap \mathcal{F}^{(n)}_{AB}$.

The $i$\hyp{}th $n$\hyp{}ary \emph{projection} on $A$ is the operation $\pr^{(n)}_i \colon A^n \to A$ given by the rule $(a_1, \dots, a_n) \mapsto a_i$.
We denote by $\clProj{A}$ the set of all projections on $A$.

The \emph{composition} of $f \in \mathcal{F}^{(n)}_{BC}$ (the \emph{outer function}) with $g_1, \dots, g_n \in \mathcal{F}^{(m)}_{AB}$ (the \emph{inner functions})
is the function $f(g_1, \dots, g_n) \in \mathcal{F}^{(m)}_{AC}$ given by the rule
\[
f(g_1, \dots, g_n)(\vect{a})
= f(g_1(\vect{a}), \dots, g_n(\vect{a})),
\]
for all $\vect{a} \in A^m$.
The notion of functional composition extends to function classes.
If $C \subseteq \mathcal{F}_{BC}$ and $K \subseteq \mathcal{F}_{AB}$, then the \emph{composition} of $C$ with $K$ is
\[
C K := \{ \, f(g_1, \dots, g_n) \mid \exists m, n \in \IN_{+} \colon \, f \in C^{(n)}, \, g_1, \dots, g_n \in K^{(m)} \,\}.
\]

A set $C \subseteq \mathcal{O}_A$ is called a \emph{clone} on $A$ if $\clProj{A} \subseteq C$ and $C C \subseteq C$.
The set of all clones on $A$ is a closure system.
For $F \subseteq \mathcal{O}_A$, we denote by $\clonegen{F}$ the clone generated by $F$, i.e., the smallest clone on $A$ that contains $F$.
The clones on a two\hyp{}element set were described by Post~\cite{Post}; and the lattice of these clones, known as \emph{Post's lattice}, is presented in Figure~\ref{fig:Post} (see Section~\ref{sec:Boolean} for an explanation of the notation for these clones).

Let $f, g \in \mathcal{F}_{AB}$, say $f$ is $n$\hyp{}ary and $g$ is $m$\hyp{}ary.
We say that $f$ is a \emph{minor} of $g$ if $f \in \{g\} \clProj{A}$, or, more explicitly, if there is a $\sigma \colon \nset{m} \to \nset{n}$ such that $f = g(\pr^{(n)}_{\sigma(1)}, \dots, \pr^{(n)}_{\sigma(m)})$.
Such a map $\sigma$ is referred to as a \emph{minor formation map}, and we denote by $g_\sigma$ the function $g(\pr^{(n)}_{\sigma(1)}, \dots, \pr^{(n)}_{\sigma(m)})$.
It holds that $g_\sigma(\vect{a}) = g(\vect{a} \sigma)$ for all $\vect{a} \in A^n$, where $\vect{a} \sigma = \vect{a} \circ \sigma$ (recall that $\vect{a}$ is a mapping $\nset{n} \to A$).
A set $C \subseteq \mathcal{F}_{AB}$ is a \emph{minion} (or a \emph{minor\hyp{}closed class}) (from $A$ to $B$) if $C \clProj{A} \subseteq C$.
The set of all minions from $A$ to $B$ constitutes a closure system.
It is easy to see that the intersection and the union of minions are minions, so the lattice of all minions is a sublattice of the power set lattice of $\mathcal{F}_{AB}$.

For minors of functions, especially of a small arity, we sometimes use the following shorthand notation.
If $g$ is $m$\hyp{}ary and $\sigma \colon \nset{m} \to \nset{n}$, we write $g_{\sigma(1) \sigma(2) \dots \sigma(m)}$ for $g_\sigma$.
For example, we may write ${+}_{ij}$ to denote $\mathord{+}(\pr^{(n)}_i, \pr^{(n)}_j)$.

Let $C_1$ and $C_2$ be clones on $A$ and $B$, respectively.
A set $K \subseteq \mathcal{F}_{AB}$ is
\emph{stable under right composition with $C_1$} if $K C_1 \subseteq K$,
and it is
\emph{stable under left composition with $C_2$} if $C_2 K \subseteq K$.
We say that $K$ is \emph{$(C_1,C_2)$\hyp{}stable} or that $K$ is a \emph{$(C_1,C_2)$\hyp{}clonoid} if $K C_1 \subseteq K$ and $C_2 K \subseteq K$.
The set of all $(C_1,C_2)$\hyp{}clonoids constitutes a closure system, denoted by $\closys{(C_1,C_2)}$.
For $K \subseteq \mathcal{F}_{AB}$, we denote by $\gen{K}$ the $(C_1,C_2)$\hyp{}clonoid generated by $K$, i.e., the smallest $(C_1,C_2)$\hyp{}clonoid containing $K$.
It is known that $\gen{K} = C_2 ( K C_1 )$ (see \cite[Lemma~2.5]{Lehtonen-SM}).

\begin{lemma}[{\cite[Lemma~2.1]{Lehtonen-SM}}]
\label{lem:stable-monotonicity}
Let $C_1$ and $C'_1$ be clones on $A$ and let $C_2$ and $C'_2$ be clones on $B$.
If $C_1 \subseteq C'_1$ and $C_2 \subseteq C'_2$, then every $(C'_1,C'_2)$\hyp{}clonoid is a $(C_1,C_2)$\hyp{}clonoid.
\end{lemma}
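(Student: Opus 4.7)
The plan is to unpack the definition of $(C_1,C_2)$-clonoid and exploit the obvious monotonicity of composition with respect to set inclusion. Suppose $K$ is a $(C'_1, C'_2)$-clonoid, i.e., $K C'_1 \subseteq K$ and $C'_2 K \subseteq K$. The goal is to show $K C_1 \subseteq K$ and $C_2 K \subseteq K$.

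First I would observe that the composition operation on function classes, as defined in the preliminaries by
\[
D F := \{ \, f(g_1, \dots, g_n) \mid \exists m, n \in \IN_{+} \colon f \in D^{(n)}, \, g_1, \dots, g_n \in F^{(m)} \, \},
\]
is monotone in each argument with respect to set inclusion: if $D \subseteq D'$ and $F \subseteq F'$, then $D F \subseteq D' F'$, since every term on the left is also a term on the right. This is immediate from the definition.

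Applying this to the hypotheses $C_1 \subseteq C'_1$ and $C_2 \subseteq C'_2$ yields $K C_1 \subseteq K C'_1$ and $C_2 K \subseteq C'_2 K$. Combining with the $(C'_1, C'_2)$-stability of $K$, we get $K C_1 \subseteq K C'_1 \subseteq K$ and $C_2 K \subseteq C'_2 K \subseteq K$, so $K$ is $(C_1, C_2)$-stable, i.e., a $(C_1, C_2)$-clonoid.

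No step presents any real obstacle; the entire argument is two lines of set-inclusion chasing once one notes the monotonicity of $D F$ in both arguments. The only thing to be careful about is that the composition operation indeed distributes over this kind of inclusion, which follows directly from its definition as a union over witnesses.
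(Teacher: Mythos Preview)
Your proof is correct and complete; the argument is exactly the standard one-line monotonicity observation. Note that the paper does not actually prove this lemma but merely cites it from \cite[Lemma~2.1]{Lehtonen-SM}, so there is no in-paper proof to compare against; your argument is the natural one and matches what one would expect the cited proof to be.
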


\begin{lemma}[{\cite[Lemma~2.2]{Lehtonen-SM}}]
\label{lem:stable-clones}
Let $C$ and $K$ be clones on $A$.
Then the following statements hold.
\begin{enumerate}[label={\upshape(\roman*)}]
\item $KC \subseteq K$ if and only if $C \subseteq K$.
\item $CK \subseteq K$ if and only if $C \subseteq K$.
\end{enumerate}
\end{lemma}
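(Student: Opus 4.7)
The plan is to observe that both equivalences reduce immediately to two standard properties of a clone: namely, that it contains all projections and is closed under its own composition. The backward implications in both (i) and (ii) are instantaneous: if $C \subseteq K$, then $KC \subseteq KK \subseteq K$ and $CK \subseteq KK \subseteq K$, using only that $K$ is closed under composition.

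For the forward direction of (i), assume $KC \subseteq K$ and pick an arbitrary $f \in C^{(n)}$. The goal is to exhibit $f$ as a composition with outer function from $K$ and inner functions from $C$. The natural choice is to take the unary identity $\pr^{(1)}_1 \in K$ (available because $\clProj{A} \subseteq K$) as the outer function and $f$ itself as the single inner function; then $\pr^{(1)}_1(f) = f$, so $f \in KC \subseteq K$. For the forward direction of (ii), assume $CK \subseteq K$ and again take $f \in C^{(n)}$; this time use $f$ as the outer function and the projections $\pr^{(n)}_1, \dots, \pr^{(n)}_n \in K$ as the inner functions, giving $f(\pr^{(n)}_1, \dots, \pr^{(n)}_n) = f \in CK \subseteq K$.

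There is essentially no obstacle here: the only subtlety is respecting the arity conventions in the definition of $CK$ (outer of arity $n$, inner of common arity $m$), which is handled by choosing the trivial ``witness'' compositions above. The lemma is really a formal record of the fact that, within clones, the containment relation $C \subseteq K$ is equivalent to $K$ being closed under composition with $C$ on either side.
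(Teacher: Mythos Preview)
Your proof is correct. The paper does not supply its own proof of this lemma; it merely cites it from \cite[Lemma~2.2]{Lehtonen-SM}, so there is nothing to compare against beyond noting that your argument is the standard one.
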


\begin{lemma}[{Couceiro, Foldes \cite[Associativity Lemma]{CouFol-2007,CouFol-2009}}]
Let $A$, $B$, $C$, and $D$ be arbitrary nonempty sets, and let $I \subseteq \mathcal{F}_{CD}$, $J \subseteq \mathcal{F}_{BC}$, $K \subseteq \mathcal{F}_{AB}$.
Then the following statements hold.
\begin{enumerate}[label={\upshape(\roman*)}]
\item $(IJ)K \subseteq I(JK)$.
\item If $J$ is stable under right composition with the clone of projections on $B$ \textup{(}i.e., $J$ is minor-closed\textup{)}, then $(IJ)K = I(JK)$.
\end{enumerate}
\end{lemma}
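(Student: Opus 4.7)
The plan is a bookkeeping argument obtained by unfolding the definitions of functional composition and, for part~(ii), by using minor-closedness of $J$ to bring several compositions to a common arity. The asymmetry between $(IJ)K$ and $I(JK)$ is that in $(IJ)K$ the middle functions from $J$ must all share one arity $m$ and must all be applied to a single tuple $(k_1, \dots, k_m)$ of inner $K$-functions, whereas in $I(JK)$ each middle function may have its own arity $m_i$ and choose its own inner tuple of $K$-functions.

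For part~(i), a generic element of $(IJ)K$ has the form $f(g_1, \dots, g_n)(k_1, \dots, k_m)$ with $f \in I^{(n)}$, $g_1, \dots, g_n \in J^{(m)}$, and $k_1, \dots, k_m \in K^{(p)}$. Two applications of the definition of composition rewrite this as $f\bigl(g_1(k_1, \dots, k_m), \dots, g_n(k_1, \dots, k_m)\bigr)$. Setting $h_i := g_i(k_1, \dots, k_m) \in (JK)^{(p)}$ for each $i \in \nset{n}$ exhibits the function as $f(h_1, \dots, h_n) \in I(JK)$, which is the desired inclusion.

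For part~(ii), pick $\varphi = f(h_1, \dots, h_n) \in I(JK)$, where $f \in I^{(n)}$ and each $h_i = g_i(k^{(i)}_1, \dots, k^{(i)}_{m_i}) \in (JK)^{(p)}$ with $g_i \in J^{(m_i)}$ and $k^{(i)}_j \in K^{(p)}$. To realize $\varphi$ as an element of $(IJ)K$, I would line up all the inner $K$-functions $\{k^{(i)}_j : i \in \nset{n},\ j \in \nset{m_i}\}$ in a single $M$-tuple $(k_1, \dots, k_M)$ and choose maps $\sigma_i \colon \nset{m_i} \to \nset{M}$ with $k_{\sigma_i(j)} = k^{(i)}_j$ for all $i,j$. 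Define $g'_i := (g_i)_{\sigma_i}$; this is an $M$-ary function, and it lies in $J$ because $J$ is minor-closed. Using the identity $g_\sigma(\vect{a}) = g(\vect{a}\sigma)$ pointwise, one checks $g'_i(k_1, \dots, k_M) = g_i(k^{(i)}_1, \dots, k^{(i)}_{m_i}) = h_i$. Consequently $f(g'_1, \dots, g'_n) \in (IJ)^{(M)}$, and applying it to $(k_1, \dots, k_M)$ recovers $f(h_1, \dots, h_n) = \varphi$, so $\varphi \in (IJ)K$. Combined with part~(i), this yields the desired equality.

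The only delicate step is the unification in part~(ii): one must set up $M$, the tuple $(k_1, \dots, k_M)$, and the maps $\sigma_i$ carefully so that each padded middle function $g'_i$ simultaneously belongs to $J$ and evaluates to the correct $h_i$. Once this is arranged, minor-closedness of $J$ handles membership essentially for free, and no cardinality or finiteness issues arise since $n$, the $m_i$, and $M$ are finite by the very definition of composition.
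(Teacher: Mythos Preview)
Your argument is correct and is precisely the standard proof of this lemma. Note that the paper does not actually supply its own proof here; the result is quoted from Couceiro and Foldes, so there is nothing in the paper to compare against beyond observing that your unfolding-and-padding argument is the expected one.
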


\begin{lemma}[{\cite[Lemma~2.5]{Lehtonen-SM}}]
\label{lem:F-closure}
Let $F \subseteq \mathcal{F}_{AB}$ and let $C_1$ and $C_2$ be clones on $A$ and $B$, respectively.
Then $\gen[(C_1,C_2)]{F} = C_2 ( F C_1 )$.
\end{lemma}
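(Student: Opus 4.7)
My plan is to establish the two inclusions separately. For $C_2(FC_1) \subseteq \gen{F}$, I would iterate the defining closure properties of $\gen{F}$: being right-stable under $C_1$, it contains $FC_1 \subseteq \gen{F} \, C_1 \subseteq \gen{F}$; being left-stable under $C_2$, it then contains $C_2(FC_1) \subseteq C_2 \gen{F} \subseteq \gen{F}$.

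The substantive direction is $\gen{F} \subseteq C_2(FC_1)$, which reduces to verifying that $C_2(FC_1)$ is itself a $(C_1,C_2)$-clonoid containing $F$, since $\gen{F}$ is defined as the smallest such set. Containment of $F$ is immediate from $\clProj{A} \subseteq C_1$ and $\clProj{B} \subseteq C_2$: each $f \in F^{(n)}$ equals $f(\pr^{(n)}_1,\dots,\pr^{(n)}_n) \in FC_1$, and composing on the left with a projection in $C_2$ lifts this into $C_2(FC_1)$. Right-stability, $(C_2(FC_1))C_1 \subseteq C_2(FC_1)$, is a routine manipulation using only part~(i) of the Associativity Lemma: one has $(C_2(FC_1))C_1 \subseteq C_2((FC_1)C_1) \subseteq C_2(F(C_1 C_1)) \subseteq C_2(FC_1)$, where the last step uses $C_1 C_1 \subseteq C_1$.

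The one spot that requires care, and the main (minor) obstacle, is left-stability $C_2(C_2(FC_1)) \subseteq C_2(FC_1)$. A naive appeal to part~(i) of the Associativity Lemma would run the inclusion in the wrong direction, giving only $(C_2 C_2)(FC_1) \subseteq C_2(C_2(FC_1))$. The remedy is to invoke part~(ii): the middle factor $C_2$ is a clone and hence closed under composition with projections on $B$, so the inclusion upgrades to an equality, yielding $C_2(C_2(FC_1)) = (C_2 C_2)(FC_1) \subseteq C_2(FC_1)$. With left-stability in hand, $C_2(FC_1)$ is confirmed to be a $(C_1,C_2)$-clonoid containing $F$, and the desired inclusion $\gen{F} \subseteq C_2(FC_1)$ follows by minimality of $\gen{F}$.
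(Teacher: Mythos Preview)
Your argument is correct. The paper does not supply its own proof of this lemma; it merely cites it from \cite[Lemma~2.4]{Lehtonen-SM}, so there is nothing to compare against directly. Your two-inclusion argument is the standard one, and your identification of the one nontrivial point---that left-stability requires part~(ii) of the Associativity Lemma (minor-closure of the middle factor $C_2$) rather than just part~(i)---is exactly right.
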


In the next lemma we employ the binary composition operation $\ast$ defined as follows:
if $f \in \mathcal{O}_A^{(n)}$ and $g \in \mathcal{O}_A^{(m)}$, then $f \ast g \in \mathcal{O}_A^{(m+n-1)}$ is defined by
\[
(f \ast g)(a_1, \dots, a_{m+n-1}) :=
f(g(a_1, \dots, a_m), a_{m+1}, \dots, a_{m+n-1}),
\]
for all $a_1, \dots, a_{m+n-1} \in A$.

\begin{lemma}[{\cite[Lemma~3.2]{CouLeh-Lcstability}}]
\label{lem:right-stable}
Let $F \subseteq \mathcal{O}_A$, let $C$ be a clone on $A$, and let $G$ be a generating set of $C$.
Then the following conditions are equivalent.
\begin{enumerate}[label={\upshape(\roman*)}]
\item $FC \subseteq F$.
\item $F$ is minor\hyp{}closed and $f \ast g \in F$ whenever $f \in F$ and $g \in C$.
\item $F$ is minor\hyp{}closed and $f \ast g \in F$ whenever $f \in F$ and $g \in G$.
\end{enumerate}
\end{lemma}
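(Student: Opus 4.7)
The plan is to prove the cycle (i) $\Rightarrow$ (ii) $\Rightarrow$ (iii) $\Rightarrow$ (i); the only substantial step is the last one.

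For (i) $\Rightarrow$ (ii), since $\clProj{A} \subseteq C$, the hypothesis $FC \subseteq F$ already forces $F$ to be minor\hyp{}closed, and for $f \in F^{(n)}$ and $g \in C^{(m)}$ the identity
\[
f \ast g = f\bigl(g(\pr^{(m+n-1)}_1, \dots, \pr^{(m+n-1)}_m),\ \pr^{(m+n-1)}_{m+1}, \dots, \pr^{(m+n-1)}_{m+n-1}\bigr)
\]
exhibits $f \ast g$ as an element of $FC \subseteq F$, since every inner function lies in $C$. The implication (ii) $\Rightarrow$ (iii) is immediate from $G \subseteq C$.

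For (iii) $\Rightarrow$ (i), the plan is to fix, once and for all, for each of the given $g_1, \dots, g_n \in C$ (common arity $m$) a representation as a composition term over $G \cup \clProj{A}$, and then induct on the total number of $G$\hyp{}symbols appearing in these fixed terms. In the base case the total is zero, so every $g_i$ is a projection and $f(g_1, \dots, g_n)$ is literally a minor of $f$, hence in $F$. In the inductive step some $g_i$ has a representing term whose outermost symbol lies in $G$ (any outer projection collapses to a subterm, so this may be assumed). After permuting the arguments of $f$ by a minor\hyp{}formation, which is allowed as $F$ is minor\hyp{}closed, I may take $i = 1$ and write $g_1 = h(h_1, \dots, h_r)$ with $h \in G$ and $h_1, \dots, h_r \in C$ of arity $m$. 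Setting $\tilde{f} := f \ast h \in F$ (by (iii)), an instance of the Associativity Lemma gives
\[
f(g_1, g_2, \dots, g_n) = \tilde{f}(h_1, \dots, h_r, g_2, \dots, g_n),
\]
and the right\hyp{}hand side is a composition of the same shape whose aggregate $G$\hyp{}symbol count has dropped by one, so the induction hypothesis closes the step.

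The hard part is the bookkeeping around $\ast$: since $f \ast h$ substitutes $h$ only into the first argument slot of $f$, each invocation in the inductive step must be preceded by a permuting minor\hyp{}formation bringing the chosen slot to the front, and the measure ``total number of $G$\hyp{}symbols'' must be pinned down on fixed representing terms at the outset so that it is unambiguous and strictly decreases at each step. Beyond this combinatorial care, the argument uses nothing more than the definition of $\ast$, the minor\hyp{}closure of $F$, and the Associativity Lemma, so the implication should be routine once the induction is set up correctly.
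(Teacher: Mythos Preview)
The paper does not supply its own proof of this lemma; it is quoted from an external reference (Couceiro--Lehtonen), so there is no in-paper argument to compare against. Your proof is correct: the cycle (i) $\Rightarrow$ (ii) $\Rightarrow$ (iii) $\Rightarrow$ (i) is the natural route, and the induction on the total number of $G$-symbols in fixed term representations of the inner functions handles (iii) $\Rightarrow$ (i) cleanly. One small point: the identity $f(g_1,\dots,g_n)=\tilde f(h_1,\dots,h_r,g_2,\dots,g_n)$ follows directly from the definitions of $\ast$ and of functional composition; the Associativity Lemma as stated in this paper concerns class-level composition $(IJ)K$ versus $I(JK)$ and is not what is actually being invoked here. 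This is a naming quibble only and does not affect the validity of your argument.
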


\begin{lemma}[{\cite[Lemma~3.3]{CouLeh-Lcstability}}]
\label{lem:left-stable}
Let $F \subseteq \mathcal{O}_A$, let $C$ be a clone on $A$, and let $G$ be a generating set of $C$.
Then the following conditions are equivalent.
\begin{enumerate}[label={\upshape(\roman*)}]
\item $CF \subseteq F$.
\item $g(f_1, \dots, f_n) \in F$ whenever $g \in C^{(n)}$ and $f_1, \dots, f_n \in F^{(m)}$ for some $n, m \in \IN$.
\item $g(f_1, \dots, f_n) \in F$ whenever $g \in G^{(n)}$ and $f_1, \dots, f_n \in F^{(m)}$ for some $n, m \in \IN$.
\end{enumerate}
\end{lemma}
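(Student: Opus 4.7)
The plan is as follows. The equivalence (i) $\iff$ (ii) is immediate once one unpacks the definition of the composition $CF$ of function classes: by definition, $CF$ consists precisely of the functions $g(f_1,\dots,f_n)$ with $g \in C^{(n)}$ and $f_1,\dots,f_n \in F^{(m)}$ for some $n,m$, so the inclusion $CF \subseteq F$ is literally the assertion in (ii). The implication (ii) $\Rightarrow$ (iii) is trivial, since $G \subseteq C$ gives $G^{(n)} \subseteq C^{(n)}$ for every $n$. Hence the only substantive task is (iii) $\Rightarrow$ (ii), which must lift the closure property from a generating set of $C$ to all of $C$.

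For (iii) $\Rightarrow$ (ii), I would proceed by induction on the build\hyp{}up of operations in $C = \clonegen{G}$ from $G \cup \clProj{A}$ via composition. Call an operation $g \in C$ \emph{good} if $g(f_1,\dots,f_n) \in F$ whenever $f_1,\dots,f_n \in F^{(m)}$ and $n = \arity{g}$; the goal is to show that every operation of $C$ is good. In the base cases, every projection $\pr^{(n)}_i$ is good because $\pr^{(n)}_i(f_1,\dots,f_n) = f_i \in F$, and every generator in $G$ is good by hypothesis (iii). For the inductive step, suppose $h \in C^{(p)}$ and $k_1,\dots,k_p \in C^{(n)}$ are already known to be good, and set $g := h(k_1,\dots,k_p)$. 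Given arbitrary $f_1,\dots,f_n \in F^{(m)}$, goodness of each $k_j$ yields $f'_j := k_j(f_1,\dots,f_n) \in F^{(m)}$, and then by associativity of superposition,
\[
g(f_1,\dots,f_n) = h\bigl(k_1(f_1,\dots,f_n),\dots,k_p(f_1,\dots,f_n)\bigr) = h(f'_1,\dots,f'_p) \in F,
\]
the last membership following from goodness of $h$. Hence $g$ is good, which completes the induction.

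I do not anticipate any real obstacle here: the argument is a direct structural induction on clone\hyp{}construction, and the only nontrivial identity invoked is the associativity of superposition, which is pointwise in nature and is moreover already packaged as the Associativity Lemma of Couceiro and Foldes cited just above in the excerpt. The mild subtlety worth flagging in a careful write\hyp{}up is that the inductive hypothesis must be formulated uniformly in the arity $m$ of the inner tuple $(f_1,\dots,f_m)$, but this presents no difficulty since the arity $m$ is not affected by the outer composition.
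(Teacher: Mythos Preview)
Your argument is correct. Note, however, that the paper does not actually prove this lemma: it is merely quoted from \cite[Lemma~3.3]{CouLeh-Lcstability}, so there is no in\hyp{}paper proof to compare against. That said, your structural induction on the clone generation of $C$ from $G \cup \clProj{A}$ is exactly the standard argument one expects for such a statement, and the only nontrivial ingredient---associativity of superposition---is indeed available (and even cited as the Associativity Lemma just above).
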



\section{Properties and classes of Boolean functions}
\label{sec:Boolean}

\subsection{Boolean functions and their properties}

Operations on $\{0,1\}$ are called \emph{Boolean functions.}
The operation tables in Figure~\ref{fig:BFs} define some well\hyp{}known Boolean functions:
the constant functions $0$ and $1$,
identity $\id$,
negation $\neg$,
conjunction $\mathord{\wedge}$,
disjunction $\mathord{\vee}$,
addition $\mathord{+}$,
equivalence $\mathord{\leftrightarrow}$,
nonimplication $\mathord{\nrightarrow}$,
majority $\mu$,
and triple sum $\mathord{\oplus_3}$.

\begin{figure}
\newcommand{\FIGunary}{$\begin{array}[t]{c|cccc}
x_1 & 0 & 1 & \id & \neg \\
\hline
0 & 0 & 1 & 0 & 1 \\
1 & 0 & 1 & 1 & 0
\end{array}$}
\newcommand{\FIGbinary}{$\begin{array}[t]{cc|ccccc}
x_1 & x_2 & \wedge & \vee & + & \leftrightarrow & \nrightarrow \\
\hline
0 & 0 & 0 & 0 & 0 & 1 & 0 \\
0 & 1 & 0 & 1 & 1 & 0 & 0 \\
1 & 0 & 0 & 1 & 1 & 0 & 1 \\
1 & 1 & 1 & 1 & 0 & 1 & 0
\end{array}$}
\newcommand{\FIGternary}{$\begin{array}[t]{ccc|cc}
x_1 & x_2 & x_3 & \mu & \mathord{\oplus_3} \\
\hline
0 & 0 & 0 & 0 & 0 \\
0 & 0 & 1 & 0 & 1 \\
0 & 1 & 0 & 0 & 1 \\
0 & 1 & 1 & 1 & 0 \\
1 & 0 & 0 & 0 & 1 \\
1 & 0 & 1 & 1 & 0 \\
1 & 1 & 0 & 1 & 0 \\
1 & 1 & 1 & 1 & 1
\end{array}$}
\newlength{\FIGternaryheight}
\settototalheight{\FIGternaryheight}{\FIGternary}
\newlength{\FIGbinarywidth}
\settowidth{\FIGbinarywidth}{\FIGbinary}
\begin{minipage}[t][\FIGternaryheight][t]{\FIGbinarywidth}
\FIGunary
\vfill
\FIGbinary
\end{minipage}
\qquad\qquad
\FIGternary

\caption{Some well\hyp{}known Boolean functions.}
\label{fig:BFs}
\end{figure}

For $a \in \{0,1\}^n$, let $\overline{a} :=  1 - a$.
For $\vect{a} = (a_1, \dots, a_n) \in \{0,1\}^n$, let $\overline{\vect{a}} := (\overline{a_1}, \dots, \overline{a_n})$.
We regard the set $\{0,1\}$ totally ordered by the natural order $0 < 1$, which induces the direct product order on $\{0,1\}^n$.
The poset $(\{0,1\}^n, {\leq})$ constitutes a Boolean lattice, i.e., a complemented distributive lattice with least and greatest elements $\vect{0} = (0, \dots 0)$ and $\vect{1} = (1, \dots, 1)$, and with the map $\vect{a} \mapsto \overline{\vect{a}}$ being the complementation.

Denote by $\vect{e}_i$ the $n$\hyp{}tuple $(0, \dots, 0, 1, 0, \dots, 0)$, where the single $1$ is at the $i$\hyp{}th position.

We are going to define several properties of Boolean functions and introduce notation for classes of Boolean functions satisfying these properties.
For the reader's convenience, we summarize the notation in Table~\ref{table:Bf-notation}.

\begin{table}
\begin{center}
\begin{tabular}{lll}
\toprule
Notation & Description & Definition \\
\midrule
$\clAll$ & all Boolean functions & \ref{def:all} \\
$\clRefl$ & reflexive functions & \ref{def:neg} \\
$\clS$ & self-dual functions & \ref{def:neg} \\
$\clSmin$ & minorants of self-dual functions & \ref{def:minorant} \\
$\clSmaj$ & majorants of self-dual functions & \ref{def:minorant} \\
$\clM$ & monotone functions & \ref{def:monotone} \\
$\clSM$ & $\clS \cap \clM$ & \ref{def:monotone} \\
$\clUk{k}$ & $1$\hyp{}separating functions of rank $k$ & \ref{def:separating} \\
$\clWk{k}$ & $0$\hyp{}separating functions of rank $k$ & \ref{def:separating} \\
$\clMUk{k}$ & $\clM \cap \clUk{k}$ & \ref{def:separating} \\
$\clMWk{k}$ & $\clM \cap \clWk{k}$ & \ref{def:separating} \\
$\clIc$ & projections & \ref{def:proj} \\
$\clVak$ & constant functions & \ref{def:constant} \\
$\clVako$ & constant $0$ functions & \ref{def:constant} \\
$\clVaki$ & constant $1$ functions & \ref{def:constant} \\
$\clKlik{k}{\Theta}$ & $k$\hyp{}local closure of the minorant minion ${\downarrow} \Theta$ & \ref{def:UkTheta} \\
\bottomrule
\end{tabular}
\end{center}

\bigskip
\caption{Notation for classes of Boolean functions.}
\label{table:Bf-notation}
\end{table}

\begin{definition}
\label{def:all}
Denote by $\clAll$ the class of all Boolean functions.
For any $C \subseteq \clAll$ and
any $a, b \in \{0,1\}$, let
\begin{align*}
\clIntVal{C}{a}{} & := \{ \, f \in C \mid f(\vect{0}) = a \, \}, \\
\clIntVal{C}{}{b} & := \{ \, f \in C \mid f(\vect{1}) = b \, \}, \\
\clIntVal{C}{a}{b} & := \clIntVal{C}{a}{} \cap \clIntVal{C}{}{b}, \\
\clIntEq{C} & := \{ \, f \in C \mid f(\vect{0}) = f(\vect{1}) \, \} = \clIntVal{C}{0}{0} \cup \clIntVal{C}{1}{1}, \\
\clIntNeq{C} & := \{ \, f \in C \mid f(\vect{0}) \neq f(\vect{1}) \, \} = \clIntVal{C}{0}{1} \cup \clIntVal{C}{1}{0}, \\
\clIntLeq{C} & := \{ \, f \in C \mid f(\vect{0}) \leq f(\vect{1}) \, \} = \clIntVal{C}{0}{0} \cup \clIntVal{C}{0}{1} \cup \clIntVal{C}{1}{1}, \\
\clIntGeq{C} & := \{ \, f \in C \mid f(\vect{0}) \geq f(\vect{1}) \, \} = \clIntVal{C}{0}{0} \cup \clIntVal{C}{1}{0} \cup \clIntVal{C}{1}{1}, \\
\clIntNeqOO{C} & := \clIntNeq{C} \cup \clIntVal{C}{0}{0} = \clIntVal{C}{0}{1} \cup \clIntVal{C}{1}{0} \cup \clIntVal{C}{0}{0}, \\
\clIntNeqII{C} & := \clIntNeq{C} \cup \clIntVal{C}{1}{1} = \clIntVal{C}{0}{1} \cup \clIntVal{C}{1}{0} \cup \clIntVal{C}{1}{1}.
\end{align*}
\end{definition}

\begin{definition}
\label{def:constant}
A function $f \colon \{0,1\}^n \to \{0,1\}$ is \emph{constant} if $f(\vect{a}) = f(\vect{b})$ for all $\vect{a}, \vect{b} \in \{0,1\}^n$.
We denote by $\clVak$ the class of all constant Boolean functions, and we introduce the shorthands $\clVako := \clIntVal{\clVak}{0}{0}$ and $\clVaki := \clIntVal{\clVak}{1}{1}$.
\end{definition}

\begin{definition}
\label{def:neg}
Let $f \colon \{0,1\}^n \to \{0,1\}$.
The \emph{negation} $\overline{f}$, the \emph{inner negation} $f^\mathrm{n}$, and the \emph{dual} $f^\mathrm{d}$ of $f$ are the $n$\hyp{}ary Boolean functions given by the rules
$\overline{f}(\vect{a}) := \overline{f(\vect{a})}$, $f^\mathrm{n}(\vect{a}) := f(\overline{\vect{a}})$, and $f^\mathrm{d}(\vect{a}) := \overline{f(\overline{\vect{a}})}$, for all $\vect{a} \in \{0,1\}^n$.
In other words, $\overline{f} = \neg(f)$, $f^\mathrm{n} = f(\neg^{(n)}_1, \dots, \neg^{(n)}_n)$, and $f^\mathrm{d} = \neg(f(\neg^{(n)}_1, \dots, \neg^{(n)}_n))$.
A function $f \colon \{0,1\}^n \to \{0,1\}$ is \emph{reflexive} if $f = f^\mathrm{n}$, i.e., $f(\vect{a}) = f(\overline{\vect{a}})$ for all $\vect{a} \in \{0,1\}^n$, and $f$ is \emph{self\hyp{}dual} if $f = f^\mathrm{d}$, i.e., $f(\vect{a}) = \overline{f(\overline{\vect{a}})}$ for all $\vect{a} \in \{0,1\}^n$.
We denote by $\clRefl$ and $\clS$ the class of all reflexive functions and the class of all self\hyp{}dual functions, respectively.
For any $C \subseteq \clAll$, let $\overline{C} := \{ \, \overline{f} \mid f \in C \, \}$, $C^\mathrm{n} := \{ \, f^\mathrm{n} \mid f \in C \, \}$, and $C^\mathrm{d} := \{ \, f^\mathrm{d} \mid f \in C \, \}$.
\end{definition}

\begin{lemma}\label{lem:C1nC2}
\label{lem:stability-nd}
Let $C_1$ and $C_2$ be clones.
If $C_1 = C_1^\mathrm{d}$, then the inner negation $K \mapsto K^\mathrm{n}$ is an automorphism of $\closys{(C_1,C_2)}$.
\end{lemma}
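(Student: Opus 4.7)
The plan is to verify that $K \mapsto K^{\mathrm{n}}$ is (i) an involution on the power set $\mathcal{P}(\mathcal{O}_{\{0,1\}})$, (ii) inclusion-preserving, and (iii) maps $(C_1,C_2)$-clonoids to $(C_1,C_2)$-clonoids. Items (i) and (ii) are immediate from the pointwise definition $f^{\mathrm{n}}(\vect{a}) = f(\overline{\vect{a}})$, since $(f^{\mathrm{n}})^{\mathrm{n}} = f$ and $K \subseteq L$ trivially implies $K^{\mathrm{n}} \subseteq L^{\mathrm{n}}$. Once (iii) is in place, the map is an order-preserving involution of $\closys{(C_1,C_2)}$, hence a lattice automorphism of this (complete) lattice.

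For (iii), the main work is to translate inner negation through composition. The two identities I would record are:
\begin{align*}
g(f_1^{\mathrm{n}}, \dots, f_n^{\mathrm{n}}) &= \bigl[g(f_1, \dots, f_n)\bigr]^{\mathrm{n}}, \\
f^{\mathrm{n}}(g_1, \dots, g_n) &= \bigl[f(g_1^{\mathrm{d}}, \dots, g_n^{\mathrm{d}})\bigr]^{\mathrm{n}},
\end{align*}
both verified by direct evaluation at an arbitrary $\vect{a}$, using that $\overline{g(\vect{a})} = g^{\mathrm{d}}(\overline{\vect{a}})$.

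With these identities, left stability of $K^{\mathrm{n}}$ under $C_2$ is automatic and needs no hypothesis on $C_2$: for $g \in C_2$ and $f_1, \dots, f_n \in K$, the first identity shows $g(f_1^{\mathrm{n}}, \dots, f_n^{\mathrm{n}})$ is the inner negation of $g(f_1, \dots, f_n) \in C_2 K \subseteq K$, hence lies in $K^{\mathrm{n}}$. Right stability is exactly where the assumption $C_1 = C_1^{\mathrm{d}}$ enters: for $f \in K$ and $g_1, \dots, g_n \in C_1$, the second identity expresses $f^{\mathrm{n}}(g_1, \dots, g_n)$ as the inner negation of $f(g_1^{\mathrm{d}}, \dots, g_n^{\mathrm{d}})$, and since $C_1$ is closed under duals, each $g_i^{\mathrm{d}}$ lies in $C_1$, so $f(g_1^{\mathrm{d}}, \dots, g_n^{\mathrm{d}}) \in K C_1 \subseteq K$ and the inner negation lies in $K^{\mathrm{n}}$.

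The only real obstacle is getting the composition identity $f^{\mathrm{n}}(g_1, \dots, g_n) = [f(g_1^{\mathrm{d}}, \dots, g_n^{\mathrm{d}})]^{\mathrm{n}}$ right and recognising that it is the dual $g_i^{\mathrm{d}}$, not the negation or inner negation, that appears; this is what forces the hypothesis $C_1 = C_1^{\mathrm{d}}$ rather than, say, $C_1 = \overline{C_1}$. Once this is observed, the proof reduces to the three short verifications above.
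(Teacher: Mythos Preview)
Your proof is correct and follows essentially the same approach as the paper: both arguments verify that $K^{\mathrm{n}}$ is again a $(C_1,C_2)$-clonoid by unpacking the same two composition identities (the paper derives them inline, you record them explicitly), and both conclude by noting that $K \mapsto K^{\mathrm{n}}$ is an involution. Your explicit observation that it is the dual $g_i^{\mathrm{d}}$ appearing in the right-stability identity, and hence the hypothesis $C_1 = C_1^{\mathrm{d}}$ that is needed, is a nice clarifying remark not made explicit in the paper.
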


\begin{proof}
Assume that $C_1 = C_1^\mathrm{d}$, and let $K$ be a $(C_1,C_2)$\hyp{}clonoid.
We need to show that $K^\mathrm{n}$ is a $(C_1,C_2)$\hyp{}clonoid.

First we show that $K^\mathrm{n} C_1 \subseteq K^\mathrm{n}$.
Let $f \in K^\mathrm{n} C_1$.
Then $f = g(h_1, \dots, h_n)$ with $g \in K^\mathrm{n}$ and $h_1, \dots, h_n \in C_1$.
Thus $g^\mathrm{n} \in K$ and $h_1^\mathrm{d}, \dots, h_n^\mathrm{d} \in C_1^\mathrm{d} = C_1$, and we have
\begin{align*}
f
&
= g(h_1, \dots, h_n)
= (g^\mathrm{n})^\mathrm{n}(h_1, \dots, h_n)
= (g^\mathrm{n}(\neg^{(n)}_1, \dots, \neg^{(n)}_n))(h_1, \dots, h_n)
\\ &
= g^\mathrm{n}(\overline{h_1}, \dots, \overline{h_n})
= g^\mathrm{n}((h_1^\mathrm{d})^\mathrm{n}, \dots, (h_n^\mathrm{d})^\mathrm{n})
= (g^\mathrm{n}(h_1^\mathrm{d}, \dots, h_n^\mathrm{d}))^\mathrm{n}
\in (K C_1)^\mathrm{n}
\subseteq K^\mathrm{n}.
\end{align*}

Next we show that $C_2 K^\mathrm{n} \subseteq K^\mathrm{n}$.
Let $f \in C_2 K^\mathrm{n}$.
Then $f = g(h_1, \dots, h_n)$ with $g \in C$ and $h_1, \dots, h_n \in K^\mathrm{n}$, so $h_1^\mathrm{n}, \dots, h_n^\mathrm{n} \in K$.
Since $C_2 K \subseteq K$, we have $g(h_1^\mathrm{n}, \dots, h_n^\mathrm{n}) \in K$, and hence
\[
f = g(h_1, \dots, h_n)
= g((h_1^\mathrm{n})^\mathrm{n}, \dots, (h_n^\mathrm{n})^\mathrm{n})
= (g(h_1^\mathrm{n}, \dots, h_n^\mathrm{n}))^\mathrm{n}
\in K^\mathrm{n}.
\]

It remains to note that $K \mapsto K^\mathrm{n}$ is an involution.
It follows that this map is an automorphism of $\closys{(C_1,C_2)}$.
\end{proof}

\begin{lemma}
\label{lem:duality}
Let $C_1$ and $C_2$ be clones on $\{0,1\}$, and let $K \subseteq \clAll$.
Then $K$ is a $(C_1,C_2)$\hyp{}clonoid if and only if $K^\mathrm{d}$ is a $(C_1^\mathrm{d},C_2^\mathrm{d})$\hyp{}clonoid.
\end{lemma}

\begin{proof}
Observe first that for all $f \in \clAll^{(n)}$, $g_1, \dots, g_n \in \clAll^{(m)}$, we have, for all $\vect{a} \in \{0,1\}^m$,
\begin{align*}
\lhs
(f(g_1, \dots, g_n))^\mathrm{d}(\vect{a})
= \overline{f(g_1, \dots, g_n)(\overline{\vect{a}})}
= \overline{f(g_1(\overline{\vect{a}}), \dots, g_n(\overline{\vect{a}}))}
\\
& =
\overline{f(\overline{\overline{g_1(\overline{\vect{a}})}}, \dots, \overline{\overline{g_n(\overline{\vect{a}})}})}
= f^\mathrm{d}(g_1^\mathrm{d}(\vect{a}), \dots, g_n^\mathrm{d}(\vect{a}))
= f^\mathrm{d}(g_1^\mathrm{d}, \dots, g_n^\mathrm{d})(\vect{a}),
\end{align*}
so $(f(g_1, \dots, g_n))^\mathrm{d} = f^\mathrm{d}(g_1^\mathrm{d}, \dots, g_n^\mathrm{d}$.
From this it follows that for all $I, J \subseteq \clAll$, $(IJ)^\mathrm{d} = I^\mathrm{d} J^\mathrm{d}$.

Now, if $K$ is a $(C_1,C_2)$\hyp{}clonoid, we have that $K = \gen[(C_1,C_2)]{K} = C_2 ( K C_1 )$.
Therefore, $K^\mathrm{d} = (C_2 ( K C_1 ))^\mathrm{d} = C_2^\mathrm{d} ( K^\mathrm{d} C_1^\mathrm{d} ) = \gen[(C_1^\mathrm{d},C_2^\mathrm{d})]{K^\mathrm{d}}$, which means that $K^\mathrm{d}$ is a $(C_1^\mathrm{d},C_2^\mathrm{d})$\hyp{}clonoid.
The converse implication follows by observing that $(C^\mathrm{d})^\mathrm{d} = C$ for all $C \subseteq \clAll$.
\end{proof}

\begin{definition}
\label{def:minorant}
Let $f, g \colon \{0,1\}^n \to \{0,1\}$.
We say that $f$ is a \emph{minorant} of $g$, or that $g$ is a \emph{majorant} of $f$, and we write $f \minorant g$, if $f(\vect{a}) \leq g(\vect{a})$ for all $\vect{a} \in \{0,1\}^n$.
We denote by $\clSmin$ and $\clSmaj$ the classes of all minorants of self\hyp{}dual functions and all majorants of self\hyp{}dual functions, respectively.
Note that $\clS = \clSmin \cap \clSmaj$ and that $f \in \clSmin$ if and only if $f(\vect{a}) \wedge f(\overline{\vect{a}}) = 0$ for all $\vect{a} \in \{0,1\}^n$, and $f \in \clSmaj$ if and only if $f(\vect{a}) \vee f(\overline{\vect{a}}) = 1$ for all $\vect{a} \in \{0,1\}^n$.
\end{definition}

\begin{definition}
\label{def:monotone}
A function $f \in \{0,1\}^n \to \{0,1\}$ is \emph{monotone} if $f(\vect{a}) \leq f(\vect{b})$ whenever $\vect{a} \leq \vect{b}$.
We denote by $\clM$ the class of all monotone functions.
Let $\clSM := \clS \cap \clM$.
\end{definition}

\begin{definition}
\label{def:separating}
For $k \in \{2, \dots, \infty\}$,
a function $f \in \{0,1\}^n \to \{0,1\}$ is \emph{$1$\hyp{}separating of rank $k$} if for all $T \subseteq f^{-1}(1)$ with $\card{T} \leq k$, it holds that $\bigwedge T \neq \vect{0}$,
and $f$ is \emph{$0$\hyp{}separating of rank $k$} if for all $F \subseteq f^{-1}(0)$ with $\card{F} \leq k$, it holds that $\bigvee F \neq \vect{1}$.
We denote by $\clUk{k}$ and $\clWk{k}$ the classes of all $1$\hyp{}separating functions of rank $k$ and all $0$\hyp{}separating functions of rank $k$.
We introduce the shorthands $\clMUk{k}$ for $\clM \cap \clUk{k}$ and $\clMWk{k}$ for $\clM \cap \clWk{k}$.
\end{definition}

\begin{definition}
\label{def:proj}
Denote by $\clIc$ the class of all projections on $\{0,1\}$.
\end{definition}

The clones of Boolean functions were described by Post~\cite{Post}.
The lattice of clones of Boolean functions, also known as \emph{Post's lattice,} is presented in Figure~\ref{fig:Post}.
The classes $\clAll$, $\clOX$, $\clXI$, $\clOI$, $\clM$, $\clMo$, $\clMi$, $\clMc$, $\clS$, $\clSc$, $\clSM$, $\clUk{k}$, $\clTcUk{k}$, $\clMUk{k}$, $\clMcUk{k}$, $\clWk{k}$, $\clTcWk{k}$, $\clMWk{k}$, $\clMcWk{k}$, and $\clIc$ ($k \geq 2$) that were defined above are clones, and they are indicated in Figure~\ref{fig:Post}.

\begin{figure}
\begin{center}
\scalebox{0.5}{%
\tikzstyle{every node}=[circle, draw, fill=black, scale=1, font=\normalsize]
\begin{tikzpicture}[baseline, scale=0.8]
   \node [label = below:$\clIc$] (Ic) at (0,-1) {};
   \node (Istar) at (0,0.5) {};
   \node (I0) at (4.5,0.5) {};
   \node (I1) at (-4.5,0.5) {};
   \node (I) at (0,2) {};
   \node (Omega1) at (0,5) {};
   \node (Lc) at (0,7.5) {};
   \node (LS) at (0,9) {};
   \node (L0) at (3,9) {};
   \node (L1) at (-3,9) {};
   \node (L) at (0,10.5) {};
   \node [label = below:$\clSM\,\,$] (SM) at (0,13.5) {};
   \node [label = left:$\clSc$] (Sc) at (0,15) {};
   \node [label = above:$\clS$] (S) at (0,16.5) {};
   \node [label = below:$\clMc$] (Mc) at (0,23) {};
   \node [label = left:$\clMo\,\,$] (M0) at (2,24) {};
   \node [label = right:$\,\,\clMi$] (M1) at (-2,24) {};
   \node [label = above:$\clM\,\,$] (M) at (0,25) {};
   \node (Lamc) at (7.2,6.7) {};
   \node (Lam1) at (5,7.5) {};
   \node (Lam0) at (8.7,7.5) {};
   \node (Lam) at (6.5,8.3) {};
   \node [label = left:$\clMcUk{\infty}$] (McUi) at (7.2,11.5) {};
   \node [label = left:$\clMUk{\infty}$] (MUi) at (8.7,13) {};
   \node [label = right:$\clTcUk{\infty}$] (TcUi) at (10.2,12) {};
   \node [label = right:$\clUk{\infty}$] (Ui) at (11.7,13.5) {};
   \node [label = left:$\clMcUk{3}$](McU3) at (7.2,16) {};
   \node [label = left:$\clMUk{3}$] (MU3) at (8.7,17.5) {};
   \node [label = right:$\clTcUk{3}$] (TcU3) at (10.2,16.5) {};
   \node [label = right:$\clUk{3}$] (U3) at (11.7,18) {};
   \node [label = left:$\clMcU\,$] (McU2) at (7.2,19) {};
   \node [label = left:$\clMU\,$] (MU2) at (8.7,20.5) {};
   \node [label = right:$\clTcU$] (TcU2) at (10.2,19.5) {};
   \node [label = right:$\clU$] (U2) at (11.7,21) {};
   \node (Vc) at (-7.2,6.7) {};
   \node (V0) at (-5,7.5) {};
   \node (V1) at (-8.7,7.5) {};
   \node (V) at (-6.5,8.3) {};
   \node [label = right:$\clMcWk{\infty}$] (McWi) at (-7.2,11.5) {};
   \node [label = right:$\clMWk{\infty}$] (MWi) at (-8.7,13) {};
   \node [label = left:$\clTcWk{\infty}$] (TcWi) at (-10.2,12) {};
   \node [label = left:$\clWk{\infty}$] (Wi) at (-11.7,13.5) {};
   \node [label = right:$\clMcWk{3}$] (McW3) at (-7.2,16) {};
   \node [label = right:$\clMWk{3}$] (MW3) at (-8.7,17.5) {};
   \node [label = left:$\clTcWk{3}$] (TcW3) at (-10.2,16.5) {};
   \node [label = left:$\clWk{3}$] (W3) at (-11.7,18) {};
   \node [label = right:$\,\,\clMcW$] (McW2) at (-7.2,19) {};
   \node [label = right:$\clMW$] (MW2) at (-8.7,20.5) {};
   \node [label = left:$\clTcW$] (TcW2) at (-10.2,19.5) {};
   \node [label = left:$\clW$] (W2) at (-11.7,21) {};
   \node [label = above:$\clOI$] (Tc) at (0,28) {};
   \node [label = right:$\clOX$] (T0) at (5,29.5) {};
   \node [label = left:$\clXI$] (T1) at (-5,29.5) {};
   \node [label = above:$\clAll$] (Omega) at (0,31) {};
   \draw [thick] (Ic) -- (Istar) to[out=135,in=-135] (Omega1);
   \draw [thick] (I) -- (Omega1);
   \draw [thick] (Omega1) to[out=135,in=-135] (L);
   \draw [thick] (Ic) -- (I0) -- (I);
   \draw [thick] (Ic) -- (I1) -- (I);
   \draw [thick] (Ic) to[out=128,in=-134] (Lc);
   \draw [thick] (Ic) to[out=58,in=-58] (SM);
   \draw [thick] (I0) -- (L0);
   \draw [thick] (I1) -- (L1);
   \draw [thick] (Istar) to[out=60,in=-60] (LS);
   \draw [thick] (Ic) -- (Lamc);
   \draw [thick] (I0) -- (Lam0);
   \draw [thick] (I1) -- (Lam1);
   \draw [thick] (I) -- (Lam);
   \draw [thick] (Ic) -- (Vc);
   \draw [thick] (I0) -- (V0);
   \draw [thick] (I1) -- (V1);
   \draw [thick] (I) -- (V);
   \draw [thick] (Lamc) -- (Lam0) -- (Lam);
   \draw [thick] (Lamc) -- (Lam1) -- (Lam);
   \draw [thick] (Lamc) -- (McUi);
   \draw [thick] (Lam0) -- (MUi);
   \draw [thick] (Lam1) -- (M1);
   \draw [thick] (Lam) -- (M);
   \draw [thick] (Vc) -- (V0) -- (V);
   \draw [thick] (Vc) -- (V1) -- (V);
   \draw [thick] (Vc) -- (McWi);
   \draw [thick] (V0) -- (M0);
   \draw [thick] (V1) -- (MWi);
   \draw [thick] (V) -- (M);
   \draw [thick] (McUi) -- (TcUi) -- (Ui);
   \draw [thick] (McUi) -- (MUi) -- (Ui);
   \draw [thick,loosely dashed] (McUi) -- (McU3);
   \draw [thick,loosely dashed] (MUi) -- (MU3);
   \draw [thick,loosely dashed] (TcUi) -- (TcU3);
   \draw [thick,loosely dashed] (Ui) -- (U3);
   \draw [thick] (McU3) -- (TcU3) -- (U3);
   \draw [thick] (McU3) -- (MU3) -- (U3);
   \draw [thick] (McU3) -- (McU2);
   \draw [thick] (MU3) -- (MU2);
   \draw [thick] (TcU3) -- (TcU2);
   \draw [thick] (U3) -- (U2);
   \draw [thick] (McU2) -- (TcU2) -- (U2);
   \draw [thick] (McU2) -- (MU2) -- (U2);
   \draw [thick] (McU2) -- (Mc);
   \draw [thick] (MU2) -- (M0);
   \draw [thick] (TcU2) to[out=120,in=-25] (Tc);
   \draw [thick] (U2) -- (T0);

   \draw [thick] (McWi) -- (TcWi) -- (Wi);
   \draw [thick] (McWi) -- (MWi) -- (Wi);
   \draw [thick,loosely dashed] (McWi) -- (McW3);
   \draw [thick,loosely dashed] (MWi) -- (MW3);
   \draw [thick,loosely dashed] (TcWi) -- (TcW3);
   \draw [thick,loosely dashed] (Wi) -- (W3);
   \draw [thick] (McW3) -- (TcW3) -- (W3);
   \draw [thick] (McW3) -- (MW3) -- (W3);
   \draw [thick] (McW3) -- (McW2);
   \draw [thick] (MW3) -- (MW2);
   \draw [thick] (TcW3) -- (TcW2);
   \draw [thick] (W3) -- (W2);
   \draw [thick] (McW2) -- (TcW2) -- (W2);
   \draw [thick] (McW2) -- (MW2) -- (W2);
   \draw [thick] (McW2) -- (Mc);
   \draw [thick] (MW2) -- (M1);
   \draw [thick] (TcW2) to[out=60,in=-155] (Tc);
   \draw [thick] (W2) -- (T1);

   \draw [thick] (SM) -- (McU2);
   \draw [thick] (SM) -- (McW2);

   \draw [thick] (Lc) -- (LS) -- (L);
   \draw [thick] (Lc) -- (L0) -- (L);
   \draw [thick] (Lc) -- (L1) -- (L);
   \draw [thick] (Lc) to[out=120,in=-120] (Sc);
   \draw [thick] (LS) to[out=60,in=-60] (S);
   \draw [thick] (L0) -- (T0);
   \draw [thick] (L1) -- (T1);
   \draw [thick] (L) to[out=125,in=-125] (Omega);
   \draw [thick] (SM) -- (Sc) -- (S);
   \draw [thick] (Sc) to[out=142,in=-134] (Tc);
   \draw [thick] (S) to[out=42,in=-42] (Omega);
   \draw [thick] (Mc) -- (M0) -- (M);
   \draw [thick] (Mc) -- (M1) -- (M);
   \draw [thick] (Mc) to[out=120,in=-120] (Tc);
   \draw [thick] (M0) -- (T0);
   \draw [thick] (M1) -- (T1);
   \draw [thick] (M) to[out=55,in=-55] (Omega);
   \draw [thick] (Tc) -- (T0) -- (Omega);
   \draw [thick] (Tc) -- (T1) -- (Omega);
\end{tikzpicture}
}
\end{center}
\caption{Post's lattice.}
\label{fig:Post}
\end{figure}

\subsection{Threshold functions and generators of near\hyp{}unanimity clones}
\label{subsec:gen-nu}

\begin{definition}
A Boolean function $f \colon \{0,1\}^n \to \{0,1\}$ is called a \emph{threshold function} (or a \emph{linearly separable function}) if there exist \emph{weights} $w_1, \dots, w_n \in \mathbb{R}$ and a \emph{threshold} $t \in \mathbb{R}$ such that $f(a_1, \dots, a_n) = 1$ if and only if $\sum_{i=1}^n w_i a_i \geq t$ (see, e.g., Muroga~\cite{Muroga}).
We use the notation $\threshold{n}{t}$ for the $n$\hyp{}ary threshold function with weights $w_1 = w_2 = \dots = w_n = 1$ and threshold $t$.
Thus, $\threshold{n}{t}(a_1, \dots, a_n) = 1$ if and only if $\card{\{ \, i \in \nset{n} \mid a_i = 1 \,\}} \geq t$.
\end{definition}

For $1 < t \leq n-1$, the threshold function $\threshold{n}{t}$ is a near\hyp{}unanimity operation.
These functions can be used to generate the clones of $0$- or $1$\hyp{}separating functions.
In what follows, we will make use of the following generating sets of clones.

\begin{itemize}
\item $\clMcUk{k} = \clonegen{\threshold{k+1}{k}}$ for $k \geq 3$,
      $\clMcUk{2} = \clonegen{\threshold{3}{2}, x \wedge (y \vee z)}$
\item $\clMcUk{\infty} = \clonegen{x \wedge (y \vee z)}$
\item $\clMcWk{k} = \clonegen{\threshold{k+1}{2}}$ for $k \geq 3$,
      $\clMcWk{2} = \clonegen{\threshold{3}{2}, x \vee (y \wedge z)}$
\item $\clMcWk{\infty} = \clonegen{x \vee (y \wedge z)}$
\item $\clMUk{k} = \clonegen{\threshold{k+1}{k}, 0}$ for $k \geq 2$
\item $\clMUk{\infty} = \clonegen{x \wedge (y \vee z), 0}$
\item $\clMWk{k} = \clonegen{\threshold{k+1}{2}, 1}$ for $k \geq 2$
\item $\clMWk{\infty} = \clonegen{x \vee (y \wedge z), 1}$
\item $\clTcUk{k} = \clonegen{\threshold{k+1}{k}, x \wedge (y \rightarrow z)}$ for $k \geq 2$
\item $\clTcUk{\infty} = \clonegen{x \wedge (y \rightarrow z)}$
\item $\clTcWk{k} = \clonegen{\threshold{k+1}{2}, x \vee (y \nrightarrow z)}$ for $k \geq 2$
\item $\clTcWk{\infty} = \clonegen{x \vee (y \nrightarrow z)}$
\item $\clUk{k} = \clonegen{\threshold{k+1}{k}, \mathord{\nrightarrow}}$ for $k \geq 2$
\item $\clUk{\infty} = \clonegen{\mathord{\nrightarrow}}$
\item $\clWk{k} = \clonegen{\threshold{k+1}{2}, \mathord{\rightarrow}}$ for $k \geq 2$
\item $\clWk{\infty} = \clonegen{\mathord{\rightarrow}}$
\end{itemize}


\section{Minorant minions and $\ell$\hyp{}local closures}
\label{sec:minmin}

In this section, we introduce the most important notions that we need for describing the $(\clIc,\clMcUk{k})$\hyp{}clonoids: $\ell$\hyp{}locally closed minorant minions.
As it turns out, such function classes are $(\clIc,\clMcUk{k})$\hyp{}clonoids whenever $\ell \leq k \leq \infty$ and $k \geq 2$ (see Proposition~\ref{prop:UTheta-stability}).
We will use the notation $\clKlik{\ell}{\Theta}$ for the $\ell$\hyp{}local closure of the minorant minion ${\downarrow} \Theta$ (see Definition~\ref{def:UkTheta}), and we develop tools for working efficiently with these classes.

\subsection{Minorant minions}

\begin{definition}
Let $f \in \clAll$.
The elements of $f^{-1}(1)$ are called the \emph{true points} of $f$, and the elements of $f^{-1}(0)$ are called the \emph{false points} of $f$.
For $T \subseteq \{0,1\}^n$, the unique $n$\hyp{}ary Boolean function whose set of true points is $T$ is called the \emph{characteristic function} of $T$ and is denoted by $\chi_T$.
\end{definition}

\begin{definition}
\label{def:minmin}
The \emph{minorant\hyp{}minor} relation $\minmin$ of Boolean functions is the transitive closure of the union of the minorant and the minor relations on $\clAll$, in symbols, $\mathord{\minmin} = (\mathord{\minorant} \cup \mathord{\minor})^\mathrm{tr}$.
The minorant\hyp{}minor relation $\minmin$ is a quasiorder (a reflexive and transitive relation) on $\clAll$; it is reflexive because both $\minorant$ and $\minor$ are, and it is transitive by definition.
It induces an equivalence relation $\eqminmin$ on $\clAll$ and a partial order $\minmin$ on $\clAll / \mathord{\eqminmin}$ in the usual way: $f \eqminmin g$ if and only if $f \minmin g$ and $g \minmin f$, and $f / \mathord{\eqminmin} \minmin g / \mathord{\eqminmin}$ if and only if $f \minmin g$.

A downset of $(\clAll, \mathord{\minmin})$ is called a \emph{minorant minion}.
By definition, a minorant minion is a function class that is closed under minors and minorants, i.e., it is a minorant\hyp{}closed minion.
\end{definition}

The following lemma will be very helpful for working with the minorant\hyp{}minor relation.
(Note that we compose relations from left to right, so that $x \mathrel{(R \circ S)} y$ if and only if there is a $z$ such that $x \mathrel{R} z \mathrel{S} y$.)

\begin{lemma}
\label{lem:minmin}
\leavevmode
\begin{enumerate}[label={\upshape(\roman*)}]
\item\label{lem:minmin:move}
$\mathord{\leq} \circ \mathord{\minorant} \subseteq \mathord{\minorant} \circ \mathord{\minor}$.
\item\label{lem:minmin:comp}
$\mathord{\minmin} = \mathord{\minorant} \circ \mathord{\minor}$.
\item\label{lem:minmin:cond}
For Boolean functions $f$ and $g$ of arities $n$ and $m$, respectively,
$f \minmin g$ if and only if there exists a $\sigma \colon \nset{m} \to \nset{n}$ such that $f \minorant g_\sigma$.
\end{enumerate}
\end{lemma}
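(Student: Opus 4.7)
My plan is to prove the three items in order, with item (i) serving as the commutation lemma that drives item (ii), and item (iii) falling out as a direct unpacking of item (ii) in terms of minor formation maps.

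For item (i), I would take an arbitrary pair in $\mathord{\minor} \circ \mathord{\minorant}$: by the left\hyp{}to\hyp{}right reading of composition, there is some $h$ with $f \minor h$ and $h \minorant g$. Writing $f = h_\sigma$ for an appropriate $\sigma \colon \nset{m} \to \nset{n}$, where $m = \arity{h} = \arity{g}$ and $n = \arity{f}$, the natural candidate for the intermediate function on the other side is $h' := g_\sigma$. For every $\vect{a} \in \{0,1\}^n$,
\[
f(\vect{a}) = h_\sigma(\vect{a}) = h(\vect{a}\sigma) \leq g(\vect{a}\sigma) = g_\sigma(\vect{a}),
\]
which gives $f \minorant g_\sigma$, and $g_\sigma \minor g$ holds by construction.

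For item (ii), the inclusion $\mathord{\minorant} \circ \mathord{\minor} \subseteq \mathord{\minmin}$ is immediate from $\mathord{\minorant}, \mathord{\minor} \subseteq \mathord{\minmin}$ and transitivity of $\mathord{\minmin}$. For the reverse inclusion, since $\mathord{\minmin}$ is the transitive closure of $\mathord{\minorant} \cup \mathord{\minor}$, it suffices to check that $\mathord{\minorant} \circ \mathord{\minor}$ contains each of $\mathord{\minorant}$ and $\mathord{\minor}$ and is itself transitive. The two one\hyp{}step inclusions follow from the reflexivity of $\mathord{\minor}$ and $\mathord{\minorant}$, respectively (take the witness $h' = g$ or $h' = f$). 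Transitivity is where (i) does its work: given $f \minorant h_1 \minor g$ and $g \minorant h_2 \minor h$, I apply (i) to the middle pair $h_1 \minor g \minorant h_2$ to obtain some $h_3$ with $h_1 \minorant h_3$ and $h_3 \minor h_2$; transitivity of $\minorant$ then yields $f \minorant h_3$ and transitivity of $\minor$ yields $h_3 \minor h$, so $f \mathrel{(\minorant \circ \minor)} h$.

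Item (iii) is essentially a restatement of (ii): $f \minmin g$ iff there exists $h$ with $f \minorant h$ and $h \minor g$, and $h \minor g$ is by definition equivalent to $h = g_\sigma$ for some $\sigma \colon \nset{m} \to \nset{n}$. The arities automatically match, since $f \minorant h$ forces $\arity{h} = \arity{f} = n$. The main burden of the whole lemma is purely notational; the one genuine observation, made in (i), is that the \emph{same} minor formation map that realises $f$ as a minor of $h$ also transports the pointwise inequality $h \leq g$ down to the pointwise inequality $f \leq g_\sigma$ at the arity of $f$. I do not anticipate any substantive obstacle.
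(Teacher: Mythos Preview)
Your proposal is correct and follows essentially the same approach as the paper: item~(i) is proved identically, and item~(iii) is the same unpacking of~(ii). For item~(ii), the paper phrases the reverse inclusion as a word\hyp{}rewriting argument (take an arbitrary chain of $\minorant$'s and $\minor$'s, use~(i) to bubble all $\minorant$'s to the left, then collapse by transitivity), whereas you verify directly that $\mathord{\minorant} \circ \mathord{\minor}$ is transitive and contains the generators; these are two presentations of the same commutation argument, with your version arguably a touch cleaner.
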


\begin{proof}
\ref{lem:minmin:move}
Assume $f \mathrel{(\mathord{\minor} \circ \mathord{\minorant})} g$.
Then there exists an $h$ such that $f \minor h \minorant g$.
Since $h \minorant g$, we have $h(\vect{b}) \leq g(\vect{b})$ for all $\vect{b} \in \{0,1\}^m$, and
since $f \leq h$, there exists a $\tau \colon \nset{m} \to \nset{n}$ such that $f(\vect{a}) = h(\vect{a} \tau)$ for all $\vect{a} \in \{0,1\}^n$.
Then $f(\vect{a}) = h(\vect{a} \tau) \leq g(\vect{a} \tau) = g_\tau(\vect{a})$ for all $\vect{a} \in \{0,1\}^n$, which means that $f \minorant g_\tau \leq g$, i.e., $f \mathrel{(\mathord{\minorant} \circ \mathord{\leq})} g$.

\ref{lem:minmin:comp}
By definition, $\mathord{\minmin} = (\mathord{\minorant} \cup \mathord{\minor})^\mathrm{tr}$, so the inclusion $\mathord{\minorant} \circ \mathord{\minor} \subseteq \mathord{\minmin}$ is obvious.
Assume now that $f \minmin g$.
Then $f \mathrel{(\beta_1 \circ \beta_2 \circ \dots \circ \beta_r)} g$ for some $r \in \IN_{+}$, where each $\beta_i$ is either $\minorant$ or $\minor$.
By the reflexivity of partial orders, we may assume that each of $\minor$ and $\minorant$ occurs at least once.
Because $\mathord{\minor} \circ \mathord{\minorant} \subseteq \mathord{\minorant} \circ \mathord{\minor}$ by part \ref{lem:minmin:move}, we can bring all $\minorant$'s to the left and all $\minor$'s to the right.
By transitivity, we can remove repetitions.
It follows that $f \mathrel{(\mathord{\minorant} \circ \mathord{\minor})} g$, as claimed.

\ref{lem:minmin:cond}
This is an immediate consequence of part \ref{lem:minmin:comp}.
\end{proof}

We now present a few basic results on the structure of the minorant\hyp{}minor poset $(\clAll / \mathord{\eqminmin}, \mathord{\minmin})$.
The poset has a least and a greatest element, namely (the $\eqminmin$\hyp{}equivalence classes of) the constant functions $0$ and $1$, respectively (see Proposition~\ref{prop:minmin-10-atom}).
It has one atom but no coatoms (Propositions~\ref{prop:minmin-10-atom} and \ref{prop:minmin-asc-chain}).
It contains an infinite ascending chain and an infinite antichain (Propositions~\ref{prop:minmin-asc-chain} and \ref{prop:minmin-antichain}).
A schematic Hasse diagram of the minorant\hyp{}minor poset is shown in Figure~\ref{fig:minmin-poset}.

\begin{proposition}
\label{prop:minmin-10-atom}
The least and greatest elements of the minorant\hyp{}minor poset $(\clAll / \mathord{\eqminmin}, \mathord{\minmin})$ are $0$ and $1$, respectively.
The poset has one atom, namely $\nrightarrow$, and hence $\nrightarrow$ is a minorant\hyp{}minor of every function that is not equivalent to $0$.
\end{proposition}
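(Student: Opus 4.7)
The plan is to verify the three claims by applying the characterization in Lemma~\ref{lem:minmin}\ref{lem:minmin:cond}: for $f$ of arity $n$ and $g$ of arity $m$, $f \minmin g$ holds precisely when some $\sigma \colon \nset{m} \to \nset{n}$ satisfies $f \minorant g_\sigma$. The extremal assertions are immediate from this and the fact that $0 \leq f \leq 1$ pointwise for every Boolean function $f$: we get $0 \minorant f \minorant 1$, hence $0 \minmin f \minmin 1$ for every $f$. Conversely, $f \minmin 0$ via some $\sigma$ forces $f \minorant 0_\sigma = 0$, so $f \eqminmin 0$ if and only if $f$ is (a constant) $0$ function.

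The crux is to show that $\nrightarrow \minmin f$ for every $f$ with at least one true point. Writing $f$ as $n$-ary and picking any $\vect{a} = (a_1, \dots, a_n) \in f^{-1}(1)$, I would define $\sigma \colon \nset{n} \to \nset{2}$ by letting $\sigma(i) = 1$ when $a_i = 1$ and $\sigma(i) = 2$ when $a_i = 0$. Then $f_\sigma(1, 0) = f(a_1, \dots, a_n) = 1$, and since $(1, 0)$ is the unique true point of $\nrightarrow$, this gives $\nrightarrow \minorant f_\sigma$. Lemma~\ref{lem:minmin}\ref{lem:minmin:cond} then yields $\nrightarrow \minmin f$.

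The atom assertions are a direct consequence. Because $\nrightarrow$ has a true point, $\nrightarrow \not\eqminmin 0$. If $h \not\eqminmin 0$ satisfies $h \minmin \nrightarrow$, then by the crux $\nrightarrow \minmin h$, so $h \eqminmin \nrightarrow$; thus no class lies strictly between $0$ and $\nrightarrow$, i.e., $\nrightarrow$ is an atom. Conversely, if $a$ is any atom, then $a \not\eqminmin 0$, whence $\nrightarrow \minmin a$ by the crux, and the minimality of $a$ forces $a \eqminmin \nrightarrow$. Therefore $\nrightarrow$ is the unique atom and is a minorant-minor of every function not equivalent to $0$.

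There is no real obstacle here: the only non-routine point is the explicit construction of $\sigma$ in the crux, but this is immediate once one notices that the bit pattern of any true point of $f$ is itself a minor-formation map producing a binary minor $f_\sigma$ that majorises $\nrightarrow$; the remainder is straightforward bookkeeping with Lemma~\ref{lem:minmin}.
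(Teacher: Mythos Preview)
Your proof is correct and follows essentially the same approach as the paper's. The only minor difference is that where the paper first introduces a fictitious argument to ensure the chosen true point is neither $\vect{0}$ nor $\vect{1}$ before identifying arguments, you construct the map $\sigma \colon \nset{n} \to \nset{2}$ directly and allow it to be non\hyp{}surjective, which handles those edge cases automatically; this is a slight streamlining of the same idea.
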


\begin{proof}
It is easy to verify the statements about the least and greatest elements; one just needs to observe that the minors of a constant function are constant functions with the same value and that $0 \minorant f \minorant 1$ for every Boolean function (here the constant functions $0$ and $1$ have the same arity as $f$).

Let now $f$ be a function that is not constant $0$.
Then $f$ has a true point $\vect{u} = (u_1, \dots, u_n)$.
By introducing a fictitious argument, if necessary, we may assume that $\vect{u} \notin \{\vect{0}, \vect{1}\}$.
By identifying, on the one hand, the arguments corresponding to those positions $i$ for which $u_i = 1$, and, on the other hand, those for which $u_i = 0$, we obtain a binary minor of $f$ in which $10$ is a true point; $\nrightarrow$ is a minorant of this function, so $\mathord{\nrightarrow} \minmin f$.
From this it also follows that $\nrightarrow$ is an atom of $(\clAll / \mathord{\eqminmin}, \mathord{\minmin})$.
\end{proof}

\begin{figure}
\scalebox{0.6}{%
\tikzstyle{every node}=[circle, draw, fill=black, scale=1, font=\huge]
\pgfdeclarelayer{poset}
\pgfdeclarelayer{blobs}
\pgfsetlayers{blobs,poset}
\begin{tikzpicture}
\begin{pgfonlayer}{poset}
\node (0) at (0,0) {};
\draw ($(0)+(0,-0.7)$) node[draw=none,fill=none]{$0$};
\node (atom) at (0,1) {};
\draw ($(atom)+(-0.7,0)$) node[draw=none,fill=none]{$\nrightarrow$};
\node (1) at (0,12) {};
\draw ($(1)+(0,0.7)$) node[draw=none,fill=none]{$1$};
\draw[thick] (0) -- (atom);
\node (f3) at (-1.5,4.5) {};
\draw ($(f3)+(0,-0.7)$) node[draw=none,fill=none]{$f_3$};
\node (f4) at (-0.5,4.5) {};
\draw ($(f4)+(0,-0.7)$) node[draw=none,fill=none]{$f_4$};
\node (f5) at (0.5,4.5) {};
\draw ($(f5)+(0,-0.7)$) node[draw=none,fill=none]{$f_5$};
\node[draw=none,fill=none] (fdots) at (1.5,4.5) {$\cdots$};
\node (v2) at (-2,6) {};
\draw ($(v2)+(0.7,0)$) node[draw=none,fill=none]{$\vee_2$};
\node (v3) at (-1.75,7) {};
\draw ($(v3)+(0.7,0)$) node[draw=none,fill=none]{$\vee_3$};
\node (v4) at (-1.5,8) {};
\draw ($(v4)+(0.7,0)$) node[draw=none,fill=none]{$\vee_4$};
\node (v5) at (-1.25,9) {};
\draw ($(v5)+(0.7,0)$) node[draw=none,fill=none]{$\vee_5$};
\node[draw=none,fill=none] (vdots) at (-1,10) {\begin{turn}{75.9}$\cdots$\end{turn}};
\draw[thick] (v2) -- (v3) -- (v4) -- (v5);
\node (A2) at (2,6) {};
\draw ($(A2)+(-0.7,0)$) node[draw=none,fill=none]{$\mathord{\uparrow}_2$};
\node (A3) at (1.75,7) {};
\draw ($(A3)+(-0.7,0)$) node[draw=none,fill=none]{$\mathord{\uparrow}_3$};
\node (A4) at (1.5,8) {};
\draw ($(A4)+(-0.7,0)$) node[draw=none,fill=none]{$\mathord{\uparrow}_4$};
\node (A5) at (1.25,9) {};
\draw ($(A5)+(-0.7,0)$) node[draw=none,fill=none]{$\mathord{\uparrow}_5$};
\node[draw=none,fill=none] (Adots) at (1,10) {\begin{turn}{-75.9}$\cdots$\end{turn}};
\draw[thick] (A2) -- (A3) -- (A4) -- (A5);
\end{pgfonlayer}
\begin{pgfonlayer}{blobs}
\draw[thin,fill=blue!22] (atom.center) to[out=145,in=215] (1.center) to[out=325,in=35] (atom.center) -- cycle;
\end{pgfonlayer}
\end{tikzpicture}
}
\caption{Schematic Hasse diagram of the minorant\hyp{}minor poset $(\clAll / \mathord{\eqminmin}, \mathord{\minmin})$.}
\label{fig:minmin-poset}
\end{figure}

For $n \in \IN_{+}$, let $\mathord{\vee_n} := \pr_1^{(n)} \vee \dots \vee \pr_n^{(n)}$ and $\mathord{\uparrow_n} := \neg(\pr_1^{(1)} \wedge \dots \wedge \pr_n^{(n)})$.

\begin{proposition}
\label{prop:minmin-asc-chain}
\leavevmode
\begin{enumerate}[label={\upshape{(\roman*)}}]
\item\label{prop:minmin-asc-chain:1}
The families $\{\vee_n\}_{n \in \IN_{+}}$ and $\{\mathord{\uparrow}_n\}_{n \in \IN_{+}}$ are infinite ascending chains in the poset $(\clAll / \mathord{\eqminmin}, \mathord{\minmin})$.
\item\label{prop:minmin-asc-chain:2}
Furthermore, there is no coatom in $(\clAll / \mathord{\eqminmin}, \mathord{\minmin})$.
\end{enumerate}
\end{proposition}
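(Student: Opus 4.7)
The plan is to verify both chain claims in part~(i) by invoking Lemma~\ref{lem:minmin}(iii), and then to exploit those chains in part~(ii) to show that $1$ cannot cover any element.

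For part~(i), the inequalities $\vee_n \minmin \vee_{n+1}$ and $\mathord{\uparrow}_n \minmin \mathord{\uparrow}_{n+1}$ are immediate because in each case the smaller function arises as a minor of the larger via any surjective $\sigma \colon \nset{n+1} \to \nset{n}$. For the converse non\hyp{}relation $\vee_{n+1} \not\minmin \vee_n$, I would argue by contradiction: by Lemma~\ref{lem:minmin}(iii) there would exist $\sigma \colon \nset{n} \to \nset{n+1}$ with $\vee_{n+1} \minorant (\vee_n)_\sigma$. The image of $\sigma$ has cardinality at most $n$, so some index $j \in \nset{n+1}$ is missed; evaluating at the unit vector $\vect{e}_j$ then gives $1 = \vee_{n+1}(\vect{e}_j) \leq (\vee_n)_\sigma(\vect{e}_j) = 0$, a contradiction. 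The argument for $\mathord{\uparrow}_{n+1} \not\minmin \mathord{\uparrow}_n$ is dual, evaluating instead at $\overline{\vect{e}_j}$.

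For part~(ii), the plan is to produce, for every $f \not\eqminmin 1$, an element lying strictly between $f$ and $1$ in the poset. The key observation is a dichotomy: $1 \minmin f$ holds iff some minor $f_\sigma$ is the constant $1$ function, which, by examining the unary diagonal minor $f(x, \dots, x)$, is equivalent to $f(\vect{0}) = f(\vect{1}) = 1$. Consequently the hypothesis $f \not\eqminmin 1$ forces $f(\vect{0}) = 0$ or $f(\vect{1}) = 0$.

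Suppose first that $f(\vect{0}) = 0$ and that $f$ has arity $n$. Then $f \minorant \vee_n$, since $\vect{0}$ is the only false point of $\vee_n$; hence $f \minmin \vee_n$. By part~(i) the chain $\vee_n \minmin \vee_{n+1} \minmin \dots$ consists of pairwise $\eqminmin$\hyp{}distinct elements, so for sufficiently large $m$ the functions $f$ and $\vee_m$ lie in different $\eqminmin$\hyp{}classes, making $f \minmin \vee_m$ strict. On the other hand, $\vee_m(\vect{0}) = 0$ prevents any minor of $\vee_m$ from being constant $1$, so $\vee_m \minmin 1$ is strict as well. Thus $\vee_m$ witnesses that $1$ does not cover $f$. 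The case $f(\vect{1}) = 0$ is handled identically with $\mathord{\uparrow}_m$ in place of $\vee_m$, since $\mathord{\uparrow}_m$ vanishes only on $\vect{1}$. The only mildly subtle step is the initial dichotomy, and it is precisely this that explains why the poset admits two parallel cofinal chains and hence has no coatom.
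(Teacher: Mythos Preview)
Your proof is correct and follows essentially the same approach as the paper: part~(i) is proved via the same surjectivity/unit-vector contradiction (the paper compares arbitrary $n<m$ rather than only consecutive indices, but that is immaterial), and part~(ii) uses the same observation that any $f$ with $f(\vect{0})=0$ (resp.\ $f(\vect{1})=0$) is a minorant of $\vee_n$ (resp.\ $\mathord{\uparrow}_n$) and hence sits below one of the two ascending chains. Your explicit verification of the dichotomy $1 \minmin f \iff f(\vect{0})=f(\vect{1})=1$ is a bit more detailed than the paper's terse statement that every function in $\clII$ is $\eqminmin$\hyp{}equivalent to $1$, but the content is identical.
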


\begin{proof}
\ref{prop:minmin-asc-chain:1}
Let $n, m \in \IN_{+}$ with $n < m$.
It is easy to verify that $\vee_n = (\vee_m)_\sigma$ with $\sigma \colon \nset{m} \to \nset{n}$, $i \mapsto \min(i,n)$, so $\vee_n \minor \vee_m$ and hence $\vee_n \minmin \vee_m$.

Suppose now, to the contrary, that $\vee_m \minmin \vee_n$.
Then, by Lemma~\ref{lem:minmin}, $\vee_m \minorant (\vee_n)_\sigma$ for some $\sigma \colon \nset{n} \to \nset{m}$.
Since $n < m$, $\sigma$ is not surjective, so there is an element $p \in \nset{m}$ that is not in the range of $\sigma$, and we have $\vect{e}_p \sigma = \vect{0}$.
Then $\vee_m(\vect{e}_p) = 1$ but $\vee_n(\vect{e}_p \sigma) = \vee_n(\vect{0}) = 0$, a contradiction.

This shows that $\vee_n \minmin \vee_m$ if and only if $n \leq m$.
A similar argument shows that $\mathord{\uparrow}_n \minmin \mathord{\uparrow}_m$ if and only if $n \leq m$.

\ref{prop:minmin-asc-chain:2}
It is clear that every function in $\clII$ is $\minmin$\hyp{}equivalent to $1$.
On the other hand, every at most $n$\hyp{}ary function in $\clOX$ (in $\clXO$, resp.)\ is a minorant\hyp{}minor of $\vee_n$ (of $\mathord{\uparrow}_n$, resp.)\ and is hence below one of the elements of the two infinite ascending chains of part \ref{prop:minmin-asc-chain:1}.
Therefore, $(\clAll / \mathord{\eqminmin}, \mathord{\minmin})$ has no coatom.
\end{proof}

\begin{proposition}
\label{prop:minmin-antichain}
The minorant\hyp{}minor poset $(\clAll / \mathord{\eqminmin}, \mathord{\minmin})$ contains an infinite antichain.
Consequently, there are uncountably many minorant minions of Boolean functions.
\end{proposition}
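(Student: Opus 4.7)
The plan is to exhibit an explicit infinite antichain inside $(\clAll / \mathord{\eqminmin}, \mathord{\minmin})$ and then pass to its downsets to obtain uncountably many distinct minorant minions.

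For each integer $n \geq 3$, I take $f_n \colon \{0,1\}^n \to \{0,1\}$ to be $f_n := \chi_{\{\vect{e}_1, \dots, \vect{e}_n\}}$; equivalently, $f_n$ is the $n$\hyp{}ary function whose value is $1$ exactly when its input has Hamming weight $1$. The central claim is that $\{f_n\}_{n \geq 3}$ is an antichain in the minorant-minor order. By Lemma~\ref{lem:minmin}\ref{lem:minmin:cond}, $f_n \minmin f_m$ holds if and only if there exists a map $\sigma \colon \nset{m} \to \nset{n}$ with $f_n \minorant (f_m)_\sigma$, which, given the choice of $f_n$, amounts to requiring that $\vect{e}_i \circ \sigma$ be a true point of $f_m$ for every $i \in \nset{n}$. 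A direct computation shows that $\vect{e}_i \circ \sigma \in \{0,1\}^m$ is the characteristic vector of the fibre $\sigma^{-1}(i) \subseteq \nset{m}$, so being a true point of $f_m$ is equivalent to $\card{\sigma^{-1}(i)} = 1$. Imposing this for every $i \in \nset{n}$ then forces $m = \sum_{i \in \nset{n}} \card{\sigma^{-1}(i)} = n$, so $f_n \minmin f_m$ cannot hold unless $n = m$.

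For the uncountability assertion, I will associate to each subset $S \subseteq \{ \, n \in \IN \mid n \geq 3 \, \}$ the minorant minion $K_S := {\downarrow}\{ \, f_n \mid n \in S \, \}$ of $(\clAll, \mathord{\minmin})$. Given $S \neq S'$, pick $n$ in the symmetric difference, say $n \in S \setminus S'$; then $f_n \in K_S$ trivially, whereas $f_n \notin K_{S'}$ because every $m \in S'$ satisfies $m \neq n$, and the antichain property just established then yields $f_n \not\minmin f_m$. Hence $S \mapsto K_S$ is injective on the power set of an infinite set, and it follows that there are uncountably many minorant minions of Boolean functions.

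The only substantive step is the antichain verification, which reduces cleanly to an elementary fibre-counting argument once Lemma~\ref{lem:minmin}\ref{lem:minmin:cond} is invoked; I do not anticipate any further obstacle.
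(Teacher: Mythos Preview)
Your proof is correct and follows the same overall strategy as the paper: exhibit an explicit infinite antichain, then pass to downsets of its subsets to obtain uncountably many minorant minions. The difference lies in the choice of antichain. The paper takes $f_n$ to be the $n$\hyp{}ary function with true points exactly at Hamming weights $1$ and $n-1$, and then argues separately that any $\sigma$ witnessing $f_m \minmin f_n$ must be surjective (using the weight\hyp{}$1$ true points) and injective (using the weight\hyp{}$(n-1)$ true points together with surjectivity). Your choice of the weight\hyp{}$1$ functions alone is simpler and allows the single fibre\hyp{}counting line $m = \sum_i \card{\sigma^{-1}(i)} = n$, bypassing the two\hyp{}step surjective/injective argument. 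The paper's more elaborate family has the advantage of also being an antichain in the pure \emph{minor} order (this is the family Pippenger used, as noted in the paper's subsequent remark), whereas your weight\hyp{}$1$ functions are not: for $n < m$ one has $f_n = (f_m)_\sigma$ via any injection $\sigma \colon \nset{n} \hookrightarrow \nset{m}$ \ldots\ wait, $\sigma$ goes the other way. In fact your functions do form a minor antichain too, by essentially the same argument. Either way, for the minorant\hyp{}minor poset your streamlined version is entirely adequate and arguably cleaner.
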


\begin{proof}
For $n \geq 3$, let $f_n \colon \{0,1\}^n \to \{0,1\}$ be defined by the rule $f_n(\vect{a}) = 1$ if and only if $w(\vect{a}) \in \{1, n-1\}$, where $w(\vect{a})$ is the \emph{Hamming weight} of $\vect{a}$, i.e., $w(a_1, \dots, a_n) = \card{\{ i \in \nset{n} \mid a_i \neq 0\}}$.

We claim that for all $n, m \in \nset{N}$ with $n, m \geq 3$, we have $f_n \minmin f_m$ if and only if $n = m$.
Sufficiency is obvious. It remains to prove necessity.
Assume that $f_n \minmin f_m$.
Then $f_n \minorant (f_m)_\sigma$ for some $\sigma \colon \nset{m} \to \nset{n}$.
Suppose first, to the contrary, that $\sigma$ is not surjective; say, $p \in \nset{n}$ has no preimage under $\sigma$.
Then $\vect{e}_p \sigma = \vect{0}$, and we have $f_n(\vect{e}_p) = 1$ but $(f_m)_\sigma(\vect{e}_p) = f_m(\vect{e}_p \sigma) = f_m(\vect{0}) = 0$, a contradiction.
Therefore, $\sigma$ must be surjective.

Suppose now, to the contrary, that $\sigma$ is not injective; say, $r, s \in \nset{m}$, $q \in \nset{n}$ are elements such that $r \neq s$ and $\sigma(r) = \sigma(s) = q$.
Since $\sigma$ is surjective, there are at least $n-1$ elements $i \in \nset{m}$ such that $\sigma(i) \neq q$.
Then $2 \leq \card{\sigma^{-1}(q)} \leq m - (n-1) \leq m - 2$ because $n \geq 3$, and we have $f_n(\vect{e}_q) = 1$ but, because $w(\vect{e}_q \sigma) = \card{\sigma^{-1}(q)}$, we have $(f_m)_\sigma(\vect{e}_q) = f_m(\vect{e}_q \sigma) = 0$, a contradiction.

Therefore, $\sigma$ is a bijection, and, consequently, $n = m$.
We conclude that the functions $f_n$ ($n \geq 3$) constitute an infinite antichain in the minorant\hyp{}minor poset.
It follows that there are uncountably many minorant minions of Boolean functions, because $\{\, f_n \mid n \geq 3 \,\}$ has uncountably many subsets and distinct subsets thereof generate distinct minorant minions.
\end{proof}

\begin{remark}
The same family of functions $f_n$ as in the proof of Proposition~\ref{prop:minmin-antichain} was used by Pippenger~\cite[Proposition~3.4]{Pippenger} to show that the minor poset of Boolean functions contains an infinite antichain and, consequently, that there are uncountably many minions.
\end{remark}

\subsection{$\ell$\hyp{}local closures}

\begin{definition}
Let $\Theta \subseteq \clAll$.
For $\ell \in \IN \cup\{\infty\}$, the \emph{$\ell$\hyp{}local closure} of $\Theta$ is
\[
\kLoc[\ell](\Theta) := \{ \, f \in \clAll \mid \forall S \subseteq \dom(f) \, (\card{S} \leq \ell \rightarrow \exists g \in \Theta \, (f|_S = g|_S )) \, \}.
\]
\end{definition}

\begin{lemma}
\label{lem:lLoc-closureop}
For every $\ell \in \IN \cup \{\infty\}$,
the mapping $\kLoc[\ell] \colon \mathcal{P}(\clAll) \to \mathcal{P}(\clAll)$ is a closure operator on $\clAll$.
\end{lemma}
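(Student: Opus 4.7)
The plan is to verify the three defining properties of a closure operator: extensivity ($\Theta \subseteq \kLoc[\ell](\Theta)$), monotonicity ($\Theta_1 \subseteq \Theta_2$ implies $\kLoc[\ell](\Theta_1) \subseteq \kLoc[\ell](\Theta_2)$), and idempotence ($\kLoc[\ell](\kLoc[\ell](\Theta)) = \kLoc[\ell](\Theta)$). All three should follow quickly from the definition; there is no combinatorial obstacle here.

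For extensivity, given $f \in \Theta$ and any $S \subseteq \dom(f)$ with $\card{S} \leq \ell$, the witness $g := f$ trivially satisfies $f|_S = g|_S$, so $f \in \kLoc[\ell](\Theta)$. Monotonicity is equally direct: a witness $g \in \Theta_1$ for membership in $\kLoc[\ell](\Theta_1)$ also lies in $\Theta_2$ and hence serves as a witness for membership in $\kLoc[\ell](\Theta_2)$.

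The only step requiring more than a single line is idempotence, and even that is a one-step chase. The inclusion $\kLoc[\ell](\Theta) \subseteq \kLoc[\ell](\kLoc[\ell](\Theta))$ comes for free from extensivity applied to the set $\kLoc[\ell](\Theta)$. For the reverse inclusion, suppose $f \in \kLoc[\ell](\kLoc[\ell](\Theta))$ and let $S \subseteq \dom(f)$ with $\card{S} \leq \ell$. By definition, there exists $h \in \kLoc[\ell](\Theta)$ (of the same arity as $f$) with $f|_S = h|_S$; applying the definition once more to $h$ and the same set $S$, there exists $g \in \Theta$ with $h|_S = g|_S$. Transitivity of equality gives $f|_S = g|_S$, hence $f \in \kLoc[\ell](\Theta)$.

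The only subtlety worth flagging explicitly is that $f|_S = g|_S$ presupposes $S \subseteq \dom(g)$, so the witnesses $g$ and $h$ must have the same arity as $f$; this is implicit in the definition and is preserved throughout the chase. No argument needs to distinguish the cases $\ell \in \IN$ and $\ell = \infty$, since the quantifier $\card{S} \leq \ell$ degenerates to a vacuous bound in the latter case and the same three verifications still go through.
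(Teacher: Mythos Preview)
Your proof is correct and follows essentially the same approach as the paper's own proof: verify extensivity, monotonicity, and idempotence directly from the definition, with idempotence handled by a two-step chase through an intermediate witness. The only cosmetic difference is that the paper derives the easy inclusion of idempotence from extensivity together with monotonicity, whereas you (equivalently and slightly more directly) apply extensivity to the set $\kLoc[\ell](\Theta)$ itself.
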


\begin{proof}
We need to show that $\kLoc[\ell]$ is extensive, order\hyp{}preserving, and idempotent.

\emph{Extensivity:}
Let $X \subseteq \clAll$.
For every $f \in X$ and for every $S \subseteq \dom(f)$ with $\card{S} \leq \ell$, we clearly have $f|_S = f|_S$; hence, $f \in \kLoc[\ell](X)$.
Therefore, $X \subseteq \kLoc[\ell](X)$.

\emph{Order preservation:}
Assume $X, Y \subseteq \clAll$ satisfy $X \subseteq Y$.
If $f \in \kLoc[\ell](X)$, then for all $S \subseteq \dom(f)$ with $\card{S} \leq \ell$, there exists a $g \in X$ such that $f|_S = g|_S$.
Since $X \subseteq Y$, the function $g$ mentioned above belongs to $Y$, and we conclude that $f \in \kLoc[\ell](Y)$.
Therefore, $\kLoc[\ell](X) \subseteq \kLoc[\ell](Y)$.

\emph{Idempotence:}
Because $\kLoc[\ell]$ is extensive, we have $X \subseteq \kLoc[\ell](X)$, and because $\kLoc[\ell]$ is order\hyp{}preserving, it follows that $\kLoc[\ell](X) \subseteq \kLoc[\ell](\kLoc[\ell](X))$.

It remains to show that the converse inclusion $\kLoc[\ell](\kLoc[\ell](X)) \subseteq \kLoc[\ell](X)$ holds.
Let $f \in \kLoc[\ell](\kLoc[\ell](X))$.
In order to show that $f \in \kLoc[\ell](X)$, let $S \subseteq \dom(f)$ with $\card{S} \leq \ell$.
Since $f \in \kLoc[\ell](\kLoc[\ell](X))$, there exists a $g \in \kLoc[\ell](X)$ such that $g|_S = f|_S$.
Since $g \in \kLoc[\ell](X)$, there exists an $h \in X$ such that $h|_S = g|_S = f|_S$.
Therefore, $f \in \kLoc[\ell](X)$.
\end{proof}

\begin{definition}
For $\ell \in \IN \cup \{\infty\}$, let
\[
\clAllleq{\ell} :=
\{ \, f \in \clAll \mid \card{f^{-1}(1)} \leq \ell \,\},
\]
the set of Boolean functions with at most $\ell$ true points.
For $\Theta \subseteq \clAll$, let $\Theta^{[\leq \ell]} := \Theta \cap \clAllleq{\ell}$.
\end{definition}

\begin{lemma}
\label{lem:minmin-kLoc}
Let $\ell \in \IN$ with $\ell \geq 2$, let $\Theta \subseteq \clAll$, assume that $\Theta$ is closed under minorants, and let $f \in \clAll$.
Then the following conditions are equivalent.
\begin{enumerate}[label={\upshape(\roman*)}]
\item\label{lem:minmin-kLoc:1}
$f \in \kLoc[\ell](\Theta)$.
\item\label{lem:minmin-kLoc:2}
For all $T \subseteq f^{-1}(1)$ with $\card{T} \leq \ell$, we have $\chi_T \in \Theta$.
\item\label{lem:minmin-kLoc:3}
$f \in \kLoc[\ell](\Theta^{[\leq \ell]})$.
\end{enumerate}
\end{lemma}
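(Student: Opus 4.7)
The plan is to prove the cycle (i) $\Rightarrow$ (ii) $\Rightarrow$ (iii) $\Rightarrow$ (i). The implication (iii) $\Rightarrow$ (i) is immediate from $\Theta^{[\leq \ell]} \subseteq \Theta$ and the order-preservation of $\kLoc[\ell]$ established in Lemma~\ref{lem:lLoc-closureop}. The other two implications carry the content, and the minorant-closure of $\Theta$ is used essentially (and only) in (i) $\Rightarrow$ (ii).

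For (i) $\Rightarrow$ (ii), suppose $f \in \kLoc[\ell](\Theta)$ has arity $n$, and fix $T \subseteq f^{-1}(1)$ with $\card{T} \leq \ell$. Taking $S := T$ in the definition of $\kLoc[\ell](\Theta)$, I obtain an $n$-ary $g \in \Theta$ with $f|_T = g|_T$; in particular, $g(\vect{a}) = 1$ for every $\vect{a} \in T$. Now I observe that $\chi_T \minorant g$: indeed, $\chi_T$ takes value $1$ only on $T$, where $g$ also equals~$1$, and elsewhere $\chi_T$ is $0$. Since $\Theta$ is minorant-closed, it follows that $\chi_T \in \Theta$.

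For (ii) $\Rightarrow$ (iii), let $S \subseteq \dom(f)$ with $\card{S} \leq \ell$ and set $T := S \cap f^{-1}(1)$. Then $T \subseteq f^{-1}(1)$ and $\card{T} \leq \card{S} \leq \ell$, so by (ii) we have $\chi_T \in \Theta$. Moreover $\card{\chi_T^{-1}(1)} = \card{T} \leq \ell$, so in fact $\chi_T \in \Theta^{[\leq \ell]}$. A case split on whether $\vect{a} \in S$ satisfies $f(\vect{a}) = 1$ or $f(\vect{a}) = 0$ shows that $f|_S = \chi_T|_S$, which witnesses $f \in \kLoc[\ell](\Theta^{[\leq \ell]})$.

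There is no serious obstacle here; the key idea is simply that, thanks to minorant\hyp{}closure, the characteristic function $\chi_T$ of the set of true points of $f$ inside $S$ is the pointwise smallest candidate matching $f$ on $T$, so it can always be used to replace an arbitrary witness $g \in \Theta$. The hypothesis $\ell \geq 2$ is not actually needed for these logical implications as stated; it will be what matters for downstream applications of the lemma, not for the equivalence itself.
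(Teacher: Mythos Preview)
Your proof is correct and follows essentially the same approach as the paper: the same cycle (i) $\Rightarrow$ (ii) $\Rightarrow$ (iii) $\Rightarrow$ (i), with the same use of minorant\hyp{}closure to replace the witness $g$ by $\chi_T$ in (i) $\Rightarrow$ (ii), the same construction $T := S \cap f^{-1}(1)$ in (ii) $\Rightarrow$ (iii), and the same appeal to monotonicity of $\kLoc[\ell]$ for (iii) $\Rightarrow$ (i). Your added remark that the hypothesis $\ell \geq 2$ is not needed for the equivalence itself is also correct.
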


\begin{proof}
\ref{lem:minmin-kLoc:1} $\implies$ \ref{lem:minmin-kLoc:2}
Let $T \subseteq f^{-1}(1)$ with $\card{T} \leq \ell$.
By the definition of $\ell$\hyp{}local closure, there exists a $g \in \Theta$ such that $f|_T = g|_T$.
It clearly holds that $\chi_T \minorant g$.
Since $\Theta$ is closed under minorants, we have $\chi_T \in \Theta$.

\ref{lem:minmin-kLoc:2} $\implies$ \ref{lem:minmin-kLoc:3}
Let $S \subseteq \dom(f)$ with $\card{S} \leq \ell$, and let $T := f^{-1}(1) \cap S$.
Clearly, $T \subseteq f^{-1}(1)$ and $\card{T} \leq \card{S} \leq \ell$, so $\chi_T \in \Theta$ by \ref{lem:minmin-kLoc:2}, and, in fact,
$\chi_T \in \Theta^{[\leq \ell]}$ because $\card{\chi_T^{-1}(1)} = \card{T} \leq \ell$.
Furthermore, $f|_S = (\chi_T)|_S$, and we conclude that $f \in \kLoc[\ell](\Theta^{[\leq \ell]})$.

\ref{lem:minmin-kLoc:3} $\implies$ \ref{lem:minmin-kLoc:1}
Since $\Theta^{[\leq \ell]} \subseteq \Theta$, we have $\kLoc[\ell](\Theta^{[\leq \ell]}) \subseteq \kLoc[\ell](\Theta)$ because $\kLoc[\ell]$ is monotone (see Lemma~\ref{lem:lLoc-closureop}).
\end{proof}

\subsection{$\ell$\hyp{}local closures of minorant minions}

Recall from Definition~\ref{def:separating} that a function $f$ belongs to $\clUk{\ell}$ if and only if for all $T \subseteq f^{-1}(1)$ with $\card{T} \leq \ell$,
we have $\bigwedge T \neq \vect{0}$, i.e.,
there is an $i \in \nset{n}$ such that $a_i = 1$ for all $\vect{a} = (a_1, \dots, a_n) \in T$.
Note that the part ``$a_i = 1$'' of this condition can be rewritten as ``$\id_i(\vect{a}) = 1$'', or, more explicitly, as ``$\id_{\tau_i}(\vect{a}) = \id(\vect{a} \tau_i) = 1$'', where $\tau_i \colon \nset{1} \to \nset{n}$ is the minor formation map $1 \mapsto i$.
This leads to an interesting natural generalization.

\begin{definition}
\label{def:UkTheta}
For $T \subseteq \{0,1\}^n$ and $\Theta \subseteq \clAll$, we write $T \tulee \Theta$ if there exists a $\theta \in \Theta$, say of arity $h$, and $\tau \colon \nset{h} \to \nset{n}$ such that $\theta_\tau(\vect{a}) = \theta(\vect{a} \tau) = 1$ for all $\vect{a} \in T$.
In other words, $T \tulee \Theta$ if there exists a $\theta \in \Theta$ such that $\chi_T \minmin \theta$.
Equivalently, $T \tulee \Theta$ if $\chi_T \in {\downarrow} \Theta$.

Let $\Theta \subseteq \clAll$, $\ell \in \IN \cup \{\infty\}$, and define $\clKlik{\ell}{\Theta}$ to be the set of all Boolean functions $f$ satisfying
$T \tulee \Theta$ (i.e., $\chi_T \in {\downarrow} \Theta$) for all $T \subseteq f^{-1}(1)$ with $\card{T} \leq \ell$.
More explicitly, $f \in \clKlik{\ell}{\Theta}$ if
for all $T \subseteq f^{-1}(1)$ with $\card{T} \leq \ell$, there exists a $\theta \in \Theta$ and $\tau \colon \nset{\arity \theta} \to \nset{n}$ such that $\theta_\tau(\vect{a}) = \theta(\vect{a} \tau) = 1$ for all $\vect{a} \in T$.
\end{definition}

Note that we obtain $\clUk{\ell}$ as a special instance of $\clKlik{\ell}{\Theta}$, namely, for $\ell \geq 2$, $\clUk{\ell} = \clKlik{\ell}{\{\id\}} = \clKlik{\ell}{\clIc}$.

The classes of the form $\clKlik{\ell}{\Theta}$ are precisely the $\ell$\hyp{}local closures of minorant minions.

\begin{proposition}
\label{prop:Uk-kLoc-equal}
Let $\Theta \subseteq \clAll$, and let $\ell \in \mathbb{N} \cup \{\infty\}$.
Then $\clKlik{\ell}{\Theta} = \kLoc[\ell]({\downarrow} \Theta)$.
\end{proposition}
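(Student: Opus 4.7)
The plan is to reduce the statement to Lemma~\ref{lem:minmin-kLoc} applied with the minorant-closed set ${\downarrow}\Theta$ in place of $\Theta$. The first preparatory step is to verify that ${\downarrow}\Theta$ really is closed under the minorant relation: if $h \in {\downarrow}\Theta$ then $h \minmin \theta$ for some $\theta \in \Theta$, and any $f$ with $f \minorant h$ satisfies $f \minmin h \minmin \theta$ by definition of $\minmin$, so $f \in {\downarrow}\Theta$. With this in hand, the core of the proof is a bookkeeping step: by the very definition of $T \tulee \Theta$ given in Definition~\ref{def:UkTheta}, namely $T \tulee \Theta \iff \chi_T \in {\downarrow}\Theta$, the condition defining membership in $\clKlik{\ell}{\Theta}$ becomes
\[
\forall T \subseteq f^{-1}(1) \colon \card{T} \leq \ell \ \Longrightarrow\ \chi_T \in {\downarrow}\Theta,
\]
which is precisely condition \ref{lem:minmin-kLoc:2} of Lemma~\ref{lem:minmin-kLoc} applied to ${\downarrow}\Theta$. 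Hence, for every finite $\ell \geq 2$, that lemma yields $\clKlik{\ell}{\Theta} = \kLoc[\ell]({\downarrow}\Theta)$.

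It then remains to address the values of $\ell$ not covered by Lemma~\ref{lem:minmin-kLoc}, namely $\ell \in \{0, 1, \infty\}$. For $\ell = \infty$, I would show that both sides collapse to ${\downarrow}\Theta$: on the one hand, taking $S = \dom(f)$ in the definition of $\kLoc[\infty]$ forces $f = g \in {\downarrow}\Theta$, and conversely any $f \in {\downarrow}\Theta$ trivially lies in $\kLoc[\infty]({\downarrow}\Theta)$ by choosing $g = f$; on the other hand, taking $T = f^{-1}(1)$ in the definition of $\clKlik{\infty}{\Theta}$ and using that $f = \chi_{f^{-1}(1)}$ gives $f \in {\downarrow}\Theta$, while for the reverse inclusion one uses minorant-closure of ${\downarrow}\Theta$ to conclude $\chi_T \in {\downarrow}\Theta$ for every $T \subseteq f^{-1}(1)$. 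The cases $\ell \in \{0, 1\}$ are handled by the same simple inspection, using that $\chi_\emptyset$ and $\chi_{\{\vect{a}\}}$ are minorants of any function in which the corresponding true points appear.

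There is no real obstacle here; the proposition is essentially a reformulation of Lemma~\ref{lem:minmin-kLoc} in the language of the operator $\clKlik{\ell}{\,\cdot\,}$, once one notices that the auxiliary relation $T \tulee \Theta$ is by design a restatement of $\chi_T \in {\downarrow}\Theta$. The only point requiring a moment of care is the verification that ${\downarrow}\Theta$ is minorant-closed, which in turn relies on the definition $\mathord{\minmin} = (\mathord{\minorant} \cup \mathord{\minor})^{\mathrm{tr}}$ and the alternative presentation $\mathord{\minmin} = \mathord{\minorant} \circ \mathord{\minor}$ from Lemma~\ref{lem:minmin}\ref{lem:minmin:comp}, rather than on any deeper structural property of $\Theta$.
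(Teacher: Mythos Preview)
Your proposal is correct. The reduction to Lemma~\ref{lem:minmin-kLoc} applied with ${\downarrow}\Theta$ in place of $\Theta$ is exactly right: condition~\ref{lem:minmin-kLoc:2} of that lemma, with the minorant-closed set ${\downarrow}\Theta$ plugged in, is literally the defining condition of $\clKlik{\ell}{\Theta}$ via Definition~\ref{def:UkTheta}, so the equivalence \ref{lem:minmin-kLoc:1}$\Leftrightarrow$\ref{lem:minmin-kLoc:2} gives the proposition immediately for finite $\ell \geq 2$. Your separate treatment of $\ell \in \{0,1,\infty\}$ is also sound.

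The paper, however, does not invoke Lemma~\ref{lem:minmin-kLoc} at all: it gives a short direct argument, valid uniformly for every $\ell \in \IN \cup \{\infty\}$, by checking both inclusions elementwise. For $\clKlik{\ell}{\Theta} \subseteq \kLoc[\ell]({\downarrow}\Theta)$ it takes an arbitrary $S \subseteq \dom(f)$ with $\card{S} \leq \ell$, sets $T := S \cap f^{-1}(1)$, and observes that $\chi_T \in {\downarrow}\Theta$ with $f|_S = (\chi_T)|_S$; for the reverse inclusion it starts from $T \subseteq f^{-1}(1)$, picks a witness $\varphi \in {\downarrow}\Theta$ agreeing with $f$ on $T$, and unwinds $\varphi \in {\downarrow}\Theta$ to exhibit the required $\theta$ and $\tau$. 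This is essentially the same work as the proof of Lemma~\ref{lem:minmin-kLoc} itself, just carried out in situ, which buys uniformity in $\ell$ and avoids your three-way case split. Your route, on the other hand, makes the logical dependency explicit and highlights that the proposition is nothing more than Lemma~\ref{lem:minmin-kLoc} instantiated at ${\downarrow}\Theta$ --- a useful observation in its own right.
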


\begin{proof}
Assume first that $f \in \clKlik{\ell}{\Theta}$.
Let $S \subseteq \dom(f)$ with $\card{S} \leq \ell$, and let $T := S \cap f^{-1}(1)$; we clearly have $f|_S = (\chi_T)|_S$.
Furthermore, since $\card{T} \leq \ell$, we have $\chi_T \in {\downarrow} \Theta$ by the definition of the class $\clKlik{\ell}{\Theta}$ (see Definition~\ref{def:UkTheta}).
We conclude that $f \in \kLoc[\ell]({\downarrow} \Theta)$.

Assume now that $f \in \kLoc[\ell]({\downarrow} \Theta)$.
Let $T \subseteq f^{-1}(1)$ with $\card{T} \leq \ell$.
Since $f \in \kLoc[\ell]({\downarrow} \Theta)$, there exists a $\varphi \in {\downarrow} \Theta$ such that $\varphi|_T = f|_T$.
But then $\chi_T \minorant \varphi$ and hence $\chi_T \in {\downarrow} \Theta$.
Therefore $f \in \clKlik{\ell}{\Theta}$.
\end{proof}

We are going to use the following fact in many proofs that follow.

\begin{lemma}
\label{lem:chiT-chiTsigma}
Let $n, m \in \IN_{+}$, $T \subseteq \{0,1\}^n$, and $\sigma \colon \nset{n} \to \nset{m}$, and let $T \sigma := \{ \, \vect{a} \sigma \mid \vect{a} \in T \,\}$.
Then $\chi_T \minorant (\chi_{T \sigma})_\sigma$, and consequently $\chi_T \minmin \chi_{T \sigma}$.
\end{lemma}

\begin{proof}
Let $\vect{a} \in \chi_T^{-1}(1) = T$.
Then $\vect{a} \sigma \in T \sigma$,
so $(\chi_{T \sigma})_\sigma (\vect{a}) = \chi_{T \sigma} (\vect{a} \sigma) = 1$.
Therefore $\chi_T \minorant (\chi_{T \sigma})_\sigma \minor \chi_{T \sigma}$, so $\chi_T \minmin \chi_{T \sigma}$.
\end{proof}

The classes $\clKlik{\ell}{\Theta}$ have remarkable closure properties, as revealed by the following proposition.
Note that it follows, in particular, that the $\ell$\hyp{}local closure of a minorant minion is a minorant minion.

\begin{proposition}
\label{prop:UTheta-stability}
Let $\ell \in \IN \cup \{\infty\}$ and $\Theta \subseteq \clAll$.
\begin{enumerate}[label={\upshape(\roman*)}]
\item\label{prop:UTheta-stability:minorant}
The class $\clKlik{\ell}{\Theta}$ is closed under taking minorants.

\item\label{prop:UTheta-stability:minor}
The class $\clKlik{\ell}{\Theta}$ is closed under taking minors.

\item\label{prop:UTheta-stability:left-stable}
For any $k \in \IN \cup \{\infty\}$ with $\max(2,\ell) \leq k$, the class $\clKlik{\ell}{\Theta}$ is stable under left composition with $\clMcUk{k}$.

\item\label{prop:UTheta-stability:stable}
For any $k \in \IN \cup \{\infty\}$ with $\max(2,\ell) \leq k$, the class $\clKlik{\ell}{\Theta}$ is a minorant\hyp{}closed $(\clIc,\clMcUk{k})$\hyp{}clonoid and, in particular, a minorant minion.
\end{enumerate}
\end{proposition}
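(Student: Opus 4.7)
The plan is to verify the four parts by chasing the defining condition from Definition~\ref{def:UkTheta}; parts \ref{prop:UTheta-stability:minorant}, \ref{prop:UTheta-stability:minor}, \ref{prop:UTheta-stability:stable} are essentially bookkeeping, so the main work is in \ref{prop:UTheta-stability:left-stable}, which is where the $1$\hyp{}separating property of $\clUk{k}$ is used.

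Part \ref{prop:UTheta-stability:left-stable} is the heart of the matter. Suppose $g \in \clMcUk{k}$ is $n$\hyp{}ary and $f_1, \dots, f_n \in \clKlik{\ell}{\Theta}$ share arity $r$; set $F := g(f_1, \dots, f_n)$. For any $T \subseteq F^{-1}(1)$ with $\card{T} \leq \ell$, the tuples $\vect{u}_{\vect{a}} := (f_1(\vect{a}), \dots, f_n(\vect{a}))$, as $\vect{a}$ ranges over $T$, form a subset of $g^{-1}(1)$ of size at most $\ell \leq k$. Since $g \in \clUk{k}$ is $1$\hyp{}separating of rank $k$, the meet of these tuples is nonzero; equivalently, some coordinate $i \in \nset{n}$ satisfies $f_i(\vect{a}) = 1$ for all $\vect{a} \in T$. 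Thus $T \subseteq f_i^{-1}(1)$, and invoking $f_i \in \clKlik{\ell}{\Theta}$ on this $T$ of size at most $\ell$ yields the required $\theta \in \Theta$ witnessing $T \tulee \Theta$. The hypothesis $\ell \leq k$ is used exactly once, to admit an input of size $\card{T} \leq \ell$ to the $1$\hyp{}separating condition of $g$; note that monotonicity of $g$ and the values $g(\vect{0})=0$, $g(\vect{1})=1$ play no role, so the argument even yields the stronger assertion for any $g \in \clUk{k}$.

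The remaining parts need little more than definition chasing. For \ref{prop:UTheta-stability:minorant}, $f \minorant g$ implies $f^{-1}(1) \subseteq g^{-1}(1)$, so every small $T \subseteq f^{-1}(1)$ is already a small subset of $g^{-1}(1)$ witnessed by $g \in \clKlik{\ell}{\Theta}$. For \ref{prop:UTheta-stability:minor}, given a minor $f = g_\sigma$ and $T \subseteq f^{-1}(1)$, the image $\{\vect{a}\sigma \mid \vect{a} \in T\}$ is a subset of $g^{-1}(1)$ of size at most $\card{T}$; a witness $(\theta, \tau)$ there pulls back to a witness $(\theta, \sigma \circ \tau)$ for $T$ via the identity $\vect{a}(\sigma \circ \tau) = (\vect{a}\sigma)\tau$. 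Finally, \ref{prop:UTheta-stability:stable} is a combination: \ref{prop:UTheta-stability:minor} gives right stability under the clone of projections $\clIc$, \ref{prop:UTheta-stability:left-stable} gives left stability under $\clMcUk{k}$, so $\clKlik{\ell}{\Theta}$ is a $(\clIc, \clMcUk{k})$\hyp{}clonoid, and \ref{prop:UTheta-stability:minorant} upgrades this to minorant\hyp{}closure, which together with \ref{prop:UTheta-stability:minor} makes it a minorant minion.
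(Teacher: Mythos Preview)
Your proof is correct, and parts \ref{prop:UTheta-stability:minorant}, \ref{prop:UTheta-stability:minor}, \ref{prop:UTheta-stability:stable} match the paper's argument almost verbatim.

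For part \ref{prop:UTheta-stability:left-stable} you take a genuinely different route. The paper invokes Lemma~\ref{lem:left-stable} and the explicit generating sets of $\clMcUk{k}$ from Subsection~\ref{subsec:gen-nu}, then checks the generators one by one: for $\threshold{k+1}{k}(f_1,\dots,f_{k+1})$ it uses a pigeonhole argument (each of the $\ell \leq k$ true points misses at most one $f_i$, so some $f_p$ covers them all), and for $x \wedge (y \vee z)$ it observes that $f_1$ alone already dominates. You bypass the generators entirely and argue directly from the $1$\hyp{}separating property of an arbitrary $g \in \clUk{k}$: the images $(f_1(\vect{a}),\dots,f_n(\vect{a}))$ of at most $\ell \leq k$ true points have a common $1$\hyp{}coordinate, singling out one $f_i$. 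Your argument is shorter, avoids case analysis on $k$ (the paper handles $k=2$, $3 \leq k < \infty$, and $k=\infty$ via different generating sets), and in fact establishes the stronger conclusion $\clUk{k}\,\clKlik{\ell}{\Theta} \subseteq \clKlik{\ell}{\Theta}$, which the paper only derives later in Proposition~\ref{prop:left-stability}\ref{prop:left-stability:Klik}. The paper's approach, on the other hand, makes visible exactly which structural feature of each generator is responsible for preservation.
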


\begin{proof}
\ref{prop:UTheta-stability:minorant}
Assume $f \in \clKlik{\ell}{\Theta}$, and let $g$ be a minorant of $f$.
Let $T \subseteq g^{-1}(1)$ with $\card{T} \leq \ell$.
Because $g \minorant f$, we have $T \subseteq f^{-1}(1)$, and so
we have $\chi_T \in {\downarrow} \Theta$ by the definition of $\clKlik{\ell}{\Theta}$.
We conclude that $g \in \clKlik{\ell}{\Theta}$.

\ref{prop:UTheta-stability:minor}
Let $f \in \clKlik{\ell}{\Theta}$, and let $g$ be a minor of $f$.
Then $g = f_\sigma$ for some $\sigma \colon \nset{n} \to \nset{m}$.
Let $T \subseteq g^{-1}(1)$ with $\card{T} \leq \ell$, and let $T \sigma := \{ \, \vect{a} \sigma \mid \vect{a} \in T \, \}$.
Then $1 = g(\vect{a}) = f_\sigma(\vect{a}) = f(\vect{a} \sigma)$, so $T \sigma \subseteq f^{-1}(1)$.
Since $\card{T \sigma} \leq \card{T} \leq \ell$, we have that $\chi_{T \sigma} \in {\downarrow} \Theta$.
Since $\chi_T \minmin \chi_{T \sigma}$ by Lemma~\ref{lem:chiT-chiTsigma}, it follows that $\chi_T \in {\downarrow} \Theta$ as well.
Therefore $g \in \clKlik{\ell}{\Theta}$.

\ref{prop:UTheta-stability:left-stable}
Because $\clMcUk{2} = \clonegen{\threshold{3}{2}, x \wedge (y \vee z)}$, $\clMcUk{k} = \clonegen{\threshold{k+1}{k}}$ for $3 \leq k < \infty$, and $\clMcUk{\infty} = \clonegen{x \wedge (y \vee z)}$, it suffices to show, by Lemma~\ref{lem:left-stable}, that  for all $f_1, \dots, f_{k+1} \in \clKlik{\ell}{\Theta}$ we have $\threshold{k+1}{k}(f_1, \dots, f_{k+1}) \in \clKlik{\ell}{\Theta}$ and  for all $f_1, f_2, f_3 \in \clKlik{\ell}{\Theta}$ we have $(x_1 \wedge (x_2 \vee x_3))(f_1, f_2, f_3) \in \clKlik{\ell}{\Theta}$.

Let $f_1, \dots, f_{k+1} \in \clKlik{\ell}{\Theta}$, and let $\varphi := \threshold{k+1}{k}(f_1, \dots, f_{k+1})$.
Let $T = \{ \vect{a}_1, \dots, \vect{a}_s \} \subseteq \varphi^{-1}(1)$ with $s \leq \ell$.
Then, for each $j \in \nset{s}$, at least $k$ of the values $f_1(\vect{a}_j), \dots, f_{k+1}(\vect{a}_j)$ are equal to $1$.
Since $s \leq \ell \leq k$, there exists a $p \in \nset{k+1}$ such that $f_p(\vect{a}_1) = f_p(\vect{a}_2) = \dots = f_p(\vect{a}_s) = 1$.
Thus $\chi_T \minorant f_p$.
Since $f_p \in \clKlik{\ell}{\Theta}$ and $\card{T} \leq \ell$, we have $\chi_T \in {\downarrow} \Theta$.
We conclude that $\varphi \in \clKlik{\ell}{\Theta}$.

Now let $f_1, f_2, f_3 \in \clKlik{\ell}{\Theta}$, and let $\psi := (x_1 \wedge (x_2 \vee x_3))(f_1, f_2, f_3)$.
Let $T \subseteq \psi^{-1}(1)$ with $\card{T} \leq \ell$.
Clearly, for $\psi(\vect{a}) = 1$, it is necessary that $f_1(\vect{a}) = 1$;
hence $T \subseteq f_1^{-1}(1)$.
Since $f_1 \in \clKlik{\ell}{\Theta}$,
we have $\chi_T \in {\downarrow} \Theta$.
Therefore $\psi \in \clKlik{\ell}{\Theta}$.

\ref{prop:UTheta-stability:stable}
This is a corollary of parts \ref{prop:UTheta-stability:minorant}--\ref{prop:UTheta-stability:left-stable}.
\end{proof}

\begin{definition}
In what follows,
we are going to make use of the Boolean functions defined in Table~\ref{table:BFs}.
The functions are specified by listing their true points.
Note that each of these functions has at most three true points.
\end{definition}

\begin{table}
\begingroup
\small
\begin{tabular}{ccccccc}
\toprule
$\begin{array}[t]{c} 0 \\ \midrule \text{---} \end{array}$
&
$\begin{array}[t]{c} \id \\ \midrule 1 \end{array}$
&
$\begin{array}[t]{c} \neg \\ \midrule 0 \end{array}$
&
$\begin{array}[t]{c} \nrightarrow \\ \midrule 10 \end{array}$
&
$\begin{array}[t]{c} 1 \\ \midrule 1 \\ 0 \end{array}$
&
$\begin{array}[t]{c} + \\ \midrule 01 \\ 10 \end{array}$
\\
\midrule
$\begin{array}[t]{c} \lambda_{30} \\ \midrule 001 \\ 010 \end{array}$
&
$\begin{array}[t]{c} \lambda_{31} \\ \midrule 110 \\ 101 \end{array}$
&
$\begin{array}[t]{c} \vee \\ \midrule 11 \\ 01 \\ 10 \end{array}$
&
$\begin{array}[t]{c} \uparrow \\ \midrule 00 \\ 01 \\ 10 \end{array}$
&
$\begin{array}[t]{c} \Gamma_0 \\ \midrule 000 \\ 110 \\ 101 \end{array}$
&
$\begin{array}[t]{c} \Gamma_1 \\ \midrule 111 \\ 001 \\ 010 \end{array}$
&
$\begin{array}[t]{c} \Gamma_{01} \\ \midrule 1001 \\ 0111 \\ 0010 \end{array}$
\\
\midrule
$\begin{array}[t]{c} \delta_0 \\ \midrule 110 \\ 101 \\ 011 \end{array}$
&
$\begin{array}[t]{c} \delta_0^0 \\ \midrule 1100 \\ 1010 \\ 0110 \end{array}$
&
$\begin{array}[t]{c} \delta_0^1 \\ \midrule 1101 \\ 1011 \\ 0111 \end{array}$
&
$\begin{array}[t]{c} \delta_0^{+} \\ \midrule 1101 \\ 1010 \\ 0110 \end{array}$
&
$\begin{array}[t]{c} \delta_0^{0{+}} \\ \midrule 11010 \\ 10100 \\ 01100 \end{array}$
&
$\begin{array}[t]{c} \lambda_{10} \\ \midrule 10011 \\ 01101 \\ 00110 \end{array}$
&
$\begin{array}[t]{c} \lambda_{10}^0 \\ \midrule 100110 \\ 011010 \\ 001100 \end{array}$
\\
\midrule
$\begin{array}[t]{c} \delta_1 \\ \midrule 001 \\ 010 \\ 100 \end{array}$
&
$\begin{array}[t]{c} \delta_1^0 \\ \midrule 0010 \\ 0100 \\ 1000 \end{array}$
&
$\begin{array}[t]{c} \delta_1^1 \\ \midrule 0011 \\ 0101 \\ 1001 \end{array}$
&
$\begin{array}[t]{c} \delta_1^{+} \\ \midrule 0010 \\ 0101 \\ 1001 \end{array}$
&
$\begin{array}[t]{c} \delta_1^{1{+}} \\ \midrule 00101 \\ 01011 \\ 10011 \end{array}$
&
$\begin{array}[t]{c} \lambda_{11} \\ \midrule 10001 \\ 01010 \\ 00111 \end{array}$
&
$\begin{array}[t]{c} \lambda_{11}^1 \\ \midrule 100011 \\ 010101 \\ 001111 \end{array}$
\\
\midrule
$\begin{array}[t]{c} \lambda_2 \\ \midrule 100011 \\ 010101 \\ 001110 \end{array}$
&
$\begin{array}[t]{c} \lambda_2^0 \\ \midrule 1000110 \\ 0101010 \\ 0011100 \end{array}$
&
$\begin{array}[t]{c} \lambda_2^1 \\ \midrule 1000111 \\ 0101011 \\ 0011101 \end{array}$
&
$\begin{array}[t]{c} A_0 \\ \midrule 110 \\ 100 \\ 010 \end{array}$
&
$\begin{array}[t]{c} A_0^{+} \\ \midrule 1110 \\ 0100 \\ 0010 \end{array}$
&
$\begin{array}[t]{c} A_1 \\ \midrule 001 \\ 011 \\ 101 \end{array}$
&
$\begin{array}[t]{c} A_1^{+} \\ \midrule 0001 \\ 1011 \\ 1101 \end{array}$
\\
\bottomrule
\end{tabular}
\endgroup
\medskip
\caption{The Boolean functions with at most three true points, up to $\eqminmin$\hyp{}equivalence.}
\label{table:BFs}
\end{table}

\begin{example}
\label{ex:classes}
Many of the classes of Boolean functions introduced in Section~\ref{sec:Boolean} are of the form $\clKlik{\ell}{\Theta}$ for some $\ell \in \mathbb{N} \cup \{\infty\}$ and $\Theta \subseteq \clAll$.
The following equalities are easy to verify (see Table~\ref{table:BFs} for the notation):
\begin{align*}
& \clAll = \clKlik{1}{\{\id, \neg\}} = \clKlik{2}{\{1\}}, &
& \clEiii = \clKlik{2}{\{\id, \neg, \mathord{+}\}}, &
& \clOX = \clKlik{1}{\{\id\}}, &
& \clXO = \clKlik{1}{\{\neg\}}, \\
& \clOO = \clKlik{1}{\{\mathord{\nrightarrow}\}} = \clKlik{2}{\{\mathord{+}\}}, &
& \clSmin = \clKlik{2}{\{\id, \neg\}}, &
& \clSminOX = \clKlik{2}{\{\id, \lambda_{30}\}}, &
& \clSminXO = \clKlik{2}{\{\neg, \lambda_{31}\}}, \\
& \clSminOO = \clKlik{2}{\{\lambda_{30}, \lambda_{31}\}}, &
& \clUk{\ell} = \clKlik{\ell}{\{\id\}}, &
& \clWkneg{\ell} = \clKlik{\ell}{\{\neg\}}, &
& \clUkOO{\ell} = \clKlik{\ell}{\{\varphi_\ell\}}, \\
& \clWknegOO{\ell} = \clKlik{\ell}{\{\varphi'_\ell\}}, &
& \clUk{\ell} \cap \clWkneg{\ell} = \clKlik{\ell}{\{\mathord{\nrightarrow}\}}, &
& \clVako = \clKlik{1}{\{0\}}, &
& \clEmpty = \clKlik{1}{\emptyset},
\end{align*}
where $\varphi_\ell \colon \{0,1\}^{\ell + 1} \to \{0,1\}$ with $\varphi_\ell(a_1, \dots, a_{\ell + 1}) = 1$ if and only if $a_1 = 1$ and $(a_2, \dots, a_{\ell + 1}) \neq \vect{1}$
and
$\varphi'_\ell \colon \{0,1\}^{\ell + 1} \to \{0,1\}$ with $\varphi'_\ell(a_1, \dots, a_{\ell + 1}) = 1$ if and only if $a_1 = 0$ and $(a_2, \dots, a_{\ell + 1}) \neq \vect{0}$.
\end{example}

\begin{proposition}
Every Boolean function in $\clAllleq{3}$ is $\eqminmin$\hyp{}equivalent to one of the functions defined in Table~\ref{table:BFs}.
\end{proposition}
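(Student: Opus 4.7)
The plan is to proceed by case analysis on $k := \card{f^{-1}(1)} \in \{0, 1, 2, 3\}$, producing in each case a function $g$ from Table~\ref{table:BFs} and explicit minor-formation maps witnessing $f \eqminmin g$ via part~\ref{lem:minmin:cond} of Lemma~\ref{lem:minmin}.

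The key reduction tool is the following. After fixing an ordering $\vect{t}_1, \dots, \vect{t}_k$ of the true points of $f$, assign to each position $i \in \nset{\arity{f}}$ its \emph{type} $\tau(i) := (\vect{t}_1(i), \dots, \vect{t}_k(i)) \in \{0,1\}^k$. Positions of equal type may be merged without changing the $\eqminmin$-class: if $\tau(i) = \tau(j)$, then the minor $f'$ obtained by collapsing $i$ and $j$ has true points obtained from those of $f$ by deleting the $j$-th coordinate, and one verifies via part~\ref{lem:minmin:cond} of Lemma~\ref{lem:minmin} that $f \eqminmin f'$. Iterating, $f$ is reduced to an equivalent $\tilde f$ of arity equal to the number of distinct types realized, in particular at most $2^k \leq 8$. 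Hence the $\eqminmin$-class of $f$ is determined by the set of realized types.

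The cases $k \leq 2$ are straightforward. For $k = 0$, $f \eqminmin 0$ by Proposition~\ref{prop:minmin-10-atom}. For $k = 1$, only two types are available, yielding $\tilde f \in \{\id, \neg, \nrightarrow\}$ depending on whether the unique true point is $\vect{1}$, $\vect{0}$, or mixed. For $k = 2$, one enumerates non-empty type-subsets modulo swap of $\vect{t}_1, \vect{t}_2$: the four subsets $\{01\}$, $\{01, 10\}$, $\{01, 10, 00\}$, $\{01, 10, 11\}$ yield the new representatives $1$, $+$, $\lambda_{30}$, $\lambda_{31}$, while all remaining admissible subsets admit an explicit minor-minorant reduction to one of $\id$, $\neg$, $\nrightarrow$.

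The case $k = 3$ is the main task and principal obstacle. Here $\{0,1\}^3$ has eight types, and, modulo the $S_3$ action permuting $\vect{t}_1, \vect{t}_2, \vect{t}_3$, a finite but substantial list of admissible type-subsets must be enumerated. For each orbit one forms $\tilde f$, checks whether a minor-minorant reduction drops it into a lower-$k$ class, and otherwise identifies the matching three-true-point representative in Table~\ref{table:BFs}: the entries $\vee$, $\uparrow$, $\Gamma_0$, $\Gamma_1$, $\Gamma_{01}$; the $\delta_0$-family ($\delta_0$, $\delta_0^0$, $\delta_0^1$, $\delta_0^+$, $\delta_0^{0+}$) with augmentations $\lambda_{10}$, $\lambda_{10}^0$; the dual $\delta_1$-family with $\lambda_{11}$, $\lambda_{11}^1$; the entries $\lambda_2$, $\lambda_2^0$, $\lambda_2^1$ covering subsets in which all six non-trivial types appear; and the asymmetric entries $A_0$, $A_0^+$, $A_1$, $A_1^+$. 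The table is arranged to list exactly one representative per orbit that can actually occur, and the proof reduces to the tedious but mechanical verification that the orbits considered exhaust all possibilities and that the reduction in each case produces the listed representative via the maps supplied by part~\ref{lem:minmin:cond} of Lemma~\ref{lem:minmin}.
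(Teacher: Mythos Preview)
The paper's own proof is simply ``Left as an exercise to the reader,'' so there is no argument to compare against; your column-type reduction (merging argument positions whose columns in the true-point matrix coincide) followed by enumeration of type-subsets modulo the $S_k$ action on the ordering of true points is exactly the mechanism the paper itself invokes in the discussion preceding Definition~\ref{def:Id-minmin} to justify restricting to arity at most $2^k - 1$, and it is the natural way to carry out the exercise. Your outline is correct as a plan; the only caveat is that it remains a plan---the $k=3$ case still requires the explicit orbit enumeration and the pairwise $\eqminmin$-distinctness checks among the thirty-odd representatives in Table~\ref{table:BFs}, which you acknowledge but do not perform.
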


\begin{proof}
We leave the details of the proof as an exercise to the reader.
A possible approach to proving this is to think of the true points of a function as the rows of a matrix with entries in $\{0,1\}$.
The order of rows clearly does not matter, and we can also assume that the rows are pairwise distinct.
Neither does the order of columns matter -- functions obtained from each other by permutation of arguments are $\eqminmin$\hyp{}equivalent.
Furthermore, we may assume that the columns are pairwise distinct, because matrices with the same set of columns represent functions that are $\eqminmin$\hyp{}equivalent; we can eliminate identical columns by identifying arguments, and we can bring such columns back by introducing fictitious arguments and then taking a minorant.
Thus, in order to find a transversal of the $\eqminmin$\hyp{}equivalence classes in $\clAllleq{3}$, it suffices to consider, for each $n \in \nset{3}$, each subset of $\{0,1\}^n$, which would give us the columns of a matrix with $n$ rows.
The task is then to consider functions corresponding to such matrices and to eliminate possible duplicate representatives of the $\eqminmin$\hyp{}equivalence classes.
Some duplicates arise simply by permutation of rows and columns, but other equivalences may arise in more involved ways.
One such set of representatives (without duplicates) is given in Table~\ref{table:BFs}.

Note that if the matrix corresponding to the true points of $f$ has $n$ rows ($n \geq 1$) and $2^n$ distinct columns, then $f \eqminmin \mathord{\nrightarrow}$. (To see that $\mathord{\nrightarrow} \minmin f$, take any true point $\vect{a}$ of $f$ and identify those arguments $i$ with $a_i = 0$ and those arguments $j$ with $a_j = 1$. To see that $f \minmin \mathord{\nrightarrow}$, introduce $2^n - 2$ fictitious arguments and take a minorant.)
\end{proof}


\subsection{Working with the classes $\clKlik{\ell}{\Theta}$}

We now develop a few facts that help us work with classes of the form $\clKlik{\ell}{\Theta}$.
Our first lemma provides a basic inclusion relationship between classes of the form $\clKlik{\ell}{\Theta}$.

\begin{lemma}
\label{lem:kllk-inclusion}
Let $\ell, \ell' \in \IN \cup \{\infty\}$ and $\Theta, \Theta' \subseteq \clAll$.
If $\ell \geq \ell'$ and ${\downarrow} \Theta \subseteq {\downarrow} \Theta'$, then $\clKlik{\ell}{\Theta} \subseteq \clKlik{\ell'}{\Theta'}$.
\end{lemma}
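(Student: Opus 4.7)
The plan is simply to unfold the definition of $\clKlik{\ell}{\Theta}$ (Definition~\ref{def:UkTheta}) and push the two hypotheses through one after the other; no real calculation is needed.

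Assume $\ell \geq \ell'$ and ${\downarrow} \Theta \subseteq {\downarrow} \Theta'$, and let $f \in \clKlik{\ell}{\Theta}$. To show $f \in \clKlik{\ell'}{\Theta'}$, I would pick an arbitrary $T \subseteq f^{-1}(1)$ with $\card{T} \leq \ell'$ and verify that $\chi_T \in {\downarrow} \Theta'$. Since $\ell' \leq \ell$, we also have $\card{T} \leq \ell$, so $T$ is an admissible test set for the membership $f \in \clKlik{\ell}{\Theta}$; by the defining property of this class we obtain $\chi_T \in {\downarrow} \Theta$. The assumed inclusion ${\downarrow} \Theta \subseteq {\downarrow} \Theta'$ immediately promotes this to $\chi_T \in {\downarrow} \Theta'$, which is precisely the condition required for $f \in \clKlik{\ell'}{\Theta'}$.

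Since no nontrivial step appears, I would not expect any real obstacle. The only subtlety worth a sentence is that the condition $\chi_T \in {\downarrow} \Theta$ is genuinely equivalent to the ``$T \tulee \Theta$'' formulation in Definition~\ref{def:UkTheta} (witness $\theta \in \Theta$ and a minor formation map $\tau$), so that the hypothesis ${\downarrow} \Theta \subseteq {\downarrow} \Theta'$ transfers the witness verbatim; but this equivalence is the very definition and requires no additional argument.
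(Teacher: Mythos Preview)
Your proof is correct and follows essentially the same approach as the paper: take $T\subseteq f^{-1}(1)$ with $\card{T}\le\ell'\le\ell$, use membership in $\clKlik{\ell}{\Theta}$ to get $\chi_T\in{\downarrow}\Theta$, and then pass to ${\downarrow}\Theta'$. The only cosmetic difference is that the paper unpacks the witnesses $\theta\in\Theta$, $\theta'\in\Theta'$ and their minor formation maps explicitly, whereas you invoke the equivalent $\chi_T\in{\downarrow}\Theta$ formulation from Definition~\ref{def:UkTheta} directly.
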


\begin{proof}
Let $f \in \clKlik{\ell}{\Theta}$.
Let $T \subseteq f^{-1}(1)$ with $\card{T} \leq \ell'$.
Since $\ell' \leq \ell$ and $f \in \clKlik{\ell}{\Theta}$,
we have $T \tulee \Theta$.
Because ${\downarrow} \Theta \subseteq {\downarrow} \Theta'$, it follows that $T \tulee \Theta'$.
We conclude that
$f \in \clKlik{\ell'}{\Theta'}$.
\end{proof}

When considering a class of the form $\clKlik{\ell}{\Theta}$, we may assume that $\Theta$ contains only functions with at most $\ell$ true points.

\begin{lemma}
\label{lem:klik-atmostk}
Let $\ell \in \IN \cup \{\infty\}$ and $C \subseteq \clAll$.
Then the following conditions are equivalent.
\begin{enumerate}[label={\upshape(\roman*)}]
\item\label{lem:klik-atmostk-any}
$C = \clKlik{\ell}{\Theta}$ for some $\Theta \subseteq \clAll$.
\item\label{lem:klik-atmostk-k}
$C = \clKlik{\ell}{\Psi}$ for some $\Psi \subseteq \clAll^{[\leq \ell]}$.
\end{enumerate}
\end{lemma}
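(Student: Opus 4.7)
The direction (ii)$\implies$(i) is immediate: any $\Psi \subseteq \clAll^{[\leq \ell]}$ is also a subset of $\clAll$, so take $\Theta := \Psi$. The content lies in (i)$\implies$(ii).

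Suppose $C = \clKlik{\ell}{\Theta}$. The plan is to define
\[
\Psi := \{\, \chi_T \mid T \subseteq \{0,1\}^n \text{ for some } n \in \IN,\ \card{T} \leq \ell,\ \chi_T \in {\downarrow}\Theta \,\},
\]
and verify that $C = \clKlik{\ell}{\Psi}$. Since each $\chi_T \in \Psi$ has exactly $\card{T} \leq \ell$ true points by construction, we have $\Psi \subseteq \clAllleq{\ell}$, as required. Note that the condition $\chi_T \in {\downarrow}\Theta$ is exactly the relation $T \tulee \Theta$ from Definition~\ref{def:UkTheta}.

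For the inclusion $\clKlik{\ell}{\Theta} \subseteq \clKlik{\ell}{\Psi}$, take $f \in \clKlik{\ell}{\Theta}$ and $T \subseteq f^{-1}(1)$ with $\card{T} \leq \ell$. By definition of $\clKlik{\ell}{\Theta}$ we have $\chi_T \in {\downarrow}\Theta$, hence $\chi_T \in \Psi$. Witness $T \tulee \Psi$ by taking $\theta := \chi_T \in \Psi$ together with the identity minor formation map: then $\theta(\vect{a}) = \chi_T(\vect{a}) = 1$ for all $\vect{a} \in T$, so $f \in \clKlik{\ell}{\Psi}$.

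For the converse inclusion $\clKlik{\ell}{\Psi} \subseteq \clKlik{\ell}{\Theta}$, take $f \in \clKlik{\ell}{\Psi}$ and $T \subseteq f^{-1}(1)$ with $\card{T} \leq \ell$. Then $T \tulee \Psi$, so there exist $\chi_U \in \Psi$, say of arity $n'$, and $\tau \colon \nset{n'} \to \nset{\arity{f}}$ such that $\chi_U(\vect{a}\tau) = 1$ for all $\vect{a} \in T$; equivalently, $\vect{a}\tau \in U$ for all $\vect{a} \in T$. Since $\chi_U \in \Psi$, we have $\chi_U \in {\downarrow}\Theta$, so there exist $\theta \in \Theta$ of some arity $h$ and $\sigma \colon \nset{h} \to \nset{n'}$ with $\theta(\vect{b}\sigma) = 1$ for all $\vect{b} \in U$. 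Setting $\tau' := \tau \circ \sigma \colon \nset{h} \to \nset{\arity f}$ and using associativity of function composition, we get $\theta(\vect{a}\tau') = \theta((\vect{a}\tau)\sigma) = 1$ for every $\vect{a} \in T$, witnessing $T \tulee \Theta$. Hence $f \in \clKlik{\ell}{\Theta}$.

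There is no serious obstacle here: the statement is essentially a definitional unpacking, and the non-trivial direction is driven by the fact that the characteristic functions $\chi_T$ themselves lie in ${\downarrow}\Theta$ by the very definition of $\clKlik{\ell}{\Theta}$, which lets us insert them into $\Psi$ while keeping the ``at most $\ell$ true points'' constraint. The only mild care needed is to correctly compose the two minor formation maps in the last inclusion; alternatively, one could bypass this bookkeeping by invoking Proposition~\ref{prop:Uk-kLoc-equal} together with Lemma~\ref{lem:minmin-kLoc}~\ref{lem:minmin-kLoc:3} when $\ell \geq 2$ (and handling $\ell \in \{0,1,\infty\}$ by inspection), but the direct argument above is uniform across all $\ell \in \IN \cup \{\infty\}$.
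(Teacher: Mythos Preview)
Your proof is correct and follows essentially the same approach as the paper. The only cosmetic difference is in the choice of $\Psi$: the paper takes $\Psi$ to be the set of \emph{minorants} of members of $\Theta$ having at most $\ell$ true points (so $\psi \minorant \theta$ rather than $\psi \in {\downarrow}\Theta$), which lets it dispatch the inclusion $\clKlik{\ell}{\Psi} \subseteq \clKlik{\ell}{\Theta}$ by a one-line appeal to Lemma~\ref{lem:kllk-inclusion}, whereas you take the slightly larger set using the full minorant\hyp{}minor downset and verify both inclusions directly.
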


\begin{proof}
\ref{lem:klik-atmostk-k} $\implies$ \ref{lem:klik-atmostk-any}: Trivial.

\ref{lem:klik-atmostk-any} $\implies$ \ref{lem:klik-atmostk-k}:
Assume $C = \clKlik{\ell}{\Theta}$, where $\Theta \subseteq \clAll$.
Let $\Psi$ be the set of all minorants of members of $\Theta$ with at most $\ell$ true points, i.e.,
\[
\Psi :=
\{ \, \psi \in \clAll \mid \exists \theta \in \Theta \colon \text{$\psi \minorant \theta$ and $\card{\psi^{-1}(1)} \leq \ell$} \, \}.
\]
We have $\Psi \subseteq \clAll^{[\leq \ell]}$ by construction.

We need to show that $\clKlik{\ell}{\Theta} = \clKlik{\ell}{\Psi}$.
Since ${\downarrow} \Psi \subseteq {\downarrow} \Theta$ by definition, we have $\clKlik{\ell}{\Psi} \subseteq \clKlik{\ell}{\Theta}$ by Lemma~\ref{lem:kllk-inclusion}.
For the converse inclusion,
assume that $f \in \clKlik{\ell}{\Theta}$, say $f$ is $n$\hyp{}ary.
Let $T \subseteq f^{-1}(1)$ with $\card{T} \leq \ell$.
Then there exists a $\theta \in \Theta$ and $\tau \colon \nset{\arity \theta} \to \nset{n}$ such that $\theta(\vect{a} \tau) = 1$ for all $\vect{a} \in T$.
Let $T \tau := \{ \, \vect{a} \tau \mid \vect{a} \in T \, \}$.
Note that $\chi_{T \tau} \minorant \theta$ and $\card{T \tau} \leq \card{T} \leq \ell$.
Therefore, we have $\chi_{T \tau} \in \Psi$ and $\chi_{T \tau}(\vect{a} \tau) = 1$ for all $\vect{a} \in T$.
We conclude that $f \in \clKlik{\ell}{\Psi}$; thus $\clKlik{\ell}{\Theta} \subseteq \clKlik{\ell}{\Psi}$.
\end{proof}

By Proposition~\ref{prop:UTheta-stability}\ref{prop:UTheta-stability:stable}, a set $\clKlik{\ell}{\Theta}$ is a $(\clIc,\clMcUk{k})$\hyp{}clonoid whenever $\Theta \subseteq \clAll$ and $\max(2,\ell) \leq k$.
In order to describe $(\clIc,\clMcUk{k})$\hyp{}clonoids of this form, we can, in fact, assume that $\ell = k$.

\begin{lemma}
\label{lem:klik-k}
Let $k, \ell \in \IN \cup \{\infty\}$ with $\max(2,\ell) \leq k$.
For any $\Theta \subseteq \clAll$, there exists a $\Psi \subseteq \clAll$ such that $\clKlik{\ell}{\Theta} = \clKlik{k}{\Psi}$.
\end{lemma}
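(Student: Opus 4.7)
The natural candidate is $\Psi := \clKlik{\ell}{\Theta}$ itself. The point is that $\clKlik{\ell}{\Theta}$ is already a minorant minion (Proposition~\ref{prop:UTheta-stability}\ref{prop:UTheta-stability:stable}), which gives us strong closure properties to exploit, and enlarging the ``locality parameter'' from $\ell$ to $k$ should not change the closure when the test set $\Psi$ is already minorant-minor closed.

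\emph{Step 1: ``$\subseteq$''.} Given $f \in \clKlik{\ell}{\Theta}$ and any $T \subseteq f^{-1}(1)$ with $\card{T} \leq k$, we have $\chi_T \minorant f$, and since $\clKlik{\ell}{\Theta}$ is minorant\hyp{}closed (Proposition~\ref{prop:UTheta-stability}\ref{prop:UTheta-stability:minorant}), $\chi_T \in \clKlik{\ell}{\Theta} = \Psi \subseteq {\downarrow} \Psi$. Hence $f \in \clKlik{k}{\Psi}$.

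\emph{Step 2: ``$\supseteq$''.} Let $f \in \clKlik{k}{\Psi}$ and let $T \subseteq f^{-1}(1)$ with $\card{T} \leq \ell$. Since $\ell \leq k$, we have $\chi_T \in {\downarrow} \Psi = {\downarrow} \clKlik{\ell}{\Theta}$ by definition of $\clKlik{k}{\Psi}$. Because $\clKlik{\ell}{\Theta}$ is closed under both minors and minorants (Proposition~\ref{prop:UTheta-stability}\ref{prop:UTheta-stability:minor}, \ref{prop:UTheta-stability:minorant}), i.e., it is a downset of the minorant\hyp{}minor quasiorder, we conclude $\chi_T \in \clKlik{\ell}{\Theta}$. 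Applying the definition of $\clKlik{\ell}{\Theta}$ to the function $\chi_T$, together with the choice $T' := T \subseteq \chi_T^{-1}(1)$ (permissible since $\card{T} \leq \ell$), we obtain $\chi_T \in {\downarrow} \Theta$. Thus $f \in \clKlik{\ell}{\Theta}$.

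\emph{Main obstacle.} There is essentially no obstacle beyond correctly unfolding the definitions; the work has already been done by Proposition~\ref{prop:UTheta-stability}, which tells us that the class $\clKlik{\ell}{\Theta}$ is both minor\hyp{}closed and minorant\hyp{}closed. The only subtle point is the final step of Step 2, where one must remember that $\chi_T$ has precisely $T$ as its set of true points, so the membership $\chi_T \in \clKlik{\ell}{\Theta}$ applied to the subset $T \subseteq \chi_T^{-1}(1)$ immediately yields $\chi_T \in {\downarrow}\Theta$.
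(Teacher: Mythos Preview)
Your proof is correct, and it takes a somewhat different route from the paper's. The paper chooses
\[
\Psi := \{ \, \psi \in \clAll \mid \card{\psi^{-1}(1)} \leq k \text{ and } \forall T \subseteq \psi^{-1}(1), \, \card{T} \leq \ell \colon T \tulee \Theta \, \},
\]
which is nothing other than $(\clKlik{\ell}{\Theta})^{[\leq k]}$, and then verifies both inclusions by explicit chasing of definitions. Your choice $\Psi := \clKlik{\ell}{\Theta}$ is the same set before truncating to functions with at most $k$ true points, and by Lemma~\ref{lem:klik-atmostk} the two choices give the same class $\clKlik{k}{\Psi}$. Your argument is cleaner because it directly exploits that $\clKlik{\ell}{\Theta}$ is already a minorant minion (Proposition~\ref{prop:UTheta-stability}), so ${\downarrow}\Psi = \Psi$, which collapses Step~2 to a one\hyp{}line observation. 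The paper's construction, on the other hand, has the advantage of producing $\Psi \subseteq \clAllleq{k}$ from the outset, which is the normal form used in the subsequent development (cf.\ the discussion immediately following the lemma and Definition~\ref{def:Id-minmin}).
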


\begin{proof}
Assume that $\max(2,\ell) \leq k$ and $\Theta \subseteq \clAll$.
By Lemma~\ref{lem:klik-atmostk}, we may furthermore assume that $\Theta \subseteq \clAll^{[\leq \ell]}$.
Let $\Psi$ be the set of all Boolean functions $\psi$ such that $\card{\psi^{-1}(1)} \leq k$ and for all $T \subseteq \psi^{-1}(1)$ with $\card{T} \leq \ell$ we have $T \tulee \Theta$.
We need to show that $\clKlik{\ell}{\Theta} = \clKlik{k}{\Psi}$.

Assume first that $f \in \clKlik{\ell}{\Theta}$, say $f$ is $n$\hyp{}ary.
Let $T \subseteq f^{-1}(1)$ with $\card{T} \leq k$.
Since $f \in \clKlik{\ell}{\Theta}$, we have that $S \tulee \Theta$ for all $S \subseteq T$ with $\card{S} \leq \ell$.
By the definition of $\Psi$, we have $\chi_T \in \Psi$.
Consequently, $T \tulee \Psi$.
We conclude that $f \in \clKlik{k}{\Psi}$.
This shows that $\clKlik{\ell}{\Theta} \subseteq \clKlik{k}{\Psi}$.

Assume now that $f \in \clKlik{k}{\Psi}$.
Let $T \subseteq f^{-1}(1)$ with $\card{T} \leq \ell$.
Since $f \in \clKlik{k}{\Psi}$, we have that $S \tulee \Psi$ for all $S \subseteq T$ with $\card{S} \leq \ell$; in particular, $T \tulee \Psi$ because $\ell \leq k$, so there are a $\psi \in \Psi$, say $h$\hyp{}ary, and $\sigma \colon \nset{h} \to \nset{n}$ such that $\psi(\vect{a} \sigma) = 1$ for all $\vect{a} \in T$.
Thus, $T \sigma := \{ \, \vect{a} \sigma \mid \vect{a} \in T \, \} \subseteq \psi^{-1}(1)$ and $\card{T \sigma} \leq \card{T} \leq \ell$.
By the definition of $\Psi$, this implies that $T \sigma \tulee \Theta$, i.e., there are a $\theta \in \Theta$, say $h'$\hyp{}ary, and $\tau \colon \nset{h'} \to \nset{h}$ such that $\theta(\vect{b} \tau) = 1$ for all $\vect{b} \in T \sigma$, that is, $\theta(\vect{a} \sigma \tau) = 1$ for all $\vect{a} \in T$.
But this means that $T \tulee \Theta$.
We conclude that $f \in \clKlik{\ell}{\Theta}$.
This shows that $\clKlik{k}{\Psi} \subseteq \clKlik{\ell}{\Theta}$.
\end{proof}

By putting together Lemmata~\ref{lem:klik-atmostk} and \ref{lem:klik-k}, we conclude that whenever we consider $(\clIc,\clMcUk{k})$\hyp{}clonoids of the form $\clKlik{\ell}{\Theta}$, we may assume that $\ell = k$ and $\Theta \subseteq \clAll^{[\leq k]}$.
We may furthermore assume that if $k \in \IN$, then the arity of each $\theta \in \Theta$ is at most $2^k$ (in fact, less than $2^k$ if $k \geq 2$).
This last assumption can be made for the following reason.
Assume that $\theta^{-1}(1) = \{\vect{a}_1, \dots, \vect{a}_k\}$ and consider the matrix $M$ whose rows are the $n$\hyp{}tuples $\vect{a}_1, \dots, \vect{a}_k$ (in an arbitrary order).
If some columns of $M$ are identical, then by identifying the arguments of $\theta$ corresponding to the identical columns we obtain a function $\theta'$ that is equivalent to $\theta$ with respect to the minorant\hyp{}minor order. Namely, we can get from $\theta'$ back to $\theta$ by introducing fictitious arguments and taking a minorant of the result.
Since there are $2^k$ different columns of length $k$, we conclude that it suffices to consider functions of arity at most $2^k$.
In fact, whenever $k \geq 2$, we can do with functions of arity at most $2^k - 1$, because if $M$ contains all $2^k$ possible different columns, then the function is actually equivalent to $\mathord{\nrightarrow}$, as can be easily checked.

Motivated by the above, we introduce some notation and concepts that help us study $(\clIc,\clMcUk{k})$\hyp{}clonoids of the form $\clKlik{k}{\Theta}$.

\begin{definition}
\label{def:Id-minmin}
Consider the poset $\posetAllleq{k} = (\clAllleq{k} / \mathord{\eqminmin}, \mathord{\minmin}|_{\clAllleq{k}})$ of $\eqminmin$\hyp{}equivalence classes of $\clAllleq{k}$, ordered by $\minmin$.
(Note that we do indeed mean the restriction of the relation $\minmin$ to $\clAllleq{k}$: $\mathord{\minmin}|_{\clAllleq{k}}$.
We do \textbf{not} mean taking the transitive closure of the restrictions of $\minorant$ and $\minor$ to $\clAllleq{k}$: $(\mathord{\minorant}|_{\clAllleq{k}} \cup \mathord{\minor}|_{\clAllleq{k}})^\mathrm{tr}$.
Therefore, even if $f \minmin g$ with $f, g \in \clAllleq{k}$, it may be the case that every function $h$ satisfying $f \minorant h \minor g$ is in $\clAll \setminus \clAllleq{k}$.
This is fine and consistent with our definition.)

Let $\Ideals(\posetAllleq{k})$ be the lattice of ideals of $\posetAllleq{k}$,
\[
\Ideals(\posetAllleq{k}) :=
\{ \, {\downarrow^{[\leq k]}} X \mid X \subseteq \clAllleq{k} / \mathord{\eqminmin} \, \},
\]
where, for the sake of clarity, ${\downarrow^{[\leq k]}} X$ denotes the downset of $X$ in $\posetAllleq{k}$.
We identify each element ${\downarrow^{[\leq k]}} X$ of $\Ideals(\posetAllleq{k})$ with the union of $\eqminmin$\hyp{}equivalence classes in ${\downarrow^{[\leq k]}} X$, i.e., the set $\bigcup {\downarrow^{[\leq k]}} X$, which is easily seen to be equal to
$({\downarrow} X ) \cap \clAllleq{k}$, and in this last expression ${\downarrow} X$ is taken in $(\clAll, \mathord{\minmin})$.
\end{definition}

As observed above, we may choose functions of arity at most $2^k$ (at most $2^k - 1$ whenever $k \geq 2$) as representatives of the $\eqminmin$\hyp{}equivalence classes.
Consequently, for every $k \in \IN$, the poset $\posetAllleq{k}$ is finite, and so is its ideal lattice $\Ideals(\posetAllleq{k})$.

\begin{lemma}
\label{lem:kllk-inclusion-2}
For all $X, Y \subseteq \clAllleq{k}$, $\clKlik{k}{X} \subseteq \clKlik{k}{Y}$ if and only if ${\downarrow^{[\leq k]}} X \subseteq {\downarrow^{[\leq k]}} Y$.
\end{lemma}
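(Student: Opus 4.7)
The plan is to prove the two implications separately; the forward direction is an easy reduction to Lemma~\ref{lem:kllk-inclusion}, and the reverse direction rests on the simple but decisive observation that every $\varphi \in \clAllleq{k}$ is itself the characteristic function of its set of true points, a set of size at most $k$.

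For the direction $(\Leftarrow)$, I would assume ${\downarrow^{[\leq k]}} X \subseteq {\downarrow^{[\leq k]}} Y$. Since $X \subseteq \clAllleq{k}$ by hypothesis, every $\theta \in X$ satisfies $\theta \in {\downarrow^{[\leq k]}} X \subseteq {\downarrow^{[\leq k]}} Y \subseteq {\downarrow} Y$, so $X \subseteq {\downarrow} Y$, and taking downsets in $(\clAll,\mathord{\minmin})$ yields ${\downarrow} X \subseteq {\downarrow} Y$. An appeal to Lemma~\ref{lem:kllk-inclusion} (with $\ell = \ell' = k$) then immediately gives $\clKlik{k}{X} \subseteq \clKlik{k}{Y}$.

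For the direction $(\Rightarrow)$, I would assume $\clKlik{k}{X} \subseteq \clKlik{k}{Y}$ and take an arbitrary $\varphi \in {\downarrow^{[\leq k]}} X$, so $\varphi \in \clAllleq{k}$ and $\varphi \minmin \theta$ for some $\theta \in X$. The first step is to verify that $\varphi$ belongs to $\clKlik{k}{X}$: for any $T \subseteq \varphi^{-1}(1)$ with $\card{T} \leq k$, we have $\chi_T \minorant \varphi$ (since $T$ consists of true points of $\varphi$), and combining $\chi_T \minorant \varphi \minmin \theta$ with transitivity gives $\chi_T \minmin \theta$, i.e., $T \tulee X$. Hence $\varphi \in \clKlik{k}{X} \subseteq \clKlik{k}{Y}$. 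The crucial second step is to apply the definition of $\clKlik{k}{Y}$ to $\varphi$ with the specific choice $T := \varphi^{-1}(1)$, which has cardinality at most $k$ because $\varphi \in \clAllleq{k}$: this forces $T \tulee Y$, that is, $\chi_T \in {\downarrow} Y$. But $\chi_{\varphi^{-1}(1)} = \varphi$, so $\varphi \in {\downarrow} Y$, and since $\varphi \in \clAllleq{k}$, we conclude $\varphi \in {\downarrow^{[\leq k]}} Y$, as required.

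No step looks technically difficult. The only point worth flagging is the need to remain careful with the two conventions for ``downset'': ${\downarrow} X$ taken in $(\clAll,\mathord{\minmin})$ versus ${\downarrow^{[\leq k]}} X$ taken in $\posetAllleq{k}$. By the identification in Definition~\ref{def:Id-minmin} one has ${\downarrow^{[\leq k]}} X = ({\downarrow} X) \cap \clAllleq{k}$, and since any $\chi_T$ appearing in the proof automatically satisfies $\card{\chi_T^{-1}(1)} = \card{T} \leq k$, these characteristic functions lie in $\clAllleq{k}$ and may be freely moved between the two downsets without loss.
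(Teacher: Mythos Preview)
Your proof is correct and follows essentially the same approach as the paper's: both directions hinge on Lemma~\ref{lem:kllk-inclusion} for $(\Leftarrow)$ and on the key observation that $\varphi = \chi_{\varphi^{-1}(1)}$ with $\card{\varphi^{-1}(1)} \leq k$ for $(\Rightarrow)$. The only cosmetic difference is that the paper argues the forward direction by contrapositive (and the deduction ${\downarrow} X \subseteq {\downarrow} Y$ by contradiction), whereas you proceed directly in both places; your version is slightly cleaner but the content is the same.
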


\begin{proof}
Assume first that ${\downarrow^{[\leq k]}} X \subseteq {\downarrow^{[\leq k]}} Y$.
Then ${\downarrow} X \subseteq {\downarrow} Y$.
(For, suppose, to the contrary, that ${\downarrow} X \nsubseteq {\downarrow} Y$.
Then there is a $z \in {\downarrow} X \setminus {\downarrow} Y$, and hence there is an $x \in X$ such that $z \minmin x$.
Since $x \in X \subseteq {\downarrow^{[\leq k]}} X \subseteq {\downarrow^{[\leq k]}} Y$. there is a $y \in Y$ such that $x \minmin y$.
Thus, $z \minmin x \minmin y$, so $z \in {\downarrow} Y$, a contradiction.)
Consequently, $\clKlik{k}{X} \subseteq \clKlik{k}{Y}$ by Lemma~\ref{lem:kllk-inclusion}.

Assume now that ${\downarrow^{[\leq k]}} X \nsubseteq {\downarrow^{[\leq k]}} Y$.
Then there is a $\varphi \in {\downarrow^{[\leq k]}} X$ such that $\varphi \notin {\downarrow^{[\leq k]}} Y$.
Hence, there is an $x \in X$ such that $\varphi \minmin x$, and we must clearly have $x \notin {\downarrow^{[\leq k]}} Y$.
Therefore, we may assume, without loss of generality, that $\varphi \in X$.
We have $\varphi \in X \subseteq \clKlik{k}{X}$.
Suppose, to the contrary, that $\varphi \in \clKlik{k}{Y}$.
Since $\card{\varphi^{-1}(1)} \leq k$, there exists a $\psi \in Y$ and $\tau \colon \nset{h} \to \nset{n}$ such that $\psi(\vect{a} \tau) = 1$ for all $\vect{a} \in \varphi^{-1}(1)$.
This implies that $\varphi \minmin \psi$ and hence $\varphi \in {\downarrow^{[\leq k]}} \{ \psi \} \subseteq {\downarrow^{[\leq k]}} Y$, a contradiction.
\end{proof}

We have thus obtained the following result.

\begin{theorem}
\label{thm:Uktheta-lattice}
The sets of the form $\clKlik{k}{\Theta}$ constitute a lattice that is isomorphic to $\Ideals(\posetAllleq{k})$.
\end{theorem}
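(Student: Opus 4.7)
The plan is to exhibit an order isomorphism
\[
\Phi \colon \Ideals(\posetAllleq{k}) \to \{\, \clKlik{k}{\Theta} \mid \Theta \subseteq \clAll \,\}, \qquad \Phi(I) := \clKlik{k}{I},
\]
with both sides ordered by set inclusion; since the domain is a lattice (the downset lattice of a poset), this automatically transports a lattice structure to the codomain and yields the claimed isomorphism. Both the surjectivity of $\Phi$ and its order-reflecting injectivity are essentially already contained in the lemmata leading up to the theorem; the task is only to package them correctly.

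For surjectivity, I take an arbitrary set $\clKlik{k}{\Theta}$ with $\Theta \subseteq \clAll$. By Lemma~\ref{lem:klik-atmostk} applied with $\ell = k$, there exists $\Psi \subseteq \clAllleq{k}$ with $\clKlik{k}{\Theta} = \clKlik{k}{\Psi}$. Put $I := {\downarrow^{[\leq k]}} \Psi$, which is an ideal of $\posetAllleq{k}$. Since $I$ and $\Psi$ have the same downset in $\posetAllleq{k}$, Lemma~\ref{lem:kllk-inclusion-2} gives $\clKlik{k}{\Psi} = \clKlik{k}{I}$, whence $\clKlik{k}{\Theta} = \Phi(I)$.

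For the order-reflecting injectivity, let $I, J \in \Ideals(\posetAllleq{k})$. Because $I$ and $J$ are already downsets of $\posetAllleq{k}$, in the identification of Definition~\ref{def:Id-minmin} we have ${\downarrow^{[\leq k]}} I = I$ and ${\downarrow^{[\leq k]}} J = J$. Therefore Lemma~\ref{lem:kllk-inclusion-2} yields
\[
\clKlik{k}{I} \subseteq \clKlik{k}{J} \iff {\downarrow^{[\leq k]}} I \subseteq {\downarrow^{[\leq k]}} J \iff I \subseteq J.
\]
In particular $\Phi(I) = \Phi(J)$ implies $I = J$, so $\Phi$ is a bijection, and the displayed equivalence shows that $\Phi$ is an order isomorphism.

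Since $\Ideals(\posetAllleq{k})$ is a lattice under $\cap$ and $\cup$, and $\Phi$ is an order isomorphism, the collection $\{\clKlik{k}{\Theta}\}$ inherits a lattice structure via $\Phi$ and is isomorphic to $\Ideals(\posetAllleq{k})$. There is no serious obstacle in this argument — the only conceptual care needed is to keep straight the identification of an ideal ${\downarrow^{[\leq k]}} X$ with the subset $({\downarrow} X) \cap \clAllleq{k}$ of $\clAllleq{k}$ introduced in Definition~\ref{def:Id-minmin}, so that the hypothesis of Lemma~\ref{lem:kllk-inclusion-2} can be applied directly to ideals.
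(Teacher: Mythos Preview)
Your proof is correct and follows the same approach as the paper: the paper simply states the theorem as an immediate consequence of Lemmata~\ref{lem:klik-atmostk} and~\ref{lem:kllk-inclusion-2} (writing ``We have thus obtained the following result''), and you have spelled out explicitly how these two lemmata combine to give the order isomorphism.
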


Note that for all $\Theta, \Phi \subseteq \clAllleq{k}$, $\clKlik{k}{\Theta} \cap \clKlik{k}{\Phi} = \clKlik{k}{\Psi}$, where
$\Psi = {\downarrow^{[\leq k]}} \Theta \cap {\downarrow^{[\leq k]}} \Phi$.

\begin{example}
The Hasse diagrams of $\posetAllleq{k}$ for $k \leq 3$ and $\Ideals(\posetAllleq{k})$ for $k \leq 2$ are presented in
Figures \ref{fig:minmin1}, \ref{fig:minmin2}, and \ref{fig:MinMin3}.
We have determined with SageMath~\cite{SageMath} that the ideal lattice $\Ideals(\posetAllleq{3})$ has 2854 elements; it is too big to present explicitly here.
In order to simplify notation in the figures, we specify each ideal by listing its maximal elements without any punctuation.
\end{example}

\begingroup
\newcommand{\DownIdNeg}{$\mathord{\id} \mathord{\neg}$}
\newcommand{\DownId}{$\mathord{\id}$}
\newcommand{\DownNeg}{$\mathord{\neg}$}
\newcommand{\DownNimp}{$\mathord{\nrightarrow}$}
\newcommand{\DownCo}{$0$}
\newcommand{\DownEmpty}{$\emptyset$}
\begin{figure}
\begin{center}
\begin{tabular}{c@{\qquad\qquad}c}
\tikzset{every node/.style={circle,draw,fill=black,inner sep=2pt}, every line/.style={thick}, font=\small} 
\scalebox{0.85}{%
\begin{tikzpicture}[baseline=(c0)]
\node[label=below:$0$] (c0) at (0,0) {};
\node[label=left:$\nrightarrow$] (imp) at (0,1) {};
\node[label=above:$\id$] (id) at (-1,2) {};
\node[label=above:$\neg$] (neg) at (1,2) {};
\draw (c0) -- (imp);
\draw (imp) -- (id);
\draw (imp) -- (neg);
\end{tikzpicture}
}
&
\tikzset{every node/.style={circle,draw,fill=black,inner sep=2pt}, every line/.style={thick}, font=\small} 
\scalebox{0.85}{%
\begin{tikzpicture}[baseline=(Empty)]
\node[label=above:\DownIdNeg] (IdNeg) at (0,0) {};
\node[label=left:\DownId] (Id) at ($(IdNeg)+(-1,-1)$) {};
\node[label=right:\DownNeg] (Neg) at ($(IdNeg)+(1,-1)$) {};
\draw (IdNeg) -- (Id);   \draw (IdNeg) -- (Neg);
\node[label=left:\DownNimp] (Nimp) at ($(IdNeg)+(0,-2)$) {};
\draw (Id) -- (Nimp);   \draw (Neg) -- (Nimp);
\node[label=left:\DownCo] (Co) at ($(Nimp)+(0,-1)$) {};
\draw (Nimp) -- (Co);
\node[label=below:\DownEmpty] (Empty) at ($(Co)+(0,-1)$) {};
\draw (Co) -- (Empty);
\end{tikzpicture}
}
\\
\\
$\posetAllleq{1} = (\clAllleq{1}, \mathord{\minmin})$
&
$\Ideals(\posetAllleq{1})$
\end{tabular}
\end{center}
\caption{The minorant\hyp{}minor poset of Boolean functions with at most one true point and its ideal lattice.}
\label{fig:minmin1}
\end{figure}
\endgroup

\begingroup
\newcommand{\DownCi}{$1$}
\newcommand{\DownIdPlusNeg}{$\mathord{\id} \mathord{+} \mathord{\neg}$}
\newcommand{\DownIdPlus}{$\mathord{\id} \mathord{+}$}
\newcommand{\DownIdNeg}{$\mathord{\id} \mathord{\neg}$}
\newcommand{\DownPlusNeg}{$\mathord{+} \mathord{\neg}$}
\newcommand{\DownIdLco}{$\mathord{\id} \lambda_{30}$}
\newcommand{\DownPlus}{$\mathord{+}$}
\newcommand{\DownNegLci}{$\mathord{\neg} \lambda_{31}$}
\newcommand{\DownId}{$\mathord{\id}$}
\newcommand{\DownLcoLci}{$\lambda_{30} \lambda_{31}$}
\newcommand{\DownNeg}{$\mathord{\neg}$}
\newcommand{\DownLco}{$\lambda_{30}$}
\newcommand{\DownLci}{$\lambda_{31}$}
\newcommand{\DownNimp}{$\mathord{\nrightarrow}$}
\newcommand{\DownCo}{$0$}
\newcommand{\DownEmpty}{$\emptyset$}
\begin{figure}
\begin{center}
\begin{tabular}{c@{\qquad\qquad}c}
\tikzset{every node/.style={circle,draw,fill=black,inner sep=2pt}, every line/.style={thick}, font=\small} 
\scalebox{0.85}{%
\begin{tikzpicture}[baseline=(c0)]
\node[label=below:$0$] (c0) at (0,0) {};
\node[label=left:$\nrightarrow$] (Nimp) at (0,1) {};
\node[label=left:$\lambda_{31}$] (L31) at (-1,2) {};
\node[label=right:$\lambda_{30}$] (L30) at (1,2) {};
\node[label=left:$\id$] (id) at (-2,3) {};
\node[label=left:$+$] (plus) at (0,3) {};
\node[label=right:$\neg$] (neg) at (2,3) {};
\node[label=above:$1$] (c1) at (0,5) {};
\draw (c0) -- (Nimp);
\draw (Nimp) -- (L31);
\draw (Nimp) -- (L30);
\draw (L31) -- (id);
\draw (L31) -- (plus);
\draw (L30) -- (plus);
\draw (L30) -- (neg);
\draw (id) -- (c1);
\draw (plus) -- (c1);
\draw (neg) -- (c1);
\end{tikzpicture}}
&
\tikzset{every node/.style={circle,draw,fill=black,inner sep=2pt}, every line/.style={thick}, font=\small} 
\scalebox{0.85}{%
\begin{tikzpicture}[baseline=(Empty)]
\node[label=above:\DownCi] (C1) at (0,0) {};
\node[label=left:\DownIdPlusNeg] (IdPlusNeg) at ($(C1)+(0,-1)$) {};
\draw (C1) -- (IdPlusNeg);
\node[label=left:\DownIdPlus] (IdPlus) at ($(IdPlusNeg)+(-1,-1)$) {};
\node[label=left:\DownIdNeg] (IdNeg) at ($(IdPlusNeg)+(0,-1)$) {};
\node[label=right:\DownPlusNeg] (PlusNeg) at ($(IdPlusNeg)+(1,-1)$) {};
\draw (IdPlusNeg) -- (IdPlus);   \draw (IdPlusNeg) -- (IdNeg);   \draw (IdPlusNeg) -- (PlusNeg);
\node[label=left:\DownIdLco] (IdL30) at ($(IdPlus)+(0,-1)$) {};
\node[label=left:\DownPlus] (Plus) at ($(IdNeg)+(0,-1)$) {};
\node[label=right:\DownNegLci] (NegL31) at ($(PlusNeg)+(0,-1)$) {};
\draw (IdPlus) -- (IdL30);   \draw (IdPlus) -- (Plus);
\draw (IdNeg) -- (IdL30);   \draw (IdNeg) -- (NegL31);
\draw (PlusNeg) -- (Plus);   \draw (PlusNeg) -- (NegL31);
\node[label=left:\DownId] (Id) at ($(IdL30)+(-1,-1)$) {};
\node[label=left:\DownLcoLci] (L30L31) at ($(Plus)+(0,-1)$) {};
\node[label=right:\DownNeg] (Neg) at ($(NegL31)+(1,-1)$) {};
\draw (IdL30) -- (Id);   \draw (IdL30) -- (L30L31);   \draw (Plus) -- (L30L31);   \draw (NegL31) -- (L30L31);   \draw (NegL31) -- (Neg);
\node[label=left:\DownLci] (L31) at ($(Id)+(1,-1)$) {};
\node[label=right:\DownLco] (L30) at ($(Neg)+(-1,-1)$) {};
\draw (Id) -- (L31);   \draw (L30L31) -- (L31);  \draw (L30L31) -- (L30);   \draw (Neg) -- (L30);
\node[label=left:\DownNimp] (Nimp) at ($(L30)!0.5!(L31)+(0,-1)$) {};
\draw (L30) -- (Nimp);   \draw (L31) -- (Nimp);
\node[label=left:\DownCo] (C0) at ($(Nimp)+(0,-1)$) {};
\draw (Nimp) -- (C0);
\node[label=below:\DownEmpty] (Empty) at ($(C0)+(0,-1)$) {};
\draw (C0) -- (Empty);
\end{tikzpicture}
}
\\
\\
$\posetAllleq{2} = (\clAllleq{2}, \mathord{\minmin})$
&
$\Ideals(\posetAllleq{2})$
\end{tabular}
\end{center}
\caption{The minorant\hyp{}minor poset of Boolean functions with at most two true points and its ideal lattice.}
\label{fig:minmin2}
\end{figure}
\endgroup

\begin{figure}
\begin{center}
\tikzset{every node/.style={circle,draw,fill=black,inner sep=2pt}, every line/.style={thick}, font=\small} 
\scalebox{0.85}{%
\begin{tikzpicture}
\node[label=below:$0$] (c0) at (0,3) {};
\node[label=left:$\nrightarrow$] (imp) at (0,4) {};
\node[label=right:$\lambda_2^0$] (L20) at (0.75,4.75) {};
\node[label=left:$\lambda_2^1$] (L21) at (-0.75,4.75) {};
\node[label=left:$\lambda_2$] (L2) at (0,5.5) {};
\node[label=right:$\lambda_{10}^0$] (L100) at (1.5,5.5) {};
\node[label=left:$\lambda_{11}^1$] (L111) at (-1.5,5.5) {};
\node[label=right:$\delta_0^{0{+}}$] (D00p) at (3,7) {};
\node[label=left:$\delta_1^{1{+}}$] (D11p) at (-3,7) {};
\node[label=right:$\lambda_{30}$] (L30) at (4.5,8.5) {};
\node[label=left:$\lambda_{31}$] (L31) at (-4.5,8.5) {};
\node[label=right:$\delta_0^0$] (D00) at (3,8.5) {};
\node[label=left:$\delta_1^1$] (D11) at (-3,8.5) {};
\node[label=right:$A_0^{+}$] (A0p) at (4.5,10) {};
\node[label=left:$A_1^{+}$] (A1p) at (-4.5,10) {};
\node[label=right:$A_0$] (A0) at (4.5,11.5) {};
\node[label=left:$A_1$] (A1) at (-4.5,11.5) {};
\node[label=right:$\delta_1^0$] (D10) at (6,10) {};
\node[label=left:$\delta_0^1$] (D01) at (-6,10) {};
\node[label=right:$\lambda_{10}$] (L10) at (-3,10) {};
\node[label=left:$\lambda_{11}$] (L11) at (3,10) {};
\node[label=right:$\neg$] (neg) at (4.5,13) {};
\node[label=left:$\id$] (id) at (-4.5,13) {};
\node[label=below:$\Gamma_{01}$] (G01) at (0,12.75) {};
\node[label=left:$\delta_0^{+}$] (D0p) at (-3,13.25) {};
\node[label=right:$\delta_1^{+}$] (D1p) at (3,13.25) {};
\node[label=above:$+$] (plus) at (0,15.75) {};
\node[label=left:$\delta_0$] (D0) at (-4.5,15.75) {};
\node[label=right:$\delta_1$] (D1) at (4.5,15.75) {};
\node[label=right:$\Gamma_0$] (G0) at (3,15.75) {};
\node[label=left:$\Gamma_1$] (G1) at (-3,15.75) {};
\node[label=left:$\vee$] (or) at (-3,17) {};
\node[label=right:$\uparrow$] (nand) at (3,17) {};
\node[label=above:$1$] (c1) at (0,18.5) {};
\draw (c0) -- (imp);
\draw (imp) -- (L20);   \draw (imp) -- (L21);
\draw (L20) -- (L2);    \draw (L21) -- (L2);
\draw (L20) -- (L100);  \draw (L21) -- (L111);
\draw (L2) -- (L10);    \draw (L2) -- (L11);
\draw (L100) -- (L10);  \draw (L111) -- (L11);
\draw (L100) -- (D00p); \draw (L111) -- (D11p);
\draw (D00p) -- (L30);  \draw (D11p) -- (L31);
\draw (L30) -- (L11);   \draw (L31) -- (L10);
\draw (G1) -- (D0p);    \draw (G0) -- (D1p);
\draw (D00p) -- (D0p);  \draw (D11p) -- (D1p);
\draw (L10) -- (D0p);   \draw (L11) -- (D1p);
\draw (D00p) -- (D00);  \draw (D11p) -- (D11);
\draw (L10) -- (G01);   \draw (L11) -- (G01);
\draw (L30) -- (D10);   \draw (L31) -- (D01);
\draw (L30) -- (A0p);   \draw (L31) -- (A1p);
\draw (D00) -- (D0);    \draw (D11) -- (D1);
\draw (D01) -- (D0);    \draw (D10) -- (D1);
\draw (D0p) -- (D0);    \draw (D1p) -- (D1);
\draw (A0p) -- (A0);    \draw (A1p) -- (A1);
\draw (D00) -- (A0);    \draw (D11) -- (A1);
\draw (A0) -- (neg);    \draw (A1) -- (id);
\draw (D10) -- (neg);   \draw (D01) -- (id);
\draw (D0p) -- (plus);  \draw (D1p) -- (plus);
\draw (G01) -- (plus);
\draw (neg) -- (G0);    \draw (id) -- (G1);
\draw (G01) -- (G0);    \draw (G01) -- (G1);
\draw (A1p) -- (G0);    \draw (A0p) -- (G1);
\draw (A0) -- (or);     \draw (A1) -- (nand);
\draw (D0) -- (or);     \draw (D1) -- (nand);
\draw (plus) -- (or);   \draw (plus) -- (nand);
\draw (G1) -- (or);     \draw (G0) -- (nand);
\draw (or) -- (c1);     \draw (nand) -- (c1);
\end{tikzpicture}
}

\medskip
$\posetAllleq{3} = (\clAllleq{3}, \mathord{\minmin})$
\end{center}

\caption{The minorant\hyp{}minor poset of Boolean functions with at most three true points.}
\label{fig:MinMin3}
\end{figure}


\subsection{Closures of functions and extensions of the minorant\hyp{}minor order}

The goal of this subsection is to describe the intersections of classes of the form $\clKlik{k}{\Theta}$ with other types of $(\clIc,\clMcUk{k})$\hyp{}clonoids, such as $\clXI$, $\clIX$, $\clM$, $\clMneg$, and $\clRefl$ (for the notation, see Definitions~\ref{def:all}, \ref{def:neg}, \ref{def:monotone}).
To this end, we are going to define a few extensions of the minorant\hyp{}minor order.

\begin{definition}
\label{def:C-closure}
Let $f \in \clAll^{(n)}$.
For $C \in \{\clXI, \clIX, \clM, \clMneg, \clRefl\}$,
we define the \emph{$C$\hyp{}closure} of $f$, denoted by $f^C$, as the least majorant $g$ of $f$ such that $g \in C$.
(Such a least majorant exists; it is the conjunction of all majorants of $f$ that are in $C$, i.e., $f^C := \bigwedge \{ \, g \in C \mid f \minorant g \, \}$.
In order to see this, note first that there exist majorants of $f$ in $C$, namely $f \minorant 1 \in C$ for each class $C$ considered here.
Secondly, the conjunction of majorants of $f$ is again a majorant of $f$.
Thirdly, for each class $C$ considered here, it holds that $\{ \mathord{\wedge} \} C \subseteq C$.)
It is easy to see that
\begin{itemize}
\item $f^\clXI(\vect{a}) = f(\vect{a})$ if $\vect{a} \neq \vect{1}$ and $f^\clXI(\vect{1}) = 1$,
\item $f^\clIX(\vect{a}) = f(\vect{a})$ if $\vect{a} \neq \vect{0}$ and $f^\clIX(\vect{0}) = 1$,
\item $f^\clM(\vect{a}) = 1$ if and only if there is a $\vect{b} \in \{0,1\}^n$ such that $\vect{b} \leq \vect{a}$ and $f(\vect{b}) = 1$,
\item $f^\clMneg(\vect{a}) = 1$ if and only if there is a $\vect{b} \in \{0,1\}^n$ such that $\vect{a} \leq \vect{b}$ and $f(\vect{b}) = 1$,
\item $f^\clRefl(\vect{a}) = f(\vect{a}) \vee f(\overline{\vect{a}})$.
\end{itemize}
For any set $K \subseteq \clAll$, we let $K^C := \{ \, f^C \mid f \in K \, \}$.
Furthermore, we define the \emph{$C$\hyp{}closure relation} $\RelCl{C}$ on $\clAll$ by the rule $f \RelCl{C} g$ if and only if $f = g^C$.
We extend the minorant\hyp{}minor relation $\minmin$ into the \emph{$C$\hyp{}closed minorant\hyp{}minor relation} $\minmin_C$, defined as the transitive closure of the union of the minorant, minor, and the $C$\hyp{}closure relations, i.e., $\minmin_C := (\mathord{\minorant} \cup \mathord{\minor} \cup \mathord{\RelCl{C}})^\mathrm{tr}$.
Thus, $\minmin_C$ is a quasiorder, and it induces an equivalence relation $\eqminmin_C$ on $\clAll$ and a partial order, also denoted by $\minmin_C$, on $\clAll / \mathord{\eqminmin_C}$.

Similarly to Definition~\ref{def:Id-minmin}, we are interested in the restriction of the $C$\hyp{}closed minorant\hyp{}minor relation $\minmin_C$ to Boolean functions with at most $k$ true points, and we consider the poset
$\posetAllleq{k}_C = (\clAllleq{k} / \mathord{\eqminmin_C}, \mathord{\minmin_C}|_{\clAllleq{k}_C})$
of $\eqminmin_C$\hyp{}equivalence classes of $\clAllleq{k}$ ordered by $\minmin_C$.
Let $\Ideals(\posetAllleq{k}_C)$ be the lattice of ideals of $\posetAllleq{k}_C$,
\[
\Ideals(\posetAllleq{k}_C) :=
\{ \, {\downarrow^{[\leq k]}_C} X \mid X \subseteq \clAllleq{k} / \mathord{\eqminmin_C} \, \},
\]
where, for the sake of clarity, ${\downarrow^{[\leq k]}_C} X$ denotes the downset of $X$ in $\posetAllleq{k}_C$.
We identify each element ${\downarrow^{[\leq k]}_C} X$ of $\Ideals(\posetAllleq{k}_C)$ with the union of $\eqminmin_C$\hyp{}equivalence classes in ${\downarrow^{[\leq k]}_C} X$, i.e., the set $\bigcup {\downarrow^{[\leq k]}_C} X$, which is easily seen to be equal to
$({\downarrow_C} X ) \cap \clAllleq{k}$, and in this last expression, the downset ${\downarrow_C} X$ is taken in $(\clAll, \mathord{\minmin_C})$.
\end{definition}

\begin{remark}
\label{rem:1MR}
By the definition of $C$\hyp{}closure, we have $f \minorant f^C \RelCl{C} f$ for any function $f \in \clAll$ and $C \in \{\clXI, \clIX, \clM, \clMneg, \clRefl\}$.
Therefore $\mathord{\minorant} \circ \mathord{\RelCl{C}}$ is a reflexive relation.
Furthermore, $\mathord{\RelCl{C}}$ is a transitive relation, which follows immediately from the fact that $(f^C)^C = f^C$.
\end{remark}

We now find analogues of Lemma~\ref{lem:minmin} for $\mathord{\minmin_C}$.

\begin{lemma}
\label{lem:1MR-apu}
\leavevmode
\begin{enumerate}[label={\upshape(\roman*)}]
\item\label{lem:1MR-apu:1}
$\mathord{\RelCl{\clXI}} \circ \mathord{\minorant} \subseteq \mathord{\minorant} \circ \mathord{\RelCl{\clXI}}$
and
$\mathord{\RelCl{\clXI}} \circ \mathord{\minor} \subseteq \mathord{\minorant} \circ \mathord{\minor} \circ \mathord{\RelCl{\clXI}}$.

\item\label{lem:1MR-apu:0}
$\mathord{\RelCl{\clIX}} \circ \mathord{\minorant} \subseteq \mathord{\minorant} \circ \mathord{\RelCl{\clIX}}$
and
$\mathord{\RelCl{\clIX}} \circ \mathord{\minor} \subseteq \mathord{\minorant} \circ \mathord{\minor} \circ \mathord{\RelCl{\clIX}}$.

\item\label{lem:1MR-apu:M}
$\mathord{\RelCl{\clM}} \circ \mathord{\minorant} \subseteq \mathord{\minorant} \circ \mathord{\RelCl{\clM}}$
and
$\mathord{\RelCl{\clM}} \circ \mathord{\minor} \subseteq \mathord{\minorant} \circ \mathord{\minor} \circ \mathord{\RelCl{\clM}}$.

\item\label{lem:1MR-apu:Mneg}
$\mathord{\RelCl{\clMneg}} \circ \mathord{\minorant} \subseteq \mathord{\minorant} \circ \mathord{\RelCl{\clMneg}}$
and
$\mathord{\RelCl{\clMneg}} \circ \mathord{\minor} \subseteq \mathord{\minorant} \circ \mathord{\minor} \circ \mathord{\RelCl{\clMneg}}$.

\item\label{lem:1MR-apu:R}
$\mathord{\RelCl{\clRefl}} \circ \mathord{\minorant} \subseteq \mathord{\minorant} \circ \mathord{\RelCl{\clRefl}}$
and
$\mathord{\RelCl{\clRefl}} \circ \mathord{\minor} \subseteq \mathord{\minor} \circ \mathord{\RelCl{\clRefl}}$.
\end{enumerate}
\end{lemma}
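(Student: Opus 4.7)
The plan is to prove each of the five parts by the same two-witness construction, exploiting two structural properties of the $C$\hyp{}closures: (a) monotonicity with respect to the minorant order, i.e., $h \minorant g$ implies $h^C \minorant g^C$, and (b) commutativity (up to $\minorant$) with the formation of minors, i.e., $(g_\sigma)^C \minorant (g^C)_\sigma$.

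For the first inclusion in each part, suppose $f \mathrel{(\mathord{\RelCl{C}} \circ \mathord{\minorant})} g$, so there is an $h$ with $f = h^C$ and $h \minorant g$. I take as witness $g^C$ and note that $g^C \RelCl{C} g$ by the definition of $\RelCl{C}$; it then suffices to check $f \minorant g^C$, i.e., $h^C \minorant g^C$. This is immediate from the pointwise descriptions in Definition~\ref{def:C-closure}: for $C = \clXI$ (resp.\ $\clIX$), both closures agree with their argument outside the point $\vect{1}$ (resp.\ $\vect{0}$) and take the value $1$ at that point; for $C = \clM$, whenever $h^\clM(\vect{a}) = 1$ is witnessed by some $\vect{b} \leq \vect{a}$ with $h(\vect{b}) = 1$, we have $g(\vect{b}) \geq h(\vect{b}) = 1$, so $g^\clM(\vect{a}) = 1$; the case $\clMneg$ is dual; and for $\clRefl$, $h^\clRefl(\vect{a}) = h(\vect{a}) \vee h(\overline{\vect{a}}) \leq g(\vect{a}) \vee g(\overline{\vect{a}}) = g^\clRefl(\vect{a})$.

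For the second inclusion, suppose $f \mathrel{(\mathord{\RelCl{C}} \circ \mathord{\minor})} g$: there is an $h$ with $f = h^C$ and $h = g_\sigma$ for some minor formation map $\sigma \colon \nset{m} \to \nset{n}$. I take $h_1 := (g^C)_\sigma$ and $h_2 := g^C$, so that trivially $h_1 \minor h_2 \RelCl{C} g$, and the whole task reduces to showing $f = (g_\sigma)^C \minorant (g^C)_\sigma = h_1$. For $C = \clM$, if $(g_\sigma)^\clM(\vect{a}) = 1$, pick $\vect{b} \leq \vect{a}$ with $g_\sigma(\vect{b}) = g(\vect{b}\sigma) = 1$; since $\vect{x} \mapsto \vect{x}\sigma$ preserves order, $\vect{b}\sigma \leq \vect{a}\sigma$, and hence $(g^\clM)_\sigma(\vect{a}) = g^\clM(\vect{a}\sigma) = 1$. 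The case $\clMneg$ is dual. For $C = \clXI$, I use that $\vect{a} = \vect{1}$ implies $\vect{a}\sigma = \vect{1}$, so either $\vect{a} = \vect{1}$ and both sides equal $1$, or $\vect{a} \neq \vect{1}$ and the LHS equals $g(\vect{a}\sigma)$, which is bounded above by the RHS (which equals $g(\vect{a}\sigma)$ or $1$); the case $\clIX$ is dual.

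For $\clRefl$ I can in fact establish the stronger identity $(g_\sigma)^\clRefl = (g^\clRefl)_\sigma$, since $\overline{\vect{a}}\sigma = \overline{\vect{a}\sigma}$ for every $\vect{a}$, giving
\[
(g_\sigma)^\clRefl(\vect{a}) = g(\vect{a}\sigma) \vee g(\overline{\vect{a}}\sigma) = g(\vect{a}\sigma) \vee g(\overline{\vect{a}\sigma}) = g^\clRefl(\vect{a}\sigma) = (g^\clRefl)_\sigma(\vect{a}),
\]
which is exactly why part~\ref{lem:1MR-apu:R} has $\minor$ rather than $\mathord{\minorant} \circ \mathord{\minor}$ on the right. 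No step is a serious obstacle; the work is a careful unpacking of the five definitions of $(-)^C$. The most delicate point is the $\clXI$ and $\clIX$ cases of the second inclusion, where the failure of $\vect{a}\sigma = \vect{1}$ (resp.\ $\vect{0}$) to imply $\vect{a} = \vect{1}$ (resp.\ $\vect{0}$) when $\sigma$ is not surjective forces one to drop equality and settle for a minorant relationship.
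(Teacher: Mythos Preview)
Your proposal is correct and follows essentially the same approach as the paper: the same witnesses $g^C$ and $(g^C)_\sigma$ are used, and the core of each argument is the verification that $h \minorant g$ implies $h^C \minorant g^C$ and that $(g_\sigma)^C \minorant (g^C)_\sigma$ (with equality in the $\clRefl$ case). The only cosmetic difference is that the paper argues the $\clM$ minor case contrapositively (showing $(g^\clM)_\sigma(\vect{a}) = 0$ implies $(g_\sigma)^\clM(\vect{a}) = 0$), whereas you argue it directly, but this is immaterial.
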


\begin{proof}
\ref{lem:1MR-apu:1}
Assume that $f \mathrel{(\mathord{\RelCl{\clXI}} \circ \mathord{\minorant})} g$.
Then there exists a function $h$ such that $f \RelCl{\clXI} h \minorant g$, that is, $f = h^\clXI$ and $h$ is a minorant of $g$.
We want to show that $f \mathrel{(\mathord{\minorant} \circ \mathord{\RelCl{\clXI}})} g$, i.e., $f \minorant g^\clXI$.
For any $\vect{a} \neq \vect{1}$, we have $f(\vect{a}) = h^\clXI(\vect{a}) \leq g(\vect{a}) = g^\clXI(\vect{a})$ and $f(\vect{1}) = h^\clXI(\vect{1}) = 1 = g^\clXI(\vect{1})$,
which shows that $f \minorant g^\clXI$.

Assume now that $f \mathrel{(\mathord{\RelCl{\clXI}} \circ \mathord{\minor})} g$.
Then there exists a minor formation map $\sigma$ such that $f = (g_\sigma)^\clXI$.
In order to show that $f \mathrel{(\mathord{\minorant} \circ \mathord{\minor} \circ \mathord{\RelCl{\clXI}})} g$, it suffices to show that $(g_\sigma)^\clXI \minorant (g^\clXI)_\sigma$.
If $\vect{a} \neq \vect{1}$, then $(g_\sigma)^\clXI(\vect{a}) = g_\sigma(\vect{a}) = g(\vect{a} \sigma) \leq g^\clXI(\vect{a} \sigma) = (g^\clXI)_\sigma(\vect{a})$.
Furthermore, $(g_\sigma)^\clXI(\vect{1}) = 1 = (g^\clXI)_\sigma(\vect{1})$.
This shows that $f \mathrel{(\mathord{\minorant} \circ \mathord{\minor} \circ \mathord{\RelCl{\clXI}})} g$.

\ref{lem:1MR-apu:0}
The proof is similar to that of statement \ref{lem:1MR-apu:1}.

\ref{lem:1MR-apu:M}
Assume that $f \mathrel{(\mathord{\RelCl{\clM}} \circ \mathord{\minorant})} g$.
Then there exists a function $h$ such that $f \RelCl{\clM} h \minorant g$, that is, $f = h^\clM$ and $h$ is a minorant of $g$.
We want to show that $f \mathrel{(\mathord{\minorant} \circ \mathord{\RelCl{\clM}})} g$, i.e., $f \minorant g^\clM$.
For this, it is sufficient to show that for all $\vect{a} \in \{0,1\}^n$, $f(\vect{a}) = 1$ implies $g^\clM(\vect{a}) = 1$.
Assume $f(\vect{a}) = 1$.
Since $f = h^\clM$, there exists a $\vect{b}$ with $\vect{b} \leq \vect{a}$ such that $h(\vect{b}) = 1$.
Since $h \minorant g$, we have $g(\vect{b}) = 1$.
Since $\vect{b} \leq \vect{a}$, this implies that $g^\clM(\vect{a}) = 1$.

Assume that $f \mathrel{(\mathord{\RelCl{\clM}} \circ \mathord{\minor})} g$.
Then there exists a minor formation map $\sigma$ such that $f = (g_\sigma)^\clM$.
In order to show that $f \mathrel{(\mathord{\minorant} \circ \mathord{\minor} \circ \mathord{\RelCl{\clM}})} g$, it suffices to show that $f \minorant (g^\clM)_\sigma$.
For this, it suffices to show that for all $\vect{a}$, $(g^\clM)_\sigma(\vect{a}) = 0$ implies $f(\vect{a}) = 0$.
Assume $(g^\clM)_\sigma(\vect{a}) = 0$.
Then $g^\clM(\vect{a} \sigma) = 0$, and hence, by the definition of the monotone closure, $g(\vect{b}) = 0$ whenever $\vect{b} \leq \vect{a} \sigma$. Consequently, $g_\sigma(\vect{c}) = 0$ whenever $\vect{c} \leq \vect{a}$, which in turn implies $f(\vect{a}) = (g_\sigma)^\clM(\vect{a}) = 0$.

\ref{lem:1MR-apu:Mneg}
The proof is similar to that of statement \ref{lem:1MR-apu:M}.

\ref{lem:1MR-apu:R}
Assume that $f \mathrel{(\mathord{\RelCl{\clRefl}} \circ \mathord{\minorant})} g$.
Then there exists a function $h$ such that $f \RelCl{\clRefl} h \minorant g$, that is, $f = h^\clRefl$ and $h$ is a minorant of $g$.
We want to show that $f \mathrel{(\mathord{\minorant} \circ \mathord{\RelCl{\clRefl}})} g$, i.e., $f \minorant g^\clRefl$.
For this, it is sufficient to show that for all $\vect{a} \in \{0,1\}^n$, $f(\vect{a}) = 1$ implies $g^\clRefl(\vect{a}) = 1$.
Assume $f(\vect{a}) = 1$.
Since $f = h^\clRefl$, we have $h(\vect{a}) \vee h(\overline{\vect{a}}) = 1$; hence $h(\vect{a}) = 1$ or $h(\overline{\vect{a}}) = 1$.
Since $h \minorant g$, it follows that $g(\vect{a}) = 1$ or $g(\overline{\vect{a}}) = 1$; hence $g^\clRefl(\vect{a}) = 1$.
This shows that $f \mathrel{(\mathord{\minorant} \circ \mathord{\RelCl{\clRefl}})} g$.

Assume that $f \mathrel{(\mathord{\RelCl{\clRefl}} \circ \mathord{\minor})} g$.
Then there exists a minor formation map $\sigma$ such that $f = (g_\sigma)^\clRefl$.
We have
\begin{align*}
f(\vect{a}) = 1
& \iff (g_\sigma)^\clRefl(\vect{a}) = 1 \\
& \iff g_\sigma(\vect{a}) \vee g_\sigma(\overline{\vect{a}}) = 1 \\
& \iff g_\sigma(\vect{a}) = 1 \text{ or } g_\sigma(\overline{\vect{a}}) = 1 \\
& \iff g(\vect{a} \sigma) = 1 \text{ or } g(\overline{\vect{a}} \sigma) = 1 \\
& \iff g^\clRefl(\vect{a} \sigma) = 1 \\
& \iff (g^\clRefl)_\sigma(\vect{a}) = 1;
\end{align*}
hence $f = (g^\clRefl)_\sigma$, which shows that $f \mathrel{(\mathord{\minor} \circ \mathord{\RelCl{\clRefl}})} g$.
\end{proof}

\begin{lemma}
\label{lem:1MR-prod}
For $C \in \{\clXI, \clIX, \clM, \clMneg, \clRefl\}$,
$\mathord{\minmin_C} = \mathord{\minorant} \circ \mathord{\minor} \circ \mathord{\RelCl{C}}$.
\end{lemma}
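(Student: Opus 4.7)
The plan is to reduce the claim to the rewriting identities already collected in Lemma~\ref{lem:1MR-apu} (together with Lemma~\ref{lem:minmin}\ref{lem:minmin:move}) and then normalize every word in the alphabet $\{\minorant, \minor, \RelCl{C}\}$ into the canonical form $\mathord{\minorant} \circ \mathord{\minor} \circ \mathord{\RelCl{C}}$. The inclusion $\mathord{\minorant} \circ \mathord{\minor} \circ \mathord{\RelCl{C}} \subseteq \mathord{\minmin_C}$ is immediate from the definition of $\minmin_C$ as the transitive closure of the union of the three relations. For the reverse inclusion, I would show by induction on $r$ that every composition $\beta_1 \circ \cdots \circ \beta_r$ with $\beta_i \in \{\minorant, \minor, \RelCl{C}\}$ is contained in $\mathord{\minorant} \circ \mathord{\minor} \circ \mathord{\RelCl{C}}$. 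Since $\minmin_C$ is the union of all such compositions, this suffices.

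The base case $r=0$ amounts to checking that $\mathord{\minorant} \circ \mathord{\minor} \circ \mathord{\RelCl{C}}$ is reflexive, which is witnessed by the chain $f \minorant f^C \minor f^C \RelCl{C} f$ for any $f \in \clAll$ (see Remark~\ref{rem:1MR}). For the inductive step, assuming the claim for $r$, I would right-compose with an additional letter $\beta_{r+1}$ and split into three cases. If $\beta_{r+1} = \RelCl{C}$, the conclusion follows at once from the transitivity of $\RelCl{C}$ noted in Remark~\ref{rem:1MR}. If $\beta_{r+1} = \minorant$, one successively applies $\mathord{\RelCl{C}} \circ \mathord{\minorant} \subseteq \mathord{\minorant} \circ \mathord{\RelCl{C}}$ from Lemma~\ref{lem:1MR-apu}, then $\mathord{\minor} \circ \mathord{\minorant} \subseteq \mathord{\minorant} \circ \mathord{\minor}$ from Lemma~\ref{lem:minmin}\ref{lem:minmin:move}, and finally absorbs $\mathord{\minorant} \circ \mathord{\minorant}$ via transitivity of $\minorant$. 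If $\beta_{r+1} = \minor$, the rewriting $\mathord{\RelCl{C}} \circ \mathord{\minor} \subseteq \mathord{\minorant} \circ \mathord{\minor} \circ \mathord{\RelCl{C}}$ from Lemma~\ref{lem:1MR-apu}\ref{lem:1MR-apu:1}--\ref{lem:1MR-apu:Mneg} (or the simpler $\mathord{\minor} \circ \mathord{\RelCl{C}}$ in the case $C = \clRefl$, from Lemma~\ref{lem:1MR-apu}\ref{lem:1MR-apu:R}) produces at most one extra $\minorant$ and one extra $\minor$ in front of the final $\RelCl{C}$; these can then be pushed past the existing $\minor$ using Lemma~\ref{lem:minmin}\ref{lem:minmin:move} and collapsed by transitivity of $\minorant$ and $\minor$ into the desired shape.

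I do not anticipate a genuine obstacle: all the real content has been packaged into Lemma~\ref{lem:1MR-apu}, so the present proof is purely combinatorial bookkeeping on words. The only subtleties to watch are the verification of reflexivity of $\mathord{\minorant} \circ \mathord{\minor} \circ \mathord{\RelCl{C}}$ (which relies on $f \minorant f^C$ holding by the very definition of $f^C$ as a majorant of $f$, plus $f^C \RelCl{C} f$ by the definition of $\RelCl{C}$) and the minor notational adjustment needed in the $\clRefl$ case, where the pertinent rewriting produces a $\mathord{\minor} \circ \mathord{\RelCl{C}}$ rather than a $\mathord{\minorant} \circ \mathord{\minor} \circ \mathord{\RelCl{C}}$ factor, which if anything shortens the case analysis.
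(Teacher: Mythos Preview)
Your proposal is correct and follows essentially the same approach as the paper: both arguments use the rewriting inclusions of Lemma~\ref{lem:1MR-apu} and Lemma~\ref{lem:minmin}\ref{lem:minmin:move} to normalize an arbitrary word in $\{\minorant, \minor, \RelCl{C}\}$ into the canonical form $\mathord{\minorant} \circ \mathord{\minor} \circ \mathord{\RelCl{C}}$. The only difference is presentational---you phrase the normalization as an explicit induction on word length, whereas the paper argues more informally that one can ``bring all $\minorant$'s to the left, all $\minor$'s to the middle, and all $\RelCl{C}$'s to the right'' and then collapse repetitions by transitivity.
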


\begin{proof}
We need to show that $(\mathord{\minorant} \cup \mathord{\minor} \cup \mathord{\RelCl{C}})^\mathrm{tr} = \mathord{\minorant} \circ \mathord{\minor} \circ \mathord{\RelCl{C}}$.
The inclusion $\mathord{\minorant} \circ \mathord{\minor} \circ \mathord{\RelCl{C}} \subseteq (\mathord{\minorant} \cup \mathord{\minor} \cup \mathord{\RelCl{C}})^\mathrm{tr}$ holds by the definition of transitive closure.

In order to prove the converse inclusion,
assume $f \mathrel{(\mathord{\minorant} \cup \mathord{\minor} \cup \mathord{\RelCl{C}})^\mathrm{tr}} g$.
Then there exists a $p \in \IN$ and a sequence of relations $\alpha_1, \dots, \alpha_p \in \{\mathord{\minorant}, \mathord{\minor}, \mathord{\RelCl{C}}\}$ such that $f \mathrel{(\alpha_1 \circ \alpha_2 \circ \dots \circ \alpha_p)} g$.
Since $\minorant$, $\minor$, and $\mathord{\minorant} \circ \mathord{\RelCl{C}}$ are reflexive relations (see Remark~\ref{rem:1MR}), we may assume that each of the relations $\minorant$, $\minor$, and $\mathord{\RelCl{C}}$ appears in the sequence at least once.
By using the inclusions between the relational products
established in Lemmata~\ref{lem:minmin}\ref{lem:minmin:move} and \ref{lem:1MR-apu},
we can bring all occurrences of $\minorant$ to the left, keep all occurrences of $\minor$ in the middle, and bring all occurrences of $\mathord{\RelCl{C}}$ to the right.
Since $\minorant$, $\minor$, and $\mathord{\RelCl{C}}$ are transitive (see Remark~\ref{rem:1MR}), we can then omit all repetitions, and we are left with $f \mathrel{(\mathord{\minorant} \circ \mathord{\minor} \circ \mathord{\RelCl{C}})} g$.
This shows that $(\mathord{\minorant} \cup \mathord{\minor} \cup \mathord{\RelCl{C}})^\mathrm{tr} \subseteq \mathord{\minorant} \circ \mathord{\minor} \circ \mathord{\RelCl{C}}$.
\end{proof}

\begin{lemma}
For $C \in \{\clXI, \clIX, \clM, \clMneg, \clRefl\}$,
$\mathord{\minmin} \subseteq \mathord{\minmin_C}$.
Consequently,
$\eqminmin$ is a refinement of $\eqminmin_C$.
\end{lemma}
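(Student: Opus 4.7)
The plan is to exploit the fact that both $\minmin$ and $\minmin_C$ are defined as transitive closures of unions of relations, where the union defining $\minmin_C$ is literally larger than the one defining $\minmin$. The inclusion $\mathord{\minmin} \subseteq \mathord{\minmin_C}$ should therefore drop out directly, without any need to invoke the structural rewriting lemmas (Lemma~\ref{lem:minmin} and Lemma~\ref{lem:1MR-apu}) used in the more delicate proof of Lemma~\ref{lem:1MR-prod}.

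Concretely, I would first write $\mathord{\minorant} \cup \mathord{\minor} \subseteq \mathord{\minorant} \cup \mathord{\minor} \cup \mathord{\RelCl{C}}$, which is obvious from set\hyp{}theoretic considerations. Then I would appeal to the monotonicity of the transitive\hyp{}closure operator: whenever $R \subseteq S$, one has $R^{\mathrm{tr}} \subseteq S^{\mathrm{tr}}$. Applying this with $R = \mathord{\minorant} \cup \mathord{\minor}$ and $S = \mathord{\minorant} \cup \mathord{\minor} \cup \mathord{\RelCl{C}}$, and unfolding the definitions of $\minmin$ and $\minmin_C$ from Definitions~\ref{def:minmin} and \ref{def:C-closure}, yields $\mathord{\minmin} \subseteq \mathord{\minmin_C}$.

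For the consequence about equivalence relations, I would argue pointwise. Suppose $f \eqminmin g$; unfolding the definition of $\eqminmin$ from Definition~\ref{def:minmin} gives $f \minmin g$ and $g \minmin f$. By the inclusion just proved, this yields $f \minmin_C g$ and $g \minmin_C f$, hence $f \eqminmin_C g$ by the definition of $\eqminmin_C$ in Definition~\ref{def:C-closure}. Thus every $\eqminmin$\hyp{}class is contained in a single $\eqminmin_C$\hyp{}class, which is exactly what it means for $\eqminmin$ to refine $\eqminmin_C$.

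There is no real obstacle here; the lemma is purely a bookkeeping observation about how the two quasiorders are constructed. The only mild care required is to make sure the reader sees that the definitions genuinely give nested unions, and that both transitive\hyp{}closure and the passage from quasiorder to induced equivalence respect inclusion. I would keep the write\hyp{}up to a few lines.
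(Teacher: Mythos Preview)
Your proposal is correct and takes essentially the same approach as the paper, which simply states that the result is ``clear from the definition of the relations $\minmin$ and $\minmin_C$.'' Your write-up merely spells out what this means: the defining union for $\minmin_C$ contains that for $\minmin$, transitive closure is monotone, and the refinement of the induced equivalences follows pointwise.
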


\begin{proof}
Clear from the definition of the relations $\minmin$ and $\minmin_C$.
\end{proof}

\begin{definition}
\label{def:kC-closed}
Let $k \in \IN \cup \{\infty\}$, $\Phi \subseteq \clAll$, and $C \in \{\clXI, \clIX, \clM, \clMneg, \clRefl\}$.
The set $\Phi$ is \emph{$(k,C)$\hyp{}closed} if for every $\varphi \in \Phi$ and for every $T \subseteq (\varphi^C)^{-1}(1)$ with $\card{T} \leq k$, we have $\chi_T \in \Phi$.
We also say that a $(\clIc,\clMcUk{k})$\hyp{}clonoid $K$ is \emph{$(k,C)$\hyp{}closed} if $K = \clKlik{k}{\Phi}$ for some $(k,C)$\hyp{}closed $\Phi$.
\end{definition}

\begin{lemma}
\label{lem:k-1MR-char}
Let $k \in \IN \cup \{\infty\}$, $\Phi \in \Ideals(\posetAllleq{k})$, and $C \in \{\clXI, \clIX, \clM, \clMneg, \clRefl\}$.
Then $\Phi$ is $(k,C)$\hyp{}closed if and only if $\Phi \in \Ideals(\posetAllleq{k}_C)$.
\end{lemma}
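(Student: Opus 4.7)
My plan is to use Lemma~\ref{lem:1MR-prod}, which says $\mathord{\minmin_C} = \mathord{\minorant} \circ \mathord{\minor} \circ \mathord{\RelCl{C}}$, to translate ``$\Phi$ is closed downward under $\minmin_C$'' into the concrete condition involving $\varphi^C$ that defines $(k,C)$-closedness. Before starting either direction, I record the easy observation that since $\mathord{\minmin} \subseteq \mathord{\minmin_C}$, every $\minmin_C$-ideal of $\clAllleq{k}$ is automatically a $\minmin$-ideal; so it is consistent to speak of a $\Phi \in \Ideals(\posetAllleq{k})$ being possibly a member of $\Ideals(\posetAllleq{k}_C)$.

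For the ``only if'' direction, assume $\Phi \in \Ideals(\posetAllleq{k}_C)$, let $\varphi \in \Phi$ and $T \subseteq (\varphi^C)^{-1}(1)$ with $\card{T} \leq k$. The function $\chi_T$ has the same arity as $\varphi^C$, and since $T \subseteq (\varphi^C)^{-1}(1)$ we have $\chi_T \minorant \varphi^C$; moreover $\varphi^C \RelCl{C} \varphi$ by the definition of the closure relation. Chaining these two steps yields $\chi_T \minmin_C \varphi$, and $\chi_T \in \clAllleq{k}$ because $\card{\chi_T^{-1}(1)} = \card{T} \leq k$. Since $\Phi$ is a $\minmin_C$-downset, we conclude $\chi_T \in \Phi$.

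For the ``if'' direction, assume $\Phi$ is $(k,C)$-closed (and a $\minmin$-ideal of $\clAllleq{k}$). Take $g \in \Phi$ and $f \in \clAllleq{k}$ with $f \minmin_C g$. By Lemma~\ref{lem:1MR-prod} there is a minor formation map $\sigma$ with $f \minorant (g^C)_\sigma$. Setting $T := f^{-1}(1)$ and $T\sigma := \{\, \vect{a}\sigma \mid \vect{a} \in T \,\}$, each $\vect{a} \in T$ satisfies $g^C(\vect{a}\sigma) = (g^C)_\sigma(\vect{a}) = 1$, so $T\sigma \subseteq (g^C)^{-1}(1)$ and $\card{T\sigma} \leq \card{T} \leq k$. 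The $(k,C)$-closedness applied to $g \in \Phi$ and the set $T\sigma$ then gives $\chi_{T\sigma} \in \Phi$. A direct check shows $\chi_T \minorant (\chi_{T\sigma})_\sigma$ (if $\chi_T(\vect{a}) = 1$ then $\vect{a} \in T$ so $\vect{a}\sigma \in T\sigma$, and hence $(\chi_{T\sigma})_\sigma(\vect{a}) = \chi_{T\sigma}(\vect{a}\sigma) = 1$), so $f = \chi_T \minmin \chi_{T\sigma}$; since $f \in \clAllleq{k}$ and $\Phi$ is a $\minmin$-ideal of $\clAllleq{k}$, we obtain $f \in \Phi$.

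I expect no serious obstacle: the content is really just unpacking Lemma~\ref{lem:1MR-prod} and keeping track of arities and of which direction a minor formation map runs. The only step that requires a moment's care is the small verification that $\chi_T \minorant (\chi_{T\sigma})_\sigma$ in the ``if'' direction, which is what bridges $T\sigma \subseteq (g^C)^{-1}(1)$ back to $f$ itself.
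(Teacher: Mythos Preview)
Your proof is correct and follows essentially the same approach as the paper's, using Lemma~\ref{lem:1MR-prod} to unwind $\minmin_C$ and passing through $\chi_{T\sigma}$ in the harder direction; your verification that $\chi_T \minorant (\chi_{T\sigma})_\sigma$ is in fact slightly more careful than the paper, which asserts equality. Note, however, that you have swapped the labels ``if'' and ``only if'': the statement reads ``$(k,C)$-closed if and only if $\Phi \in \Ideals(\posetAllleq{k}_C)$'', so assuming $\Phi \in \Ideals(\posetAllleq{k}_C)$ is the ``if'' direction, not the ``only if''.
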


\begin{proof}
Assume first that $\Phi$ is $(k,C)$\hyp{}closed.
We need to show that for any $\varphi \in \Phi$ and every $\gamma \in \clAllleq{k}$ with $\gamma \minmin_C \varphi$, we have $\gamma \in \Phi$.

Thus, let $\varphi \in \Phi$ and $\gamma \in \clAllleq{k}$ with $\gamma \minmin_C \varphi$.
Then $\gamma \mathrel{(\mathord{\minorant} \circ \mathord{\minor} \circ \RelCl{C})} \varphi$ by Lemma~\ref{lem:1MR-prod}, so $\gamma \mathrel{(\mathord{\minorant} \circ \mathord{\minor})} \varphi^C$.
Therefore, there exists a minor formation map $\sigma$ such that $\gamma \minorant (\varphi^C)_\sigma$.
Let $S := \gamma^{-1}(1)$, and let $S \sigma := \{ \, \vect{a} \sigma \mid \vect{a} \in S \, \}$.
For every $\vect{b} \in S \sigma$, we have $\vect{b} = \vect{a} \sigma$ for some $\vect{a} \in S$, and hence $1 = \chi_{S \sigma}(\vect{b}) = \chi_{S \sigma}(\vect{a} \sigma) = \chi_S(\vect{a}) =  \gamma(\vect{a}) = (\varphi^C)_\sigma(\vect{a}) = \varphi^C(\vect{a} \sigma) = \varphi^C(\vect{b})$; therefore $\chi_{S \sigma} \minorant \varphi^C$.
Since $\gamma \in \clAllleq{k}$, we have $\card{S} \leq k$; hence $\card{S \sigma} \leq k$ as well.
Therefore $\chi_{S \sigma} \in \Phi$ by Definition~\ref{def:kC-closed}.
Since $\gamma = \chi_S \minmin (\chi_{S \sigma})_\sigma$ by Lemma~\ref{lem:chiT-chiTsigma}, we have $\gamma \in \Phi$, because $\Phi$ is closed under minorants and minors.

Assume now that $\Phi \in \Ideals(\posetAllleq{k}_C)$.
Let $\varphi \in \Phi$, $S \subseteq (\varphi^C)^{-1}(1)$ with $\card{S} \leq k$.
We need to show that $\chi_S \in \Phi$.
We have $\chi_S \in \clAllleq{k}$ by definition.
Moreover, $\chi_S \minorant \varphi^C$, so $\chi_S \minorant \varphi^C \minor \varphi^C \mathrel{\RelCl{C}} \varphi$, so $\chi_S \minmin_C \varphi$.
It now follows from our assumption that $\chi_S \in \Phi$.
\end{proof}

\begin{lemma}
\label{lem:k-1MR-closed}
Let $k \in \IN \cup \{\infty\}$, $\Phi \subseteq \clAll$ and $C \in \{ \clXI, \clIX, \clM, \clMneg, \clRefl \}$.
\begin{enumerate}[label=\textup{(\roman*)}]
\item\label{lem:k-1MR-closed:a}
If $\Phi$ is $(k,C)$\hyp{}closed, then $(\clKlik{k}{\Phi})^C = \clKlik{k}{\Phi} \cap C$.

\item\label{lem:k-1MR-closed:b}
If $\Phi$ is a minorant minion and $(\clKlik{k}{\Phi})^C = \clKlik{k}{\Phi} \cap C$, then $\Phi$ is $(k,C)$\hyp{}closed.
\end{enumerate}
\end{lemma}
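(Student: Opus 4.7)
The plan is to prove (i) by splitting into two inclusions. The easy direction $\clKlik{k}{\Phi} \cap C \subseteq (\clKlik{k}{\Phi})^C$ follows from the observation that $g^C = g$ whenever $g \in C$, which is quickly verified for each choice of $C$ from Definition~\ref{def:C-closure}; then every $g \in \clKlik{k}{\Phi} \cap C$ appears in $(\clKlik{k}{\Phi})^C$ as its own image.

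For the reverse inclusion $(\clKlik{k}{\Phi})^C \subseteq \clKlik{k}{\Phi} \cap C$, I would fix $f \in \clKlik{k}{\Phi}$; since $f^C \in C$ is automatic, the task is to show $f^C \in \clKlik{k}{\Phi}$. Given $T \subseteq (f^C)^{-1}(1)$ with $\card{T} \leq k$, I would build a witness set $T^* \subseteq f^{-1}(1)$ with $\card{T^*} \leq \card{T}$ by picking, for each $\vect{a} \in T$, a point $\vect{b}_{\vect{a}} \in f^{-1}(1)$ certifying $f^C(\vect{a}) = 1$: namely $\vect{b}_{\vect{a}} = \vect{a}$ for $C \in \{\clXI, \clIX\}$ (omitting the exceptional $\vect{1}$ or $\vect{0}$ if it occurs in $T$); some $\vect{b} \leq \vect{a}$ with $f(\vect{b}) = 1$ for $C = \clM$; some $\vect{b} \geq \vect{a}$ with $f(\vect{b}) = 1$ for $C = \clMneg$; and $\vect{b}_{\vect{a}} \in \{\vect{a}, \overline{\vect{a}}\}$ for $C = \clRefl$. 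Applying $f \in \clKlik{k}{\Phi}$ to $T^*$ and invoking Lemma~\ref{lem:minmin}\ref{lem:minmin:cond}, I obtain $\varphi \in \Phi$ and a minor formation map $\sigma$ with $\chi_{T^*} \minorant \varphi_\sigma$.

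The crux is the verification $T\sigma \subseteq (\varphi^C)^{-1}(1)$, which relies on the compatibility of minor maps with the respective closure relations: $\vect{1}\sigma = \vect{1}$ for $C = \clXI$, $\vect{0}\sigma = \vect{0}$ for $C = \clIX$, the order-preservation $\vect{b} \leq \vect{a} \Rightarrow \vect{b}\sigma \leq \vect{a}\sigma$ for $C \in \{\clM, \clMneg\}$, and the identity $\overline{\vect{a}}\sigma = \overline{\vect{a}\sigma}$ for $C = \clRefl$. In each case this delivers $\varphi^C(\vect{a}\sigma) = 1$ for every $\vect{a} \in T$. Since $\card{T\sigma} \leq k$ and $\varphi \in \Phi$, the $(k,C)$-closure hypothesis yields $\chi_{T\sigma} \in \Phi$, and then $\chi_T \minorant (\chi_{T\sigma})_\sigma$ gives $\chi_T \in {\downarrow}\Phi$, completing the argument.

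Part (ii) is short: since $\kLoc[k]$ is extensive (Lemma~\ref{lem:lLoc-closureop}), $\Phi \subseteq \clKlik{k}{\Phi}$, so for any $\varphi \in \Phi$ the hypothesis gives $\varphi^C \in (\clKlik{k}{\Phi})^C = \clKlik{k}{\Phi} \cap C \subseteq \clKlik{k}{\Phi}$. The defining property of $\clKlik{k}{\Phi}$ then places $\chi_T$ in ${\downarrow}\Phi$ for every $T \subseteq (\varphi^C)^{-1}(1)$ with $\card{T} \leq k$, and since $\Phi$ is a minorant minion (a $\minmin$-downset), $\chi_T \in \Phi$. The main obstacle is the uniform packaging of the five subcases in (i): the witness construction and the compatibility identity with $\sigma$ differ from case to case, but once laid out side by side, each reduces to a short calculation.
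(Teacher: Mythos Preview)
Your proposal is correct and follows essentially the same approach as the paper's proof: the same two-inclusion argument for (i), the same case-by-case construction of witness points $\vect{b}_{\vect{a}}$ and the same compatibility of $\sigma$ with the closure relations to land $T\sigma$ inside $(\varphi^C)^{-1}(1)$, and the same short extensivity-plus-downset argument for (ii). The only cosmetic difference is that you phrase the hard inclusion as ``take $f \in \clKlik{k}{\Phi}$ and show $f^C \in \clKlik{k}{\Phi}$'' whereas the paper writes it as ``take $f \in (\clKlik{k}{\Phi})^C$, so $f = (f')^C$ for some $f' \in \clKlik{k}{\Phi}$, and show $f \in \clKlik{k}{\Phi}$''; these are equivalent.
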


\begin{proof}
\ref{lem:k-1MR-closed:a}
The inclusion $\clKlik{k}{\Phi} \cap C \subseteq (\clKlik{k}{\Phi})^C$ holds by definition.

In order to prove the converse inclusion $(\clKlik{k}{\Phi})^C \subseteq \clKlik{k}{\Phi} \cap C$, let $f \in (\clKlik{k}{\Phi})^C$.
We have $f \in C$ by definition, and it only remains to show that $f \in \clKlik{k}{\Phi}$.
There exists a $f' \in \clKlik{k}{\Phi}$ such that $f = (f')^C$.
Let $T \subseteq f^{-1}(1)$ with $\card{T} \leq k$; say $T = \{ \vect{a}_1, \vect{a}_2, \dots, \vect{a}_p \}$ for some $p \leq k$.
We need to show that $\chi_T \in {\downarrow} \Phi$.
We consider different cases according to the class $C$.

Case 1: $C = \clXI$.
If $\vect{1} \notin T$, then $T \subseteq (f')^{-1}(1)$, so $\chi_T \in {\downarrow} \Phi$ because $f' \in \clKlik{k}{\Phi}$.
If $\vect{1} \in T$, then $T \setminus \{\vect{1}\} \subseteq (f')^{-1}(1)$, so $\chi_{T \setminus \{\vect{1}\}} \in {\downarrow} \Phi$.
In other words, there exists a $\varphi \in \Phi$ such that $\chi_{T \setminus \{\vect{1}\}} \minmin \varphi$, i.e., there is a $\tau \colon \nset{\arity \varphi} \to \nset{n}$ such that
$\chi_{T \setminus \{\vect{1}\}} \minorant \varphi_\tau \minor \varphi$.
Let $T \tau := \{ \vect{a} \tau \mid \vect{a} \in T \}$; note that $\vect{1} \tau = \vect{1}$ and $\chi_{T \tau} \minorant \varphi^\clXI$.
Since $\Phi$ is $(k,\clXI)$\hyp{}closed and $\card{T \tau} \leq \card{T} \leq k$, we have that $\chi_{T \tau} \in \Phi$.
Consequently, $\chi_T \in {\downarrow} \Phi$ by Lemma~\ref{lem:chiT-chiTsigma}.

Case 2: $C = \clIX$.
The proof is similar to the previous case.

Case 3: $C = \clM$.
For each $\vect{a} \in T$, there is a $\vect{b}_\vect{a} \in (f')^{-1}(1)$ with $\vect{b}_\vect{a} \leq \vect{a}$; let $B := \{ \vect{b}_\vect{a} \mid \vect{a} \in T \}$.
Since $f' \in \clKlik{k}{\Phi}$ and $\card{B} \leq \card{T} \leq k$, there exists a $\varphi \in \Phi$ and $\tau \colon \nset{\arity \varphi} \to \nset{n}$ such that $\varphi(\vect{b}_\vect{a} \tau) = 1$ for all $\vect{a} \in T$.
Note that $\vect{b}_\vect{a} \tau \leq \vect{a} \tau$; hence $\varphi^\clM(\vect{a} \tau) = 1$ for all $\vect{a} \in T$.
Let $T \tau := \{\vect{a} \tau \mid \vect{a} \in T \}$.
Since $\Phi$ is $(k,\clM)$\hyp{}closed and $\card{T \tau} \leq \card{T} \leq k$, we have that $\chi_{T \tau} \in \Phi$.
Consequently, $\chi_T \in {\downarrow} \Phi$ by Lemma~\ref{lem:chiT-chiTsigma}.

Case 4: $C = \clMneg$.
The proof is similar to the previous case.

Case 5: $C = \clRefl$.
For each $\vect{a} \in T$, there is a $\vect{b}_\vect{a} \in (f')^{-1}(1)$ such that $\vect{b}_\vect{a} \in \{\vect{a}, \overline{\vect{a}}\}$.
Since $f' \in \clKlik{k}{\Phi}$, there exist a $\varphi \in \Phi$ and $\tau \colon \nset{\arity \varphi} \to \nset{n}$ such that $\varphi(\vect{b}_\vect{a} \tau) = 1$ for all $\vect{a} \in T$.
Note that $\vect{a} \tau \in \{ \vect{b}_\vect{a} \tau, \overline{\vect{b}_\vect{a} \tau} \}$.
Let $T \tau := \{\vect{a} \tau \mid \vect{a} \in T\}$.
Since $\Phi$ is $(k,\clRefl)$\hyp{}closed, $\chi_{T \tau} \minorant \varphi^\clRefl$, and $\card{T \tau} \leq \card{T} \leq k$,
it follows that $\chi_{T \tau} \in \Phi$.
Consequently, $\chi_T \in {\downarrow} \Phi$ by Lemma~\ref{lem:chiT-chiTsigma}.

\ref{lem:k-1MR-closed:b}
Let $\varphi \in \Phi$ and $T \subseteq (\varphi^C)^{-1}(1)$ with $\card{T} \leq k$.
We need to show that $\chi_T \in \Phi$.
Since $\varphi \in \Phi \subseteq \clKlik{k}{\Phi}$, we have $\varphi^C \in (\clKlik{k}{\Phi})^C = \clKlik{k}{\Phi} \cap C$.
Because $\varphi^C \in \clKlik{k}{\Phi}$, $T \subseteq (\varphi^C)^{-1}(1)$, and $\card{T} \leq k$, we have $T \tulee \Phi$, so $\chi_T \in {\downarrow} \Phi = \Phi$, where the equality holds because $\Phi$ is a minorant minion.
\end{proof}

\begin{lemma}
\label{lem:unionkC}
For $k \in \IN \cup \{\infty\}$ and $C \in \{ \clXI, \clIX, \clM, \clMneg, \clRefl \}$,
the union of $(k,C)$\hyp{}closed subsets of $\clAllleq{k}$ is $(k,C)$\hyp{}closed.
\end{lemma}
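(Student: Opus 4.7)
The plan is to unpack the definition of $(k,C)$-closedness and observe that the condition is ``pointwise'' in the sense that it only refers to a single witness $\varphi$ at a time. This is exactly the kind of closure condition that is automatically preserved under arbitrary unions.

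More precisely, let $\{\Phi_i\}_{i \in I}$ be a family of $(k,C)$-closed subsets of $\clAllleq{k}$, and set $\Phi := \bigcup_{i \in I} \Phi_i$. Clearly $\Phi \subseteq \clAllleq{k}$. To verify that $\Phi$ is $(k,C)$-closed, take any $\varphi \in \Phi$ and any $T \subseteq (\varphi^C)^{-1}(1)$ with $\card{T} \leq k$. By the definition of union, there exists some index $i \in I$ such that $\varphi \in \Phi_i$. Since $\Phi_i$ is $(k,C)$-closed, the defining property applied to $\varphi \in \Phi_i$ with the same set $T$ yields $\chi_T \in \Phi_i$, and hence $\chi_T \in \Phi$.

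There is no real obstacle here; the statement is essentially an observation that $(k,C)$-closedness is expressible as a conjunction of implications, each of whose hypothesis is existentially quantified over members of the set, and such properties always pass to unions. I would write the proof in one short paragraph matching the argument above.
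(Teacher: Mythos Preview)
Your proof is correct and is exactly the straightforward verification the paper alludes to; the paper's own proof consists of the single phrase ``Straightforward verification,'' and your argument spells out precisely that.
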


\begin{proof}
Straightforward verification.
\end{proof}

By Lemma~\ref{lem:unionkC} it is meaningful to speak of the largest $(k,C)$\hyp{}closed subset of a set $\Theta$ of Boolean functions.

\begin{proposition}
\label{prop:1MR-part}
Let $k \in \IN \cup \{\infty\}$, let $C \in \{ \clXI, \clIX, \clM, \clMneg, \clRefl \}$, and let $\Theta \in \Ideals(\posetAllleq{k})$.
Then $\clKlik{k}{\Theta} \cap C = (\clKlik{k}{\Psi})^C = \clKlik{k}{\Psi} \cap C$,
where $\Psi$ is the largest $(k,C)$\hyp{}closed subset of $\Theta$.
\end{proposition}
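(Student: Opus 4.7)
The plan is to exhibit $\Psi$ as the union of all $(k,C)$\hyp{}closed subsets of $\Theta$, which is itself $(k,C)$\hyp{}closed by Lemma~\ref{lem:unionkC}. With this choice, the equality $(\clKlik{k}{\Psi})^C = \clKlik{k}{\Psi} \cap C$ is immediate from Lemma~\ref{lem:k-1MR-closed}\ref{lem:k-1MR-closed:a}, so the remaining task is to prove $\clKlik{k}{\Theta} \cap C = \clKlik{k}{\Psi} \cap C$. The inclusion $\clKlik{k}{\Psi} \cap C \subseteq \clKlik{k}{\Theta} \cap C$ is immediate from $\Psi \subseteq \Theta$ via Lemma~\ref{lem:kllk-inclusion}.

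For the converse inclusion, given $f \in \clKlik{k}{\Theta} \cap C$, I would introduce the auxiliary set
\[
\Xi := \{\, \chi_S \mid S \subseteq f^{-1}(1),\ \card{S} \leq k \,\}.
\]
The containment $\Xi \subseteq \Theta$ is straightforward: each $\chi_S \in \Xi$ lies in $\clAllleq{k}$ by construction, and from $f \in \clKlik{k}{\Theta}$ (cf.\ Definition~\ref{def:UkTheta}) we obtain $\chi_S \in {\downarrow}\Theta$; since $\Theta$ is an ideal of $\posetAllleq{k}$, this forces $\chi_S \in \Theta$.

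The main step is to verify that $\Xi$ is $(k,C)$\hyp{}closed. The key observation is that whenever $\chi_S \in \Xi$, the inclusion $S \subseteq f^{-1}(1)$ gives $\chi_S \minorant f$, so $f$ is a $C$\hyp{}majorant of $\chi_S$; by the minimality built into the definition of the $C$\hyp{}closure (Definition~\ref{def:C-closure}), this forces $(\chi_S)^C \minorant f$. Consequently, for any $T' \subseteq ((\chi_S)^C)^{-1}(1)$ with $\card{T'} \leq k$ we have $T' \subseteq f^{-1}(1)$, so $\chi_{T'} \in \Xi$, which is precisely $(k,C)$\hyp{}closedness.

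Once $\Xi$ is established as a $(k,C)$\hyp{}closed subset of $\Theta$, the maximality of $\Psi$ yields $\Xi \subseteq \Psi$. Hence for every $T \subseteq f^{-1}(1)$ with $\card{T} \leq k$ we have $\chi_T \in \Xi \subseteq \Psi \subseteq {\downarrow}\Psi$, which places $f$ in $\clKlik{k}{\Psi}$ and completes the proof. The conceptual crux is the one-line observation that the $C$\hyp{}closure of a minorant of $f \in C$ is again a minorant of $f$; everything else amounts to combining Definition~\ref{def:UkTheta} with the ideal structure of $\Theta$ and the maximality of $\Psi$.
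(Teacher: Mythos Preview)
Your proof is correct and takes a genuinely different, cleaner route than the paper's. The paper argues the hard inclusion $\clKlik{k}{\Theta} \cap C \subseteq \clKlik{k}{\Psi} \cap C$ by contradiction: assuming there is an $f$ in the difference, it picks a witness set $T \subseteq f^{-1}(1)$, pushes it through a minor formation map $\tau$ (arranged to be surjective, with a right inverse $\tau'$), and then performs a five-way case split on $C \in \{\clXI, \clIX, \clM, \clMneg, \clRefl\}$ to show that a suitable set $S \tau'$ lies in $f^{-1}(1)$, yielding the contradiction $\chi_S \in \Theta$.

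Your argument is direct and avoids all of this machinery. The crucial insight is that the $C$\hyp{}closure is defined as the \emph{least} majorant in $C$ (Definition~\ref{def:C-closure}), so $\chi_S \minorant f$ with $f \in C$ immediately gives $(\chi_S)^C \minorant f$. This single observation replaces the paper's case analysis entirely, because it makes $(k,C)$\hyp{}closedness of your auxiliary set $\Xi$ automatic. The rest is just the maximality of $\Psi$.

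What your approach buys is brevity and uniformity across the five classes $C$; what the paper's approach buys is perhaps a more hands-on view of why each closure works, via the explicit formulas for $f^C$. But your argument is strictly simpler.
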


\begin{proof}
The equality
$(\clKlik{k}{\Psi})^C = \clKlik{k}{\Psi} \cap C$
holds by Lemma~\ref{lem:k-1MR-closed}.
Since $\Psi \subseteq \Theta$,
we have $\clKlik{k}{\Psi} \subseteq \clKlik{k}{\Theta}$ by Lemma~\ref{lem:kllk-inclusion-2} and hence
$\clKlik{k}{\Psi} \cap C \subseteq \clKlik{k}{\Theta} \cap C$.

Suppose now, to the contrary, that
the inclusion $\clKlik{k}{\Psi} \cap C \subseteq \clKlik{k}{\Theta} \cap C$ is proper.
Then there exists an $f \in (\clKlik{k}{\Theta} \cap C) \setminus (\clKlik{k}{\Psi} \cap C) = (\clKlik{k}{\Theta} \cap C) \setminus \clKlik{k}{\Psi}$.
Since $f \notin \clKlik{k}{\Psi}$, there exists $T \subseteq f^{-1}(1)$ with $\card{T} \leq k$, say $T = \{\vect{u}_1, \dots, \vect{u}_p\} \in f^{-1}(1)$ with $p \leq k$, such that for all $\psi \in \Psi$ and for all $\pi \colon \nset{\arity \psi} \to \nset{n}$, there is an $i \in \nset{p}$ such that $\psi(\vect{u}_i \pi) = 0$.
Since $f \in \clKlik{k}{\Theta}$, there is a $\theta \in \Theta$ and $\tau \in \nset{\arity \theta} \to \nset{n}$ such that $\theta(\vect{u}_i \tau) = 1$ for all $i \in \nset{p}$.
We may assume that $\tau$ is surjective.
(If necessary, we just need to add fictitious arguments to $\theta$ and extend $\tau$ in such a way that every element of $\nset{n}$ gets a preimage.
Since $\Theta$ is minor\hyp{}closed, the new modified $\theta$ will also belong to $\Theta$.)
In fact, for $T \tau := \{ \, \vect{t} \tau \mid \vect{t} \in T \,\}$, we have $\chi_{T \tau}(\vect{t} \tau) = \theta(\vect{t} \tau) = 1$ for all $\vect{t} \in T$, and therefore we can take $\chi_{T \tau}$ in place of $\theta$; since $\chi_{T \tau} \minorant \theta$ and $\Theta$ is minorant\hyp{}closed, we have $\chi_{T \tau} \in \Theta$ as well.
Since $\tau$ is surjective, it has a right inverse, i.e., a map $\tau' \colon \nset{n} \to \nset{\arity \theta}$ such that $\tau \tau'$ is the identity map on $\nset{n}$, and, consequently, $\vect{t} = \vect{t} \tau \tau'$ for all $\vect{t} \in T$.

Note that $\chi_{T \tau}(\vect{t} \tau) = 1$ for all $\vect{t} \in T$; therefore $\chi_{T \tau} \notin \Psi$ because of the way we chose the set $T$.
By the definition of $\Psi$, there exists a set $S \subseteq (\chi_{T \tau}^C)^{-1}(1)$ with $\card{S} \leq k$ such that $\chi_S \notin \Theta$.
Our goal is to show that for $S \tau' := \{ \, \vect{a} \tau' \mid \vect{a} \in S \,\}$, we have $S \tau' \subseteq f^{-1}(1)$.
Since $f \in \clKlik{k}{\Theta}$ and $\card{S \tau'} \leq \card{S} \leq k$, this will imply that $\chi_{S \tau'} \in {\downarrow} \Theta$.
By Lemma~\ref{lem:chiT-chiTsigma}, we then get $\chi_S \in {\downarrow} \Theta$.
Moreover, $\chi_S \in {\downarrow^{[\leq k]}} \Theta = \Theta$, and we will have reached the desired contradiction.

It remains to show that $S \tau' \subseteq f^{-1}(1)$.
The argument proceeds in slightly different ways for the different classes $C$.

\begin{itemize}[wide]
\item
Case 1: $C = \clXI$.
Let $\vect{u} \in S \tau'$.
Then $\vect{u} = \vect{v} \tau'$ for some $\vect{v} \in S$.
We have $\vect{v} \in T \tau \cup \{\vect{1}\}$.
If $\vect{v} \in T \tau$, then $\vect{v} = \vect{t} \tau$ for some $\vect{t} \in T$; hence $\vect{u} = \vect{v} \tau' = \vect{t} \tau \tau' = \vect{t} \in T \subseteq f^{-1}(1)$.
If $\vect{v} = \vect{1}$, then $\vect{u} = \vect{1} \tau' = \vect{1} \in f^{-1}(1)$ because $f \in \clXI$.

\item
Case 2: $C = \clIX$.
The proof is similar to the previous case.

\item
Case 3: $C = \clM$.
Let $\vect{u} \in S \tau'$.
Then $\vect{u} = \vect{v} \tau'$ for some $\vect{v} \in S$.
By the definition of monotone closure, there exists a $\vect{w} \in T \tau$ such that $\vect{w} \leq \vect{v}$; moreover, $\vect{w} = \vect{t} \tau$ for some $\vect{t} \in T$.
Now, $\vect{u} = \vect{v} \tau' \geq \vect{w} \tau' = \vect{t} \tau \tau' = \vect{t} \in T \subseteq f^{-1}(1)$.
Since $f \in \clM$, it follows that $\vect{u} \in f^{-1}(1)$.

\item
Case 4: $C = \clMneg$.
The proof is similar to the previous case.

\item
Case 5: $C = \clRefl$.
Let $\vect{u} \in S \tau'$.
Then $\vect{u} = \vect{v} \tau'$ for some $\vect{v} \in S$.
By the definition of reflexive closure, there exists a $\vect{w} \in T \tau$ such that $\vect{v} \in \{ \vect{w}, \overline{\vect{w}} \}$.
Now, $\vect{w} = \vect{t} \tau$ for some $\vect{t} \in T$, and $\overline{\vect{w}} = \overline{\vect{t} \tau} = \overline{\vect{t}} \tau$.
Consequently, $\vect{u} = \vect{v} \tau' \in \{ \vect{w} \tau', \overline{\vect{w}} \tau' \} = \{ \vect{t} \tau \tau', \overline{\vect{t}} \tau \tau' \} = \{ \vect{t}, \overline{\vect{t}} \} \subseteq f^{-1}(1)$ because $\vect{t} \in T \subseteq f^{-1}(1)$ and $f$ is reflexive.
\qedhere
\end{itemize}
\end{proof}

We conclude this subsection with a description of the least and greatest elements and the atoms and coatoms of the lattice of $(k,C)$\hyp{}closed $(\clIc,\clMcUk{k})$\hyp{}clonoids.

\begin{proposition}
\label{prop:some-kC-closed}
Let $k \geq 2$.
\begin{enumerate}[label={\upshape(\roman*)}]
\item\label{prop:some-kC-closed:AllEmpty}
For each $C \in \{\clXI, \clIX, \clM, \clMneg, \clRefl\}$, $\clEmpty$ and $\clAll$ are the least and the greatest $(k,C)$\hyp{}closed $(\clIc,\clMcUk{k})$\hyp{}clonoids.
\item\label{prop:some-kC-closed:C0}
For $C \in \{\clM, \clMneg, \clRefl\}$, $\clVako$ is the least nonempty $(k,C)$\hyp{}closed $(\clIc,\clMcUk{k})$\hyp{}clonoid.
However, $\clVako$ is not $(k,C')$\hyp{}closed for $C' \in \{\clXI, \clIX\}$.
\item\label{prop:some-kC-closed:Uk}
For $C \in \{\clXI, \clM\}$, the least $(k,C)$\hyp{}closed $(\clIc,\clMcUk{k})$\hyp{}clonoid distinct from $\clEmpty$ and $\clVako$ is $\clUk{k}$.
\item\label{prop:some-kC-closed:Wkneg}
For $C \in \{\clIX, \clMneg\}$, the least $(k,C)$\hyp{}closed $(\clIc,\clMcUk{k})$\hyp{}clonoid distinct from $\clEmpty$ and $\clVako$ is $\clWkneg{k}$.
\item\label{prop:some-kC-closed:Ukplus}
The least $(k,\clRefl)$\hyp{}closed $(\clIc,\clMcUk{k})$\hyp{}clonoid distinct from $\clEmpty$ and $\clVako$ is $\clKlik{k}{\{\mathord{+}\}}$.
\item\label{prop:some-kC-closed:OX}
For $C \in \{\clXI, \clM\}$, the greatest $(k,C)$\hyp{}closed $(\clIc,\clMcUk{k})$\hyp{}clonoid distinct from $\clAll$ is $\clOX$.
\item\label{prop:some-kC-closed:XO}
For $C \in \{\clIX, \clMneg\}$, the greatest $(k,C)$\hyp{}closed $(\clIc,\clMcUk{k})$\hyp{}clonoid distinct from $\clAll$ is $\clXO$.
\item\label{prop:some-kC-closed:OO}
The greatest $(k,\clRefl)$\hyp{}closed $(\clIc,\clMcUk{k})$\hyp{}clonoid distinct from $\clAll$ is $\clOO$.
\end{enumerate}
\end{proposition}

\begin{proof}
Note first that the classes $\clAll$, $\clEmpty$, $\clVako$, $\clUk{k}$, $\clKlik{k}{\{\mathord{+}\}}$, $\clOX$, $\clXO$, and $\clOO$ are of the form $\clKlik{k}{\Phi}$ for some $\Phi$; see Example~\ref{ex:classes}.

\ref{prop:some-kC-closed:AllEmpty}
Clear.

\ref{prop:some-kC-closed:C0}
Clear, because every nonempty minorant minion includes $\clVako$ and $0^C = 0$ for each $C \in \{\clM, \clMneg, \clRefl\}$ and $0^\clXI = \id \notin \clVako$ and $0^\clIX = \neg \notin \clVako$.

\ref{prop:some-kC-closed:Uk}
For any $f \in \clAll$, $\id \minmin f^\clXI$.
Similarly, for any $g \in \clAll \setminus \clVako$, $\id \minmin g^\clM$.
Therefore, ${\downarrow^{[\leq k]}} \{\id\}$ must be the least nonempty element of $\Ideals(\posetAllleq{k}_\clXI)$ and the least element distinct from $\clEmpty$ and $\clVako$ of $\Ideals(\posetAllleq{k}_\clM)$.
Therefore $\clUk{k} = \clKlik{k}{\{\id\}}$ has the claimed property.

\ref{prop:some-kC-closed:Wkneg}
The proof is similar to \ref{prop:some-kC-closed:Uk}.

\ref{prop:some-kC-closed:Ukplus}
For any $f \in \clAll \setminus \clVako$, $\mathord{+} \minmin f^\clRefl$.
Therefore ${\downarrow^{[\leq k]}} \{\mathord{+}\}$ is the least element distinct from $\clEmpty$ and $\clVako$ of $\Ideals(\posetAllleq{k}_\clRefl)$.
Therefore $\clKlik{k}{\{\mathord{+}\}}$ has the claimed property.

\ref{prop:some-kC-closed:OX}
For $C \in \{\clXI, \clM\}$, $\clOX$ is $(k,C)$\hyp{}closed by Lemma~\ref{lem:k-1MR-closed} because $(\clOX)^\clXI = \clOI = \clOX \cap \clXI$ and $(\clOX)^\clM = \clMo = \clOX \cap \clM$.
Moreover, for any $f \in \clAll \setminus \clOX = \clIX$, we have $1 \minmin f^C$.
Because $\clKlik{k}{\{1\}} = \clAll$, it follows that $\clOX$ is the greatest $(k,C)$\hyp{}closed $(\clIc,\clMcUk{k})$\hyp{}clonoid distinct from $\clAll$.

\ref{prop:some-kC-closed:XO}
The proof is similar to \ref{prop:some-kC-closed:OX}.

\ref{prop:some-kC-closed:OO}
By Lemma~\ref{lem:k-1MR-closed}, $\clOO$ is $(k,\clRefl)$\hyp{}closed because $(\clOO)^\clRefl = \clReflOO = \clOO \cap \clRefl$.
Moreover, for any $f \in \clAll \setminus \clOO = \clEioo$, we have $1 \minmin f^\clRefl$.
Because $\clKlik{k}{\{1\}} = \clAll$, it follows that $\clOO$ is the greatest $(k,\clRefl)$\hyp{}closed $(\clIc,\clMcUk{k})$\hyp{}clonoid distinct from $\clAll$.
\end{proof}


\section{$(G,k)$-semibisectable functions}
\label{sec:semibisectable}
\renewcommand{\gendefault}{(\clIc,\clMcUk{k})}

\numberwithin{theorem}{section}

We now introduce one further tool that makes it relatively easy to determine whether a function belongs to the $(\clIc,\clMcUk{k})$\hyp{}clonoid generated by a given set of functions.

\begin{lemma}
\label{lem:extend-McUk}
Let $k \geq 2$.
Let $T$ and $F$ be nonempty subsets of $\{0,1\}^n$ such that
for all $\vect{u}_1, \dots, \vect{u}_k \in T$ it holds that $\vect{u}_1 \wedge \dots \wedge \vect{u}_k \neq \vect{0}$,
and for all $\vect{u} \in T$ and $\vect{v} \in F$ it holds that $\vect{u} \nleq \vect{v}$.
Then there exists an $n$\hyp{}ary function $f \in \clMcUk{k}$ such that $f(\vect{a}) = 1$ for all $\vect{a} \in T$ and $f(\vect{b}) = 0$ for all $\vect{b} \in F$.
\end{lemma}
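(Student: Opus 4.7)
The plan is to take $f$ to be the monotone closure of the characteristic function of $T$, that is, $f := \chi_T^{\clM}$; equivalently, $f(\vect{a}) = 1$ if and only if there exists $\vect{u} \in T$ with $\vect{u} \leq \vect{a}$. This is the pointwise minimum monotone function that takes the value $1$ on every point of $T$, so it is the most economical candidate to try. All that remains is to verify that $f \in \clMcUk{k}$ and that $f$ has the prescribed values on $T$ and on $F$.

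The verification breaks into five essentially routine checks, which I would carry out in the following order. First, $f \in \clM$ is immediate from the construction, since $f^{-1}(1)$ is by definition an order filter. Second, $f(\vect{a}) = 1$ for $\vect{a} \in T$ is witnessed by taking $\vect{u} := \vect{a}$, and $f(\vect{b}) = 0$ for $\vect{b} \in F$ follows directly from the hypothesis that no $\vect{u} \in T$ lies below any $\vect{v} \in F$. Third, $f(\vect{1}) = 1$ because $T$ is nonempty and every $\vect{u} \in T$ satisfies $\vect{u} \leq \vect{1}$. Fourth, $f(\vect{0}) = 0$: otherwise $\vect{0}$ would belong to $T$, and instantiating the intersection hypothesis with $\vect{u}_1 = \dots = \vect{u}_k = \vect{0}$ would give $\vect{0} \neq \vect{0}$, a contradiction. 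Fifth, to show that $f \in \clUk{k}$, take any $\vect{a}_1, \dots, \vect{a}_k \in f^{-1}(1)$ and select witnesses $\vect{u}_i \in T$ with $\vect{u}_i \leq \vect{a}_i$; then
\[
\bigwedge_{i=1}^{k} \vect{a}_i \;\geq\; \bigwedge_{i=1}^{k} \vect{u}_i \;\neq\; \vect{0},
\]
the last inequality being precisely the hypothesis on $T$.

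Strictly speaking, there is no genuine obstacle here; the hypotheses are exactly tailored to the canonical construction. The only subtlety worth highlighting is that the intersection hypothesis is stated over tuples $(\vect{u}_1, \dots, \vect{u}_k)$ of length $k$ with repetitions allowed (rather than over $k$-element subsets), and this latitude is exactly what makes the fourth and fifth steps work: in the former it permits a single element to be repeated $k$ times, and in the latter the witnesses $\vect{u}_i$ attached to distinct $\vect{a}_i$ need not themselves be distinct.
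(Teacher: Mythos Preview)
Your proposal is correct and follows essentially the same construction as the paper: both take $f$ to be the characteristic function of ${\uparrow} T$ (equivalently, the monotone closure of $\chi_T$) and verify the required properties in the same way. The only cosmetic difference is in showing $f(\vect{0}) = 0$: the paper uses the nonemptiness of $F$ (so $\vect{0} \in {\downarrow} F$, which is disjoint from ${\uparrow} T$), whereas you invoke the intersection hypothesis with $\vect{u}_1 = \dots = \vect{u}_k = \vect{0}$; both arguments are valid.
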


\begin{proof}
Define $f \colon \{0,1\}^n \to \{0,1\}$ by the rule $f(\vect{a}) = 1$ if and only if $\vect{a} \in {\uparrow} T$.
We need to verify that $f \in \clMcUk{k}$ and $f(\vect{a}) = 1$ for all $\vect{a} \in T$ and $f(\vect{b}) = 0$ for all $\vect{b} \in F$.
It is immediate from the definition that $f \in \clM$ and $f(\vect{a}) = 1$ for all $\vect{a} \in T$.

Observe that ${\uparrow} T \cap {\downarrow} F = \emptyset$.
(Suppose to the contrary that there is an $\vect{a} \in {\uparrow} T \cap {\downarrow} F$.
Then there exist $\vect{u} \in T$ and $\vect{v} \in F$ such that $\vect{u} \leq \vect{a} \leq \vect{v}$, which contradicts the assumption that $\vect{u} \nleq \vect{v}$.)
Therefore $f(\vect{b}) = 0$ for all $\vect{b} \in F$.

Since $T \neq \emptyset$, we have $\vect{1} \in {\uparrow} T$, so $f(\vect{1}) = 1$.
Since $F \neq \emptyset$, we have $\vect{0} \in {\downarrow} F$, so $f(\vect{0}) = 0$.
Therefore $f \in \clOI$.

It remains to verify that $f \in \clUk{k}$.
Let $\vect{a}_1, \dots, \vect{a}_k \in f^{-1}(1) = {\uparrow} T$.
Then there exist $\vect{u}_1, \dots, \vect{u}_k \in T$ such that $\vect{u}_i \leq \vect{a}_i$ for all $i \in \nset{k}$.
It follows from our assumption that $\vect{0} \neq \vect{u}_1 \wedge \dots \wedge \vect{u}_k \leq \vect{a}_1 \wedge \dots \wedge \vect{a}_k$.
Therefore $f \in \clUk{k}$.
\end{proof}

\begin{definition}
\label{def:helpful}
Let $G$ be a set of Boolean functions, and for each $n \in \IN$, denote by $G_n$ the set of all $n$\hyp{}ary minors of functions in $G$.
Let $f$ be a nonconstant $n$\hyp{}ary Boolean function, and $k \in \IN_{+}$.
We say that $f$ is \emph{$(G,k)$\hyp{}semibisectable} if the following two conditions hold:
\begin{enumerate}[label={\upshape(\Alph*)}]
\item\label{helpful:k-true} For all $\vect{a}_1, \vect{a}_2, \dots, \vect{a}_k \in f^{-1}(1)$ there exists a $\tau \in G_n$ such that $\tau(\vect{a}_1) = \tau(\vect{a}_2) = \dots = \tau(\vect{a}_k) = 1$.
\item\label{helpful:both} For all $\vect{a} \in f^{-1}(1)$ and for all $\vect{b} \in f^{-1}(0)$ there exists a $\tau \in G_n$ such that $\tau(\vect{a}) = 1$ and $\tau(\vect{b}) = 0$.
\end{enumerate}
We say that a class $C \subseteq \clAll$ is \emph{$(G,k)$\hyp{}semibisectable} if every function in $C$ is $(G,k)$\hyp{}semibisectable.
\end{definition}

\begin{lemma}
\label{lem:helpful}
Let $G \subseteq \clAll$, $f \in \clAll \setminus \clVak$, and $k \geq 2$.
If $f$ is $(G,k)$\hyp{}semibisectable, then $f \in \gen{G}$.
\end{lemma}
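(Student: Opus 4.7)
The plan is to produce $f$ explicitly as a composition $h(\tau_1,\dots,\tau_m)$ in which $h \in \clMcUk{k}$ and each $\tau_i$ is an $n$\hyp{}ary minor of some function of $G$; by Lemma~\ref{lem:F-closure}, such a representation places $f$ in $\clMcUk{k}(G\clIc) = \gen{G}$.

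Set $n := \arity{f}$, $T := f^{-1}(1)$, $F := f^{-1}(0)$; both are nonempty because $f \notin \clVak$. The set $G_n$ of all $n$\hyp{}ary minors of functions in $G$ is contained in the finite set $\mathcal{O}_{\{0,1\}}^{(n)}$, so I can enumerate it as $\tau_1,\dots,\tau_m$. I then consider the map $\Phi \colon \{0,1\}^n \to \{0,1\}^m$ defined by $\Phi(\vect{a}) := (\tau_1(\vect{a}),\dots,\tau_m(\vect{a}))$ and put $T' := \Phi(T)$, $F' := \Phi(F)$.

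The heart of the argument is that the two hypotheses of Lemma~\ref{lem:extend-McUk} on the pair $(T',F')$ in $\{0,1\}^m$ are precisely the $\Phi$\hyp{}translations of the semibisectability conditions on $f$. Given $\vect{u}_1,\dots,\vect{u}_k \in T'$ with preimages $\vect{a}_1,\dots,\vect{a}_k \in T$, condition \ref{helpful:k-true} produces some $\tau_i \in G_n$ equal to $1$ on each $\vect{a}_j$; this is a coordinate $i$ at which every $\vect{u}_j$ has value $1$, hence $\vect{u}_1 \wedge \dots \wedge \vect{u}_k \neq \vect{0}$. Similarly, for $\vect{u} \in T'$, $\vect{v} \in F'$ with preimages $\vect{a} \in T$, $\vect{b} \in F$, condition \ref{helpful:both} delivers a $\tau_i \in G_n$ with $\tau_i(\vect{a})=1$ and $\tau_i(\vect{b})=0$, which is a coordinate at which $\vect{u}$ is $1$ and $\vect{v}$ is $0$, so $\vect{u} \nleq \vect{v}$. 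Lemma~\ref{lem:extend-McUk} then yields an $h \in \clMcUk{k}^{(m)}$ that is identically $1$ on $T'$ and identically $0$ on $F'$, and a one\hyp{}line check confirms $f(\vect{a}) = h(\Phi(\vect{a})) = h(\tau_1,\dots,\tau_m)(\vect{a})$ for every $\vect{a} \in \{0,1\}^n$.

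I do not anticipate any genuine obstacle: the difficulty has been packaged into Lemma~\ref{lem:extend-McUk}, and once one recognises that $(G,k)$\hyp{}semibisectability is designed to match its hypotheses coordinate\hyp{}by\hyp{}coordinate through $\Phi$, the argument runs on autopilot. The only mild things to confirm are that $G_n$ is nonempty (apply \ref{helpful:k-true} to $k$ copies of any $\vect{a} \in T$, using $T \neq \emptyset$) and that the nullary case $n=0$ is excluded (again by $f \notin \clVak$).
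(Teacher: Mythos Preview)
Your proof is correct and follows essentially the same approach as the paper: enumerate $G_n$, push $f^{-1}(1)$ and $f^{-1}(0)$ forward through the resulting map into $\{0,1\}^{|G_n|}$, check that the semibisectability conditions translate into the hypotheses of Lemma~\ref{lem:extend-McUk}, and read off $f = h(\tau_1,\dots,\tau_m) \in \clMcUk{k}(G\clIc) = \gen{G}$. Your added remarks on $G_n \neq \emptyset$ and $n \neq 0$ are small hygiene points the paper leaves implicit.
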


\begin{proof}
Let $f \in \clAll \setminus \clVak$ with $\arity{f} = n$, and assume that $f$ is $(G,k)$\hyp{}semibisectable.
Let $N$ be the cardinality of $G_n$ (there are only a finite number of $n$\hyp{}ary Boolean functions, so the set $G_n$ is certainly finite),
and assume that $\varphi_1, \dots, \varphi_N$ is an enumeration of the functions in $G_n$ in some fixed order.
Let $\varphi \colon \{0,1\}^n \to \{0,1\}^N$, $\varphi(\vect{a}) := (\varphi_1(\vect{a}), \dots, \varphi_N(\vect{a}))$.
Let $T := \varphi(f^{-1}(1))$ and $F := \varphi(f^{-1}(0))$.
Let $\vect{u}_1, \dots, \vect{u}_k \in T$; then there exist $\vect{a}_1, \dots, \vect{a}_k \in f^{-1}(1)$ such that $\varphi(\vect{a}_i) = \vect{u}_i$ for each $i \in \nset{k}$.
By condition \ref{helpful:k-true}, there exists an index $t \in \nset{N}$ such that $\varphi_t(\vect{a}_1) = \dots = \varphi_t(\vect{a}_k) = 1$; hence $\vect{u}_1 \wedge \dots \wedge \vect{u}_k \neq \vect{0}$.
Let now $\vect{u} \in T$ and $\vect{v} \in F$.
Then there exist $\vect{a} \in f^{-1}(1)$ and $\vect{b} \in f^{-1}(0)$ such that $\varphi(\vect{a}) = \vect{u}$ and $\varphi(\vect{b}) = \vect{v}$.
By condition \ref{helpful:both}, there exists an index $s \in \nset{N}$ such that $\varphi_s(\vect{a}) = 1$ and $\varphi_s(\vect{b}) = 0$; hence $\vect{u} = \varphi(\vect{a}) \nleq \varphi(\vect{b}) = \vect{v}$.
Therefore the sets $T$ and $F$ satisfy the hypotheses of Lemma~\ref{lem:extend-McUk}, and it follows that there exists an $N$\hyp{}ary function $h \in \clMcUk{k}$ such that $T \subseteq h^{-1}(1)$ and $F \subseteq h^{-1}(0)$.
Then it holds that $f = h \circ \varphi = h(\varphi_1, \dots, \varphi_N)$.
Since $h \in \clMcUk{k}$ and $\varphi_1, \dots, \varphi_N \in G \, \clIc$, we conclude with the help of Lemma~\ref{lem:F-closure} that $f \in \clMcUk{k} (G \, \clIc) = \gen{G}$.
\end{proof}


\section{Main result: $(\clIc,\mathsf{MU}^k_{01})$\hyp{}clonoids}
\label{sec:IcMcUk}
\numberwithin{theorem}{subsection}
\renewcommand{\gendefault}{(\clIc,\clMcUk{k})}

We are now ready to present and prove our main result: a complete description of the $(\clIc,\clMcUk{k})$\hyp{}clonoids for $k \geq 2$.

\subsection{The $(\clIc,\mathsf{MU}^k_{01})$\hyp{}clonoids}

\begin{theorem}
\label{thm:IcMcUk-clonoids}
For $k \in \IN$ with $k \geq 2$, the $(\clIc,\clMcUk{k})$\hyp{}clonoids are the following:
\begin{enumerate}[label={\upshape(\alph*)}]
\item\label{MAIN:klik}
$\clKlik{k}{\Theta}$ for each $\Theta \in \Ideals(\posetAllleq{k})$,
\item\label{MAIN:klikXI}
$\clKlik{k}{\Theta} \cap (\clOICO)$ and $\clKlik{k}{\Theta} \cap \clOI$ for each nonempty $\Theta \in \Ideals(\posetAllleq{k}_{\clXI})$,
\item\label{MAIN:klikIX}
$\clKlik{k}{\Theta} \cap (\clIOCO)$ and $\clKlik{k}{\Theta} \cap \clIO$ for each nonempty $\Theta \in \Ideals(\posetAllleq{k}_{\clIX})$,
\item\label{MAIN:klikM}
$\clKlik{k}{\Theta} \cap \clMo$ and $\clKlik{k}{\Theta} \cap \clMc$ for each nonempty $\Theta \in \Ideals(\posetAllleq{k}_{\clM})$,
\item\label{MAIN:klikMneg}
$\clKlik{k}{\Theta} \cap \clMineg$ and $\clKlik{k}{\Theta} \cap \clMcneg$ for each nonempty $\Theta \in \Ideals(\posetAllleq{k}_{\clMneg})$,
\item\label{MAIN:klikR}
$\clKlik{k}{\Theta} \cap \clReflOO$ for each nonempty $\Theta \in \Ideals(\posetAllleq{k}_{\clRefl})$,
\item\label{MAIN:rest}
$\clEiio$, $\clEioi$,
$\clEq$,
$\clOXC$, $\clXOC$,
$\clIXC$, $\clXIC$,
$\clOOC$, $\clOIC$, $\clIOC$,
$\clIIC$,
$\clOICI$, $\clIOCI$,
$\clIX$, $\clXI$,
$\clII$,
$\clM$, $\clMi$, $\clMneg$, $\clMoneg$,
$\clRefl$, $\clReflOOC$, $\clReflIIC$, $\clReflII$,
$\clVak$,
$\clVaki$.
\end{enumerate}
\end{theorem}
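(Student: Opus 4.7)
The plan is to prove the theorem in two directions: soundness (each listed class is a $(\clIc,\clMcUk{k})$\hyp{}clonoid) and completeness (every $(\clIc,\clMcUk{k})$\hyp{}clonoid is one of those listed). The key tools are Proposition~\ref{prop:UTheta-stability}\ref{prop:UTheta-stability:stable} for stability of the classes $\clKlik{k}{\Theta}$, Proposition~\ref{prop:1MR-part} for handling intersections with classes of the form $\clXI$, $\clIX$, $\clM$, $\clMneg$, $\clRefl$, and Lemma~\ref{lem:helpful} for proving that a function belongs to a given $(\clIc,\clMcUk{k})$\hyp{}clonoid via $(G,k)$\hyp{}semibisectability. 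Lemma~\ref{lem:C1nC2} (applied with $\clIc = \clIc^\mathrm{d}$) cuts the work by pairing dual cases, such as $\clOI$ with $\clIO$ and $\clM$ with $\clMneg$.

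For soundness, the classes in family~\ref{MAIN:klik} are $(\clIc,\clMcUk{k})$\hyp{}clonoids directly by Proposition~\ref{prop:UTheta-stability}\ref{prop:UTheta-stability:stable}. Families~\ref{MAIN:klikXI}--\ref{MAIN:klikR} are intersections of some $\clKlik{k}{\Theta}$ with a clone or a clone\hyp{}extension\hyp{}by\hyp{}constants from Post's lattice; each of these ambient classes is itself a $(\clIc,\clMcUk{k})$\hyp{}clonoid (by Lemma~\ref{lem:stable-clones} for the clones, and by direct inspection in the other cases), so intersections remain clonoids. The restriction to $(k,C)$\hyp{}closed ideals $\Theta$ guarantees via Proposition~\ref{prop:1MR-part} that the $C$\hyp{}closure of $\clKlik{k}{\Theta}$ equals $\clKlik{k}{\Theta} \cap C$, so these really are distinct clonoids in correspondence with $\Ideals(\posetAllleq{k}_C)$. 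Family~\ref{MAIN:rest} is a finite list that is checked individually; most entries are clones of Post's lattice, a few are obtained by adjoining $\clVak$, $\clVako$, or $\clVaki$ to a clone.

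For completeness, given an arbitrary $(\clIc,\clMcUk{k})$\hyp{}clonoid $K$, I would associate the ideal $\Theta(K) := K \cap \clAllleq{k}$. Stability of $K$ under $\clIc$ on the right gives minor\hyp{}closure, while stability under $\clMcUk{k}$ on the left (using, for instance, compositions with $\threshold{k+1}{k}$ and $x \wedge (y \vee z)$ as in the proof of Proposition~\ref{prop:UTheta-stability}\ref{prop:UTheta-stability:left-stable}) gives minorant\hyp{}closure within $\clAllleq{k}$; thus $\Theta(K) \in \Ideals(\posetAllleq{k})$. Next, I would read off the ``boundary profile'' of $K$: the possible value pairs $(f(\vect{0}), f(\vect{1}))$ for $f \in K$, whether $\clVako, \clVaki \subseteq K$, and whether $K$ is contained in $\clM$, $\clMneg$, or $\clRefl$. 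This case split assigns $K$ uniquely to one of the families~\ref{MAIN:klik}--\ref{MAIN:rest}, forcing in cases \ref{MAIN:klikXI}--\ref{MAIN:klikR} that $\Theta(K)$ is $(k,C)$\hyp{}closed for the relevant $C$ via Lemma~\ref{lem:k-1MR-char}. The inclusion $K \subseteq \clKlik{k}{\Theta(K)} \cap C(K)$ is immediate: for $f \in K$ and $T \subseteq f^{-1}(1)$ with $\card T \leq k$, the function $\chi_T$ is a minorant of a suitable minor of $f$ and is therefore in $\Theta(K)$.

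The reverse inclusion $\clKlik{k}{\Theta(K)} \cap C(K) \subseteq K$ is the heart of the proof and is where Lemma~\ref{lem:helpful} enters. Fixing a generating set $G$ for $K$ that includes enough $n$\hyp{}ary witnesses (notably those coming from $\Theta(K)$ itself together with a few operations enforcing the boundary constraints $C(K)$), I would verify that every nonconstant $f$ on the right-hand side is $(G,k)$\hyp{}semibisectable: condition~\ref{helpful:k-true} follows directly from $f \in \clKlik{k}{\Theta(K)}$, and condition~\ref{helpful:both} is arranged by showing, case by case, that for any $\vect{a} \in f^{-1}(1)$ and $\vect{b} \in f^{-1}(0)$ a separating $\tau \in G_n$ exists, using the boundary data (for instance, monotonicity gives a coordinate where $\vect{a}$ is above and $\vect{b}$ is not, producing an identity minor that separates them; reflexivity forces both $\vect{a}$ and $\overline{\vect{a}}$ into $f^{-1}(1)$, and so on). The main obstacle is precisely this case analysis: orchestrating, across all boundary profiles, a uniform recipe showing that condition~\ref{helpful:both} is satisfied and that the correct $(k,C)$\hyp{}closure of $\Theta(K)$ arises, while constant and near-constant clonoids in family~\ref{MAIN:rest} (where $\Theta(K)$ degenerates) must be handled as separate base cases.
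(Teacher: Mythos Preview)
Your soundness direction is aligned with the paper, but the completeness argument has a genuine gap at its very first step. You claim that for an arbitrary $(\clIc,\clMcUk{k})$\hyp{}clonoid $K$, the set $\Theta(K):=K\cap\clAllleq{k}$ is a $\minmin$\hyp{}ideal, arguing that left stability under $\clMcUk{k}$ ``gives minorant\hyp{}closure within $\clAllleq{k}$''. This is false. Take $K=\clMcUk{k}$ (or $K=\clMc$): both are $(\clIc,\clMcUk{k})$\hyp{}clonoids, $\id\in K\cap\clAllleq{k}$, the constant $0$ is a minorant of $\id$ lying in $\clAllleq{k}$, yet $0\notin K$. So $\Theta(K)$ is not minorant\hyp{}closed, hence not in $\Ideals(\posetAllleq{k})$. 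The same failure undermines your inclusion $K\subseteq\clKlik{k}{\Theta(K)}$: you need $\chi_T\in K$ for every $T\subseteq f^{-1}(1)$ with $\card{T}\le k$, but $\chi_T$ is only a \emph{minorant} of $f$, not a composition from $\clMcUk{k}$ applied to elements of $K$. Proposition~\ref{prop:UTheta-stability}\ref{prop:UTheta-stability:left-stable} goes in the opposite direction (it shows $\clKlik{\ell}{\Theta}$ is left\hyp{}stable), and cannot be reversed: many of the listed clonoids, precisely those in families~\ref{MAIN:klikXI}--\ref{MAIN:klikR} and much of \ref{MAIN:rest}, are \emph{not} minorant\hyp{}closed.

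The paper sidesteps this by not trying to read an ideal off of an arbitrary $K$. Instead it works upward through the listed poset: for each listed class $K$ it identifies the putative lower covers among the list and then proves, via the $(G,k)$\hyp{}semibisectability machinery (Lemma~\ref{lem:helpful}) in Propositions~\ref{prop:UkT-gen:plain}--\ref{prop:UkT-gen:R} and \ref{prop:empty}--\ref{prop:All}, that any subset of $K$ not contained in one of those lower covers already generates all of $K$. Completeness then follows because every set of Boolean functions must generate one of the listed clonoids. Your idea of using semibisectability for the ``reverse inclusion'' is exactly the right engine, but it has to be deployed class by class against the lower covers rather than against a single putative $\Theta(K)$; the association $K\mapsto\Theta(K)$ only makes sense a posteriori, once one already knows which family $K$ belongs to and intersects with the correct $C$\hyp{}closure, and even then it is defined through Proposition~\ref{prop:1MR-part} rather than as $K\cap\clAllleq{k}$.
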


\begin{figure}
\begin{center}
\scalebox{0.25}{
\tikzstyle{every node}=[circle, draw, fill=black, scale=1, font=\huge]
\tikzstyle{minclosed}=[fill=black]
\tikzstyle{forallk}=[fill=black]
\tikzstyle{Icl}=[fill=black]
\tikzstyle{Ocl}=[fill=black]
\tikzstyle{Mcl}=[fill=black]
\tikzstyle{Mnegcl}=[fill=black]
\tikzstyle{Rcl}=[fill=black]
\pgfdeclarelayer{poset}
\pgfdeclarelayer{blobs}
\pgfsetlayers{blobs,poset}
\begin{tikzpicture}[baseline, scale=1]
\begin{pgfonlayer}{poset}
\coordinate (bottom) at (0,0);
\coordinate (sw) at ($(bottom)+(-20,9)$);
\coordinate (se) at ($(bottom)+(20,9)$);
\coordinate (top) at ($(bottom)+(0,50)$);
\coordinate (nw) at ($(top)+(0,-9)$);
\coordinate (ne) at ($(top)+(20,-9)$);
\node[minclosed] (empty) at ($(bottom)+(-9,0)+(-6,-5)$) {};
\draw ($(empty)+(270:0.7)$) node[draw=none,fill=none]{$\clEmpty$};
\node[minclosed] (D0C0) at ($(bottom)+(-13,4)+(-6,-5)$) {};
\draw ($(D0C0)+(270:0.7)$) node[draw=none,fill=none]{$\clVako$};
\node[forallk] (D0C1) at ($(bottom)+(16,4)+(0,-5)$) {};
\draw ($(D0C1)+(270:0.7)$) node[draw=none,fill=none]{$\clVaki$};
\node[forallk] (D0) at ($(bottom)+(12,8)+(0,-5)$) {};
\draw ($(D0)+(270:0.7)$) node[draw=none,fill=none]{$\clVak$};
\node[minclosed] (All) at ($(top)+(4,0)$) {};
\draw ($(All)+(90:0.7)$) node[draw=none,fill=none]{$\clAll$};
\node[forallk] (Eiio) at ($(top)+(4,-4)$) {};
\draw ($(Eiio)+(135:0.8)$) node[draw=none,fill=none]{$\clEiio$};
\node[minclosed] (Eiii) at ($(top)+(-6,-8)$) {};
\draw ($(Eiii)+(135:1)$) node[draw=none,fill=none]{$\clEiii$};
\node[forallk] (Eioi) at ($(top)+(12,-4)$) {};
\draw ($(Eioi)+(45:0.8)$) node[draw=none,fill=none]{$\clEioi$};
\node[forallk] (C0D0) at ($(top)+(0,-8)$) {};
\draw ($(C0D0)+(160:1.2)$) node[draw=none,fill=none]{$\clOXC$};
\node[forallk] (E1D0) at ($(top)+(4,-8)$) {};
\draw ($(E1D0)+(160:1.2)$) node[draw=none,fill=none]{$\clXIC$};
\node[forallk] (P0) at ($(top)+(8,-8)$) {};
\draw ($(P0)+(180:0.7)$) node[draw=none,fill=none]{$\clEq$};
\node[forallk] (E0D0) at ($(top)+(12,-8)$) {};
\draw ($(E0D0)+(0:1.3)$) node[draw=none,fill=none]{$\clXOC$};
\node[forallk] (C1D0) at ($(top)+(16,-8)$) {};
\draw ($(C1D0)+(20:1.2)$) node[draw=none,fill=none]{$\clIXC$};
\node[forallk] (C1) at ($(C1D0)+(2,-2)$) {};
\draw ($(C1)+(0:0.7)$) node[draw=none,fill=none]{$\clIX$};
\node[forallk] (E1) at ($(E1D0)+(0,-2)$) {};
\draw ($(E1)+(10:0.8)$) node[draw=none,fill=none]{$\clXI$};
\node[forallk] (C1E1D0) at ($(top)+(18,-16)$) {};
\draw ($(C1E1D0)+(80:1.2)$) node[draw=none,fill=none,rotate=284]{$\clIIC$};
\node[forallk] (C1E1) at ($(C1E1D0)+(2,-2)$) {};
\draw ($(C1E1)+(0:0.8)$) node[draw=none,fill=none]{$\clII$};
\node[forallk] (C0E0D0) at ($(E1)+1.5*(0,-2)$) {};
\draw ($(C0E0D0)+(180:1.25)$) node[draw=none,fill=none]{$\clOOC$};
\node[Icl] (C0E1) at ($(bottom)+(-16,18)$) {};
\draw ($(C0E1)+(90:0.8)$) node[draw=none,fill=none]{$\clOI$};
\node[Icl] (C0E1D000) at ($(C0E1)+(-1.2,1.2)$) {};
\draw ($(C0E1D000)+(102:1.1)$) node[draw=none,fill=none,rotate=66.8]{$\clOICO$};
\node[forallk] (C0E1D011) at ($(C0E1)+(1.2+0.5,1.2)$) {};
\draw ($(C0E1D011)+(350:1.4)$) node[draw=none,fill=none]{$\clOICI$};
\node[forallk] (C0E1D0) at ($(C0E1)+(0+0.5,2*1.2)$) {};
\draw ($(C0E1D0)+(102:1)$) node[draw=none,fill=none,rotate=66.8]{$\clOIC$};
\node[minclosed] (C0) at ($(C0E1D000)+5*(1.2,2.8)$) {};
\draw ($(C0)+(90:1.0)$) node[draw=none,fill=none]{$\clOX$};
\node[Icl] (XIclosed1OI) at ($(C0E1)+(-1.2-0.5,-1.2)$) {};
\node[Icl] (XIclosed1OIC0) at ($(XIclosed1OI)+(-1.2,1.2)$) {};
\node[minclosed] (XIclosed1) at ($(XIclosed1OIC0)+5*(1.2,2.8)$) {};
\node[Icl] (XIclosed2OI) at ($(C0E1)+2*(-1.2-0.5,-1.2)$) {};
\draw ($(XIclosed2OI)+(0:1.5)$) node[draw=none,fill=none]{$\clKlik{k}{\Theta} \cap \clOI$};
\node[Icl] (XIclosed2OIC0) at ($(XIclosed2OI)+(-1.2,1.2)$) {};
\draw ($(XIclosed2OIC0)+(80:1.9)$) node[draw=none,fill=none,rotate=66.8]{$\clKlik{k}{\Theta} \cap (\clOICO)$};
\node[minclosed] (XIclosed2) at ($(XIclosed2OIC0)+5*(1.2,2.8)$) {};
\draw ($(XIclosed2)+(75:1.0)$) node[draw=none,fill=none]{$\clKlik{k}{\Theta}$};
\node[Mcl] (XIclosed2Mc) at ($(XIclosed2OI)+1.5*(0.6,-2.4)$) {};
\draw ($(XIclosed2Mc)+(0:1.5)$) node[draw=none,fill=none]{$\clKlik{k}{\Theta} \cap \clMc$};
\node[Mcl] (XIclosed2M0) at ($(XIclosed2OIC0)+1.5*(0.6,-2.4)$) {};
\draw ($(XIclosed2M0)+(-.7,-.7)$) node[draw=none,fill=none]{$\clKlik{k}{\Theta} \cap \clMo$};
\node[Icl] (XIclosed3OI) at ($(C0E1)+3*(-1.2-0.5,-1.2)$) {};
\node[Icl] (XIclosed3OIC0) at ($(XIclosed3OI)+(-1.2,1.2)$) {};
\node[minclosed] (XIclosed3) at ($(XIclosed3OIC0)+5*(1.2,2.8)$) {};
\node[Icl] (TcU2) at ($(C0E1)+4*(-1.2-0.5,-1.2)$) {};
\draw ($(TcU2)+(90:0.8)$) node[draw=none,fill=none]{$\clTcUk{k}$};
\node[Icl] (TcU2D0) at ($(TcU2)+(-1.2,1.2)$) {};
\draw ($(TcU2D0)+(180:1.3)$) node[draw=none,fill=none]{$\clTcUkCO{k}$};
\node[minclosed] (U2) at ($(TcU2D0)+5*(1.2,2.8)$) {};
\draw ($(U2)+(135:0.9)$) node[draw=none,fill=none]{$\clUk{k}$};
\node[Mcl] (Mc) at ($(C0E1)+1.5*(0.6,-2.4)$) {};
\draw ($(Mc)+(300:0.8)$) node[draw=none,fill=none]{$\clMc$};
\node[Mcl] (M0) at ($(C0E1D000)+1.5*(0.6,-2.4)$) {};
\draw ($(M0)+(270:0.8)$) node[draw=none,fill=none]{$\clMo$};
\node[forallk] (M1) at ($(C0E1D011)+1.5*(0.6,-2.4)$) {};
\draw ($(M1)+(290:0.8)$) node[draw=none,fill=none]{$\clMi$};
\node[forallk] (M) at ($(C0E1D0)+1.5*(0.6,-2.4)$) {};
\draw ($(M)+(260:0.7)$) node[draw=none,fill=none]{$\clM$};
\node[Mcl] (MU2) at ($(TcU2D0)+1.5*(0.6,-2.4)$) {};
\draw ($(MU2)+(180:0.9)$) node[draw=none,fill=none]{$\clMUk{k}$};
\node[Mcl] (McU2) at ($(TcU2)+1.5*(0.6,-2.4)$) {};
\draw ($(McU2)+(-15:1.1)$) node[draw=none,fill=none]{$\clMcUk{k}$};
\node[Ocl] (C1E0) at ($(bottom)+(8,18)$) {};
\draw ($(C1E0)+(75:0.8)$) node[draw=none,fill=none]{$\clIO$};
\node[forallk] (C1E0D011) at ($(C1E0)+(1.2+0.5,1.2)$) {};
\draw ($(C1E0D011)+(0:1.4)$) node[draw=none,fill=none]{$\clIOCI$};
\node[Ocl] (C1E0D000) at ($(C1E0)+(-1.2,1.2)$) {};
\draw ($(C1E0D000)+(140:1.4)$) node[draw=none,fill=none,rotate=315]{$\clIOCO$};
\node[forallk] (C1E0D0) at ($(C1E0)+(0+0.5,2*1.2)$) {};
\draw ($(C1E0D0)+(140:1.4)$) node[draw=none,fill=none,rotate=315]{$\clIOC$};
\node[minclosed] (E0) at ($(C1E0D000)+4*(-0.6,2.4)$) {};
\draw ($(E0)+(110:1.0)$) node[draw=none,fill=none]{$\clXO$};
\node[Icl] (IXclosed1IO) at ($(C1E0)+(-1.2-0.5,-1.2)$) {};
\node[Icl] (IXclosed1IOC0) at ($(IXclosed1IO)+(-1.2,1.2)$) {};
\node[minclosed] (IXclosed1) at ($(IXclosed1IOC0)+4*(-0.6,2.4)$) {};
\node[Icl] (IXclosed2IO) at ($(C1E0)+2*(-1.2-0.5,-1.2)$) {};
\draw ($(IXclosed2IO)+(1.4,0)$) node[draw=none,fill=none]{$\clKlik{k}{\overline{\Theta}} \cap \clIO$};
\node[Icl] (IXclosed2IOC0) at ($(IXclosed2IO)+(-1.2,1.2)$) {};
\draw ($(IXclosed2IOC0)+(-.6,0)+(104:1.4)$) node[draw=none,fill=none,rotate=284]{$\clKlik{k}{\overline{\Theta}} \cap (\clIOCO)$};
\node[minclosed] (IXclosed2) at ($(IXclosed2IOC0)+4*(-0.6,2.4)$) {};
\draw ($(IXclosed2)+(75:1.0)$) node[draw=none,fill=none]{$\clKlik{k}{\overline{\Theta}}$};
\node[Mcl] (IXclosed2Mcneg) at ($(IXclosed2IO)+1.5*(0.6,-2.4)$) {};
\draw ($(IXclosed2Mcneg)+(1.5,0)$) node[draw=none,fill=none]{$\clKlik{k}{\overline{\Theta}} \cap \clMcneg$};
\node[Mcl] (IXclosed2M1neg) at ($(IXclosed2IOC0)+1.5*(0.6,-2.4)$) {};
\draw ($(IXclosed2M1neg)+(-.7,-.7)$) node[draw=none,fill=none]{$\clKlik{k}{\overline{\Theta}} \cap \clMineg$};
\node[Icl] (IXclosed3IO) at ($(C1E0)+3*(-1.2-0.5,-1.2)$) {};
\node[Icl] (IXclosed3IOC0) at ($(IXclosed3IO)+(-1.2,1.2)$) {};
\node[minclosed] (IXclosed3) at ($(IXclosed3IOC0)+4*(-0.6,2.4)$) {};
\node[Ocl] (TcW2neg) at ($(C1E0)+4*(-1.2-0.5,-1.2)$) {};
\draw ($(TcW2neg)+(70:0.8)$) node[draw=none,fill=none]{$\clTcWkneg{k}$};
\node[Ocl] (TcW2negD0) at ($(TcW2neg)+(-1.2,1.2)$) {};
\draw ($(TcW2negD0)+(195:1.4)$) node[draw=none,fill=none]{$\clTcWknegCO{k}$};
\node[minclosed] (W2neg) at ($(TcW2negD0)+4*(-0.6,2.4)$) {};
\draw ($(W2neg)+(120:1.0)$) node[draw=none,fill=none]{$\clWkneg{k}$};
\node[Mnegcl] (Mcneg) at ($(C1E0)+1.5*(0.6,-2.4)$) {};
\draw ($(Mcneg)+(280:0.8)$) node[draw=none,fill=none]{$\clMcneg$};
\node[forallk] (M0neg) at ($(C1E0D011)+1.5*(0.6,-2.4)$) {};
\draw ($(M0neg)+(10:0.8)$) node[draw=none,fill=none]{$\clMoneg$};
\node[Mnegcl] (M1neg) at ($(C1E0D000)+1.5*(0.6,-2.4)$) {};
\draw ($(M1neg)+(270:0.8)$) node[draw=none,fill=none]{$\clMineg$};
\node[forallk] (Mneg) at ($(C1E0D0)+1.5*(0.6,-2.4)$) {};
\draw ($(Mneg)+(250:0.8)$) node[draw=none,fill=none]{$\clMneg$};
\node[Mnegcl] (MW2neg) at ($(TcW2negD0)+1.5*(0.6,-2.4)$) {};
\draw ($(MW2neg)+(165:1.1)$) node[draw=none,fill=none]{$\clMWkneg{k}$};
\node[Mnegcl] (McW2neg) at ($(TcW2neg)+1.5*(0.6,-2.4)$) {};
\draw ($(McW2neg)+(320:1.0)$) node[draw=none,fill=none]{$\clMcWkneg{k}$};
\node[minclosed] (UkWkneg) at ($(bottom)+(-9,17)$) {};
\draw ($(UkWkneg)+(190:1.4)$) node[draw=none,fill=none]{$\clUk{k} \cap \clWkneg{k}$};
\node[forallk] (X1P0) at ($(bottom)+(17,31)$) {};
\draw ($(X1P0)+(45:0.6)$) node[draw=none,fill=none]{$\clRefl$};
\node[forallk] (X1C0E0D0) at ($(X1P0)+2*(-1,-12/17)$) {};
\draw ($(X1C0E0D0)+(345:1.2)$) node[draw=none,fill=none]{$\clReflOOC$};
\node[Rcl] (X1C0E0) at ($(X1P0)+20*(-1,-12/17)$) {};
\draw ($(X1C0E0)+(0.6,-0.6)$) node[draw=none,fill=none]{$\clReflOO$};
\node[forallk] (X1C1E1D0) at ($(X1P0)+2*(1,-12/17)$) {};
\draw ($(X1C1E1D0)+(80:1.2)$) node[draw=none,fill=none,rotate=284]{$\clReflIIC$};
\node[forallk] (X1C1E1) at ($(X1P0)+4*(1,-12/17)$) {};
\draw ($(X1C1E1)+(0:0.8)$) node[draw=none,fill=none]{$\clReflII$};
\node[minclosed] (C0E0) at ($(X1C0E0)+3.4*(-0.6,2.4)$) {};
\draw ($(C0E0)+(110:0.9)$) node[draw=none,fill=none]{$\clOO$};
\node[Rcl] (smallestRefl) at ($(X1C0E0)+2*(-1.2-0.5,-1.2)$) {};
\draw ($(smallestRefl)+(168:1.55)$) node[draw=none,fill=none]{$\clKlik{k}{\{+\}} \cap \clReflOO$};
\node[Rcl] (smallestReflclosed) at ($(smallestRefl)+3.4*(-0.6,2.4)$) {};
\draw ($(smallestReflclosed)+(205:1.0)$) node[draw=none,fill=none]{$\clKlik{k}{\{+\}}$};
\node[Rcl] (arbRefl) at ($(smallestRefl)!0.5!(X1C0E0)$) {};
\node[Rcl] (arbReflclosed) at ($(smallestReflclosed)!0.5!(C0E0)$) {};

\foreach \u/\v in {
   empty/D0C0, empty/D0C1, D0C0/D0, D0C1/D0,
   D0/X1C0E0D0, D0/X1C1E1D0,
   D0C0/MU2,
   X1C0E0/X1C0E0D0, X1C0E0D0/X1P0, X1C0E0/C0E0, X1C1E1/X1C1E1D0, X1C1E1D0/X1P0, X1C1E1/C1E1, 
   D0C1/X1C1E1,
   McU2/MU2, McU2/TcU2,
   MU2/TcU2D0, TcU2/TcU2D0, TcU2D0/U2,
   Mc/M0, Mc/M1, M0/M, M1/M,
   McW2neg/MW2neg, McW2neg/TcW2neg, 
   MW2neg/TcW2negD0, TcW2neg/TcW2negD0, TcW2negD0/W2neg, Mcneg/M0neg, Mcneg/M1neg, M0neg/Mneg, M1neg/Mneg,
   Mc/C0E1,
   Mcneg/C1E0,
   M0/C0E1D000, M1/C0E1D011, M/C0E1D0, M0neg/C1E0D011, M1neg/C1E0D000, Mneg/C1E0D0,
   X1C0E0D0/C0E0D0, X1C1E1D0/C1E1D0,
   X1P0/P0,
   C0E1D000/C0, C0E1D011/E1,
   C1E0D011/C1, C1E0D000/E0,
   C1E1/C1, C1E1/E1,
   C0E0/C0E0D0, C1E1/C1E1D0, C0E1/C0E1D000, C0E1/C0E1D011, C0E1D000/C0E1D0, C0E1D011/C0E1D0, C1E0/C1E0D000, C1E0/C1E0D011, C1E0D000/C1E0D0, C1E0D011/C1E0D0,
   C0/C0D0, C1/C1D0, E0/E0D0, E1/E1D0,
   C0E0D0/C0D0, C0E0D0/E0D0, C1E1D0/C1D0, C1E1D0/E1D0, C0E1D0/C0D0, C0E1D0/E1D0, C1E0D0/C1D0, C1E0D0/E0D0,
   C0E0D0/P0, C1E1D0/P0,
   C0D0/Eiio, C1D0/Eioi, E0D0/Eioi, E1D0/Eiio,
   P0/Eioi,
   Eiio/All, Eiii/All,
   Eioi/All,
XIclosed1OI/XIclosed1OIC0, XIclosed1OIC0/XIclosed1,
XIclosed2OI/XIclosed2OIC0, XIclosed2OIC0/XIclosed2, XIclosed2Mc/XIclosed2M0, XIclosed2Mc/XIclosed2OI, XIclosed2M0/XIclosed2OIC0,
XIclosed3OI/XIclosed3OIC0, XIclosed3OIC0/XIclosed3,
IXclosed1IO/IXclosed1IOC0, IXclosed1IOC0/IXclosed1,
IXclosed2IO/IXclosed2IOC0, IXclosed2IOC0/IXclosed2, IXclosed2Mcneg/IXclosed2M1neg, IXclosed2Mcneg/IXclosed2IO, IXclosed2M1neg/IXclosed2IOC0,
IXclosed3IO/IXclosed3IOC0, IXclosed3IOC0/IXclosed3,
smallestRefl/smallestReflclosed,
D0C0/smallestRefl,
arbRefl/arbReflclosed,
   D0C0/UkWkneg,
   D0/M, D0/Mneg,
   D0C0/MW2neg,
   empty/McU2, empty/McW2neg,
   D0C1/M1, D0C1/M0neg, D0/Mneg, P0/Eiio%
}
{
   \draw [thick] (\u) -- (\v);
}
\draw [thin] (UkWkneg) to[out=29,in=323,looseness=2.2] (Eiii.center) to[out=217,in=142,looseness=2] (UkWkneg) -- cycle;
\foreach \u/\v in {
C0E1/TcU2.center, C0E1D000/TcU2D0.center, Mc/McU2.center, M0/MU2.center,
C1E0/TcW2neg.center, C1E0D000/TcW2negD0.center, Mcneg/McW2neg.center, M1neg/MW2neg.center}
{
\draw [thin] (\u) to[bend left=12] (\v) to[bend left=12] (\u) -- cycle;
}
\foreach \u/\v in {
U2/C0.center, W2neg/E0.center}
{
\draw [thin] (\u) to[bend left=27] (\v) to[bend left=27] (\u) -- cycle;
}
\foreach \u/\v in {
smallestRefl/X1C0E0.center}
{
\draw [thin] (\u) to[bend left=17] (\v) to[bend left=17] (\u) -- cycle;
}
\foreach \u/\v in {
smallestReflclosed/C0E0.center}
{
\draw [thin] (\u) to[bend left=30] (\v) to[bend left=30] (\u) -- cycle;
}
\end{pgfonlayer}{poset}
\begin{pgfonlayer}{blobs}
    \draw[thin,draw=yellow!60!black,fill=yellow!25] plot[smooth cycle] coordinates{($(All)+(-1,1)$) ($(Eiii)+(-1.5,1)$) ($(U2)+(-2,0.5)$)
   ($(UkWkneg)+(-2.6,-1.2)$) ($(D0C0)+(-1.5,-0.25)$) ($(empty)+(1,-1)$) ($(UkWkneg)+(1,-2)$) ($(E0)+(1,-3)$) ($(E0)+(1.7,2.7)$) ($(C0D0)+(-2.75,-0.5)$) ($(All)+(1,-1)$)};
    \draw[thin,draw=green!60!black,fill=green!25] ($(smallestRefl)+(315:0.75)$) -- ($(X1C0E0)+(315:0.75)$) arc (315:495:0.75) -- ($(smallestRefl)+(135:0.75)$) arc (135:315:0.75) -- cycle;
    \draw[thin,draw=green!60!black,fill=green!25] (smallestReflclosed) circle [radius=0.5];
    \draw[thin,draw=green!60!black,fill=green!25] (arbReflclosed) circle [radius=0.5];
    \draw[thin,draw=green!60!black,fill=green!25] (C0E0) circle [radius=0.5];
    \draw[thin,draw=blue!60!black,fill=blue!25] ($(C0E1)+(35:0.75)$) -- ($(C0E1D000)+(35:0.75)$) arc (35:135:0.75) -- ($(TcU2D0)+(135:0.75)$) arc (135:215:0.75) -- ($(TcU2)+(215:0.75)$) arc (215:315:0.75) -- ($(C0E1)+(315:0.75)$) arc (315:395:0.75);
    \draw[thin,draw=magenta!60!black,fill=magenta!22] ($(Mc)+(35:0.75)$) -- ($(M0)+(35:0.75)$) arc (35:135:0.75) -- ($(MU2)+(135:0.75)$) arc (135:215:0.75) -- ($(McU2)+(215:0.75)$) arc (215:315:0.75) -- ($(Mc)+(315:0.75)$) arc (315:395:0.75);
    \draw[thin,draw=blue!60!black,fill=blue!25] ($(C1E0)+(35:0.75)$) -- ($(C1E0D000)+(35:0.75)$) arc (35:135:0.75) -- ($(TcW2negD0)+(135:0.75)$) arc (135:215:0.75) -- ($(TcW2neg)+(215:0.75)$) arc (215:315:0.75) -- ($(C1E0)+(315:0.75)$) arc (315:395:0.75);
    \draw[thin,draw=magenta!60!black,fill=magenta!22] ($(Mcneg)+(35:0.75)$) -- ($(M1neg)+(35:0.75)$) arc (35:135:0.75) -- ($(MW2neg)+(135:0.75)$) arc (135:215:0.75) -- ($(McW2neg)+(215:0.75)$) arc (215:315:0.75) -- ($(Mcneg)+(315:0.75)$) arc (315:395:0.75);
    \draw[thin,draw=blue!60!black,fill=blue!22] (C0) circle [radius=0.6];
    \draw[thin,draw=magenta!60!black,fill=magenta!22] (C0) circle [radius=0.4];
    \draw[thin,draw=blue!60!black,fill=blue!22] (XIclosed1) circle [radius=0.6];
    \draw[thin,draw=blue!60!black,fill=blue!22] (XIclosed2) circle [radius=0.6];
    \draw[thin,draw=blue!60!black,fill=blue!22] (XIclosed3) circle [radius=0.6];
    \draw[thin,draw=magenta!60!black,fill=magenta!22] (XIclosed2) circle [radius=0.4];
    \draw[thin,draw=blue!60!black,fill=blue!22] (U2) circle [radius=0.6];
    \draw[thin,draw=magenta!60!black,fill=magenta!22] (U2) circle [radius=0.4];
    \draw[thin,draw=blue!60!black,fill=blue!22] (E0) circle [radius=0.6];
    \draw[thin,draw=magenta!60!black,fill=magenta!22] (E0) circle [radius=0.4];
    \draw[thin,draw=blue!60!black,fill=blue!22] (IXclosed1) circle [radius=0.6];
    \draw[thin,draw=blue!60!black,fill=blue!22] (IXclosed2) circle [radius=0.6];
    \draw[thin,draw=blue!60!black,fill=blue!22] (IXclosed3) circle [radius=0.6];
    \draw[thin,draw=magenta!60!black,fill=magenta!22] (IXclosed2) circle [radius=0.4];
    \draw[thin,draw=blue!60!black,fill=blue!22] (W2neg) circle [radius=0.6];
    \draw[thin,draw=magenta!60!black,fill=magenta!22] (W2neg) circle [radius=0.4];
\end{pgfonlayer}{blobs}
\end{tikzpicture}
}
\end{center}
\caption{A schematic Hasse diagram of the poset of $(\clIc,\protect\clMcUk{k})$\hyp{}clonoids.}
\label{fig:McUk-stable}
\end{figure}

\begin{remark}
Note that $\clAll$, $\clVako$, $\clEmpty$ are included in item \ref{MAIN:klik} of Theorem~\ref{thm:IcMcUk-clonoids}.
Note also that we get the classes $\clOICO$, $\clOI$, $\clIOCO$, $\clIO$, $\clMo$, $\clMc$, $\clMineg$, $\clMcneg$, $\clReflOO$ from items \ref{MAIN:klikXI}--\ref{MAIN:klikR}, because $\clAllleq{k} \in \Ideals(\posetAllleq{k}_C)$ and $\clKlik{k}{\clAllleq{k}} = \clAll$.
\end{remark}

\begin{remark}
\label{rem:IcMcWk-clonoids}
From Theorem~\ref{thm:IcMcUk-clonoids}, we obtain additionally a description of the $(\clIc,\clMcWk{k})$\hyp{}clonoids.
Because $\clIc^\mathrm{d} = \clIc$ and $(\clMcUk{k})^\mathrm{d} = \clMcWk{k}$, 
Lemma~\ref{lem:duality} tells us that $(\clIc,\clMcWk{k})$\hyp{}clonoids are precisely the duals of $(\clIc,\clMcUk{k})$\hyp{}clonoids.
\end{remark}

A schematic Hasse diagram of $\closys{(\clIc,\clMcUk{k})}$ is presented in Figure~\ref{fig:McUk-stable}.
Parts of the diagram are shaded with different colours; these correspond to the different items of Theorem~\ref{thm:IcMcUk-clonoids}.
The big yellow part comprises the $(\clIc,\clMcUk{k})$\hyp{}clonoids of the form $\clKlik{k}{\Theta}$ (item \ref{MAIN:klik}); by Theorem~\ref{thm:Uktheta-lattice}, this sublattice is isomorphic to $\Ideals(\posetAllleq{k})$ (not all elements are shown in the diagram).
The blue vertices within the yellow part represent the $(k,\clXI)$- and $(k,\clIX)$\hyp{}closed clonoids distinct from $\clEmpty$ and $\clAll$; their intersections with $\clOICO$, $\clOI$, $\clIOCO$, and $\clIO$ (items \ref{MAIN:klikXI} and \ref{MAIN:klikIX}) form the blue intervals outside the yellow part.
The magenta vertices\footnote{Note that every $(k,\clM)$\hyp{}closed ($(k,\clMneg)$\hyp{}closed, resp.)\ clonoid distinct from $\clVako$ is also $(k,\clXI)$\hyp{}closed ($(k,\clIX)$\hyp{}closed, resp.);\ hence each magenta vertex is also blue.} within the yellow part represent the $(k,\clM)$- and $(k,\clMneg)$\hyp{}closed clonoids distinct from $\clEmpty$, $\clVako$, and $\clAll$; their intersections with $\clMo$, $\clMc$, $\clMineg$, and $\clMcneg$ (items \ref{MAIN:klikM} and \ref{MAIN:klikMneg}) form the magenta intervals outside the yellow part.
The green vertices within the yellow part represent the $(k,\clRefl)$\hyp{}closed clonoids distinct from $\clEmpty$, $\clVako$, and $\clAll$; their intersections with $\clReflOO$ (item \ref{MAIN:klikR}) form the green interval outside the yellow part.
(The least and the greatest $(k,C)$\hyp{}closed clonoids distinct from $\clEmpty$, $\clVako$, and $\clAll$ are described in Proposition~\ref{prop:some-kC-closed}.)
The unshaded parts are the remaining clonoids listed in item \ref{MAIN:rest}.

The remainder of this section is devoted to the proof of Theorem~\ref{thm:IcMcUk-clonoids}.
The proof has two parts.
(1) We show that the classes listed in the statement of the theorem are $(\clIc,\clMcUk{k})$\hyp{}clonoids.
This is the easier part of the proof.
(2) We show that there are no other $(\clIc,\clMcUk{k})$\hyp{}clonoids than the ones listed.
This is the more complicated and longer part of the proof.
Our proof strategy is the following.
For each class $K$ listed in the theorem, we identify the lower covers of $K$ in $\closys{(\clIc,\clMcUk{k})}$ among the listed classes.
Then we show that $K$ is generated by any subset of $K$ that is not contained in any of the lower covers of $K$ listed.
It then follows that every subset of $\clAll$ generates one of the $(\clIc,\clMcUk{k})$\hyp{}clonoids listed in Theorem~\ref{thm:IcMcUk-clonoids}; therefore the list is complete, and there are no further $(\clIc,\clMcUk{k})$\hyp{}clonoids.

\subsection{The classes listed in Theorem~\ref{thm:IcMcUk-clonoids} are $(\clIc,\mathsf{MU}^k_{01})$\hyp{}clonoids}

The first part of the proof of Theorem~\ref{thm:IcMcUk-clonoids} is to show that the classes listed is the statement of the theorem are $(\clIc,\clMcUk{k})$\hyp{}clonoids.
Because intersections of $(\clIc,\clMcUk{k})$\hyp{}clonoids are $(\clIc,\clMcUk{k})$\hyp{}clonoids, it is sufficient to verify this for the meet\hyp{}irreducible classes; the remaining ones are intersections of the meet\hyp{}irreducible ones.
We know from Theorem~\ref{prop:UTheta-stability}\ref{prop:UTheta-stability:stable} that the classes of the form $\clKlik{k}{\Theta}$ are $(\clIc,\clMcUk{k})$\hyp{}clonoids.
As for the classes that are not of this form, we make use of the monotonicity of function class composition (see Lemma~\ref{lem:stable-monotonicity}) and earlier results on 
$(\clIc,C)$\hyp{}clonoids for clones $C$ with $\clMcUk{k} \subseteq C$.

\begin{proposition}
\label{prop:stable-1}
For any $k \geq 2$, the following classes are $(\clIc,\clMcUk{k})$\hyp{}clonoids:
\begin{enumerate}[label={\upshape(\roman*)}]
\item\label{prop:stable-1:clones} $\clAll$, $\clOX$, $\clXI$, $\clXO$, $\clIX$, $\clM$, $\clMneg$, $\clUk{\ell}$, $\clWkneg{\ell}$, for $\ell \in \{2, \dots, k\}$,
\item\label{prop:stable-1:refl} $\clRefl$,
$\clEiio$, $\clEioi$, $\clEiii$, $\clOXC$, $\clIXC$, $\clXOC$, $\clXIC$.
\end{enumerate}
\end{proposition}

\begin{proof}
\ref{prop:stable-1:clones}
The classes $\clAll$, $\clOX$, $\clXI$, $\clM$, and $\clUk{\ell}$ ($\ell \in \{2, \dots, k\}$) are $(\clIc,\clMcUk{k})$\hyp{}clonoids by Lemma~\ref{lem:stable-clones}, because they are clones containing both $\clIc$ and $\clMcUk{k}$.
Because $\clIc = \clIc^\mathrm{d}$, it follows from Lemma~\ref{lem:stability-nd} that also $(\clOX)^\mathrm{n} = \clXO$, $(\clXI)^\mathrm{n} = \clIX$, $\clM^\mathrm{n} = \clMneg$, and $(\clUk{\ell})^\mathrm{n} = \clWkneg{\ell}$ are $(\clIc,\clMcUk{k})$\hyp{}clonoids.

\ref{prop:stable-1:refl}
The class $\clRefl$ is an $(\clS,\clAll)$\hyp{}clonoid by \cite[Lemma~7.9]{CouLeh-Lcstability}.
The classes $\clEiio$ and $\clEioi$ are $(\clOI,\clM)$\hyp{}clonoids, and the class $\clEiii$ is an $(\clOI,\clUk{k})$\hyp{}clonoid by \cite[Proposition~5.7]{Lehtonen-SM}.
The classes $\clOXC$ and $\clIXC$ are $(\clOX,\clM)$\hyp{}clonoids, and the classes $\clXOC$ and $\clXIC$ are $(\clXI,\clM)$\hyp{}clonoids by \cite[Proposition~5.9]{Lehtonen-SM}.
Therefore, these classes are $(\clIc,\clMcUk{k})$\hyp{}clonoids by Lemma~\ref{lem:stable-monotonicity}.
\end{proof}

\subsection{Covering relations for clonoids of the form $\clKlik{k}{\Theta}$ and their intersections with other clonoids}

\begin{lemma}
\label{lem:minmin-1}
Let $\theta, \theta' \in \clAll$, and assume that $\theta \minmin \theta'$.
If $\theta(\vect{1}) = 1$, then $\theta'(\vect{1}) = 1$.
If $\theta(\vect{0}) = 1$, then $\theta'(\vect{0}) = 1$.
\end{lemma}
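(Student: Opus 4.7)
The plan is to apply Lemma~\ref{lem:minmin}\ref{lem:minmin:comp}, which says $\mathord{\minmin} = \mathord{\minorant} \circ \mathord{\minor}$. From $\theta \minmin \theta'$ we therefore obtain a minor formation map $\tau \colon \nset{m} \to \nset{n}$ (with $n = \arity\theta$ and $m = \arity{\theta'}$) such that $\theta \minorant \theta'_\tau$, i.e.\ $\theta(\vect{a}) \leq \theta'(\vect{a}\tau)$ for every $\vect{a} \in \{0,1\}^n$.

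The key observation is that the constant tuples are fixed by every minor formation map: for any $\tau$, we have $\vect{1}\tau = \vect{1}$ and $\vect{0}\tau = \vect{0}$, since $(\vect{1}\tau)_i = \vect{1}_{\tau(i)} = 1$ for all $i$, and similarly for $\vect{0}$.

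Consequently, if $\theta(\vect{1}) = 1$ then $1 = \theta(\vect{1}) \leq \theta'(\vect{1}\tau) = \theta'(\vect{1})$, forcing $\theta'(\vect{1}) = 1$. The argument for $\vect{0}$ is identical. There is no substantive obstacle; the statement is essentially a direct consequence of Lemma~\ref{lem:minmin}\ref{lem:minmin:comp} together with the fact that constant tuples are invariant under precomposition by any map.
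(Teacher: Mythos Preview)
Your proof is correct and follows essentially the same approach as the paper: both use Lemma~\ref{lem:minmin} to obtain a minor formation map $\sigma$ with $\theta \minorant \theta'_\sigma$, then observe that $\vect{1}\sigma = \vect{1}$ (resp.\ $\vect{0}\sigma = \vect{0}$) to conclude. Your version is slightly more explicit in justifying why constant tuples are fixed under $\tau$, but the argument is the same.
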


\begin{proof}
Since $\theta \minmin \theta'$, there exists a minor formation map $\sigma$ such that $\theta \minorant \theta'_\sigma$.
Therefore $1 = \theta(\vect{1}) \leq \theta'_\sigma(\vect{1}) = \theta'(\vect{1} \sigma) = \theta'(\vect{1})$, so $\theta'(\vect{1}) = 1$.
The proof of the second claim is similar.
\end{proof}

\begin{lemma}
\label{lem:Klik-diff-1}
Let $\vect{c} \in \{\vect{1}, \vect{0}\}$.
Let $\Theta, \Psi \in \Ideals(\posetAllleq{k})$.
Assume that $\Psi \subsetneq \Theta$ and for every $\theta \in \Theta \setminus \Psi$ we have $\theta(\vect{c}) = 1$.
Let $g \in \clKlik{k}{\Theta} \setminus \clKlik{k}{\Psi}$.
Then $g(\vect{c}) = 1$.
\end{lemma}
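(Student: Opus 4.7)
The plan is to unpack both conditions on $g$ and pin down a subset $T$ of $g^{-1}(1)$ whose characteristic function witnesses the asymmetry between $\Theta$ and $\Psi$. First, since $g \notin \clKlik{k}{\Psi}$, the definition of $\clKlik{k}{\Psi}$ (Definition~\ref{def:UkTheta}) guarantees a set $T \subseteq g^{-1}(1)$ with $\card{T} \leq k$ such that $T \not\tulee \Psi$, i.e., $\chi_T \notin {\downarrow}\Psi$. Fix this $T$. On the other hand, since $g \in \clKlik{k}{\Theta}$, for this same $T$ we have $\chi_T \in {\downarrow}\Theta$.

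Next I would locate $\chi_T$ inside $\posetAllleq{k}$. Because $\card{T} \leq k$, clearly $\chi_T \in \clAllleq{k}$. Under the identification of Definition~\ref{def:Id-minmin}, any ideal $\Phi \in \Ideals(\posetAllleq{k})$ equals $({\downarrow} \Phi) \cap \clAllleq{k}$, so membership in $\Phi$ and in ${\downarrow}\Phi$ agree for elements of $\clAllleq{k}$. Applying this to both $\Theta$ and $\Psi$ with the element $\chi_T \in \clAllleq{k}$ gives $\chi_T \in \Theta$ and $\chi_T \notin \Psi$; hence $\chi_T \in \Theta \setminus \Psi$.

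Finally, I invoke the standing hypothesis: every $\theta \in \Theta \setminus \Psi$ satisfies $\theta(\vect{c}) = 1$. Applied to $\chi_T$, this forces $\chi_T(\vect{c}) = 1$, which by the very definition of $\chi_T$ means $\vect{c} \in T$. Since $T \subseteq g^{-1}(1)$, we conclude $g(\vect{c}) = 1$, as desired. I do not anticipate any real obstacle; the only subtle point is the ideal-identification bookkeeping from Definition~\ref{def:Id-minmin}, and that is essentially definitional. Note that Lemma~\ref{lem:minmin-1} is not needed here, although it would be the natural tool to prove the preservation of the value at $\vect{c}$ along the $\minmin$-relation that underlies the ``downset'' identification used above.
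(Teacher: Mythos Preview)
Your proof is correct and follows essentially the same idea as the paper's: both arguments isolate a set $T \subseteq g^{-1}(1)$ with $\card{T} \leq k$ whose characteristic function $\chi_T$ lands in $\Theta \setminus \Psi$, and then read off $\vect{c} \in T$ from the hypothesis. The only cosmetic difference is that the paper argues by contradiction (assuming $g(\vect{c}) = 0$ and showing that \emph{every} such $T$ satisfies $\chi_T \in \Psi$, forcing $g \in \clKlik{k}{\Psi}$), whereas you argue directly by picking a single witnessing $T$ from $g \notin \clKlik{k}{\Psi}$; your version is arguably cleaner.
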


\begin{proof}
We prove the claim for $\vect{c} = \vect{1}$.
The case when $\vect{c} = \vect{0}$ is proved similarly.

Suppose, to the contrary, that $g(\vect{1}) = 0$.
Let $T \subseteq g^{-1}(1)$ with $\card{T} \leq k$.
Since $g \in \clKlik{k}{\Theta}$, there exists a $\varphi \in \Theta$ and $\sigma \colon \nset{\arity{\varphi}} \to \nset{n}$ such that $\varphi(\vect{a} \sigma) = 1$ for all $\vect{a} \in T$.

It follows that $\chi_T \minmin \varphi$, because we can obtain $\chi_T$ from $\varphi$ by introducting fictitious arguments (which is a special case of taking minors) and then taking a minorant.
Since $\vect{1} \notin T = g^{-1}(1)$, we have $\chi_T(\vect{1}) = 0$.
Therefore $\chi_T \notin \Theta \setminus \Psi$, so $\chi_T \in \Psi$.
Moreover, $\chi_T(\vect{a}) = 1$ for all $\vect{a} \in T$.
We conclude that $g \in \clKlik{k}{\Psi}$, a contradiction.
\end{proof}

\begin{lemma}
\label{lem:Klik-all}
Let $\Theta \subseteq \clAll$ and $k \in \IN \cup \{\infty\}$ with $k \geq 2$.
The following conditions are equivalent.
\begin{enumerate}[label=\textup{(\roman*)}]
\item\label{lem:Klik-all:1}
$\clKlik{k}{\Theta} \cap \clII \neq \emptyset$.
\item\label{lem:Klik-all:2}
$1 \in {\downarrow} \Theta$.
\item\label{lem:Klik-all:3}
$\clKlik{k}{\Theta} = \clAll$.
\end{enumerate}
\end{lemma}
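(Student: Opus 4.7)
The plan is to close the cycle $\text{\ref{lem:Klik-all:3}} \Rightarrow \text{\ref{lem:Klik-all:1}} \Rightarrow \text{\ref{lem:Klik-all:2}} \Rightarrow \text{\ref{lem:Klik-all:3}}$. The implication $\text{\ref{lem:Klik-all:3}} \Rightarrow \text{\ref{lem:Klik-all:1}}$ is immediate, since the constant $1$ function lies in $\clII \subseteq \clAll = \clKlik{k}{\Theta}$.

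For $\text{\ref{lem:Klik-all:1}} \Rightarrow \text{\ref{lem:Klik-all:2}}$, I would pick any $f \in \clKlik{k}{\Theta} \cap \clII$ of arity $n$ and apply the defining condition of $\clKlik{k}{\Theta}$ to the set $T := \{\vect{0}, \vect{1}\}$. This is legitimate because $f \in \clII$ gives $T \subseteq f^{-1}(1)$, and the hypothesis $k \geq 2$ ensures $\card{T} \leq k$. I would then obtain some $\theta \in \Theta$ of arity $h$ and $\tau \colon \nset{h} \to \nset{n}$ with $\theta(\vect{a}\tau) = 1$ for every $\vect{a} \in T$; but $\vect{0}\tau = \vect{0}$ and $\vect{1}\tau = \vect{1}$, so this forces $\theta(\vect{0}) = \theta(\vect{1}) = 1$. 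Taking the unique map $\sigma \colon \nset{h} \to \nset{1}$ then shows that $\theta_\sigma$ is identically $1$, so $1 \minorant \theta_\sigma$, i.e., $1 \minmin \theta$, whence $1 \in {\downarrow}\Theta$.

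For $\text{\ref{lem:Klik-all:2}} \Rightarrow \text{\ref{lem:Klik-all:3}}$, the hypothesis $1 \in {\downarrow}\Theta$ unpacks (via Lemma~\ref{lem:minmin}\ref{lem:minmin:cond} applied to the $1$-ary constant $1$) to the existence of $\theta \in \Theta$ with $\theta(\vect{0}) = \theta(\vect{1}) = 1$. Then, given any $f \in \clAll$ of arity $n$ and any $T \subseteq f^{-1}(1)$ with $\card{T} \leq k$, I would take $\tau \colon \nset{h} \to \nset{n}$ to be a constant map (say $j \mapsto 1$); then for every $\vect{a} \in \{0,1\}^n$ the tuple $\vect{a}\tau$ equals $(a_1, \dots, a_1) \in \{\vect{0}, \vect{1}\}$, so $\theta(\vect{a}\tau) = 1$. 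This certifies $f \in \clKlik{k}{\Theta}$, hence $\clKlik{k}{\Theta} = \clAll$. No step here presents any real obstacle; the only place one must be a touch careful is the equivalence between $1 \in {\downarrow}\Theta$ and ``some $\theta \in \Theta$ has $\theta(\vect{0}) = \theta(\vect{1}) = 1$'', which follows because the unique minor formation map into $\nset{1}$ is the constant map.
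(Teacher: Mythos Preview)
Your proof is correct and follows essentially the same cycle as the paper. The only cosmetic difference is in $\text{\ref{lem:Klik-all:2}} \Rightarrow \text{\ref{lem:Klik-all:3}}$: the paper argues abstractly that every Boolean function is a minorant of the constant $1$, so $\chi_T \minmin 1 \minmin \theta$ puts $\chi_T \in {\downarrow}\Theta$, whereas you unpack $1 \in {\downarrow}\Theta$ to a concrete $\theta$ with $\theta(\vect{0}) = \theta(\vect{1}) = 1$ and exhibit the constant minor formation map explicitly; these are the same idea at two levels of concreteness.
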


\begin{proof}
\ref{lem:Klik-all:1} $\implies$ \ref{lem:Klik-all:2}
Since $\clKlik{k}{\Theta} \cap \clII \neq \emptyset$, there exists an $f \in \clKlik{k}{\Theta} \cap \clII$.
Since $f \in \clII$, $\{\vect{0}, \vect{1}\} \subseteq f^{-1}(1)$.
Since $f \in \clKlik{k}{\Theta}$, there exists a $\theta \in \Theta$ and $\sigma \colon \nset{\arity \theta} \to \nset{n}$ such that
$\theta(\vect{0} \sigma) = \theta(\vect{1} \sigma) = 1$, i.e., $\theta(\vect{0}) = \theta(\vect{1}) = 1$.
Then the unary constant $1$ function is a minor of $\theta$, so $1 \in {\downarrow} \Theta$.

\ref{lem:Klik-all:2} $\implies$ \ref{lem:Klik-all:3}
Let $f \in \clAll$, and let $T \subseteq f^{-1}(1)$ with $\card{T} \leq k$.
Because every Boolean function is a minorant of a constant $1$ function and $1 \in {\downarrow} \Theta$, it follows that $f \minmin 1 \minmin \theta$ for some $\theta \in \Theta$, so $f \in {\downarrow} \Theta$.
Therefore, $f \in \clKlik{k}{\Theta}$.
This shows that $\clAll \subseteq \clKlik{k}{\Theta}$.
The converse inclusion is trivial.

\ref{lem:Klik-all:3} $\implies$ \ref{lem:Klik-all:1}
Trivial.
\end{proof}

\begin{lemma}
\label{lem:Klik-diff-2}
Let $k \geq 2$, $\Theta \in \Ideals(\posetAllleq{k})$, and assume that $\Theta \neq \clAllleq{k}$.
\begin{enumerate}[label=\textup{(\alph*)}]
\item\label{lem:Klik-diff-2:a}
If $\Theta \cap \clOI \neq \emptyset$, then there is a lower cover $\Psi$ of $\Theta$ in $\Ideals(\posetAllleq{k})$ such that $\clKlik{k}{\Theta} \setminus \clKlik{k}{\Psi} \subseteq \clOI$.
\item\label{lem:Klik-diff-2:b}
If $\Theta \cap \clIO \neq \emptyset$, then there is a lower cover $\Psi$ of $\Theta$ in $\Ideals(\posetAllleq{k})$ such that $\clKlik{k}{\Theta} \setminus \clKlik{k}{\Psi} \subseteq \clIO$.
\end{enumerate}
\end{lemma}

\begin{proof}
\ref{lem:Klik-diff-2:a}
Observe first that $1 \notin {\downarrow} \Theta$ because $\Theta \neq \clAllleq{k} = {\downarrow^{[\leq k]}} \{1\}$.
Therefore, $\clKlik{k}{\Theta} \cap \clII = \clEmpty$ by Lemma~\ref{lem:Klik-all}.
Assume now that $\Theta \cap \clOI \neq \clEmpty$, so there is an $\alpha \in \Theta \cap \clOI$, and let $\alpha'$ be a $\minmin$\hyp{}maximal element of $\Theta$ such that $\alpha \minmin \alpha'$.
Let $\Psi$ be the lower cover of $\Theta$ in $\Ideals(\posetAllleq{k})$ that does not contain $\alpha'$, i.e., $\Psi = \Theta \setminus (\alpha' / \mathord{\eqminmin})$. 
It follows from Lemma~\ref{lem:minmin-1} that $\alpha' / \mathord{\eqminmin} \subseteq \clXI$.
By Lemma~\ref{lem:Klik-diff-1}, we have $\clKlik{k}{\Theta} \setminus \clKlik{k}{\Psi} \subseteq \clXI$.
Because $\clKlik{k}{\Theta} \cap \clII = \clEmpty$, we conclude that $\clKlik{k}{\Theta} \setminus \clKlik{k}{\Psi} \subseteq \clOI$, as claimed.

\ref{lem:Klik-diff-2:b}
The proof is similar to \ref{lem:Klik-diff-2:a}.
\end{proof}

\begin{lemma}
\label{lem:covers}
Let $k \in \IN \cup \{\infty\}$ and $\Theta, \Psi \in \Ideals(\posetAllleq{k})$.

\begin{enumerate}[label=\textup{(\roman*)}]
\item\label{lem:covers:none}
Assume that $\Psi$ is a lower cover of $\Theta$ in $\Ideals(\posetAllleq{k})$.
Let $\theta \in \Theta \setminus \Psi$ and $g \in \clKlik{k}{\Theta} \setminus \clKlik{k}{\Psi}$.
Then $\theta \minmin g$.
Moreover, for $\vect{c} \in \{\vect{0}, \vect{1}\}$, if $\theta(\vect{c}) = 1$ then $g(\vect{c}) = 1$.

\item\label{lem:covers:C}
Let $C \in \{ \clXI, \clIX, \clM, \clMneg, \clRefl \}$.
Assume that $\Psi$ is a lower cover of $\Theta$ in $\Ideals(\posetAllleq{k}_C)$.
Let $\theta \in \Theta \setminus \Phi$ and $g \in (\clKlik{k}{\Theta} \cap C) \setminus (\clKlik{k}{\Psi} \cap C)$.
Then $\theta \minmin g$.
\end{enumerate}
\end{lemma}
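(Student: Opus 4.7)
The plan is to exploit the witnessing subset of true points that distinguishes $\clKlik{k}{\Theta}$ from $\clKlik{k}{\Psi}$ and then use the fact that lower covers in the ideal lattice remove exactly one equivalence class of the underlying poset.

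For part \ref{lem:covers:none}, since $g \in \clKlik{k}{\Theta} \setminus \clKlik{k}{\Psi}$, the definition of $\clKlik{k}{\Psi}$ gives some $T \subseteq g^{-1}(1)$ with $\card{T} \leq k$ such that $T \not\tulee \Psi$, i.e., $\chi_T \notin {\downarrow} \Psi$; on the other hand, $g \in \clKlik{k}{\Theta}$ forces $\chi_T \in {\downarrow} \Theta$. Since $\chi_T \in \clAllleq{k}$, under the identification of Definition~\ref{def:Id-minmin} this says $\chi_T \in \Theta \setminus \Psi$. Because $\Psi$ is a lower cover of $\Theta$ in $\Ideals(\posetAllleq{k})$, the set $\Theta \setminus \Psi$ is exactly one $\eqminmin$\hyp{}equivalence class of $\clAllleq{k}$, so $\chi_T \eqminmin \theta$; in particular, $\theta \minmin \chi_T$. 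Combining this with the obvious minorant relation $\chi_T \minorant g$ (whence $\chi_T \minmin g$) and transitivity of $\minmin$ yields $\theta \minmin g$. The ``moreover'' clause then follows directly from Lemma~\ref{lem:minmin-1} applied to $\theta \minmin g$.

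For part \ref{lem:covers:C}, the argument runs in parallel, working in the $C$\hyp{}closed order $\minmin_C$ and its restricted ideal lattice $\Ideals(\posetAllleq{k}_C)$. Since $g \in C$ and $g \notin \clKlik{k}{\Psi} \cap C$, in fact $g \notin \clKlik{k}{\Psi}$, so we again obtain some $T \subseteq g^{-1}(1)$ with $\card{T} \leq k$ and $\chi_T \in \Theta \setminus \Psi$. Because $\Psi$ is now a lower cover in $\Ideals(\posetAllleq{k}_C)$, the set $\Theta \setminus \Psi$ is a single $\eqminmin_C$\hyp{}class, whence $\theta \minmin_C \chi_T$; together with $\chi_T \minorant g$ we obtain $\theta \minmin_C g$.

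The only genuine point of care is the last step: turning $\theta \minmin_C g$ into the stronger conclusion $\theta \minmin g$. This is where the hypothesis $g \in C$ is essential. By Lemma~\ref{lem:1MR-prod}, $\theta \minmin_C g$ factors as $\theta \minorant h \minor h' \mathrel{\RelCl{C}} g$ for some functions $h, h'$; but $h' \mathrel{\RelCl{C}} g$ means $h' = g^C$, and since $g \in C$ we have $g^C = g$, so $h' = g$. Hence $\theta \minorant h \minor g$, which by Lemma~\ref{lem:minmin}\ref{lem:minmin:comp} gives $\theta \minmin g$, as required. I expect this last collapse from $\minmin_C$ to $\minmin$ (and, correspondingly, the bookkeeping that $\Theta$, $\Psi$ are identified with their intersections with $\clAllleq{k}$ in the two different ideal lattices) to be the main subtlety; the rest is a straightforward translation between ``$T \tulee \Theta$'' and ``$\chi_T \in {\downarrow}\Theta$''.
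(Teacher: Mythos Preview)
Your proof is correct and follows essentially the same approach as the paper: pick a witnessing $T \subseteq g^{-1}(1)$ with $\card{T}\le k$, observe that $\chi_T$ lands in $\Theta\setminus\Psi$, use that a lower cover in the ideal lattice removes exactly one $\eqminmin$- (resp.\ $\eqminmin_C$-) class to get $\theta \minmin' \chi_T$, and then pass to $g$ via $\chi_T\minorant g$; for part~\ref{lem:covers:C} the collapse from $\minmin_C$ to $\minmin$ using $g^C=g$ (since $g\in C$) is exactly what the paper does. Your version is in fact slightly more streamlined than the paper's, which introduces an auxiliary $\theta'\in\Theta$ and works with $\chi_{T\tau}$ before invoking the single-class property, whereas you go directly through $\chi_T$ and the identification $\Theta=({\downarrow}\Theta)\cap\clAllleq{k}$.
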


\begin{proof}
For the proof of statements \ref{lem:covers:none} and \ref{lem:covers:C}, we denote by $\minmin'$ the relation $\minmin$ and $\minmin_C$, respectively, and by $\eqminmin'$ the relation $\eqminmin$ and $\eqminmin_C$, respectively.

Since $g \notin \clKlik{k}{\Psi}$, there exists $T \subseteq g^{-1}(1)$ with $\card{T} \leq k$ such that for all $\psi \in \Psi$ and for all $\sigma \colon \nset{\arity \psi} \to \nset{n}$ there is an $\vect{a} \in T$ with $\psi(\vect{a} \sigma) = 0$.
On the other hand, since $g \in \clKlik{k}{\Theta}$, there is a $\theta' \in \Theta$ and $\tau \colon \nset{\arity{\theta'}} \to \nset{n}$ such that $\theta'(\vect{t} \tau) = 1$ for all $\vect{t} \in T$; clearly $\theta' \in \Theta \setminus \Psi$.
Let $T \tau := \{ \, \vect{t} \tau \mid \vect{t} \in T \,\}$.
We thus have $\chi_T \minorant g$, $\chi_{T \tau} \minorant g_\tau \leq g$, and $\chi_{T \tau} \minmin \theta'$, and so $\chi_{T \tau} \minmin' \theta'$.

We claim that $\theta' \minmin' \chi_{T \tau}$.
For, suppose, to the contrary, that $\theta' \not\minmin' \chi_{T \tau}$.
Since $\chi_{T \tau} \minmin' \theta'$, we have that $\chi_{T \tau}$ is strictly below $\theta'$ in the $\minmin'$ order.
Because $\theta' \in \Theta \setminus \Psi$ and $\Psi$ is a lower cover of $\Theta$, it follows that $\theta' / \mathord{\eqminmin'} = \Theta \setminus \Psi$.
Therefore, $\chi_{T \tau} \not\eqminmin' \theta'$, so $\chi_{T \tau} \in \Theta \setminus (\theta' / \mathord{\eqminmin'}) = \Psi$.
By the definition of $\chi_{T \tau}$, we have $\chi_{T \tau}(\vect{t} \tau) = 1$ for all $\vect{t} \in T$, but
since $\chi_{T \tau} \in \Psi$, there must be some $\vect{u} \in T$ such that $\chi_{T \tau}(\vect{u} \tau) = 0$, a contradiction.

Since both $\theta$ and $\theta'$ belong to $\Theta \setminus \Psi$, which is a $\eqminmin'$\hyp{}class, it follows from the above that $\theta \eqminmin' \theta' \eqminmin' \chi_{T \tau} \minmin' g$.

For statement \ref{lem:covers:C},
note that $\theta \minmin_C g$ means $\theta \minmin g^C \RelCl{C} g$ because $\mathord{\minmin_C} = \mathord{(\mathord{\minorant} \circ \mathord{\minor} \circ \mathord{\RelCl{C}})}$.
Since $g = g^C$, we obtain $\theta \minmin g$, as claimed, also in this case.

Finally, the last claim in statement \ref{lem:covers:none} follows from Lemma~\ref{lem:Klik-diff-1}.
\end{proof}

We are now ready to describe the lower covers of the $(\clIc,\clMcUk{k})$\hyp{}clonoids of the form $\clKlik{k}{\Theta}$ and $\clKlik{k}{\Theta} \cap C$, where
$\clKlik{k}{\Theta} \neq \clAll$ and
\[
C \in \{\clOICO, \clIOCO, \clOI, \clIO, \clMo, \clMc, \clMineg, \clMcneg, \clReflOO\}.
\]

\begin{theorem}
\label{thm:UkTC-covers}
Let $k \geq 2$, and let $\Theta \in \Ideals(\posetAllleq{k})$
with $\Theta \notin \{\clAllleq{k}, \clVako, \clEmpty\}$.
\begin{enumerate}[label=\textup{(\roman*)}]
\item The lower covers of $\clKlik{k}{\Theta}$ are the classes $\clKlik{k}{\Psi}$ for each lower cover $\Psi$ of $\Theta$ in $\Ideals(\posetAllleq{k})$ and, additionally, the following:
\begin{itemize}
\item if $\Theta \in \Ideals(\posetAllleq{k}_\clXI)$, the class $\clKlik{k}{\Theta} \cap (\clOICO)$;
\item if $\Theta \in \Ideals(\posetAllleq{k}_\clIX)$, the class $\clKlik{k}{\Theta} \cap (\clIOCO)$;
\item if $\Theta \in \Ideals(\posetAllleq{k}_\clRefl)$, the class $\clKlik{k}{\Theta} \cap \clReflOO$.
\end{itemize}

\item Assume $\Theta \in \Ideals(\posetAllleq{k}_\clXI)$.
\begin{enumerate}[label=\textup{(\alph*)}]
\item
The lower covers of $\clKlik{k}{\Theta} \cap (\clOICO)$ are the classes $\clKlik{k}{\Psi} \cap (\clOICO)$ for each lower cover $\Psi$ of $\Theta$ in $\Ideals(\posetAllleq{k}_\clXI)$, $\clKlik{k}{\Theta} \cap \clOI$, and, additionally,
if $\Theta \in \Ideals(\posetAllleq{k}_\clM)$, the class $\clKlik{k}{\Theta} \cap \clMo$.
\item
The lower covers of $\clKlik{k}{\Theta} \cap \clOI$ are the classes $\clKlik{k}{\Psi} \cap \clOI$ for each lower cover $\Psi$ of $\Theta$ in $\Ideals(\posetAllleq{k}_\clXI)$ and, additionally,
if $\Theta \in \Ideals(\posetAllleq{k}_\clM)$, the class $\clKlik{k}{\Theta} \cap \clMc$.
\end{enumerate}

\item Assume $\Theta \in \Ideals(\posetAllleq{k}_\clIX)$.
\begin{enumerate}[label=\textup{(\alph*)}]
\item
The lower covers of $\clKlik{k}{\Theta} \cap (\clIOCO)$ are the classes $\clKlik{k}{\Psi} \cap (\clIOCO)$ for each lower cover $\Psi$ of $\Theta$ in $\Ideals(\posetAllleq{k}_\clIX)$, $\clKlik{k}{\Theta} \cap \clIO$, and, additionally,
if $\Theta \in \Ideals(\posetAllleq{k}_\clMneg)$, the class $\clKlik{k}{\Theta} \cap \clMoneg$.
\item
The lower covers of $\clKlik{k}{\Theta} \cap \clIO$ are the classes $\clKlik{k}{\Psi} \cap \clIO$ for each lower cover $\Psi$ of $\Theta$ in $\Ideals(\posetAllleq{k}_\clIX)$ and, additionally,
if $\Theta \in \Ideals(\posetAllleq{k}_\clMneg)$, the class $\clKlik{k}{\Theta} \cap \clMcneg$.
\end{enumerate}

\item Assume $\Theta \in \Ideals(\posetAllleq{k}_\clM)$.
\begin{enumerate}[label=\textup{(\alph*)}]
\item
The lower covers of $\clKlik{k}{\Theta} \cap \clMo$ are the classes $\clKlik{k}{\Psi} \cap \clMo$ for each lower cover $\Psi$ of $\Theta$ in $\Ideals(\posetAllleq{k}_\clM)$ and $\clKlik{k}{\Theta} \cap \clMc$.
\item
The lower covers of $\clKlik{k}{\Theta} \cap \clMc$ are the classes $\clKlik{k}{\Psi} \cap \clMc$ for each lower cover $\Psi$ of $\Theta$ in $\Ideals(\posetAllleq{k}_\clM)$.
\end{enumerate}

\item Assume $\Theta \in \Ideals(\posetAllleq{k}_\clMneg)$.
\begin{enumerate}[label=\textup{(\alph*)}]
\item
The lower covers of $\clKlik{k}{\Theta} \cap \clMineg$ are the classes $\clKlik{k}{\Psi} \cap \clMineg$ for each lower cover $\Psi$ of $\Theta$ in $\Ideals(\posetAllleq{k}_\clMneg)$ and $\clKlik{k}{\Theta} \cap \clMcneg$.
\item
The lower covers of $\clKlik{k}{\Theta} \cap \clMcneg$ are the classes $\clKlik{k}{\Psi} \cap \clMcneg$ for each lower cover $\Psi$ of $\Theta$ in $\Ideals(\posetAllleq{k}_\clMneg)$.
\end{enumerate}

\item Assume $\Theta \in \Ideals(\posetAllleq{k}_\clRefl)$.
\newline
The lower covers of $\clKlik{k}{\Theta} \cap \clReflOO$ are the classes $\clKlik{k}{\Psi} \cap \clReflOO$ for each lower cover $\Psi$ of $\Theta$ in $\Ideals(\posetAllleq{k}_\clRefl)$.

\end{enumerate}
\end{theorem}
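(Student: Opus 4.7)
The plan is to prove each of the six items through three steps: (1)~verify containment $L \subseteq M$ for each candidate lower cover $L$ of $M$; (2)~verify strictness $L \neq M$; and (3)~the cover property, i.e., no $(\clIc,\clMcUk{k})$-clonoid from the list of Theorem~\ref{thm:IcMcUk-clonoids} lies strictly between $L$ and $M$. Step~(1) is immediate for $\clKlik{k}{\Psi}$-type covers from Lemma~\ref{lem:kllk-inclusion-2}, and for intersection-type covers it follows from Proposition~\ref{prop:1MR-part} together with the obvious inclusions $\clMo, \clMc \subseteq \clM$, $\clMineg, \clMcneg \subseteq \clMneg$, $\clReflOO \subseteq \clRefl$, etc. Step~(2) is witnessed in each case by a specific function: for example, if $\Theta \cap \clII \neq \emptyset$ then by Lemma~\ref{lem:Klik-all} already $\clKlik{k}{\Theta} = \clAll$, contradicting the hypothesis $\Theta \neq \clAllleq{k}$; and if $\Theta$ contains some $\theta$ with $\theta(\vect{1}) = 1$ or $\theta(\vect{0}) = 1$ this yields, via Lemma~\ref{lem:Klik-diff-1} applied to the appropriate lower cover $\Psi$, a function separating $L$ from $M$.

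Step~(3), the cover property, is the heart of the proof. Suppose $K$ is any listed $(\clIc,\clMcUk{k})$-clonoid with $L \subsetneq K \subseteq M$, and pick $g \in K \setminus L$. The key fact, from Lemma~\ref{lem:covers}, is that any $\theta$ representing the removed $\eqminmin$- or $\eqminmin_C$-class in $\Theta \setminus \Psi$ satisfies $\theta \minmin g$. This gives a minor formation map $\sigma$ with $\theta \minorant g_\sigma \in K$. For the minorant-closed candidates (the plain $\clKlik{k}{\Theta'}$ from item~(a) of Theorem~\ref{thm:IcMcUk-clonoids}), this directly yields $\theta \in K$, and ranging $\theta$ over the $\eqminmin$-class forces $K$ to contain the $(\clIc,\clMcUk{k})$-clonoid generated by $\Theta$, which by Proposition~\ref{prop:UTheta-stability}\ref{prop:UTheta-stability:stable} equals $\clKlik{k}{\Theta} = M$. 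For the intersection-type candidates (items~(b)--(f)), we additionally use Lemmata~\ref{lem:Klik-diff-1} and~\ref{lem:Klik-diff-2} and the $(k,C)$-closed characterization of Lemma~\ref{lem:k-1MR-char} to pin down the values of $g$ at $\vect{0}, \vect{1}$ and, for the monotone/reflexive cases, to replace $g_\sigma$ by its $C$-closure, which lies in $K \cap C$ when $K$ sits inside $C$. The listed intermediate candidates (e.g., $\clKlik{k}{\Theta} \cap \clMo$ is intermediate between $\clKlik{k}{\Theta} \cap \clMc$ and $\clKlik{k}{\Theta} \cap (\clOICO)$) account exactly for the possible ``middle cases'' that arise.

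The main obstacle will be the meticulous case analysis: with five flavours of ambient constraint ($\clXI, \clIX, \clM, \clMneg, \clRefl$) interacting pairwise and the ``rest'' classes from item~(g) of Theorem~\ref{thm:IcMcUk-clonoids} to be excluded as possible intermediate clonoids, there are many sub-configurations. The subtlest cases are those where $\Theta$ lies in two ideal lattices simultaneously, e.g.\ $\Theta \in \Ideals(\posetAllleq{k}_{\clXI}) \cap \Ideals(\posetAllleq{k}_{\clM})$, giving rise to parallel chains of covers that must be checked to not collapse or admit further refinements. To rule out the ``rest'' classes as intermediate, the strategy will be to note that each such class (like $\clOICO$ itself, or $\clXIC$, or $\clReflOOC$) equals $\clKlik{k}{\Theta_0}$ or an intersection thereof for a very specific $\Theta_0$ (see Example~\ref{ex:classes}), so that inclusion into or containment of such a ``rest'' class translates back into an ideal-lattice statement already covered by Lemma~\ref{lem:kllk-inclusion-2}. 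Completing the verification amounts to running through each of the six items, and within each, checking the handful of subcases; no single subcase is conceptually hard, but the sheer enumeration is where care is most needed.
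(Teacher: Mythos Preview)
Your proposal has a structural gap that makes it circular. In Step~(3) you restrict attention to clonoids ``from the list of Theorem~\ref{thm:IcMcUk-clonoids}''; but in the paper's logical flow, the completeness of that list is \emph{not} established prior to Theorem~\ref{thm:UkTC-covers}. Rather, both results are proved together via Propositions~\ref{prop:UkT-gen:plain}--\ref{prop:UkT-gen:R}, which show the stronger statement: for each class $K$ in question, \emph{any} subset $G\subseteq K$ not contained in one of the claimed lower covers satisfies $\gen{G}=K$. This simultaneously gives the cover relation and rules out unlisted clonoids in between. Your Step~(3), by contrast, only shows that no \emph{listed} clonoid sits strictly between $L$ and $M$, which is not enough unless the list is already known to be complete.

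Moreover, even your argument for listed clonoids has an unjustified step. You write that obtaining $\theta\in K$ (via $\theta\minorant g_\sigma$ and minorant-closure of $K$) ``forces $K$ to contain the $(\clIc,\clMcUk{k})$-clonoid generated by $\Theta$, which by Proposition~\ref{prop:UTheta-stability}\ref{prop:UTheta-stability:stable} equals $\clKlik{k}{\Theta}$''. But Proposition~\ref{prop:UTheta-stability} only says $\clKlik{k}{\Theta}$ \emph{is} a clonoid; it does not say $\gen{\Theta}=\clKlik{k}{\Theta}$. That equality is precisely the nontrivial content of the paper's propositions, established through the $(G,k)$-semibisectability machinery of Section~\ref{sec:semibisectable} (Definition~\ref{def:helpful} and Lemma~\ref{lem:helpful}), which your proposal never invokes. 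Note also that your minorant-closure step requires $K$ to be a minorant minion, which you cannot assume for an arbitrary intermediate clonoid~$K$; this is another reason the argument cannot be made to work for unlisted clonoids. The paper's route---verifying conditions~\ref{helpful:k-true} and~\ref{helpful:both} for carefully chosen generators and applying Lemma~\ref{lem:helpful}---is what actually closes the gap.
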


Note that if $\Theta = \{0\}$, then $\Theta$ belongs to $\Ideals(\posetAllleq{k}_\clM)$, $\Ideals(\posetAllleq{k}_\clMneg)$. and $\Ideals(\posetAllleq{k}_\clRefl)$, and $\clVako = \clKlik{k}{\Theta} \cap \clMo = \clKlik{k}{\Theta} \cap \clMineg = \clKlik{k}{\Theta} \cap \clReflOO$ and $\clEmpty = \clKlik{k}{\Theta} \cap \clMc = \clKlik{k}{\Theta} \cap \clMcneg$.

\begin{proof}
For the statement about the lower covers of each $(\clIc,\clMcUk{k})$\hyp{}clonoid $K$, we need to prove two things.
First, we need to show that the claimed lower covers are proper subsets of $K$.
This is straightforward verification and is left as an exercise to the reader.
Second, we need to show that $K$ is generated by any subset $G$ of $K$ that is not contained in any of the claimed lower covers of $K$.
This is the content of Propositions~\ref{prop:UkT-gen:plain}--\ref{prop:UkT-gen:R} that follow.
\end{proof}

\begin{proposition}
\label{prop:UkT-gen:plain}
Let $k \geq 2$, let $\Theta, \Psi_1, \dots, \Psi_p \in \Ideals(\posetAllleq{k})$, and assume that $\Theta \neq \emptyset$ and $\Psi_1, \dots, \Psi_p$ are the lower covers of $\Theta$ in $\Ideals(\posetAllleq{k})$.
For $i \in \nset{p}$, let $g_i \in \clKlik{k}{\Theta} \setminus \clKlik{k}{\Psi_i}$.

\begin{enumerate}[label=\textup{(\alph*)}]
\item\label{prop:UkT-gen:plain:plain}
If $\Theta \subseteq \clEiii$ and
$\Theta \notin \Ideals(\posetAllleq{k}_C)$ for all $C \in \{\clXI, \clIX, \clRefl\}$,
then
we have
$\gen{g_1, \dots, g_p} = \clKlik{k}{\Theta}$.

\item\label{prop:UkT-gen:plain:1}
If $\Theta \subseteq \clOX$ and
$\Theta \in \Ideals(\posetAllleq{k}_\clXI)$,
then
for any $h \in \clKlik{k}{\Theta} \setminus (\clKlik{k}{\Theta} \cap (\clOICO))$, we have
$\gen{g_1, \dots, g_p, h} = \clKlik{k}{\Theta}$.

\item\label{prop:UkT-gen:plain:0}
If $\Theta \subseteq \clXO$ and
$\Theta \in \Ideals(\posetAllleq{k}_\clIX)$,
then
for any $h \in \clKlik{k}{\Theta} \setminus (\clKlik{k}{\Theta} \cap (\clIOCO))$, we have
$\gen{g_1, \dots, g_p, h} = \clKlik{k}{\Theta}$.

\item\label{prop:UkT-gen:plain:R}
If $\Theta \subseteq \clOO$ and
$\Theta \in \Ideals(\posetAllleq{k}_\clRefl)$,
then
for any $h \in \clKlik{k}{\Theta} \setminus (\clKlik{k}{\Theta} \cap \clReflOO)$, we have
$\gen{g_1, \dots, g_p, h} = \clKlik{k}{\Theta}$.
\end{enumerate}
\end{proposition}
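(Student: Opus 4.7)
My approach is uniform across the four parts. Write $G$ for the given generating set: $G = \{g_1, \dots, g_p\}$ in~\ref{prop:UkT-gen:plain:plain} and $G = \{g_1, \dots, g_p, h\}$ in~\ref{prop:UkT-gen:plain:1}--\ref{prop:UkT-gen:plain:R}. The inclusion $\gen{G} \subseteq \clKlik{k}{\Theta}$ is immediate from Proposition~\ref{prop:UTheta-stability}\ref{prop:UTheta-stability:stable}, since $\clKlik{k}{\Theta}$ is a $(\clIc,\clMcUk{k})$\hyp{}clonoid containing $G$. For the reverse inclusion I will invoke Lemma~\ref{lem:helpful}: every non\hyp{}constant $f \in \clKlik{k}{\Theta}$ will be shown to be $(G,k)$\hyp{}semibisectable. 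The only constant belonging to $\clKlik{k}{\Theta}$ is the constant~$0$, since in each of the four cases the hypothesis ($\Theta \subseteq \clEiii, \clOX, \clXO, \clOO$) forces $1 \notin {\downarrow}\Theta$ and hence $\clKlik{k}{\Theta} \cap \clII = \emptyset$ by Lemma~\ref{lem:Klik-all}. I dispatch $0 \in \gen{G}$ separately: in cases~\ref{prop:UkT-gen:plain:1}--\ref{prop:UkT-gen:plain:R}, the hypotheses on $\Theta$ together with $h \notin \clOICO, \clIOCO, \clReflOO$ respectively force $h(\vect{0}) = h(\vect{1}) = 0$, so the diagonal minor $h(x,\dots,x)$ is constant~$0$; in case~\ref{prop:UkT-gen:plain:plain}, either some $g_i$ is itself in $\clOO$ (giving a constant~$0$ diagonal minor) or the $g_i$'s together supply both $\id$ and $\neg$ as diagonal minors, in which case $\wedge(\id,\neg) = 0$ lies in $\clMcUk{k}(G\clIc) = \gen{G}$ by Lemma~\ref{lem:F-closure}.

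For condition~\ref{helpful:k-true}, let $\vect{a}_1, \dots, \vect{a}_k \in f^{-1}(1)$ and set $T := \{\vect{a}_1, \dots, \vect{a}_k\}$. Since $f \in \clKlik{k}{\Theta}$, there is $\theta \in \Theta$ with $\chi_T \minmin \theta$. Choose $\theta$ whose $\eqminmin$\hyp{}class is \emph{maximal} in the finite poset $\Theta / \mathord{\eqminmin}$; then $\Theta \setminus [\theta]$ is a lower cover of $\Theta$ in $\Ideals(\posetAllleq{k})$, hence equals some $\Psi_i$, and $\theta \in \Theta \setminus \Psi_i$. By Lemma~\ref{lem:covers}\ref{lem:covers:none} applied to $g_i \in \clKlik{k}{\Theta} \setminus \clKlik{k}{\Psi_i}$, we obtain $\theta \minmin g_i$, whence $\chi_T \minmin g_i$; the witnessing minor formation map $\rho$ produces $(g_i)_\rho \in G_n$ with $(g_i)_\rho(\vect{a}_j) = 1$ for every $j \in \nset{k}$.

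Condition~\ref{helpful:both} is the main obstacle. Given $\vect{a} \in f^{-1}(1)$ and $\vect{b} \in f^{-1}(0)$, applying the previous step to $T = \{\vect{a}\}$ yields a minor $\tau = (g_i)_\rho$ with $\tau(\vect{a}) = 1$; the task is to refine $\rho$ so that also $\tau(\vect{b}) = 0$. Partition the coordinates by the four sets $I_{xy} := \{j : a_j = x,\, b_j = y\}$ for $(x,y) \in \{0,1\}^2$. If all four $I_{xy}$ are non\hyp{}empty, a minor formation map can steer $(\vect{a}\sigma, \vect{b}\sigma)$ to any desired pair in $\{0,1\}^m \times \{0,1\}^m$, and separation reduces to the non\hyp{}constancy of $g_i$. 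Otherwise $(\vect{a}, \vect{b})$ falls into one of the degenerate configurations $\vect{a} \leq \vect{b}$, $\vect{a} \geq \vect{b}$, $\vect{a} \wedge \vect{b} = \vect{0}$, $\vect{a} \vee \vect{b} = \vect{1}$, whose boundaries are $\vect{b} \in \{\vect{0}, \vect{1}\}$ and $\vect{a} = \overline{\vect{b}}$. In case~\ref{prop:UkT-gen:plain:plain}, the three non\hyp{}closure hypotheses translate, via Lemma~\ref{lem:k-1MR-char}, into the existence of some $\theta \in \Theta$ (non\hyp{}constant, and hence necessarily non\hyp{}monotone, thanks to $\Theta \subseteq \clEiii$) whose $C$\hyp{}closure has a true\hyp{}point pattern outside $\Theta$; promoting $\theta$ to a maximal class and invoking Lemma~\ref{lem:covers}\ref{lem:covers:none} yields a $g_i$ whose minors realise the separation required at the boundary associated with $C$ ($\vect{b} = \vect{1}$ for $C = \clXI$, $\vect{b} = \vect{0}$ for $C = \clIX$, $\vect{a} = \overline{\vect{b}}$ for $C = \clRefl$); non\hyp{}boundary degenerate configurations reduce to these via appropriate minor formation. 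In cases~\ref{prop:UkT-gen:plain:1}--\ref{prop:UkT-gen:plain:R}, the extra generator $h$ plays precisely this separating role, its failure to lie in $\clOICO, \clIOCO, \clReflOO$ respectively encoding the needed separation. The principal difficulty will be executing this case analysis rigorously: matching each degenerate configuration to the correct hypothesis\hyp{}provided witness, and verifying minor\hyp{}formation compatibility against the $I_{xy}$\hyp{}pattern, is bookkeeping\hyp{}intensive but mechanical once the dictionary between $(k,C)$\hyp{}closedness failures and configuration\hyp{}separation is set up.
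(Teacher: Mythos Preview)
Your overall strategy---prove $(G,k)$\hyp{}semibisectability and invoke Lemma~\ref{lem:helpful}, handling the constant $0$ separately---matches the paper's, and your verification of condition~\ref{helpful:k-true} via Lemma~\ref{lem:covers}\ref{lem:covers:none} is essentially identical to the paper's argument.

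The gap is in your mechanism for condition~\ref{helpful:both} in part~\ref{prop:UkT-gen:plain:plain}. You propose: take a $\theta \in \Theta$ witnessing the failure of $(k,C)$\hyp{}closedness, promote it to a $\minmin$\hyp{}maximal class, and apply Lemma~\ref{lem:covers}\ref{lem:covers:none} to obtain $\theta \minmin g_i$, claiming this $g_i$ then has the separating minor you need. But $\theta \minmin g_i$ only says some minor of $g_i$ \emph{dominates} $\theta$; it does not transfer ``$\theta^C$ has a true\hyp{}point pattern outside $\Theta$'' to $g_i$, and in particular does not force $g_i$ outside any specified class. Nothing rules out, say, $g_i \in \clOI$ even when your $(k,\clXI)$\hyp{}non\hyp{}closure witness $\theta$ was chosen carefully. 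The same gap undermines your dichotomy for $0 \in \gen{G}$ in case~\ref{prop:UkT-gen:plain:plain}: the claim ``either some $g_i \in \clOO$, or both $\id$ and $\neg$ appear as diagonal minors'' is not justified by the hypotheses alone.

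The paper does not go through a witnessing $\theta$ at all. It first case\hyp{}splits on whether $\Theta \cap \clOI$ and $\Theta \cap \clIO$ are nonempty; when both are, Lemma~\ref{lem:Klik-diff-2} supplies $g_r \in \clOI$ and $g_s \in \clIO$, giving $\id$ and $\neg$ as minors. In the asymmetric cases (e.g.\ $\Theta \subseteq \clOX$) the key tool is Proposition~\ref{prop:1MR-part}: since $\Theta$ is not $(k,\clXI)$\hyp{}closed, its largest $(k,\clXI)$\hyp{}closed subset $\Gamma$ is properly contained in $\Theta$, hence in some lower cover $\Psi_t$, and then $\clKlik{k}{\Theta} \cap \clXI = \clKlik{k}{\Gamma} \cap \clXI = \clKlik{k}{\Psi_t} \cap \clXI$ forces $(\clKlik{k}{\Theta} \setminus \clKlik{k}{\Psi_t}) \cap \clXI = \emptyset$, so $g_t \in \clOO$. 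From this one extracts a concrete binary minor $\gamma$ (with $\gamma(0,0) = \gamma(1,1) = 0$, $\gamma(0,1) = 1$, or $\gamma = \mathord{\nrightarrow}$ in the reflexive case) and checks separation by a direct two\hyp{}column matrix analysis. Your $I_{xy}$\hyp{}partition framing is a reasonable alternative organization for the bookkeeping, but it will not close without the Proposition~\ref{prop:1MR-part} step to actually produce a $g_t$ with the required shape.
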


\begin{proof}
We are going to show that $\clKlik{k}{\Theta} \setminus \clVako$ is $(G,k)$\hyp{}semibisectable with $G$ being the set of functions described in each statement (either $\{g_1, \dots, g_p\}$ or $\{g_1, \dots, g_p, h\}$) and that $0 \in \gen{G}$.
It then follows from Lemma~\ref{lem:helpful} and Proposition~\ref{prop:constants}\ref{prop:constants:0} that
$\clKlik{k}{\Theta} 
= (\clKlik{k}{\Theta} \setminus \clVako) \cup \clVako
\subseteq \gen{G} \cup \gen{0}
\subseteq \gen{G}
\subseteq \clKlik{k}{\Theta}$.

Let $f \in \clKlik{k}{\Theta} \setminus \clVako$.
Let $T \subseteq f^{-1}(1)$ with $\card{T} \leq k$.
Then there is a $\phi \in \Theta$ and $\tau \colon \nset{\arity{\phi}} \to \nset{n}$ such that $\phi(\vect{a} \tau) = 1$ for all $\vect{a} \in T$.
There is a maximal element $\theta$ of $\Theta$ such that $\phi \minmin \theta$, and there is a lower cover $\Psi_j$ of $\Theta$ such that $\theta \in \Theta \setminus \Psi_j$.
By Lemma~\ref{lem:covers}\ref{lem:covers:none}, $\theta \minmin g_j$, and so $\phi \minmin g_j$.
It follows that there is a $\pi \colon \nset{\arity{g_j}} \to \nset{\arity{\psi}}$ such that $\phi \minorant (g_j)_\pi$, so $(g_j)_\pi(\vect{a} \tau) = g_j(\vect{a} \tau \pi) = 1$ for all $\vect{a} \in T$.
We conclude that condition \ref{helpful:k-true} of Definition~\ref{def:helpful} is satisfied by any $G$ with $\{g_1, \dots, g_p\} \subseteq G$.

It remains to verify that also condition \ref{helpful:both} is satisfied and that $0 \in \gen{G}$ in each of the cases described in statements \ref{prop:UkT-gen:plain:plain}, \ref{prop:UkT-gen:plain:1}, \ref{prop:UkT-gen:plain:0}, \ref{prop:UkT-gen:plain:R}.
Let $\vect{a} \in f^{-1}(1)$ and $\vect{b} \in f^{-1}(0)$.
We have $\vect{a} \neq \vect{b}$, so there is an $i \in \nset{n}$ such that $a_i \neq b_i$.

\ref{prop:UkT-gen:plain:plain}
We consider several cases according to whether $\Theta$ contains a function in $\clOI$ or $\clIO$.

Case 1:
Assume that $\Theta \cap \clOI \neq \emptyset$ and $\Theta \cap \clIO \neq \emptyset$.
By Lemma~\ref{lem:Klik-diff-2}, there are $r, s \in \nset{p}$ such that $g_r \in \clOI$ and $g_s \in \clIO$.
We have $\id \leq g_r$ and $\neg \leq g_s$, and $\tau_i(\vect{a}) = 1$ and $\tau_i(\vect{b}) = 0$ for some $\tau \in \{\id, \neg\}$, so condition \ref{helpful:both} is satisfied.
Moreover, for $k \geq 3$, $\threshold{k+1}{k}(\id, \id, \underbrace{\neg, \dots, \neg}_{k-1}) = 0$ and $(x \wedge (y \vee z))(\id, \neg, \neg) = 0$, so $0 \in \gen{g_r, g_s} \subseteq \gen{G}$.

Case 2:
Assume that $\Theta \cap \clOI \neq \emptyset$ but $\Theta \cap \clIO = \emptyset$.
Then $\Theta \subseteq \clOX$ and hence $\clKlik{k}{\Theta} \subseteq \clOX$, and we have that $\vect{a} \neq \vect{0}$, so there is a $j \in \nset{n}$ such that $a_j = 1$.
By Lemma~\ref{lem:Klik-diff-2}, there is an $s \in \nset{p}$ such that $g_s \in \clOI$, and hence $\id \leq g_s$.

Let $\Gamma$ be the largest $(k,\clXI)$\hyp{}closed subset of $\Theta$.
Since $\Theta$ is not $(k,\clXI)$\hyp{}closed,
$\Gamma$ is a proper subset of $\Theta$, so there exists a lower cover $\Psi_t$ of $\Theta$ such that $\Gamma \subseteq \Psi_t$.
By Proposition~\ref{prop:1MR-part},
\[
\clKlik{k}{\Theta} \cap \clOI = \clKlik{k}{\Gamma} \cap \clOI = \clKlik{k}{\Psi_t} \cap \clOI.
\]
Therefore,
$(\clKlik{k}{\Theta} \setminus \clKlik{k}{\Psi_t}) \cap \clOI = \emptyset$,
so $g_t \in \clOX \setminus \clOI = \clOO$.
Note that $g_t$ is not a constant $0$ function, because $\clVako \subseteq \clKlik{k}{\Psi_t}$.
Therefore $g_t$ has a binary minor $\gamma$ satisfying $\gamma(0,0) = \gamma(1,1) = 0$ and $\gamma(0,1) = 1$.
Moreover, $0 \minor \gamma$, so $0 \in \gen{g_t} \subseteq \gen{G}$.

As we have either
$\left( \begin{smallmatrix} a_i \\ b_i \end{smallmatrix} \right) = \left( \begin{smallmatrix} 1 \\ 0 \end{smallmatrix} \right)$
or $\left( \begin{smallmatrix} a_i & a_j \\ b_i & b_j \end{smallmatrix} \right) = \left( \begin{smallmatrix} 0 & 1 \\ 1 & 1 \end{smallmatrix} \right)$,
we have either $\id_i(\vect{a}) = 1$ and $\id_i(\vect{b}) = 0$ or $\gamma_{ij}(\vect{a}) = 1$ and $\gamma_{ij}(\vect{b}) = 0$, so condition \ref{helpful:both} is satisfied.

Case 3:
Assume that $\Theta \cap \clIO \neq \emptyset$ but $\Theta \cap \clOI = \emptyset$.
The proof is similar to the previous case.

Case 4:
Assume that $\Theta \cap \clOI = \emptyset$ and $\Theta \cap \clIO = \emptyset$.
Then $\Theta \subseteq \clOO$ and hence $\clKlik{k}{\Theta} \subseteq \clOO$.
Therefore $\vect{a} \notin \{\vect{0}, \vect{1}\}$, so there exists a $j \in \nset{n}$ such that $a_i \neq a_j$.

Let now $\Gamma$ be the largest $(k,\clRefl)$\hyp{}closed subset of $\Theta$.
Since $\Theta$ is not $(k,\clRefl)$\hyp{}closed, there exists a lower cover $\Psi_t$ of $\Theta$ such that $\Gamma \subseteq \Psi_t$.
By Proposition~\ref{prop:1MR-part},
\[
\clKlik{k}{\Theta} \cap \clReflOO = \clKlik{k}{\Gamma} \cap \clReflOO = \clKlik{k}{\Psi_t} \cap \clReflOO.
\]
Therefore,
$(\clKlik{k}{\Theta} \setminus \clKlik{k}{\Psi_t}) \cap \clReflOO = \emptyset$,
so $g_t \in \clOO \setminus \clReflOO$.
Note that $g_t$ is not a constant $0$ function, because $\clVako \subseteq \clKlik{k}{\Psi_t}$.
Therefore $g_t$ has a binary minor $\gamma$ satisfying $\gamma(0,0) = \gamma(1,1) = \gamma(0,1) = 0$ and $\gamma(1,0) = 1$, i.e., $\gamma = \mathord{\nrightarrow}$.
Moreover, $0 \minor \gamma$, so $0 \in \gen{g_t} \subseteq \gen{G}$.

Recall that $a_i \neq b_i$ and $a_i \neq a_j$.
Therefore
$\left( \begin{smallmatrix} a_i & a_j \\ b_i & b_j \end{smallmatrix} \right)
\in
\left\{
\left( \begin{smallmatrix} 0 & 1 \\ 1 & 0 \end{smallmatrix} \right),
\left( \begin{smallmatrix} 1 & 0 \\ 0 & 1 \end{smallmatrix} \right),
\left( \begin{smallmatrix} 0 & 1 \\ 1 & 1 \end{smallmatrix} \right),
\left( \begin{smallmatrix} 1 & 0 \\ 0 & 0 \end{smallmatrix} \right)
\right\}$.
It follows that
$\mathord{\nrightarrow}_{ij}(\vect{a}) = 1$ and $\mathord{\nrightarrow}_{ij}(\vect{b}) = 0$
or
$\mathord{\nrightarrow}_{ji}(\vect{a}) = 1$ and $\mathord{\nrightarrow}_{ji}(\vect{b}) = 0$, so condition \ref{helpful:both} is satisfied.

\ref{prop:UkT-gen:plain:1}
Since $\Theta \subseteq \clOX$, we have also $\clKlik{k}{\Theta} \subseteq \clOX$.
Therefore $\vect{a} \neq \vect{0}$, so there is a $j \in \nset{n}$ such that $a_j = 1$.

Let $h \in \clKlik{k}{\Theta} \setminus (\clKlik{k}{\Theta} \cap (\clOI \cup \clVako))$.
Then $h \in \clOO \setminus \clVako$, so $h$ has a binary minor $\gamma$ satisfying $\gamma(0,0) = \gamma(1,1) = 0$ and $\gamma(0,1) = 1$.
Moreover, $0 \minor \gamma$, so $0 \in \gen{h} \subseteq \gen{G}$.

As we have either
$\left( \begin{smallmatrix} a_i \\ b_i \end{smallmatrix} \right) = \left( \begin{smallmatrix} 1 \\ 0 \end{smallmatrix} \right)$
or $\left( \begin{smallmatrix} a_i & a_j \\ b_i & b_j \end{smallmatrix} \right) = \left( \begin{smallmatrix} 0 & 1 \\ 1 & 1 \end{smallmatrix} \right)$,
it holds that either $\id_i(\vect{a}) = 1$ and $\id_i(\vect{b}) = 0$ or $\gamma_{ij}(\vect{a}) = 1$ and $\gamma_{ij}(\vect{b}) = 0$, so condition \ref{helpful:both} is satisfied.

\ref{prop:UkT-gen:plain:0}
The proof is similar to that of statement \ref{prop:UkT-gen:plain:1}.

\ref{prop:UkT-gen:plain:R}
Since $\Theta \subseteq \clOO$, we also have $\clKlik{k}{\Theta} \subseteq \clOO$.
Therefore $\vect{a} \notin \{\vect{0}, \vect{1}\}$, so there exists a $j \in \nset{n}$ such that $a_i \neq a_j$.

Let $h \in \clKlik{k}{\Theta} \setminus (\clKlik{k}{\Theta} \cap \clReflOO)$.
Then $h \in \clOO \setminus \clReflOO$, so $h$ has a binary minor $\gamma$ satisfying $\gamma(0,0) = \gamma(1,1) = \gamma(0,1) = 0$ and $\gamma(1,0) = 1$, i.e., $\gamma = \mathord{\nrightarrow}$.
Moreover, $0 \minor \gamma$, so $0 \in \gen{g_t} \subseteq \gen{G}$.

Recall that $a_i \neq b_i$ and $a_i \neq a_j$.
Therefore
$\left( \begin{smallmatrix} a_i & a_j \\ b_i & b_j \end{smallmatrix} \right)
\in
\left\{
\left( \begin{smallmatrix} 0 & 1 \\ 1 & 0 \end{smallmatrix} \right),
\left( \begin{smallmatrix} 1 & 0 \\ 0 & 1 \end{smallmatrix} \right),
\left( \begin{smallmatrix} 0 & 1 \\ 1 & 1 \end{smallmatrix} \right),
\left( \begin{smallmatrix} 1 & 0 \\ 0 & 0 \end{smallmatrix} \right)
\right\}$.
It follows that $\mathord{\nrightarrow}_{ij}(\vect{a}) = 1$ and $\mathord{\nrightarrow}_{ij}(\vect{b}) = 0$ or $\mathord{\nrightarrow}_{ji}(\vect{a}) = 1$ and $\mathord{\nrightarrow}_{ji}(\vect{b}) = 0$, so condition \ref{helpful:both} is satisfied.
\end{proof}

\begin{proposition}
\label{prop:UkT-gen:1}
Assume that
$\Theta, \Psi_1, \dots, \Psi_p \in \Ideals(\posetAllleq{k}_\clXI)$ and
$\Psi_1, \dots, \Psi_p$ are the lower covers of $\Theta$ in $\Ideals(\posetAllleq{k}_\clXI)$.
For $i \in \nset{p}$, let $g_i \in (\clKlik{k}{\Theta} \cap \clOI) \setminus (\clKlik{k}{\Psi_i} \cap \clOI) = (\clKlik{k}{\Theta} \cap (\clOI \cup \clVako)) \setminus (\clKlik{k}{\Psi_i} \cap (\clOI \cup \clVako))$.
\begin{enumerate}[label=\textup{(\alph*)}]
\item\label{prop:UkT-gen:1:plain}
If $\Theta \notin \Ideals(\posetAllleq{k}_\clM)$,
then $\gen{g_1, \dots, g_p} = \clKlik{k}{\Theta} \cap \clOI$
and for any $c \in (\clKlik{k}{\Theta} \cap (\clOI \cup \clVako)) \setminus (\clKlik{k}{\Theta} \cap \clOI)$, $\gen{g_1, \dots, g_p, c} = \clKlik{k}{\Theta} \cap (\clOI \cup \clVako)$.

\item\label{prop:UkT-gen:1:M}
If $\Theta \in \Ideals(\posetAllleq{k}_\clM)$,
then for any $h \in (\clKlik{k}{\Theta} \cap \clOI) \setminus (\clKlik{k}{\Theta} \cap \clM) = (\clKlik{k}{\Theta} \cap (\clOI \cup \clVako)) \setminus (\clKlik{k}{\Theta} \cap \clM)$, we have $\gen{g_1, \dots, g_p, h} = \clKlik{k}{\Theta} \cap \clOI$, and for any $c \in (\clKlik{k}{\Theta} \cap (\clOI \cup \clVako)) \setminus (\clKlik{k}{\Theta} \cap \clOI)$, $\gen{g_1, \dots, g_p, h, c} = \clKlik{k}{\Theta} \cap (\clOI \cup \clVako)$.
\end{enumerate}
\end{proposition}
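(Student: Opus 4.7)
The plan is to adapt the strategy of Proposition~\ref{prop:UkT-gen:plain}: prove the substantive inclusion $\clKlik{k}{\Theta} \cap \clOI \subseteq \gen{G}$ (where $G$ is the proposed generating set) by showing that every $f$ in the left\hyp{}hand side is $(G,k)$\hyp{}semibisectable, then invoke Lemma~\ref{lem:helpful}. The reverse inclusion is automatic because $\clOI$ is a clone containing $\clIc$ and $\clMcUk{k}$, and $\clKlik{k}{\Theta}$ is a $(\clIc,\clMcUk{k})$\hyp{}clonoid by Proposition~\ref{prop:UTheta-stability}\ref{prop:UTheta-stability:stable}. The passage from $\clKlik{k}{\Theta} \cap \clOI$ to $\clKlik{k}{\Theta} \cap (\clOICO)$ is handled by observing that any $c \in \clVako$ has the unary constant $0$ as a minor, so $\clVako \subseteq \gen{G \cup \{c\}}$.

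Condition~\ref{helpful:k-true} of Definition~\ref{def:helpful} is handled uniformly for cases~\ref{prop:UkT-gen:1:plain} and~\ref{prop:UkT-gen:1:M}. Given $T \subseteq f^{-1}(1)$ with $\card{T} \leq k$, membership in $\clKlik{k}{\Theta}$ supplies $\phi \in \Theta$ and $\tau$ with $\phi(\vect{a}\tau) = 1$ for all $\vect{a} \in T$. Taking a $\minmin_{\clXI}$\hyp{}maximal $\theta$ above $\phi$ places $\theta$ in $\Theta \setminus \Psi_j$ for some $j$, and Lemma~\ref{lem:covers}\ref{lem:covers:C} yields $\theta \minmin g_j$; hence $\phi \minmin_{\clXI} g_j$. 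Because $g_j \in \clXI$ forces $g_j^{\clXI} = g_j$, Lemma~\ref{lem:1MR-prod} collapses this to $\phi \minmin g_j$, and composing the relevant minor formation maps produces an $n$\hyp{}ary minor of $g_j$ that is $1$ throughout $T$.

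Condition~\ref{helpful:both} splits into two sub\hyp{}cases for $\vect{a} \in f^{-1}(1)$ and $\vect{b} \in f^{-1}(0)$. When $\vect{a} \nleq \vect{b}$, some coordinate $i$ satisfies $a_i = 1 > 0 = b_i$, and since each $g_j \in \clOI$ has $\id$ as its unique unary minor, the projection $\id_i$ is an $n$\hyp{}ary minor of $g_j$ separating $\vect{a}$ from $\vect{b}$. The remaining sub\hyp{}case is $\vect{a} < \vect{b}$, which forces $f \notin \clM$. In case~\ref{prop:UkT-gen:1:M}, the auxiliary generator $h \in \clOI \setminus \clM$ supplies a witnessing pair $\vect{u} < \vect{v}$ with $h(\vect{u}) = 1$, $h(\vect{v}) = 0$; partitioning the variables of $h$ according to the value of $(u_i,v_i) \in \{(0,0),(1,1),(0,1)\}$ and identifying within each block yields a ternary minor $\gamma$ of $h$ with $\gamma(0,1,0) = 1$ and $\gamma(0,1,1) = 0$. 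Because $\vect{a} \neq \vect{0}$, $\vect{a} < \vect{b}$, and $\vect{b} \neq \vect{1}$, the coordinate pattern $(0,0),(1,1),(0,1)$ is realized by some triple of positions of $(\vect{a},\vect{b})$, yielding an $n$\hyp{}ary minor of $h$ that separates $\vect{a}$ from $\vect{b}$.

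In case~\ref{prop:UkT-gen:1:plain}, I would instead secure the non\hyp{}monotone generator from $\{g_1,\dots,g_p\}$ alone. Let $\Gamma$ be the largest subset of $\Theta$ that is simultaneously $(k,\clM)$\hyp{}closed and $(k,\clXI)$\hyp{}closed (which exists by Lemma~\ref{lem:unionkC}). Since $\Theta$ is not $(k,\clM)$\hyp{}closed, $\Gamma \subsetneq \Theta$, and $\Gamma \in \Ideals(\posetAllleq{k}_\clXI)$ sits under some lower cover $\Psi_t$. Iterating Proposition~\ref{prop:1MR-part} (alternating between $C = \clM$ and $C = \clXI$ until the sequence of subsets stabilizes at $\Gamma$) should yield $\clKlik{k}{\Theta} \cap \clMc = \clKlik{k}{\Gamma} \cap \clMc \subseteq \clKlik{k}{\Psi_t} \cap \clMc$, which forces the arbitrarily chosen $g_t$ to lie outside $\clM$. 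Applying the ternary\hyp{}minor extraction to $g_t$ then completes the semibisectability verification. The main obstacle I anticipate is making this iterated\hyp{}closure identity rigorous: Proposition~\ref{prop:1MR-part} handles only one closure at a time, so one must verify that the alternating procedure stabilizes in finitely many steps and that each step preserves the intersection with $\clMc$.
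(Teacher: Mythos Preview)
Your proposal follows the paper's approach closely and is essentially correct. The treatment of condition~\ref{helpful:k-true} via Lemma~\ref{lem:covers}\ref{lem:covers:C} matches the paper (note that the lemma already yields $\theta \minmin g_j$ directly, so your detour through $\minmin_\clXI$ and Lemma~\ref{lem:1MR-prod} is harmless but unnecessary). Your case split for condition~\ref{helpful:both} into $\vect{a} \nleq \vect{b}$ versus $\vect{a} < \vect{b}$, and the ternary\hyp{}minor extraction from a non\hyp{}monotone $\clOI$\hyp{}function, coincide with the paper's argument up to a relabelling of variables.

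The one point where you diverge is the ``iterated closure'' worry in case~\ref{prop:UkT-gen:1:plain}, and here the resolution is simpler than you anticipate. The paper takes $\Gamma$ to be the largest $(k,\clM)$\hyp{}closed subset of $\Theta$ and asserts without further comment that $\Gamma$ lies below some lower cover $\Psi_t$ in $\Ideals(\posetAllleq{k}_\clXI)$. This is justified because $\Gamma$ is automatically $(k,\clXI)$\hyp{}closed: for any $\varphi$ with a true point one has $\varphi^\clXI \minorant \varphi^\clM$, so $(\varphi^\clXI)^{-1}(1) \subseteq (\varphi^\clM)^{-1}(1)$ and $(k,\clM)$\hyp{}closedness at $\varphi$ implies $(k,\clXI)$\hyp{}closedness there; and for the constant~$0$ function one checks directly that ${\downarrow^{[\leq k]}}\{\id\}$ is a $(k,\clM)$\hyp{}closed subset of $\Theta$ (using that $\id \in \Theta$ since $\Theta$ is $(k,\clXI)$\hyp{}closed), whence $\id \in \Gamma$, which handles the remaining case. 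Thus a single application of Proposition~\ref{prop:1MR-part} with $C = \clM$ gives $\clKlik{k}{\Theta} \cap \clM = \clKlik{k}{\Gamma} \cap \clM = \clKlik{k}{\Psi_t} \cap \clM$, forcing $g_t \notin \clM$ as required. No alternating iteration is needed; your $\Gamma$ and the paper's $\Gamma$ coincide. You were right to flag this step as needing care, but the fix is the observation above rather than an iterative procedure.
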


\begin{proof}
We are going to show that $\clKlik{k}{\Theta} \cap \clOI$ is $(G,k)$\hyp{}semibisectable with $G$ being either $\{g_1, \dots, g_p\}$ or $\{g_1, \dots, g_p, h\}$, as required in each statement.
It then follows from Lemma~\ref{lem:helpful} that
$\clKlik{k}{\Theta} \cap \clOI \subseteq \gen{G} \subseteq \clKlik{k}{\Theta} \cap \clOI$.
Furthermore, because $c \in \clVako \subseteq \clKlik{k}{\Theta}$, it follows from Proposition~\ref{prop:constants}\ref{prop:constants:0} that
$\clKlik{k}{\Theta} \cap (\clOICO) = (\clKlik{k}{\Theta} \cap \clOI) \cup \clVako
\subseteq \gen{G} \cup \gen{0}
\subseteq \gen{G \cup \{c\}}
\subseteq \clKlik{k}{\Theta} \cap (\clOICO)$.

Let $f \in \clKlik{k}{\Theta} \cap \clOI$.
Let $T \subseteq f^{-1}(1)$ with $\card{T} \leq k$.
Then there is a $\phi \in \Theta$ and $\tau \colon \nset{\arity{\phi}} \to \nset{n}$ such that $\phi(\vect{a} \tau) = 1$ for all $\vect{a} \in T$.
There is a maximal element $\theta$ of $\Theta$ such that $\phi \minmin \theta$, and there is a lower cover $\Psi_j$ of $\Theta$ in $\Ideals(\clAllleq{k} / \mathord{\eqminmin_\clXI}, \mathord{\minmin_\clXI})$ such that $\theta \in \Theta \setminus \Psi_j$.
By Lemma~\ref{lem:covers}\ref{lem:covers:C}, $\theta \minmin g_j$, and so $\phi \minmin g_j$.
It follows that there is a $\pi \colon \nset{\arity{g_j}} \to \nset{\arity{\psi}}$ such that $\phi \minorant (g_j)_\pi$, so $(g_j)_\pi(\vect{a} \tau) = g_j(\vect{a} \tau \pi) = 1$ for all $\vect{a} \in T$.
We conclude that condition \ref{helpful:k-true} of Definition~\ref{def:helpful} is satisfied by any $G$ with $\{g_1, \dots, g_p\} \subseteq G$.

It remains to verify that also condition \ref{helpful:both} is satisfied in each case.
Let $\vect{a} \in f^{-1}(1)$ and $\vect{b} \in f^{-1}(0)$.
We have $\vect{a} \neq \vect{b}$, so there is an $i \in \nset{n}$ such that $a_i \neq b_i$.
Since $f \in \clOI$, we have $\vect{a} \neq \vect{0}$ and $\vect{b} \neq \vect{1}$, so there are $j, k \in \nset{n}$ such that $a_j = 1$ and $b_k = 0$.
Consequently, either
$\left( \begin{smallmatrix} a_i \\ b_i \end{smallmatrix} \right) = \left( \begin{smallmatrix} 1 \\ 0 \end{smallmatrix} \right)$
or
$\left( \begin{smallmatrix} a_i & a_j & a_k \\ b_i & b_j & b_k \end{smallmatrix} \right) = \left( \begin{smallmatrix} 0 & 1 & 0 \\ 1 & 1 & 0 \end{smallmatrix} \right)$.

Since $g_i \in \clOI$ for any $i \in \nset{p}$, we have $\id \leq g_i$.
Therefore we are done in the case when
$\left( \begin{smallmatrix} a_i \\ b_i \end{smallmatrix} \right) = \left( \begin{smallmatrix} 1 \\ 0 \end{smallmatrix} \right)$, because then we have $\id_i(\vect{a}) = 1$ and $\id_i(\vect{b}) = 0$.

In order to deal with the case when
$\left( \begin{smallmatrix} a_i & a_j & a_k \\ b_i & b_j & b_k \end{smallmatrix} \right) = \left( \begin{smallmatrix} 0 & 1 & 0 \\ 1 & 1 & 0 \end{smallmatrix} \right)$,
we need to provide a function $\gamma$ with $\gamma(0,0,1) = 1$ and $\gamma(0,1,1) = 0$; for such a function $\gamma$ we have $\gamma_{kij}(\vect{a}) = 1$ and $\gamma_{kij}(\vect{b}) = 0$.

\ref{prop:UkT-gen:1:plain}
Since $\Theta$ is not $(k,\clM)$\hyp{}closed by Lemma~\ref{lem:k-1MR-char}, there is a lower cover $\Psi_j$ such that the largest $(k,\clM)$\hyp{}closed subset $\Gamma$ of $\Theta$ is included in $\Psi_j$.
By Proposition~\ref{prop:1MR-part}, $\clKlik{k}{\Theta} \cap \clM = \clKlik{k}{\Gamma} \cap \clM = \clKlik{k}{\Psi_j} \cap \clM$, and from this it follows that $(\clKlik{k}{\Theta} \setminus \clKlik{k}{\Psi_j}) \cap \clM = \emptyset$; hence $g_j$ is not monotone.
Therefore $g_j$ has a ternary minor $\gamma$ with $\gamma(0,0,0) = 0$, $\gamma(0,0,1) = 1$, $\gamma(0,1,1) = 0$, and $\gamma(1,1,1) = 1$, as desired.

\ref{prop:UkT-gen:1:M}
Let $h \in (\clKlik{k}{\Theta} \cap (\clOI \cup \clVako)) \setminus (\clKlik{k}{\Theta} \cap \clM)$.
Then $h$ is not monotone, and $h$ has a ternary minor $\gamma$ with $\gamma(0,0,0) = 0$, $\gamma(0,0,1) = 1$, $\gamma(0,1,1) = 0$, and $\gamma(1,1,1) = 1$, as desired.
\end{proof}

\begin{proposition}
\label{prop:UkT-gen:0}
Assume that $\Theta, \Psi_1, \dots, \Psi_p \in \Ideals(\posetAllleq{k}_\clIX)$ and $\Psi_1, \dots, \Psi_p$ are the lower covers of $\Theta$ in $\Ideals(\posetAllleq{k}_\clIX)$.
For $i \in \nset{p}$, let $g_i \in (\clKlik{k}{\Theta} \cap \clIO) \setminus (\clKlik{k}{\Psi_i} \cap \clIO)$.
\begin{enumerate}[label=\textup{(\alph*)}]
\item\label{prop:UkT-gen:0:plain}
If $\Theta \notin \Ideals(\posetAllleq{k}_\clMneg)$,
then $\gen{g_1, \dots, g_p} = \clKlik{k}{\Theta} \cap \clIO$
and for any $c \in (\clKlik{k}{\Theta} \cap (\clIO \cup \clVako)) \setminus (\clKlik{k}{\Theta} \cap \clIO)$, $\gen{g_1, \dots, g_p, c} = \clKlik{k}{\Theta} \cap (\clIO \cup \clVako)$.

\item\label{prop:UkT-gen:0:Mneg}
If $\Theta \in \Ideals(\posetAllleq{k}_\clMneg)$,
then for any $h \in (\clKlik{k}{\Theta} \cap \clIO) \setminus (\clKlik{k}{\Theta} \cap \clMneg) = (\clKlik{k}{\Theta} \cap (\clIO \cup \clVako)) \setminus (\clKlik{k}{\Theta} \cap \clMneg)$, we have $\gen{g_1, \dots, g_p, h} = \clKlik{k}{\Theta} \cap \clIO$,
and for any $c \in (\clKlik{k}{\Theta} \cap (\clIO \cup \clVako)) \setminus (\clKlik{k}{\Theta} \cap \clIO)$, $\gen{g_1, \dots, g_p, h, c} = \clKlik{k}{\Theta} \cap (\clIO \cup \clVako)$.
\end{enumerate}
\end{proposition}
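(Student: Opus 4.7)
The plan is to mimic the proof of Proposition~\ref{prop:UkT-gen:1} verbatim, with the roles of $\clOI$ and $\clM$ played by $\clIO$ and $\clMneg$, respectively, and with the $(k,\clXI)$-structure replaced by the $(k,\clIX)$-structure. Concretely, I would show that $\clKlik{k}{\Theta} \cap \clIO$ is $(G,k)$-semibisectable for the prescribed generating set $G$, then invoke Lemma~\ref{lem:helpful} to conclude $\clKlik{k}{\Theta} \cap \clIO \subseteq \gen{G}$; the reverse inclusion is automatic, since every chosen generator already lies in this class. For the $(\clIO \cup \clVako)$ variants, the extra constant $c \in \clVako$ forces $0 \in \gen{G}$, whence $\clVako \subseteq \gen{G}$ by left-stability under $\clMcUk{k}$.

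Condition \ref{helpful:k-true} of Definition~\ref{def:helpful} is checked exactly as in the proof of Proposition~\ref{prop:UkT-gen:1}: given $f \in \clKlik{k}{\Theta} \cap \clIO$ and $T \subseteq f^{-1}(1)$ with $\card{T} \leq k$, one selects $\phi \in \Theta$ and $\tau \colon \nset{\arity{\phi}} \to \nset{n}$ witnessing $\phi(\vect{a}\tau) = 1$ for all $\vect{a} \in T$, passes to a $\minmin_\clIX$-maximal $\theta \in \Theta$ above $\phi$, picks a lower cover $\Psi_j$ of $\Theta$ in $\Ideals(\posetAllleq{k}_\clIX)$ with $\theta \in \Theta \setminus \Psi_j$, and applies Lemma~\ref{lem:covers}\ref{lem:covers:C} with $C = \clIX$ to infer $\theta \minmin g_j$, whence $\phi \minmin g_j$ yields the required minor of $g_j$.

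For condition \ref{helpful:both}, let $\vect{a} \in f^{-1}(1)$ and $\vect{b} \in f^{-1}(0)$. Since $f \in \clIO$, we have $\vect{a} \neq \vect{1}$ and $\vect{b} \neq \vect{0}$, so there exist indices $j, \ell$ with $a_j = 0$ and $b_\ell = 1$, along with some $i$ where $a_i \neq b_i$. If $\begin{psmallmatrix} a_i \\ b_i \end{psmallmatrix} = \begin{psmallmatrix} 0 \\ 1 \end{psmallmatrix}$, then $\neg_i$ separates $\vect{a}$ from $\vect{b}$, using that $\neg \leq g_s$ for each $g_s \in \clIO$. Otherwise one obtains a $3$-column pattern in the indices $i, j, \ell$ dual to the one appearing in Proposition~\ref{prop:UkT-gen:1}, and one needs a ternary minor of some generator that fails antitonicity in the appropriate sense. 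In case \ref{prop:UkT-gen:0:plain}, the hypothesis $\Theta \notin \Ideals(\posetAllleq{k}_\clMneg)$ together with Proposition~\ref{prop:1MR-part} yields a lower cover $\Psi_t$ that contains the largest $(k,\clMneg)$-closed subset of $\Theta$, forcing $g_t \notin \clMneg$; in case \ref{prop:UkT-gen:0:Mneg}, the explicitly added $h \notin \clMneg$ already supplies the ternary non-antitone minor.

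The main obstacle is the bookkeeping of the non-antitone ternary pattern in condition \ref{helpful:both}: one has to pin down the correct identification of coordinates so that the chosen minor assumes value $1$ on $\vect{a}$ and $0$ on $\vect{b}$ under the constraints $a_j = 0$, $b_\ell = 1$, $a_i = 1$, $b_i = 0$. A conceptually cleaner alternative is to invoke the inner-negation automorphism $K \mapsto K^\mathrm{n}$ of $\closys{(\clIc,\clMcUk{k})}$ from Lemma~\ref{lem:stability-nd} (valid because $\clIc = \clIc^\mathrm{d}$): after verifying that $\clIO = \clOI^\mathrm{n}$, $\clMneg = \clM^\mathrm{n}$, $\clVako^\mathrm{n} = \clVako$, and that $\Theta \mapsto \Theta^\mathrm{n}$ bijects $\Ideals(\posetAllleq{k}_\clXI)$ with $\Ideals(\posetAllleq{k}_\clIX)$ while commuting with $\clKlik{k}{-}$, Proposition~\ref{prop:UkT-gen:0} follows from Proposition~\ref{prop:UkT-gen:1} by a single application of this automorphism.
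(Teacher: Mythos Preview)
Your proposal is correct and matches the paper's approach: the paper's entire proof reads ``This is the dual of Proposition~\ref{prop:UkT-gen:1}.'' Your two routes---explicitly rerunning the argument with $\clOI \leftrightarrow \clIO$, $\clM \leftrightarrow \clMneg$, $\clXI \leftrightarrow \clIX$, or invoking the inner-negation automorphism of Lemma~\ref{lem:stability-nd}---are both valid realizations of that duality, and the paper does not commit to either one explicitly.
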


\begin{proof}
This follows from Proposition~\ref{prop:UkT-gen:1} by taking inner negations and applying Lemma~\ref{lem:stability-nd}.
\end{proof}

\begin{proposition}
\label{prop:UkT-gen:M}
Assume that
$\Theta \in \Ideals(\posetAllleq{k}_\clM)$
and $\Psi_1, \dots, \Psi_p$ are the lower covers of $\Theta$ in $\Ideals(\posetAllleq{k}_\clM)$.
For $i \in \nset{p}$, let $g_i \in (\clKlik{k}{\Theta} \cap \clMc) \setminus (\clKlik{k}{\Psi_i} \cap \clMc) = (\clKlik{k}{\Theta} \cap \clMo) \setminus (\clKlik{k}{\Psi_i} \cap \clMo)$.
Then $\gen{g_1, \dots, g_p} = \clKlik{k}{\Theta} \cap \clMc$
and for any $c \in (\clKlik{k}{\Theta} \cap \clMo) \setminus (\clKlik{k}{\Theta} \cap \clMc)$, $\gen{g_1, \dots, g_p, c} = \clKlik{k}{\Theta} \cap \clMo$.
\end{proposition}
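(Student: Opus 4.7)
The plan is to follow the template established in Propositions~\ref{prop:UkT-gen:plain}--\ref{prop:UkT-gen:0}. First I would argue that $\clKlik{k}{\Theta} \cap \clMc$ is $(G,k)$\hyp{}semibisectable with $G := \{g_1, \dots, g_p\}$. Since every $f \in \clMc$ is nonconstant ($f(\vect{0}) = 0 \neq 1 = f(\vect{1})$), Lemma~\ref{lem:helpful} then yields $\clKlik{k}{\Theta} \cap \clMc \subseteq \gen{G}$; the reverse inclusion is immediate because each $g_i$ already belongs to the $(\clIc,\clMcUk{k})$\hyp{}clonoid $\clKlik{k}{\Theta} \cap \clMc$.

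Condition~\ref{helpful:k-true} is verified exactly as in Proposition~\ref{prop:UkT-gen:1}: given $f \in \clKlik{k}{\Theta} \cap \clMc$ of arity $n$ and $\vect{a}_1, \dots, \vect{a}_k \in f^{-1}(1)$, pick $\phi \in \Theta$ and a minor formation map $\tau$ with $\phi(\vect{a}_i \tau) = 1$ for all $i$. Taking $\theta \in \Theta$ whose $\eqminmin_\clM$\hyp{}class is maximal in $\Theta / \mathord{\eqminmin_\clM}$ subject to $\phi \minmin_\clM \theta$, there is a lower cover $\Psi_j$ of $\Theta$ in $\Ideals(\posetAllleq{k}_\clM)$ with $\theta \in \Theta \setminus \Psi_j$, and Lemma~\ref{lem:covers}\ref{lem:covers:C} supplies $\theta \minmin g_j$. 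Composing minor formation maps then produces an $n$\hyp{}ary minor of $g_j$ that evaluates to $1$ on every $\vect{a}_i$. Condition~\ref{helpful:both} is where monotonicity cleans things up considerably compared with the earlier propositions: for $\vect{a} \in f^{-1}(1)$ and $\vect{b} \in f^{-1}(0)$, monotonicity of $f$ forces $\vect{a} \nleq \vect{b}$, so there is an $i \in \nset{n}$ with $a_i = 1$ and $b_i = 0$. Every $g_j \in \clMc \subseteq \clOI$ satisfies $g_j(\vect{0}) = 0$ and $g_j(\vect{1}) = 1$, so identifying all arguments of $g_j$ yields the unary identity $\id$ as a minor, and hence the $i$\hyp{}th $n$\hyp{}ary projection $\id_i$ is a minor of $g_j$ separating $\vect{a}$ from $\vect{b}$.

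For the second assertion, observe that $\clMo \setminus \clMc \subseteq \clVako$: a monotone $f$ with $f(\vect{1}) = 0$ is forced to be constant $0$. Since $\Theta$ is a nonempty downset in $\minmin_\clM$ (hence in $\minmin$), the unary constant $0$ belongs to $\Theta$, so $\clVako \subseteq \clKlik{k}{\Theta}$ and thus $\clKlik{k}{\Theta} \cap \clMo = (\clKlik{k}{\Theta} \cap \clMc) \cup \clVako$. Any $c$ in the indicated set difference is therefore a constant $0$ function, and Proposition~\ref{prop:constants}\ref{prop:constants:0} yields $\clVako \subseteq \gen{c}$; combined with the first assertion, this gives $\gen{g_1, \dots, g_p, c} = \clKlik{k}{\Theta} \cap \clMo$.

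The only delicate point is the passage from $\phi \minmin_\clM g_j$ (which is what Lemma~\ref{lem:covers}\ref{lem:covers:C} together with $\phi \minmin_\clM \theta$ directly supplies) to $\phi \minmin g_j$ needed in order to extract a genuine $n$\hyp{}ary minor of $g_j$. This relies on $g_j \in \clM$ satisfying $g_j = g_j^\clM$, whereupon the decomposition $\mathord{\minmin_\clM} = \mathord{\minorant} \circ \mathord{\minor} \circ \mathord{\RelCl{\clM}}$ from Lemma~\ref{lem:1MR-prod} collapses to $\mathord{\minorant} \circ \mathord{\minor} = \mathord{\minmin}$ on the pair $(\phi, g_j)$.
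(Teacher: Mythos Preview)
Your proof is correct and follows essentially the same approach as the paper's: establish $(G,k)$-semibisectability of $\clKlik{k}{\Theta} \cap \clMc$ with $G=\{g_1,\dots,g_p\}$, handle condition~\ref{helpful:k-true} via Lemma~\ref{lem:covers}\ref{lem:covers:C}, dispose of condition~\ref{helpful:both} using monotonicity and $\id \leq g_j$, and finish with $c \in \clVako$ and Proposition~\ref{prop:constants}\ref{prop:constants:0}. Your closing paragraph on collapsing $\minmin_\clM$ to $\minmin$ via $g_j = g_j^\clM$ and Lemma~\ref{lem:1MR-prod} is a welcome clarification of a step the paper leaves implicit.
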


\begin{proof}
We are going to show that $\clKlik{k}{\Theta} \cap \clMc$ is $(G,k)$\hyp{}semibisectable with $G = \{g_1, \dots, g_p\}$.
Hence $\clKlik{k}{\Theta} \cap \clMc \subseteq \gen{g_1, \dots, g_p} \subseteq \clKlik{k}{\Theta} \cap \clMc$.
Because $c \in \clVako$, it also follows that
$\clKlik{k}{\Theta} \cap \clMo = (\clKlik{k}{\Theta} \cap \clMc) \cup \clVako = \gen{g_1, \dots, g_p} \cup \gen{0} \subseteq \gen{g_1, \dots, g_p, c} \subseteq \clKlik{k}{\Theta} \cap \clMo$.

Let $f \in \clKlik{k}{\Theta} \cap \clMc$.
Let $T \subseteq f^{-1}(1)$ with $\card{T} \leq k$.
Then there is a $\phi \in \Theta$ and $\tau \colon \nset{\arity{\phi}} \to \nset{n}$ such that $\phi(\vect{a} \tau) = 1$ for all $\vect{a} \in T$.
There is a maximal element $\theta$ of $\Theta$ such that $\phi \minmin \theta$, and there is a lower cover $\Psi_j$ of $\Theta$ in $\Ideals(\clAllleq{k} / \mathord{\eqminmin_\clM}, \mathord{\minmin_\clM})$ such that $\theta \in \Theta \setminus \Psi_j$.
By Lemma~\ref{lem:covers}\ref{lem:covers:C}, $\theta \minmin g_j$, and so $\phi \minmin g_j$.
It follows that there is a $\pi \colon \nset{\arity{g_j}} \to \nset{\arity{\psi}}$ such that $\phi \minorant (g_j)_\pi$, so $(g_j)_\pi(\vect{a} \tau) = g_j(\vect{a} \tau \pi) = 1$ for all $\vect{a} \in T$.
We conclude that condition \ref{helpful:k-true} of Definition~\ref{def:helpful} is satisfied by any $G$ with $\{g_1, \dots, g_p\} \subseteq G$.

It remains to verify that also condition \ref{helpful:both} is satisfied.
Let $\vect{a} \in f^{-1}(1)$ and $\vect{b} \in f^{-1}(0)$.
Since $f$ is monotone, we have $\vect{a} \nleq \vect{b}$, so there exists an $i \in \nset{n}$ such that $a_i = 1$ and $b_i = 0$.
Since each $g_i$ is monotone and nonconstant, we have $\id \leq g_i$.
Since $\id_i(\vect{a}) = 1$ and $\id_i(\vect{b}) = 0$, we are done.
\end{proof}

\begin{proposition}
\label{prop:UkT-gen:Mneg}
Assume that
$\Theta \in \Ideals(\posetAllleq{k}_\clMneg)$
and $\Psi_1, \dots, \Psi_p$ are the lower covers of $\Theta$ in $\Ideals(\posetAllleq{k}_\clMneg)$.
For $i \in \nset{p}$, let $g_i \in (\clKlik{k}{\Theta} \cap \clMcneg) \setminus (\clKlik{k}{\Psi_i} \cap \clMcneg) = (\clKlik{k}{\Theta} \cap \clMineg) \setminus (\clKlik{k}{\Psi_i} \cap \clMineg)$.
Then $\gen{g_1, \dots, g_p} = \clKlik{k}{\Theta} \cap \clMcneg$
and for any $c \in (\clKlik{k}{\Theta} \cap \clMineg) \setminus (\clKlik{k}{\Theta} \cap \clMcneg)$, $\gen{g_1, \dots, g_p, c} = \clKlik{k}{\Theta} \cap \clMineg$.
\end{proposition}

\begin{proof}
This follows from Proposition~\ref{prop:UkT-gen:M} by taking inner negations and applying Lemma~\ref{lem:stability-nd}.
\end{proof}

\begin{proposition}
\label{prop:UkT-gen:R}
Assume that
$\Theta \in \Ideals(\posetAllleq{k}_\clRefl)$
and $\Psi_1, \dots, \Psi_p$ are the lower covers of $\Theta$ in $\Ideals(\posetAllleq{k}_\clRefl)$.
For $i \in \nset{p}$, let $g_i \in (\clKlik{k}{\Theta} \cap \clReflOO) \setminus (\clKlik{k}{\Psi_i} \cap \clReflOO)$.
Then $\gen{g_1, \dots, g_p} = \clKlik{k}{\Theta} \cap \clReflOO$.
\end{proposition}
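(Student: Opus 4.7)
The plan is to follow the template established in Propositions~\ref{prop:UkT-gen:M} and \ref{prop:UkT-gen:Mneg}: set $G := \{g_1, \dots, g_p\}$, verify that every nonconstant $f \in \clKlik{k}{\Theta} \cap \clReflOO$ is $(G,k)$\hyp{}semibisectable, and invoke Lemma~\ref{lem:helpful} to conclude $f \in \gen{G}$. The only constant function lying in $\clReflOO$ is $0 \in \clVako$, and each $g_j$ yields every constant $0$ function as a minor by identifying all arguments, since $g_j \in \clOO$ forces $g_j(x, \dots, x) = 0$ for $x \in \{0, 1\}$. The reverse inclusion $\gen{G} \subseteq \clKlik{k}{\Theta} \cap \clReflOO$ is immediate, as $\clReflOO = \clRefl \cap \clOO$ is a $(\clIc, \clMcUk{k})$\hyp{}clonoid (the two factors are, by Proposition~\ref{prop:stable-1} and a direct check), and so is its intersection with $\clKlik{k}{\Theta}$, which contains every $g_j$.

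Condition~\ref{helpful:k-true} is proved exactly as in Proposition~\ref{prop:UkT-gen:M}, \emph{mutatis mutandis}. Given $\vect{a}_1, \dots, \vect{a}_k \in f^{-1}(1)$, the membership $f \in \clKlik{k}{\Theta}$ furnishes a $\phi \in \Theta$ and a minor formation map $\tau$ with $\phi(\vect{a}_i \tau) = 1$ for every $i$; selecting a maximal $\theta \in \Theta$ above $\phi$ and the corresponding lower cover $\Psi_j$ in $\Ideals(\posetAllleq{k}_\clRefl)$ with $\theta \in \Theta \setminus \Psi_j$, Lemma~\ref{lem:covers}\ref{lem:covers:C} gives $\theta \minmin g_j$ and hence $\phi \minmin g_j$, whence a suitable $n$\hyp{}ary minor of $g_j$ simultaneously evaluates to $1$ at every $\vect{a}_i$.

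Condition~\ref{helpful:both} is the novel ingredient. The key is to show that each $g_j$ has the binary sum $\mathord{+}$ as a minor. Since $\Psi_j \neq \emptyset$ (in the only nontrivial case $\Theta \neq \{0\}$), the constant $0$ function lies in $\clKlik{k}{\Psi_j}$; together with $g_j \in \clOO$ this forces $g_j$ to be nonconstant and to admit a true point $\vect{u} \notin \{\vect{0}, \vect{1}\}$. Identifying the coordinates of $g_j$ where $\vect{u}$ is $1$ into a single variable $x$ and those where $\vect{u}$ is $0$ into another variable $y$ produces a binary minor $\gamma$ with $\gamma(1,0) = g_j(\vect{u}) = 1$, $\gamma(0,1) = g_j(\overline{\vect{u}}) = 1$ (by reflexivity of $g_j$), and $\gamma(0,0) = \gamma(1,1) = 0$ (by $g_j \in \clOO$), identifying $\gamma$ as $\mathord{+}$. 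To separate a true point $\vect{a}$ of $f$ from a false point $\vect{b}$, I will partition $\nset{n}$ into cells $I_{st} := \{\, i \in \nset{n} \mid (a_i, b_i) = (s, t) \,\}$ for $s, t \in \{0, 1\}$ and show that at least one of the statements ``$I_{00}$ and $I_{10}$ are both nonempty'' and ``$I_{11}$ and $I_{01}$ are both nonempty'' holds. The reflexivity of $f$ together with $f \in \clOO$ yields $\vect{a} \notin \{\vect{0}, \vect{1}\}$ and $\vect{b} \notin \{\vect{a}, \overline{\vect{a}}\}$, and a short enumeration shows that each of the four ways to violate the disjunction forces one of these four forbidden equalities. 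In the first alternative, any $i \in I_{00}$ and $j \in I_{10}$ gives $\mathord{+}_{ji}(\vect{a}) = 1$ and $\mathord{+}_{ji}(\vect{b}) = 0$; in the second, any $i \in I_{11}$ and $j \in I_{01}$ works analogously.

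The chief technical obstacle is thus the transition from the ``monotone-type'' separation argument, where the projection $\mathord{\id}$ serves as a universal separating minor, to the reflexive setting, where reflexivity combined with $\clOO$\hyp{}membership funnels every generator through the single symmetric function $\mathord{+}$; it is precisely the reflexivity of $f$ that then ensures $\mathord{+}$ is expressive enough to separate each true-false pair via the four\hyp{}contradiction dichotomy. The degenerate case $\Theta = \{0\}$ is trivial: there $\clKlik{k}{\Theta} \cap \clReflOO = \clVako$, and any $g_1 \in \clVako$ alone generates all of $\clVako$ by taking minors.
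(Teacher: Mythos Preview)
Your proposal is correct and follows essentially the same approach as the paper's proof: both verify $(G,k)$\hyp{}semibisectability of the nonconstant part via Lemma~\ref{lem:covers}\ref{lem:covers:C} for condition~\ref{helpful:k-true} and the minor $\mathord{+}$ of each $g_j$ for condition~\ref{helpful:both}, then pick up $\clVako$ from $g_j \in \clOO$. Your four\hyp{}cell partition argument is a more explicit version of the paper's ``it is not difficult to see that these indices can be chosen in such a way that $a_i \neq a_j$ and hence $b_i = b_j$'', and your separate treatment of the degenerate case $\Theta = \{0\}$ is in fact more careful than the paper, which tacitly assumes each $g_i$ is nonconstant.
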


\begin{proof}
We are going to show that $(\clKlik{k}{\Theta} \cap \clReflOO) \setminus \clVako$ is $(G,k)$\hyp{}semibisectable with $G = \{g_1, \dots, g_p\}$.
Since each $g_i$ is in $\clOO$, we have $0 \minor g_i$ and hence $0 \in \gen{G}$.
Therefore,
$\clKlik{k}{\Theta} \cap \clReflOO = ((\clKlik{k}{\Theta} \cap \clReflOO) \setminus \clVako) \cup \clVako
\subseteq \gen{g_1, \dots, g_p} \cup \gen{0}
\subseteq \gen{g_1, \dots, g_p} = \clKlik{k}{\Theta} \cap \clReflOO$.

Let $f \in (\clKlik{k}{\Theta} \cap \clReflOO) \setminus \clVako$.
Then there is a $\phi \in \Theta$ and $\tau \colon \nset{\arity{\phi}} \to \nset{n}$ such that $\phi(\vect{a} \tau) = 1$ for all $\vect{a} \in T$.
There is a maximal element $\theta$ of $\Theta$ such that $\phi \minmin \theta$, and there is a lower cover $\Psi_j$ of $\Theta$ in $\Ideals(\clAllleq{k} / \mathord{\eqminmin_\clRefl}, \mathord{\minmin_\clRefl})$ such that $\theta \in \Theta \setminus \Psi_j$.
By Lemma~\ref{lem:covers}\ref{lem:covers:C}, $\theta \minmin g_j$, and so $\phi \minmin g_j$.
It follows that there is a $\pi \colon \nset{\arity{g_j}} \to \nset{\arity{\psi}}$ such that $\phi \minorant (g_j)_\pi$, so $(g_j)_\pi(\vect{a} \tau) = g_j(\vect{a} \tau \pi) = 1$ for all $\vect{a} \in T$.
We conclude that condition \ref{helpful:k-true} of Definition~\ref{def:helpful} is satisfied by any $G$ with $\{g_1, \dots, g_p\} \subseteq G$.

It remains to verify that also condition \ref{helpful:both} is satisfied.
Let $\vect{a} \in f^{-1}(1)$ and $\vect{b} \in f^{-1}(0)$.
Since $f \in \clReflOO$, we have $\vect{a} \neq \vect{b}$, $\vect{a} \neq \overline{\vect{b}}$ , and $\vect{a} \notin \{\vect{0}, \vect{1}\}$, so there exist $i, j \in \nset{n}$ such that $a_i \neq b_i$ and $a_j = b_j$.
It is not difficult to see that these indices can be chosen in such a way that $a_i \neq a_j$ and hence $b_i = b_j$.
Since each $g_i$ is a nonconstant function in $\clReflOO$, we have $\mathord{+} \leq g_i$.
We have $\mathord{+}_{ij}(\vect{a}) = 1$ and $\mathord{+}_{ij}(\vect{b}) = 0$, and we are done.
\end{proof}


\subsection{Lower covers of all remaining $(\clIc,\mathsf{MU}^k_{01})$\hyp{}clonoids}

It remains to consider the remaining $(\clIc,\clMcUk{k})$\hyp{}clonoids that are not of the form $\clKlik{k}{\Theta}$ or $\clKlik{k}{\Theta} \cap C$ for some clonoid $C$, i.e., they lie in the non\hyp{}shaded parts of the Hasse diagram in Figure~\ref{fig:McUk-stable}.
We verify that the lower covers of each such $(\clIc,\clMcUk{k})$\hyp{}clonoid are precisely as suggested by the diagram, for every $k \geq 2$.

\begin{proposition}
\label{prop:empty}
For any $k \geq 2$, $\clEmpty$ is the least $(\clIc,\clMcUk{k})$\hyp{}clonoid and, consequently, $\gen{\clEmpty} = \clEmpty$.
\end{proposition}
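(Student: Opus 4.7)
The plan is to observe that this proposition is essentially a triviality that follows directly from the definitions. Since the composition operation $CK$ defined in Section~\ref{sec:preliminaries} ranges over $m, n \in \IN_{+}$ and requires both an outer function from $C$ and inner functions from $K$, the empty set absorbs composition on either side: $\clEmpty \cdot C_1 = \clEmpty$ and $C_2 \cdot \clEmpty = \clEmpty$ for any $C_1, C_2$. In particular, $\clEmpty \cdot \clIc \subseteq \clEmpty$ and $\clMcUk{k} \cdot \clEmpty \subseteq \clEmpty$ hold vacuously, so $\clEmpty$ is a $(\clIc,\clMcUk{k})$\hyp{}clonoid.

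To see that $\clEmpty$ is the \emph{least} $(\clIc,\clMcUk{k})$\hyp{}clonoid, I would simply note that $\clEmpty \subseteq K$ for every $K \subseteq \clAll$, in particular for every $(\clIc,\clMcUk{k})$\hyp{}clonoid $K$. The ``consequently'' part then follows immediately: by Lemma~\ref{lem:F-closure} we have $\gen{\clEmpty} = \clMcUk{k}(\clEmpty \cdot \clIc) = \clMcUk{k} \cdot \clEmpty = \clEmpty$, or alternatively by the fact that $\gen{\clEmpty}$ is defined as the smallest $(\clIc,\clMcUk{k})$\hyp{}clonoid containing $\clEmpty$, which is $\clEmpty$ itself.

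There is no substantial obstacle here; the only ``pitfall'' worth flagging is the convention that the composition quantifier uses $\IN_{+}$ rather than $\IN$, which is what ensures $C \cdot \clEmpty = \clEmpty$ (and not, e.g., the set of $0$\hyp{}ary constants produced by outer functions with no inner arguments). Given the definitions in Section~\ref{sec:preliminaries}, the verification is immediate, and the proposition is stated mainly to anchor the bottom of the lattice shown in Figure~\ref{fig:McUk-stable} for the subsequent enumeration of lower covers.
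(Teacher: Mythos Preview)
Your proposal is correct and matches the paper's approach exactly: the paper's proof consists of the single word ``Trivial.'' You have simply spelled out the routine verification in more detail than the paper bothers with.
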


\begin{proof}
Trivial.
\end{proof}

\begin{proposition}
\label{prop:constants}
Let $k \geq 2$.
\begin{enumerate}[label=\textup{(\roman*)}]
\item\label{prop:constants:0} For any $f \in \clVako$, we have $\gen{f} = \clVako$.
\item\label{prop:constants:1} For any $f \in \clVaki$, we have $\gen{f} = \clVaki$.
\item\label{prop:constants:both} For any $f, g \in \clVak$ with $f \notin \clVako$, $g \notin \clVaki$, we have $\gen{f,g} = \clVak$.
\end{enumerate}
\end{proposition}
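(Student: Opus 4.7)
The plan is to apply Lemma~\ref{lem:F-closure}, which gives $\gen{K} = \clMcUk{k}(K\clIc)$, together with the basic observation that every $h \in \clMcUk{k}$ satisfies $h(\vect{0}) = 0$ and $h(\vect{1}) = 1$ (since $\clMcUk{k} = \clIntVal{\clMUk{k}}{0}{1}$).

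For part \ref{prop:constants:0}, first I would observe that taking any minor of the constant function $f$ (of arity $n$) yields a constant $0$ function; conversely, by introducing fictitious arguments and identifying variables appropriately we can reach constant $0$ functions of any arity. Hence $f\clIc = \clVako$. Next, for any $h \in \clMcUk{k}^{(n)}$ and any constant $0$ functions $h_1, \dots, h_n \in \clVako$ of common arity $m$, we have $h(h_1, \dots, h_n)(\vect{a}) = h(\vect{0}) = 0$ for every $\vect{a} \in \{0,1\}^m$, so $\clMcUk{k}\clVako \subseteq \clVako$. Consequently $\gen{f} = \clMcUk{k}(f\clIc) = \clMcUk{k}\clVako \subseteq \clVako$, and the reverse inclusion $\clVako \subseteq \gen{f}$ holds because $\clVako = f\clIc \subseteq \gen{f}$.

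Part \ref{prop:constants:1} is proved dually, using $h(\vect{1}) = 1$ for every $h \in \clMcUk{k}$: the same argument shows $f\clIc = \clVaki$ and $\clMcUk{k}\clVaki = \clVaki$, yielding $\gen{f} = \clVaki$. For part \ref{prop:constants:both}, the hypothesis forces $f$ to be a constant $1$ function and $g$ to be a constant $0$ function. Applying parts \ref{prop:constants:0} and \ref{prop:constants:1} gives $\clVako = \gen{g} \subseteq \gen{f,g}$ and $\clVaki = \gen{f} \subseteq \gen{f,g}$, hence $\clVak = \clVako \cup \clVaki \subseteq \gen{f,g}$. The reverse inclusion follows by verifying that $\clVak$ is itself a $(\clIc,\clMcUk{k})$\hyp{}clonoid: it is manifestly closed under minors, and for any $h \in \clMcUk{k}^{(n)}$ and constant functions $h_1, \dots, h_n \in \clVak$ with respective values $c_1, \dots, c_n \in \{0,1\}$, the composition $h(h_1, \dots, h_n)$ is the constant function with value $h(c_1, \dots, c_n)$.

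There is no real obstacle here; the proposition is a direct computation that relies only on the inclusion $\clMcUk{k} \subseteq \clOI$ and the formula $\gen{K} = \clMcUk{k}(K\clIc)$. The mildly delicate point is just to ensure one checks $f\clIc = \clVako$ (resp.\ $\clVaki$) with the right arities, i.e., that minors of a constant function of arity $n$ exhaust the constant functions of every positive arity, which is immediate from the definition of a minor.
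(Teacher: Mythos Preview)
Your proof is correct and follows essentially the same approach as the paper: both use Lemma~\ref{lem:F-closure} to write $\gen{f} = \clMcUk{k}(f\,\clIc)$, identify $f\,\clIc$ with $\clVako$ (resp.\ $\clVaki$), and exploit $\clMcUk{k} \subseteq \clOI$ to conclude. You simply spell out a few steps (e.g., why $\clMcUk{k}\,\clVako = \clVako$ and why $\clVak$ is itself $(\clIc,\clMcUk{k})$\hyp{}stable) that the paper leaves implicit.
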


\begin{proof}
\ref{prop:constants:0}
By Lemma~\ref{lem:F-closure}, we have $\gen{f} = \clMcUk{k} (f \clIc) = \clMcUk{k} \, \clVako = \clVako$.

\ref{prop:constants:1}
Similarly, $\gen{f} = \clMcUk{k} (f \clIc) = \clMcUk{k} \, \clVaki = \clVaki$.

\ref{prop:constants:both}
We have $f \in \clVaki$ and $g \in \clVako$, so it follows from parts \ref{prop:constants:0} and \ref{prop:constants:1} that
$\clVak = \clVako \cup \clVaki = \gen{g} \cup \gen{f} \subseteq \gen{f,g} \subseteq \clVak$.
\end{proof}

\begin{proposition}
\label{prop:R}
Let $k \geq 2$.
\begin{enumerate}[label=\textup{(\roman*)}]
\item\label{prop:R:R00C} For any $f, g \in \clReflOOC$ with $f \notin \clReflOO$ and $g \notin \clVak$, we have $\gen{f, g} = \clReflOOC$.
\item\label{prop:R:R11} For any $f \in \clReflII$ with $f \notin \clVaki$, we have $\gen{f} = \clReflII$.
\item\label{prop:R:R11C} For any $f, g \in \clReflIIC$ with $f \notin \clReflII$ and $g \notin \clVak$, we have $\gen{f, g} = \clReflIIC$.
\item\label{prop:R:R} For any $f, g \in \clRefl$ with $f \notin \clReflOOC$ and $g \notin \clReflIIC$, we have $\gen{f,g} = \clRefl$.
\end{enumerate}
\end{proposition}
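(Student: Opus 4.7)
The plan is to prove each part by establishing $(G,k)$\hyp{}semibisectability of the relevant class (with its constant functions removed) for the stated generator set $G$, invoking Lemma~\ref{lem:helpful}, and then using Proposition~\ref{prop:constants} to handle the constants. The two binary minors that drive the separation arguments are: $\leftrightarrow$ is a minor of every non\hyp{}constant $f \in \clReflII$ (identify the variables of $f$ according to a false point $\vect{c} \notin \{\vect{0},\vect{1}\}$; the reflexivity $f = f^{\mathrm{n}}$ fixes the remaining two entries), and dually $+$ is a minor of every non\hyp{}constant $g \in \clReflOO$ (partition by a true point). In addition, the constant $1$ is a minor of any $f \in \clReflII$ (identify all variables and use $f(\vect{0}) = f(\vect{1}) = 1$), and dually the constant $0$ is a minor of any $g \in \clReflOO$.

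For part~\ref{prop:R:R11}, take $G = \{f\}$ with $f \in \clReflII$ non\hyp{}constant. Condition~\ref{helpful:k-true} of Definition~\ref{def:helpful} is trivial, as the constant $1$ is a minor of $f$. For condition~\ref{helpful:both}, applied to an arbitrary non\hyp{}constant $h \in \clReflII$ with $\vect{a} \in h^{-1}(1)$, $\vect{b} \in h^{-1}(0)$, reflexivity forces $\vect{b} \notin \{\vect{0},\vect{1}\}$ and $\vect{a} \notin \{\vect{b},\overline{\vect{b}}\}$; a short case split on whether $\vect{a}$ is extremal or mixed produces indices $i,j$ with $a_i = a_j$ and $b_i \neq b_j$, so that $\leftrightarrow_{ij}$ separates $\vect{a}$ from $\vect{b}$ in the required direction. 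Lemma~\ref{lem:helpful} then gives $\clReflII \setminus \clVaki \subseteq \gen{f}$, and $\clVaki \subseteq \gen{f}$ follows from Proposition~\ref{prop:constants}\ref{prop:constants:1}.

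Part~\ref{prop:R:R00C} is treated dually with $G = \{f,g\}$, $f \in \clVaki$, $g \in \clReflOO \setminus \clVak$: condition~\ref{helpful:k-true} uses the constant $1$ (minor of $f$), and condition~\ref{helpful:both} uses $+_{ij}$ (minor of $g$), now requiring indices with $a_i \neq a_j$ and $b_i = b_j$, provided by the dual combinatorial lemma applied to the mixed point $\vect{a}$ forced by $h \in \clReflOO$; the constants $\clVako$ come from $0 \minor g$. Part~\ref{prop:R:R11C} is immediate: by~\ref{prop:R:R11}, $\gen{g} = \clReflII$, and by Proposition~\ref{prop:constants}\ref{prop:constants:0}, $\gen{f} = \clVako$, so $\gen{f,g} \supseteq \clReflII \cup \clVako = \clReflIIC$; the converse inclusion holds because $\clReflIIC = \clRefl \cap \clIIC$ is an intersection of two $(\clIc,\clMcUk{k})$\hyp{}clonoids. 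For part~\ref{prop:R:R}, take $G = \{f,g\}$ with $f \in \clReflII \setminus \clVaki$, $g \in \clReflOO \setminus \clVako$; now both $\leftrightarrow$ and $+$ are available as minors, and the combined case analysis for condition~\ref{helpful:both} produces indices realising one of the two separation patterns depending on whether $h \in \clReflOO$ or $h \in \clReflII$ and on whether $\vect{a}, \vect{b}$ are mixed or extremal. Proposition~\ref{prop:constants}\ref{prop:constants:both} then puts $\clVak$ into $\gen{f,g}$.

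The main obstacle is the combinatorial verification of condition~\ref{helpful:both}. The unifying observation is that if $\vect{a}$ is mixed with level sets $P_0 = \{i : a_i = 0\}$ and $P_1 = \{i : a_i = 1\}$, and $\vect{b}$ is constant on each $P_c$, then $\vect{b}$ is forced into $\{\vect{a}, \overline{\vect{a}}, \vect{0}, \vect{1}\}$, all of which are excluded either by the hypotheses $\vect{a} \neq \vect{b}$, $\vect{a} \neq \overline{\vect{b}}$ or by the class membership of $h$; hence $\vect{b}$ must split some $P_c$, furnishing either indices $i,j \in P_c$ with $a_i = a_j$ and $b_i \neq b_j$ (the $\leftrightarrow$\hyp{}pattern) or, by pairing indices of $P_0$ and $P_1$ with equal $b$\hyp{}value, indices with $a_i \neq a_j$ and $b_i = b_j$ (the $+$\hyp{}pattern). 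The extremal cases $\vect{a} \in \{\vect{0},\vect{1}\}$ or $\vect{b} \in \{\vect{0},\vect{1}\}$ are handled symmetrically using the mixed nature of the other point; the subtlety lies in matching the pattern realisable to the minor ($\leftrightarrow$ or $+$) actually present among $G$\hyp{}minors in each part.
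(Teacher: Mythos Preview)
Your proposal is correct and follows essentially the same approach as the paper: reducing each part to $(G,k)$\hyp{}semibisectability via the minors $1,\leftrightarrow$ (from any nonconstant $f\in\clReflII$) and $0,+$ (from any nonconstant $g\in\clReflOO$), with Proposition~\ref{prop:constants} handling the constants. Your level\hyp{}set argument for condition~\ref{helpful:both} is a clean reformulation of the paper's explicit index\hyp{}chasing. The one genuine difference is part~\ref{prop:R:R}: you argue semibisectability of $\clRefl\setminus\clVak$ directly, using whichever of $+$ or $\leftrightarrow$ is appropriate depending on whether $h\in\clReflOO$ or $h\in\clReflII$; the paper instead avoids repeating the combinatorics by writing $\clRefl=(\clReflOOC)\cup\clReflII$ and invoking parts~\ref{prop:R:R00C} and~\ref{prop:R:R11} already proved (noting $1\leq f$), which is slightly more economical.
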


\begin{proof}
\ref{prop:R:R00C}
We have $f \in \clVaki$ and $g \in \clReflOO \setminus \clVako$, so $1 \leq f$ and $0 \leq g$ and there is a binary minor $g' \leq g$ such that $g'(0,0) = g'(1,1) = 0$ and $g'(0,1) = g'(1,0) = 1$, i.e., $\mathord{+} \leq g$.
It suffices to show that $\clReflOO \setminus \clVako$ is $(G,k)$\hyp{}semibisectable with $G := \{1, \mathord{+}\}$,
because from this it follows by Lemma~\ref{lem:helpful} and Proposition~\ref{prop:constants}\ref{prop:constants:both} that
$\clReflOOC = (\clReflOO \setminus \clVako) \cup \clVak \subseteq \gen{1, \mathord{+}} \cup \gen{0, 1} \subseteq \gen{f, g} \subseteq \clReflOOC$.

Let $\theta \in \clReflOO \setminus \clVako$.
Condition \ref{helpful:k-true} is obviously satisfied because $1 \in G$.
In order to verify condition \ref{helpful:both}, let $\vect{a} \in \theta^{-1}(1)$ and $\vect{b} \in \theta^{-1}(0)$.
Then $\vect{a} \neq \vect{b}$ and $\vect{a} \neq \overline{\vect{b}}$, so there exist $i$ and $j$ such that $a_i \neq b_i$ and $a_j = b_j$; moreover, $\vect{a} \notin \{\vect{0}, \vect{1}\}$, so there exists a $k$ such that $a_k \neq a_i$.
If $a_i \neq a_j$, then $\mathord{+}_{ij}(\vect{a}) = 1$ and $\mathord{+}_{ij}(\vect{b}) = 0$.
Assume now that $a_i = a_j$; thus $b_i \neq b_j$ and either $b_i = b_k$ or $b_j = b_k$.
Then $\tau(\vect{a}) = 1$ and $\tau(\vect{b}) = 0$ for some $\tau \in \{\mathord{+}_{ik}, \mathord{+}_{jk}\}$.

\ref{prop:R:R11}
We have $1 \leq f$ and there exists a binary minor $f' \leq f$ such that $f'(0,0) = f'(1,1) = 1$, $f'(0,1) = f'(1,0) = 0$, i.e., $\mathord{\leftrightarrow} \leq f$.
It suffices to show that $\clReflII \setminus \clVaki$ is $(G,k)$\hyp{}semibisectable with $G := \{1, \mathord{\leftrightarrow}\}$,
because from this it follows by Lemma~\ref{lem:helpful} and Proposition~\ref{prop:constants}\ref{prop:constants:1} that
$\clReflII = (\clReflII \setminus \clVaki) \cup \clVaki \subseteq \gen{1, \mathord{\leftrightarrow}} \cup \gen{1} \subseteq \gen{f} \subseteq \clReflII$.

Let $\theta \in \clReflII \setminus \clVaki$.
Condition \ref{helpful:k-true} is obviously satisfied because $1 \in G$.
In order to verify condition \ref{helpful:both}, let $\vect{a} \in \theta^{-1}(1)$ and $\vect{b} \in \theta^{-1}(0)$.
Then $\vect{a} \neq \vect{b}$ and $\vect{a} \neq \overline{\vect{b}}$, so there exist $i$ and $j$ such that $a_i \neq b_i$ and $a_j = b_j$; moreover, $\vect{b} \notin \{\vect{0}, \vect{1}\}$, so there exists a $k$ such that $b_k \neq b_i$.
If $a_i = a_j$, then $\mathord{\leftrightarrow}_{ij}(\vect{a}) = 1$ and $\mathord{\leftrightarrow}_{ij}(\vect{b}) = 0$.
Assume now that $a_i \neq a_j$; thus $b_i = b_j \neq b_k$ and either $a_i = a_k$ or $a_j = a_k$.
Then $\tau(\vect{a}) = 1$ and $\tau(\vect{b}) = 0$ for some $\tau \in \{\mathord{\leftrightarrow}_{ik}, \mathord{\leftrightarrow}_{jk}\}$.

\ref{prop:R:R11C}
We have $f \in \clVako$ and $g \in \clReflII \setminus \clVaki$, so by part \ref{prop:R:R11} and Proposition~\ref{prop:constants}\ref{prop:constants:0} it holds that
$\clReflIIC = \clReflII \cup \clVako = \gen{g} \cup \gen{f} \subseteq \gen{f, g} \subseteq \clReflIIC$.

\ref{prop:R:R}
We have $f \in \clReflII \setminus \clVaki$ and $g \in \clReflOO \setminus \clVako$.
We have $1 \leq f$, and it follows from parts \ref{prop:R:R00C} and \ref{prop:R:R11} that
$\clRefl = (\clReflOOC) \cup \clReflII = \gen{1, g} \cup \gen{f} \subseteq \gen{f, g} \subseteq \clRefl$.
\end{proof}

\begin{proposition}
\label{prop:M}
Let $k \geq 2$.
\begin{enumerate}[label=\textup{(\roman*)}]
\item\label{prop:M:Mi} For any $f, g \in \clMi$ with $f \notin \clMc$ and $g \notin \clVaki$, we have $\gen{f, g} = \clMi$.
\item\label{prop:M:M} For any $f, g, h \in \clM$ with $f \notin \clMo$, $g \notin \clMi$, and $h \notin \clVak$, we have \linebreak $\gen{f,g,h} = \clM$.
\end{enumerate}
\end{proposition}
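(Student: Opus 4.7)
The plan is to follow the template of the preceding Propositions~\ref{prop:UkT-gen:M} and~\ref{prop:R}: use the $(G,k)$\hyp{}semibisectability tool of Section~\ref{sec:semibisectable} to capture the non\hyp{}constant monotone functions, and adjoin the constants via Proposition~\ref{prop:constants}.

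The key preliminary observation is that monotonicity of a Boolean function $\psi$ forces: if $\psi(\vect{0}) = 1$ then $\psi$ is the constant~$1$, and if $\psi(\vect{1}) = 0$ then $\psi$ is the constant~$0$. Hence the hypotheses of part~\ref{prop:M:Mi} pin down $f$ to be the constant~$1$ and $g$ to lie in $\clMc \setminus \clVak$, while those of part~\ref{prop:M:M} force $f = 1$, $g = 0$, and $h \in \clMc \setminus \clVak$. Since $\clMi = \clMc \cup \clVaki$ and $\clM = \clMc \cup \clVak$, and Proposition~\ref{prop:constants} yields $\clVaki = \gen{f}$ in part~\ref{prop:M:Mi} and $\clVak \subseteq \gen{f,g}$ in part~\ref{prop:M:M}, the whole proof reduces in both cases to establishing $\clMc \subseteq \gen{G}$ for $G = \{f,g\}$ (respectively $G = \{f,g,h\}$).

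To accomplish this, I will show that every $\varphi \in \clMc$ is $(G,k)$\hyp{}semibisectable and then invoke Lemma~\ref{lem:helpful}, noting that $\clMc \cap \clVak = \emptyset$ so the lemma applies to every element of $\clMc$. Condition~\ref{helpful:k-true} of Definition~\ref{def:helpful} is met uniformly by taking $\tau$ to be the $n$\hyp{}ary constant $1$, which is a minor of $f$. For condition~\ref{helpful:both}: since $\varphi$ is monotone, $\vect{a} \in \varphi^{-1}(1)$ and $\vect{b} \in \varphi^{-1}(0)$ imply $\vect{a} \nleq \vect{b}$, so there is an index $i$ with $a_i = 1$ and $b_i = 0$. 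The generator $g$ (respectively $h$) lies in $\clMc$, so $g(x,\dots,x) = x$ by monotonicity and the boundary values $g(\vect{0}) = 0$, $g(\vect{1}) = 1$; thus $\id_i^{(n)}$ is an $n$\hyp{}ary minor of $g$ (resp.\ $h$), and it satisfies $\id_i^{(n)}(\vect{a}) = 1$ and $\id_i^{(n)}(\vect{b}) = 0$.

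The reverse inclusions are routine: $\clMi$ and $\clM$ are clones (from Post's lattice) containing both $\clIc$ and $\clMcUk{k}$ (the latter because every function in $\clMcUk{k}$ is monotone with value~$1$ on $\vect{1}$), so by Lemma~\ref{lem:stable-clones} they are $(\clIc,\clMcUk{k})$\hyp{}clonoids, and they contain the prescribed generators. No step poses a genuine obstacle: the semibisectability framework has already been fully developed, the monotonicity hypotheses collapse the admissible generators to a canonical list (constant~$1$, constant~$0$, and a non\hyp{}constant element of $\clMc$), and the two semibisectability conditions split cleanly between the constant\hyp{}$1$ generator and the non\hyp{}constant monotone generator.
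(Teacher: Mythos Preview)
Your proof is correct and follows essentially the same approach as the paper's: both arguments identify the generators as (minors of) $1$ and $\id$, verify $(G,k)$\hyp{}semibisectability of $\clMc = \clMi \setminus \clVaki$ using the constant~$1$ minor for condition~\ref{helpful:k-true} and $\id_i$ for condition~\ref{helpful:both}, and handle the constants via Proposition~\ref{prop:constants}. The only cosmetic difference is that the paper deduces part~\ref{prop:M:M} from part~\ref{prop:M:Mi} by writing $\clM = \clMi \cup \clVako$, whereas you run the two parts in parallel via $\clM = \clMc \cup \clVak$.
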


\begin{proof}
\ref{prop:M:Mi}
We have $f \in \clVaki$ and $g \in \clMc$, so $1 \leq f$ and $\id \leq g$.
It suffices to show that $\clMi \setminus \clVaki$ is $(G,k)$\hyp{}semibisectable with $G := \{1, \id\}$.
because from this it follows by Lemma~\ref{lem:helpful} and Proposition~\ref{prop:constants}\ref{prop:constants:1} that
$\clMi = (\clMi \setminus \clVaki) \cup \clVaki \subseteq \gen{1, \id} \cup \gen{1} \subseteq \gen{f, g} \subseteq \clMi$.

Let $\theta \in \clMi \setminus \clVaki$.
Condition \ref{helpful:k-true} is obviously satisfied because $1 \in G$.
In order to verify condition \ref{helpful:both}, let $\vect{a} \in \theta^{-1}(1)$ and $\vect{b} \in \theta^{-1}(0)$.
Then $\vect{a} \nleq \vect{b}$, so there exists an $i$ such that $a_i = 1$ and $b_i = 0$; hence $\id_i(\vect{a}) = 1$ and $\id_i(\vect{b}) = 0$.

\ref{prop:M:M}
We have $f \in \clVaki$, $g \in \clVako$, and $h \in \clMc$, so $1 \leq f$, $0 \leq g$, and $\id \leq h$.
By part \ref{prop:M:Mi} and Proposition~\ref{prop:constants}\ref{prop:constants:0} it holds that
$\clM = \clMi \cup \clVako = \gen{1, \id} \cup \gen{0} \subseteq \gen{f, g, h} \subseteq \clM$.
\end{proof}

\begin{proposition}
\label{prop:Mneg}
Let $k \geq 2$.
\begin{enumerate}[label=\textup{(\roman*)}]
\item\label{prop:Mneg:Moneg} For any $f, g \in \clMoneg$ with $f \notin \clMcneg$ and $g \notin \clVaki$, we have $\gen{f, g} = \clMoneg$.
\item\label{prop:Mneg:Mneg} For any $f, g, h \in \clMneg$ with $f \notin \clMoneg$, $g \notin \clMineg$, and $h \notin \clVak$, we have \linebreak $\gen{f,g,h} = \clMneg$.
\end{enumerate}
\end{proposition}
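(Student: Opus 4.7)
The plan is to deduce Proposition~\ref{prop:Mneg} from Proposition~\ref{prop:M} by invoking the inner\hyp{}negation automorphism of $\closys{(\clIc,\clMcUk{k})}$ supplied by Lemma~\ref{lem:stability-nd}. Since every projection is self\hyp{}dual, we have $\clIc = \clIc^\mathrm{d}$, so that lemma guarantees that $K \mapsto K^\mathrm{n}$ is an automorphism of $\closys{(\clIc,\clMcUk{k})}$. In particular, this automorphism commutes with the closure operator, i.e., $\gen{A}^\mathrm{n} = \gen{A^\mathrm{n}}$ for every $A \subseteq \clAll$.

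Using $f^\mathrm{n}(\vect{0}) = f(\vect{1})$ and $f^\mathrm{n}(\vect{1}) = f(\vect{0})$, I first record the effect of inner negation on the relevant classes: $\clM^\mathrm{n} = \clMneg$, $\clMi^\mathrm{n} = \clMoneg$, $\clMc^\mathrm{n} = \clMcneg$, $\clMo^\mathrm{n} = \clMineg$, $\clVak^\mathrm{n} = \clVak$, $\clVaki^\mathrm{n} = \clVaki$, and $\clVako^\mathrm{n} = \clVako$. Since inner negation is an involution, the reverse identities hold as well, and $f \notin X$ is equivalent to $f^\mathrm{n} \notin X^\mathrm{n}$.

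For part \ref{prop:Mneg:Moneg}, given $f, g \in \clMoneg$ with $f \notin \clMcneg$ and $g \notin \clVaki$, pass to $f^\mathrm{n}, g^\mathrm{n} \in \clMi$ satisfying $f^\mathrm{n} \notin \clMc$ and $g^\mathrm{n} \notin \clVaki$. Then Proposition~\ref{prop:M}\ref{prop:M:Mi} yields $\gen{f^\mathrm{n}, g^\mathrm{n}} = \clMi$, and applying inner negation gives $\gen{f, g} = \gen{f^\mathrm{n}, g^\mathrm{n}}^\mathrm{n} = \clMi^\mathrm{n} = \clMoneg$. For part \ref{prop:Mneg:Mneg}, given $f, g, h \in \clMneg$ with $f \notin \clMoneg$, $g \notin \clMineg$, and $h \notin \clVak$, the inner negations $f^\mathrm{n}, g^\mathrm{n}, h^\mathrm{n}$ belong to $\clM$ with $f^\mathrm{n} \notin \clMi$, $g^\mathrm{n} \notin \clMo$, and $h^\mathrm{n} \notin \clVak$. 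Applying Proposition~\ref{prop:M}\ref{prop:M:M} to the triple $(g^\mathrm{n}, f^\mathrm{n}, h^\mathrm{n})$ so that the hypotheses match (the function excluded from $\clMo$ goes in the first slot and the one excluded from $\clMi$ in the second), we obtain $\gen{g^\mathrm{n}, f^\mathrm{n}, h^\mathrm{n}} = \clM$. Inner negation then yields $\gen{f, g, h} = \clMneg$.

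There is no serious obstacle here: the argument is a direct transport of Proposition~\ref{prop:M} along the inner\hyp{}negation automorphism, and the only subtlety is the relabelling in part \ref{prop:Mneg:Mneg}, since inner negation interchanges the conditions ``$f(\vect{0}) = \cdot$'' and ``$f(\vect{1}) = \cdot$'' and therefore swaps the roles of the first two generators in the hypothesis of Proposition~\ref{prop:M}\ref{prop:M:M}.
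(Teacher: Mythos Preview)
Your proof is correct and is essentially a spelled\hyp{}out version of the paper's one\hyp{}line justification ``This is the dual of Proposition~\ref{prop:M}'': you explicitly transport the result along the inner\hyp{}negation automorphism of $\closys{(\clIc,\clMcUk{k})}$ granted by Lemma~\ref{lem:stability-nd}, which is exactly what that phrase encodes. The bookkeeping you do on how inner negation permutes the subclasses (and the relabelling of generators in part~\ref{prop:Mneg:Mneg}) is accurate.
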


\begin{proof}
This follows from Proposition~\ref{prop:M} by taking inner negations and applying Lemma~\ref{lem:stability-nd}.
\end{proof}

\begin{proposition}
\label{prop:OIC}
Let $k \geq 2$.
\begin{enumerate}[label=\textup{(\roman*)}]
\item\label{prop:OIC:OICI}
For any $f, g \in \clOICI$ with $f \notin \clOI$ and $g \notin \clMi$, we have $\gen{f,g} = \clOICI$.
\item\label{prop:OIC:OIC}
For any $f, g, h \in \clOIC$ with $f \notin \clOICO$, $g \notin \clOICI$, and $h \notin \clM$, we have $\gen{f,g,h} = \clOIC$.
\end{enumerate}
\end{proposition}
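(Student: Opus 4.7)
The approach is the $(G,k)$\hyp{}semibisectability pattern used throughout Propositions~\ref{prop:UkT-gen:plain}--\ref{prop:UkT-gen:R}. For part~\ref{prop:OIC:OICI}, the hypotheses force $f \in \clVaki$ (since $\clOICI \setminus \clOI = \clVaki$) and, because $\clVaki \subseteq \clMi$, also $g \in \clOI \setminus \clMi$; in particular $g$ is a non\hyp{}monotone member of $\clOI$. From $f$ I obtain the unary constant $1 \leq f$, from $g \in \clOI$ I obtain $\id \leq g$ by identifying all arguments, and from the non\hyp{}monotonicity of $g$ I extract a ternary minor $\gamma \leq g$ with $\gamma(0,0,0) = 0$, $\gamma(0,0,1) = 1$, $\gamma(0,1,1) = 0$, $\gamma(1,1,1) = 1$. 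Indeed, any witnesses $\vect{u} < \vect{v}$ to the non\hyp{}monotonicity satisfy $\vect{u} \neq \vect{0}$ and $\vect{v} \neq \vect{1}$ because $g \in \clOI$, so the argument positions partition into three necessarily nonempty blocks $\{i : u_i = v_i = 0\}$, $\{i : u_i = 0, v_i = 1\}$, $\{i : u_i = v_i = 1\}$, and identifying arguments within each block produces $\gamma$.

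I then verify that $\clOI$ (all of whose members are nonconstant) is $(G,k)$\hyp{}semibisectable with $G := \{1, \id, \gamma\} \subseteq \gen{f, g}$. Condition~\ref{helpful:k-true} is immediate from $1 \in G$. For condition~\ref{helpful:both}, given $\theta \in \clOI$, $\vect{a} \in \theta^{-1}(1)$, $\vect{b} \in \theta^{-1}(0)$: if $\vect{a} \nleq \vect{b}$, then $\id_i$ separates them for any $i$ with $a_i = 1$ and $b_i = 0$; otherwise $\vect{a} \leq \vect{b}$, in which case $\theta \in \clOI$ forces $\vect{0} \neq \vect{a} < \vect{b} \neq \vect{1}$, so by the same block analysis as for $g$ the positions split into three nonempty blocks $Q_{00}, Q_{01}, Q_{11}$, and any choice of representatives $p \in Q_{00}$, $q \in Q_{01}$, $r \in Q_{11}$ yields $\gamma_{pqr}(\vect{a}) = \gamma(0,0,1) = 1$ and $\gamma_{pqr}(\vect{b}) = \gamma(0,1,1) = 0$. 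Lemma~\ref{lem:helpful} now gives $\clOI \subseteq \gen{1, \id, \gamma} \subseteq \gen{f, g}$, and combined with $\clVaki = \gen{f}$ from Proposition~\ref{prop:constants}\ref{prop:constants:1} this yields $\clOICI = \clOI \cup \clVaki \subseteq \gen{f, g} \subseteq \clOICI$, the last inclusion because $\clOICI$ is already known to be a $(\clIc, \clMcUk{k})$\hyp{}clonoid.

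Part~\ref{prop:OIC:OIC} is a short reduction to part~\ref{prop:OIC:OICI}: the hypotheses force $f \in \clVaki$, $g \in \clVako$, and $h \in \clOI \setminus \clM \subseteq \clOI \setminus \clMi$, so applying part~\ref{prop:OIC:OICI} to $f, h$ gives $\gen{f, h} = \clOICI$, while Proposition~\ref{prop:constants}\ref{prop:constants:0} gives $\gen{g} = \clVako$; hence $\clOIC = \clOICI \cup \clVako \subseteq \gen{f, g, h} \subseteq \clOIC$. The only mildly delicate step in the whole argument is the case $\vect{a} \leq \vect{b}$ of condition~\ref{helpful:both}, where one must observe that $\theta \in \clOI$ automatically places $\vect{a}$ and $\vect{b}$ strictly between $\vect{0}$ and $\vect{1}$, so that the induced block decomposition matches exactly the two triples $(0,0,1)$ and $(0,1,1)$ on which $\gamma$ has been pinned down; everything else is routine bookkeeping and direct appeals to the preceding lemmas.
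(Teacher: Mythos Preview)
Your proof is correct and follows essentially the same approach as the paper's own proof: the same generating set $G = \{1, \id, \gamma\}$, the same case split in condition~\ref{helpful:both} according to whether $\vect{a} \nleq \vect{b}$ or $\vect{a} \leq \vect{b}$, and the same reduction of part~\ref{prop:OIC:OIC} to part~\ref{prop:OIC:OICI}. Your block\hyp{}decomposition phrasing for the case $\vect{a} \leq \vect{b}$ and for the extraction of $\gamma$ is a bit more explicit than the paper's index chase, but the underlying argument is identical.
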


\begin{proof}
\ref{prop:OIC:OICI}
We have $f \in \clVaki$ and $g \in \clOI \setminus \clM$, so $1 \leq f$, $\id \leq g$, and there is a ternary minor $g' \leq g$ such that $g'(0,0,0) = 0$, $g'(0,0,1) = 1$, $g'(0,1,1) = 0$, $g'(1,1,1) = 1$.
It suffices to show that $\clOI$ is $(G,k)$\hyp{}semibisectable with $G := \{1, \id, g'\}$.
because from this it follows by Lemma~\ref{lem:helpful} and Proposition~\ref{prop:constants}\ref{prop:constants:1} that
$\clOICI \subseteq \gen{1, \id, g'} \cup \gen{1} \subseteq \gen{f, g} \subseteq \clOICI$.

Let $\theta \in \clOI$.
Condition \ref{helpful:k-true} is obviously satisfied because $1 \in G$.
In order to verify condition \ref{helpful:both}, let $\vect{a} \in \theta^{-1}(1)$ and $\vect{b} \in \theta^{-1}(0)$.
We have $\vect{a} \neq \vect{0}$ and $\vect{b} \neq \vect{1}$, so there are $i$ and $j$ such that $a_i = 1$ and $b_j = 0$; moreover, $\vect{a} \neq \vect{b}$, so there is a $k$ such that $a_k \neq b_k$.
If $a_k = 1$ and $b_k = 0$, then $\id_k(\vect{a}) = 1$ and $\id_k(\vect{b}) = 0$.
Assume now that there is no $\ell$ such that $a_\ell = 1$ and $b_\ell = 0$; consequently $b_i = 1$, $a_j = 0$, $a_k = 0$, and $b_k = 1$, and we have $g'_{jki}(\vect{a}) = 1$ and $g'_{jki}(\vect{b}) = 0$.

\ref{prop:OIC:OIC}
We have $f \in \clVaki$, $g \in \clVako$, and $h \in \clOI \setminus \clM$, so $1 \leq f$ and $0 \leq g$.
By part \ref{prop:Mneg:Moneg} and Proposition~\ref{prop:constants}\ref{prop:constants:0} it holds that
$\clOIC = (\clOICI) \cup \clVako = \gen{1,h} \cup \gen{0} \subseteq \gen{f,g,h} \subseteq \clOIC$.
\end{proof}

\begin{proposition}
\label{prop:IOC}
Let $k \geq 2$.
\begin{enumerate}[label=\textup{(\roman*)}]
\item\label{prop:IOC:IOCI}
For any $f, g \in \clIOCI$ with $f \notin \clIO$ and $g \notin \clMoneg$, we have $\gen{f,g} = \clIOCI$.
\item\label{prop:IOC:IOC}
For any $f, g, h \in \clIOC$ with $f \notin \clIOCO$, $g \notin \clIOCI$, and $h \notin \clMneg$, we have $\gen{f,g,h} = \clIOC$.
\end{enumerate}
\end{proposition}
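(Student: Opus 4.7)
The plan is to obtain Proposition~\ref{prop:IOC} as a direct consequence of Proposition~\ref{prop:OIC} by means of the inner-negation duality principle established in Lemma~\ref{lem:stability-nd}. Since $\clIc^\mathrm{d} = \clIc$, that lemma says the map $K \mapsto K^\mathrm{n}$ is an automorphism of the lattice $\closys{(\clIc,\clMcUk{k})}$; in particular, it preserves the operator $\gen{\cdot}$, so $\gen{F^\mathrm{n}} = \gen{F}^\mathrm{n}$ for every $F \subseteq \clAll$.

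First, I would record how inner negation acts on the relevant classes. A direct check from the definitions shows
\[
\clIO = \clOI^\mathrm{n},\quad \clMoneg = \clMi^\mathrm{n},\quad \clIOCI = (\clOICI)^\mathrm{n},\quad \clMneg = \clM^\mathrm{n},\quad \clIOC = \clOIC^\mathrm{n},\quad \clIOCO = (\clOICO)^\mathrm{n}.
\]
Indeed, $f \in \clOI$ iff $f(\vect{0}) = 0$ and $f(\vect{1}) = 1$, and $f^\mathrm{n}(\vect{0}) = f(\vect{1})$, $f^\mathrm{n}(\vect{1}) = f(\vect{0})$, giving $f^\mathrm{n} \in \clIO$; monotonicity is transformed into antitonicity because $\vect{a} \leq \vect{b}$ iff $\overline{\vect{b}} \leq \overline{\vect{a}}$; and the constants are fixed by $\mathrm{n}$.

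For part \ref{prop:IOC:IOCI}, given $f, g \in \clIOCI$ with $f \notin \clIO$ and $g \notin \clMoneg$, I would consider $f^\mathrm{n}, g^\mathrm{n}$, which by the above lie in $\clOICI$ with $f^\mathrm{n} \notin \clOI$ and $g^\mathrm{n} \notin \clMi$. Proposition~\ref{prop:OIC}\ref{prop:OIC:OICI} yields $\gen{f^\mathrm{n}, g^\mathrm{n}} = \clOICI$, and applying inner negation again gives
\[
\gen{f,g} = \gen{(f^\mathrm{n})^\mathrm{n}, (g^\mathrm{n})^\mathrm{n}} = \gen{f^\mathrm{n}, g^\mathrm{n}}^\mathrm{n} = (\clOICI)^\mathrm{n} = \clIOCI.
\]
Part \ref{prop:IOC:IOC} follows by the same device from Proposition~\ref{prop:OIC}\ref{prop:OIC:OIC}: the functions $f^\mathrm{n}, g^\mathrm{n}, h^\mathrm{n}$ belong to $\clOIC$ with $f^\mathrm{n} \notin \clOICO$, $g^\mathrm{n} \notin \clOICI$, and $h^\mathrm{n} \notin \clM$, so $\gen{f^\mathrm{n}, g^\mathrm{n}, h^\mathrm{n}} = \clOIC$, and hence $\gen{f,g,h} = \clIOC$.

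There is essentially no obstacle here: the bookkeeping is the routine identification of classes under inner negation, and the substantive work has already been done in Proposition~\ref{prop:OIC}. In fact, one could simply state the proof as "This is the dual of Proposition~\ref{prop:OIC}," in line with the author's convention elsewhere (for example in Propositions~\ref{prop:UkT-gen:0} and \ref{prop:UkT-gen:Mneg}).
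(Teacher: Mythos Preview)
Your proposal is correct and is precisely the paper's approach: the paper's proof of Proposition~\ref{prop:IOC} is the single sentence ``This is the dual of Proposition~\ref{prop:OIC},'' and you have spelled out exactly what that duality means via Lemma~\ref{lem:stability-nd} (using $\clIc^\mathrm{d} = \clIc$) together with the correct identification of the relevant classes under inner negation.
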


\begin{proof}
This follows from Proposition~\ref{prop:OIC} by taking inner negations and applying Lemma~\ref{lem:stability-nd}.
\end{proof}

\begin{proposition}
\label{prop:OOC}
Let $k \geq 2$.
For any $f, g \in \clOOC$ with $f \notin \clOO$ and $g \notin \clReflOOC$, we have $\gen{f,g} = \clOOC$.
\end{proposition}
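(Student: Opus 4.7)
The plan is to apply Lemma~\ref{lem:helpful}, following the pattern of Propositions~\ref{prop:UkT-gen:plain}--\ref{prop:R}. First I would identify the hypotheses concretely: since $\clVako \subseteq \clOO$, the condition $f \in \clOOC \setminus \clOO$ forces $f \in \clVaki$; since $\clVak \subseteq \clReflOOC$, the condition $g \in \clOOC \setminus \clReflOOC$ forces $g \in \clOO \setminus \clReflOO$. Thus the unary constant $1$ is a minor of $f$. Because $g$ is not reflexive, some $\vect{u} \in \dom(g)$ satisfies $g(\vect{u}) = 1$ while $g(\overline{\vect{u}}) = 0$; identifying together all arguments in positions where $\vect{u}$ is $1$, and likewise for positions where $\vect{u}$ is $0$, yields a binary minor of $g$ whose true points are exactly $\{10\}$, i.e.\ the function $\mathord{\nrightarrow}$. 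In particular, taking the further minor $\mathord{\nrightarrow}(x,x) = 0$ gives $0 \in \gen{g}$, so by Proposition~\ref{prop:constants}\ref{prop:constants:0} we obtain $\clVako \subseteq \gen{g}$.

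Next I would show that $\clOO \setminus \clVako$ is $(G,k)$\hyp{}semibisectable with $G := \{1, \mathord{\nrightarrow}\}$. Condition \ref{helpful:k-true} of Definition~\ref{def:helpful} is immediate, since the $n$\hyp{}ary constant $1$ belongs to $G_n$ and takes the value $1$ at every tuple. For condition \ref{helpful:both}, let $\theta \in \clOO \setminus \clVako$, $\vect{a} \in \theta^{-1}(1)$, and $\vect{b} \in \theta^{-1}(0)$. Since $\theta \in \clOO$, we have $\vect{a} \notin \{\vect{0}, \vect{1}\}$, hence there exist indices $p, q$ with $a_p = 1$ and $a_q = 0$. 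Because $\vect{a} \neq \vect{b}$, one of two cases occurs: either there is $i$ with $a_i = 1$ and $b_i = 0$, in which case $\mathord{\nrightarrow}_{iq}(\vect{a}) = 1$ and $\mathord{\nrightarrow}_{iq}(\vect{b}) = b_i \wedge \neg b_q = 0$; or there is $j$ with $a_j = 0$ and $b_j = 1$, in which case $\mathord{\nrightarrow}_{pj}(\vect{a}) = 1$ and $\mathord{\nrightarrow}_{pj}(\vect{b}) = b_p \wedge \neg b_j = 0$. In either case, a suitable minor of $\mathord{\nrightarrow}$ separates $\vect{a}$ and $\vect{b}$ in $G_n$.

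By Lemma~\ref{lem:helpful}, every function in $\clOO \setminus \clVako$ lies in $\gen{G}$, and we already have $\{1, \mathord{\nrightarrow}\} \subseteq \gen{f, g}$, so $\clOO \setminus \clVako \subseteq \gen{f, g}$. Combining this with $\clVako \subseteq \gen{g}$ and $\clVaki = \gen{f}$ (Proposition~\ref{prop:constants}\ref{prop:constants:1}), we conclude
\[
\clOOC \;=\; (\clOO \setminus \clVako) \cup \clVako \cup \clVaki \;\subseteq\; \gen{f, g}.
\]
The reverse inclusion $\gen{f, g} \subseteq \clOOC$ follows from the fact that $\clOOC$ is a $(\clIc, \clMcUk{k})$\hyp{}clonoid, established earlier in this section. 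The only nontrivial step is the case analysis for condition \ref{helpful:both}; I expect that the key small observation is that because $\vect{a} \in \clOO \setminus \{\vect{0}, \vect{1}\}$ already supplies indices of both colours in $\vect{a}$, I have complete freedom in choosing the remaining index of $\mathord{\nrightarrow}_{ij}$ to handle whichever asymmetry $\vect{a}$ versus $\vect{b}$ exhibits, including the special case $\vect{b} = \overline{\vect{a}}$.
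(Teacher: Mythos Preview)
Your proof is correct and follows essentially the same approach as the paper's: both identify $f\in\clVaki$, $g\in\clOO\setminus\clReflOO$, extract the minors $1$ and $\mathord{\nrightarrow}$, verify that $\clOO\setminus\clVak$ is $(\{1,\mathord{\nrightarrow}\},k)$\hyp{}semibisectable via the same case split on $\vect{a}$ versus $\vect{b}$, and assemble the constants separately. The only cosmetic difference is that the paper obtains $0$ directly as a minor of $g$ (since $g\in\clOO$) and invokes Proposition~\ref{prop:constants}\ref{prop:constants:both} once, whereas you pass through $\mathord{\nrightarrow}(x,x)=0$ and handle $\clVako$ and $\clVaki$ in two steps.
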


\begin{proof}
We have $f \in \clVaki$ and $g \in \clOO \setminus \clReflOO$, so $1 \leq f$, $0 \leq g$, and there is a binary minor $g' \leq g$ such that $g'(0,0) = 0$, $g'(1,1) = 0$, $g'(0,1) = 0$, $g'(1,0) = 1$, i.e., $g' = \mathord{\nrightarrow}$.
It suffices to show that $\clOO \setminus \clVak$ is $(G,k)$\hyp{}semibisectable with $G := \{1, \mathord{\nrightarrow}\}$, because from this it follows by Lemma~\ref{lem:helpful} and Proposition~\ref{prop:constants}\ref{prop:constants:both} that
$\clOOC = (\clOO \setminus \clVak) \cup \clVak \subseteq \gen{1, \mathord{\nrightarrow}} \cup \gen{0,1} \subseteq \gen{f,g} \subseteq \clOOC$.

Let $\theta \in \clOO \setminus \clVak$.
Condition \ref{helpful:k-true} is obviously satisfied because $1 \in G$.
In order to verify condition \ref{helpful:both}, let $\vect{a} \in \theta^{-1}(1)$ and $\vect{b} \in \theta^{-1}(0)$.
We have $\vect{a} \notin \{\vect{0}, \vect{1}\}$, so there exist $i$ and $j$ such that $a_i = 0$, $a_j = 1$.
We have $\vect{a} \neq \vect{b}$, so there is a $k$ such that $a_k \neq b_k$.
If $a_k = 1$ and $b_k = 0$, then $\mathord{\nrightarrow}_{ki}(\vect{a}) = 1$ and $\mathord{\nrightarrow}_{ki}(\vect{b}) = 0$.
If $a_k = 0$ and $b_k = 1$, then $\mathord{\nrightarrow}_{jk}(\vect{a}) = 1$ and $\mathord{\nrightarrow}_{jk}(\vect{b}) = 0$.
\end{proof}

\begin{proposition}
\label{prop:OXC}
Let $k \geq 2$.
For any $f, g, h \in \clOXC$ with $f \notin \clOX$, $g \notin \clOOC$, $h \notin \clOIC$, we have $\gen{f,g,h} = \clOXC$.
\end{proposition}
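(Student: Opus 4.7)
The plan is to mimic the strategy used in Propositions~\ref{prop:OOC}--\ref{prop:IOC}: decompose $\clOXC = (\clOX \setminus \clVak) \cup \clVak$, show the first piece is $(G,k)$\hyp{}semibisectable for a suitable set $G$ of minors of $\{f, g, h\}$ and invoke Lemma~\ref{lem:helpful}, while generating the constants separately via Proposition~\ref{prop:constants}\ref{prop:constants:both}.

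First I would classify the three generators. Since $f \in \clOXC \setminus \clOX$, $f$ must be a constant not in $\clOX$, hence $f \in \clVaki$ and $1 \leq f$. From $g \in \clOXC \setminus \clOOC$ we obtain $g \in \clOI \setminus \clVak$, so identifying all arguments yields $\id \leq g$. From $h \in \clOXC \setminus \clOIC$ we obtain $h \in \clOO \setminus \clVak$, so $h$ has a true point $\vect{u} \notin \{\vect{0}, \vect{1}\}$; identifying all arguments of $h$ gives $0 \leq h$, and collapsing the coordinates of $h$ where $u_i = 1$ onto a variable $x_1$ and those where $u_i = 0$ onto $x_2$ yields a binary minor $h^*$ of $h$ with $h^*(0,0) = h^*(1,1) = 0$, $h^*(1, 0) = h(\vect{u}) = 1$, and $h^*(0, 1) = h(\overline{\vect{u}}) \in \{0,1\}$. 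Thus $h^* \in \{\mathord{+}, \mathord{\nrightarrow}\}$, but crucially $h^*(1, 0) = 1$ and $h^*(1, 1) = 0$ in both cases.

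Next I would verify that $\clOX \setminus \clVak$ is $(G, k)$\hyp{}semibisectable with $G := \{1, \id, h^*\}$. Condition~\ref{helpful:k-true} of Definition~\ref{def:helpful} is immediate from $1 \in G$. For condition~\ref{helpful:both}, fix $\theta \in \clOX \setminus \clVak$, $\vect{a} \in \theta^{-1}(1)$, $\vect{b} \in \theta^{-1}(0)$. Since $\theta(\vect{0}) = 0$, we have $\vect{a} \neq \vect{0}$; pick $j$ with $a_j = 1$. If some $i$ satisfies $a_i = 1$, $b_i = 0$, then $\id_i$ separates $\vect{a}$ from $\vect{b}$. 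Otherwise every coordinate where $a_i = 1$ has $b_i = 1$, so in particular $b_j = 1$, and the coordinate $i$ with $a_i \neq b_i$ (which exists since $\vect{a} \neq \vect{b}$) must satisfy $a_i = 0$, $b_i = 1$. Then $(h^*)_{ji}(\vect{a}) = h^*(1, 0) = 1$ and $(h^*)_{ji}(\vect{b}) = h^*(1, 1) = 0$, as required.

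Lemma~\ref{lem:helpful} then gives $\clOX \setminus \clVak \subseteq \gen{1, \id, h^*} \subseteq \gen{f, g, h}$, and Proposition~\ref{prop:constants}\ref{prop:constants:both} applied to the minors $0 \leq h$ and $1 \leq f$ gives $\clVak = \gen{0, 1} \subseteq \gen{f, g, h}$, so
\[
\clOXC = (\clOX \setminus \clVak) \cup \clVak \subseteq \gen{f, g, h} \subseteq \clOXC,
\]
the last inclusion by Proposition~\ref{prop:stable-1}\ref{prop:stable-1:01}. The only real subtlety is that the shape of $h^*$ is not a priori fixed (it is $\mathord{+}$ or $\mathord{\nrightarrow}$ depending on $h(\overline{\vect{u}})$), but choosing the identification so that $\vect{u}$ is mapped to $(1, 0)$ makes the needed values $h^*(1, 0) = 1$ and $h^*(1, 1) = 0$ uniform, sparing us any case split.
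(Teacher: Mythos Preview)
Your proof is correct and follows essentially the same route as the paper: classify the generators, extract the minors $1$, $\id$, and a binary minor of $h$ witnessing $h \in \clOO \setminus \clVak$, verify $(G,k)$\hyp{}semibisectability of $\clOX \setminus \clVak$, and pick up the constants via Proposition~\ref{prop:constants}\ref{prop:constants:both}. The only cosmetic difference is that the paper's binary minor $h'$ has the ``interesting'' true point at $(0,1)$ rather than at $(1,0)$, which just swaps the index pair in the final separating minor; your added remark that $h^*$ could be either $\mathord{+}$ or $\mathord{\nrightarrow}$ is true but, as you note, irrelevant to the argument.
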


\begin{proof}
We have $f \in \clVaki$, $g \in \clOI$, and $h \in \clOO \setminus \clVak$, so $1 \leq f$, $\id \leq g$, $0 \leq h$, and there is a binary minor $h' \leq h$ such that $h'(0,0) = 0$, $h'(1,1) = 0$, $h'(0,1) = 1$.
It suffices to show that $\clOX \setminus \clVak$ is $(G,k)$\hyp{}semibisectable with $G := \{1, \id, h'\}$, because from this it follows by Lemma~\ref{lem:helpful} and Proposition~\ref{prop:constants}\ref{prop:constants:both} that
$\clOXC = (\clOX \setminus \clVak) \cup \clVak \subseteq \gen{1, \id, h'} \cup \gen{0,1} \subseteq \gen{f,g,h} \subseteq \clOXC$.

Let $\theta \in \clOX \setminus \clVak$.
Condition \ref{helpful:k-true} is obviously satisfied because $1 \in G$.
In order to verify condition \ref{helpful:both}, let $\vect{a} \in \theta^{-1}(1)$ and $\vect{b} \in \theta^{-1}(0)$.
We have $\vect{a} \neq \vect{0}$, so there is an $i$ such that $a_i = 1$.
We have $\vect{a} \neq \vect{b}$, so there is a $j$ such that $a_j \neq b_j$.
If $a_j = 1$ and $b_j = 0$, then $\id_j(\vect{a}) = 1$ and $\id_j(\vect{b}) = 0$.
Assume now that there is no $\ell$ such that $a_\ell = 1$ and $b_\ell = 0$.
Then $a_j = 0$, $b_j = 1$, and $b_i = 1$, and we have $h'_{ji}(\vect{a}) = 1$ and $h'_{ji}(\vect{b}) = 0$.
\end{proof}

\begin{proposition}
\label{prop:XOC}
Let $k \geq 2$.
For any $f, g, h \in \clXOC$ with $f \notin \clXO$, $g \notin \clOOC$, $h \notin \clIOC$, we have $\gen{f,g,h} = \clXOC$.
\end{proposition}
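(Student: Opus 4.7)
\emph{Proof plan.} I would observe that this statement is precisely the image of Proposition~\ref{prop:OXC} under the inner\hyp{}negation automorphism of $\closys{(\clIc,\clMcUk{k})}$. Since the clone of projections satisfies $\clIc = \clIc^{\mathrm d}$, Lemma~\ref{lem:stability-nd} guarantees that $K \mapsto K^\mathrm{n}$ is an automorphism of the lattice $\closys{(\clIc,\clMcUk{k})}$; in particular, it commutes with the generation operator, so $\gen{K^\mathrm{n}} = \gen{K}^\mathrm{n}$.

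The next step is to check that the hypotheses and conclusion of the present proposition correspond to those of Proposition~\ref{prop:OXC} under $K \mapsto K^\mathrm{n}$. Under inner negation one has $\clOXC \leftrightarrow \clXOC$ (because $f(\vect 0)=0 \iff f^\mathrm{n}(\vect 1)=0$), $\clOX \leftrightarrow \clXO$, $\clOIC \leftrightarrow \clIOC$, and, crucially, $\clOOC \leftrightarrow \clOOC$---the latter because $\clOO$ is itself invariant under inner negation (the conditions $f(\vect 0)=f(\vect 1)=0$ are preserved when inputs $\vect 0$ and $\vect 1$ are swapped) and so is $\clVak$. Hence, given $f,g,h \in \clXOC$ with $f \notin \clXO$, $g \notin \clOOC$, $h \notin \clIOC$, applying inner negation yields $f^\mathrm{n},g^\mathrm{n},h^\mathrm{n} \in \clOXC$ satisfying $f^\mathrm{n} \notin \clOX$, $g^\mathrm{n} \notin \clOOC$, $h^\mathrm{n} \notin \clOIC$; Proposition~\ref{prop:OXC} then gives $\gen{f^\mathrm{n},g^\mathrm{n},h^\mathrm{n}} = \clOXC$, whence $\gen{f,g,h} = \clOXC^\mathrm{n} = \clXOC$.

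Should a direct proof be preferred, it would follow the same template as Proposition~\ref{prop:OXC}: from the hypotheses one reads off $f \in \clVaki$, $g \in \clIO$, $h \in \clOO \setminus \clVako$, so the minors $1 \leq f$, $\neg \leq g$, $0 \leq h$, and a binary minor $h' \leq h$ with $h'(0,0)=h'(1,1)=0$ and $h'(0,1)=1$ are available (the last one obtained by identifying the $0$\hyp{}coordinates and $1$\hyp{}coordinates of a nonconstant true point of $h$). One then shows that $\clXO \setminus \clVak$ is $(G,k)$\hyp{}semibisectable for $G := \{1,\neg,h'\}$: condition \ref{helpful:k-true} is immediate because $1 \in G$; for \ref{helpful:both}, given $\vect a \in \theta^{-1}(1)$ and $\vect b \in \theta^{-1}(0)$, one uses $\vect a \neq \vect 1$ to pick $i$ with $a_i=0$, and distinguishes the cases where some $j$ has $(a_j,b_j)=(0,1)$ (apply $\neg_j$) or every differing coordinate satisfies $(a_j,b_j)=(1,0)$, in which case necessarily $b_i=0$ and $h'_{ij}$ separates $\vect a$ from $\vect b$. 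Invoking Lemma~\ref{lem:helpful} and Proposition~\ref{prop:constants}\ref{prop:constants:both} yields $\clXOC = (\clXO \setminus \clVak) \cup \clVak \subseteq \gen{G} \cup \gen{0,1} \subseteq \gen{f,g,h}$.

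The main obstacle is essentially bookkeeping: recognizing that $\clOOC$ is self\hyp{}dual under inner negation (so that the middle hypothesis of each proposition matches its counterpart), and, in the direct approach, correctly identifying the binary minor of $h$ and executing the two\hyp{}case argument for $(G,k)$\hyp{}semibisectability. Everything else is a routine application of the machinery already in place.
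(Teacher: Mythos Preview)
Your proposal is correct and matches the paper's approach: the paper's proof is the one\hyp{}line ``This is the dual of Proposition~\ref{prop:OXC}'', where ``dual'' here means precisely the inner\hyp{}negation automorphism of $\closys{(\clIc,\clMcUk{k})}$ guaranteed by Lemma~\ref{lem:stability-nd} (since $\clIc=\clIc^{\mathrm d}$). Your verification that $\clOXC$, $\clOX$, $\clOOC$, $\clOIC$ correspond under $K\mapsto K^{\mathrm n}$ to $\clXOC$, $\clXO$, $\clOOC$, $\clIOC$ is exactly the bookkeeping needed, and your optional direct argument is the inner\hyp{}negation mirror of the paper's proof of Proposition~\ref{prop:OXC}.
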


\begin{proof}
This follows from Proposition~\ref{prop:OXC} by taking inner negations and applying Lemma~\ref{lem:stability-nd}.
\end{proof}

\begin{proposition}
\label{prop:II}
Let $k \geq 2$.
\begin{enumerate}[label=\textup{(\roman*)}]
\item\label{prop:II:II}
For any $f \in \clII \setminus \clReflII$, we have $\gen{f} = \clII$.
\item\label{prop:II:IIC}
For any $f, g \in \clIIC$ with $f \notin \clReflIIC$ and $g \notin \clII$, we have $\gen{f,g} = \clIIC$.
\end{enumerate}
\end{proposition}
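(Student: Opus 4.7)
The plan is to show part (i) by proving that every $f \in \clII \setminus \clReflII$ has both the constant $1$ function and the implication $\rightarrow$ among its minors, and then to apply the $(G,k)$\hyp{}semibisectability machinery with $G = \{1, \mathord{\rightarrow}\}$, in the same spirit as the proofs of Propositions~\ref{prop:R}\ref{prop:R:R11} and \ref{prop:constants}. Part (ii) will then follow as an immediate corollary using $\clIIC = \clII \cup \clVako$ and $\clReflIIC = \clReflII \cup \clVako$.

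To obtain the minors: any unary minor $f_\sigma$ obtained from a constant map $\sigma$ satisfies $f_\sigma(0) = f(\vect{0}) = 1$ and $f_\sigma(1) = f(\vect{1}) = 1$ because $f \in \clII$, so $1 \in f \clIc \subseteq \gen{f}$. Since $f \notin \clReflII = \clRefl \cap \clII$ and $f \in \clII$, we have $f \notin \clRefl$, so there is a $\vect{c}$ with $\{f(\vect{c}), f(\overline{\vect{c}})\} = \{0, 1\}$; without loss of generality $f(\vect{c}) = 1$ and $f(\overline{\vect{c}}) = 0$. Identifying the arguments in positions where $c_i = 0$ on one side and those where $c_i = 1$ on the other yields a binary minor $f'$ of $f$ with $f'(0,0) = f(\vect{0}) = 1$, $f'(1,1) = f(\vect{1}) = 1$, $f'(0,1) = f(\vect{c}) = 1$, $f'(1,0) = f(\overline{\vect{c}}) = 0$, that is, $f' = \mathord{\rightarrow}$.

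The main step is to show that $\clII \setminus \clVaki$ is $(\{1, \mathord{\rightarrow}\}, k)$\hyp{}semibisectable. Condition~\ref{helpful:k-true} of Definition~\ref{def:helpful} is trivial because $1 \in G$. For condition~\ref{helpful:both}, take $\theta \in \clII \setminus \clVaki$, $\vect{a} \in \theta^{-1}(1)$, $\vect{b} \in \theta^{-1}(0)$; since $\theta \in \clII$, we have $\vect{b} \notin \{\vect{0}, \vect{1}\}$, so both index sets $\{i : b_i = 1\}$ and $\{j : b_j = 0\}$ are nonempty. For each such pair $(i,j)$ we have $\mathord{\rightarrow}_{ij}(\vect{b}) = 0$, and $\mathord{\rightarrow}_{ij}(\vect{a}) = 1$ unless $(a_i, a_j) = (1, 0)$. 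The main obstacle is to show that a pair $(i, j)$ exists for which $(a_i, a_j) \neq (1, 0)$; if not, then $b_i = 1 \Rightarrow a_i = 1$ and $b_j = 0 \Rightarrow a_j = 0$ for every admissible $i, j$, which forces $\vect{a} = \vect{b}$, contradicting $\theta(\vect{a}) \neq \theta(\vect{b})$. (Unlike the analogous argument for $\clReflII$ in Proposition~\ref{prop:R}\ref{prop:R:R11}, there is no subcase $\vect{a} = \overline{\vect{b}}$ to worry about, precisely because $\mathord{\rightarrow}$ is non-reflexive.) Then Lemma~\ref{lem:helpful} yields $\clII \setminus \clVaki \subseteq \gen{1, \mathord{\rightarrow}} \subseteq \gen{f}$, and combining with $\clVaki = \gen{1} \subseteq \gen{f}$ from Proposition~\ref{prop:constants}\ref{prop:constants:1} gives $\clII \subseteq \gen{f}$; the reverse inclusion is automatic since $\clII$ is a $(\clIc, \clMcUk{k})$\hyp{}clonoid (being $\clXI \cap \clIX$, both of which appear in Proposition~\ref{prop:stable-1}\ref{prop:stable-1:clones}).

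For part (ii), note that $\clIIC \setminus \clReflIIC = \clII \setminus \clReflII$ because $\clIIC = \clII \cup \clVako$, $\clReflIIC = \clReflII \cup \clVako$, and these unions are disjoint on the $\clII$ side; and $\clIIC \setminus \clII = \clVako$. Hence the hypotheses force $f \in \clII \setminus \clReflII$ and $g \in \clVako$, so by part (i) and Proposition~\ref{prop:constants}\ref{prop:constants:0} we have $\clIIC = \clII \cup \clVako = \gen{f} \cup \gen{g} \subseteq \gen{f, g} \subseteq \clIIC$.
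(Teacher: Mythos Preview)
Your proof is correct and follows essentially the same approach as the paper's: extract the minors $1$ and $\mathord{\rightarrow}$ from $f$, verify that $\clII \setminus \clVaki$ is $(\{1,\mathord{\rightarrow}\},k)$\hyp{}semibisectable, and conclude via Lemma~\ref{lem:helpful} and Proposition~\ref{prop:constants}. The only cosmetic difference is in the verification of condition~\ref{helpful:both}: the paper first fixes an index $i$ with $a_i \neq b_i$, then picks $j$ with $b_j \neq b_i$ and checks directly that one of $\mathord{\rightarrow}_{ij}$, $\mathord{\rightarrow}_{ji}$ separates $\vect{a}$ from $\vect{b}$, whereas you argue by contradiction over all pairs $(i,j)$ with $b_i = 1$, $b_j = 0$; both arguments are equally short and valid.
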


\begin{proof}
\ref{prop:II:II}
We have $1 \leq f$, and there is a binary minor $f' \leq f$ such that $f'(0,0) = f'(1,1) = 1$ and $f'(0,1) = 1$, $f'(1,0) = 0$, i.e., $\mathord{\rightarrow} \leq f$.
It suffices to show that $\clII \setminus \clVaki$ is $(G,k)$\hyp{}semibisectable with $G := \{1, \mathord{\rightarrow}\}$, because from this it follows by Lemma~\ref{lem:helpful} and Proposition~\ref{prop:constants}\ref{prop:constants:1} that
$\clII = (\clII \setminus \clVaki) \cup \clVaki \subseteq \gen{1, \mathord{\rightarrow}} \cup \gen{1} \subseteq \gen{f} \subseteq \clII$.

Let $\theta \in \clII \setminus \clVaki$.
Condition \ref{helpful:k-true} is obviously satisfied because $1 \in G$.
In order to verify condition \ref{helpful:both}, let $\vect{a} \in \theta^{-1}(1)$ and $\vect{b} \in \theta^{-1}(0)$.
Then $\vect{a} \neq \vect{b}$, so there is an $i$ such that $a_i \neq b_i$.
Since $\vect{b} \notin \{\vect{0}, \vect{1}\}$, there is a $j$ such that $b_j \neq b_i$.
We have $\tau(\vect{a}) = 1$ and $\tau(\vect{b}) = 0$ for some $\tau \in \{\mathord{\rightarrow}_{ij}, \mathord{\rightarrow}_{ji}\}$.

\ref{prop:II:IIC}
We have $f \in \clII \setminus \clReflII$ and $g \in \clVako$.
It follows from part \ref{prop:II:II} and Proposition~\ref{prop:constants}\ref{prop:constants:0} that
$\clIIC = \clII \cup \clVako = \gen{f} \cup \gen{g} \subseteq \gen{f,g} \subseteq \clIIC$.
\end{proof}

\begin{proposition}
\label{prop:IX}
Let $k \geq 2$.
\begin{enumerate}[label=\textup{(\roman*)}]
\item\label{prop:IX:IX}
For any $f, g \in \clIX$ with $f \notin \clII$ and $g \notin \clIOCI$, we have $\gen{f,g} = \clIX$.
\item\label{prop:IX:IXC}
For any $f, g, h \in \clIXC$ with $f \notin \clIIC$, $g \notin \clIOC$, $h \notin \clIX$, we have $\gen{f,g,h} = \clIXC$.
\item\label{prop:IX:XI}
For any $f, g \in \clXI$ with $f \notin \clII$ and $g \notin \clOICI$, we have $\gen{f,g} = \clXI$.
\item\label{prop:IX:XIC}
For any $f, g, h \in \clXIC$ with $f \notin \clIIC$, $g \notin \clOIC$, $h \notin \clXI$, we have $\gen{f,g,h} = \clXIC$.
\end{enumerate}
\end{proposition}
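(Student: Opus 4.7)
My plan is to prove part (i) directly by establishing $(G,k)$\hyp{}semibisectability of $\clIX \setminus \clVaki$ with $G = \{1, \neg, \mathord{\rightarrow}\}$, then derive part (ii) by adding a constant $0$, and finally obtain parts (iii) and (iv) by applying the inner\hyp{}negation automorphism of $\closys{(\clIc,\clMcUk{k})}$ from Lemma~\ref{lem:stability-nd} (which is available because $\clIc = \clIc^\mathrm{d}$).

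For part (i), the hypothesis $f \notin \clII$ forces $f \in \clIX \setminus \clII = \clIO$, so identifying all arguments of $f$ yields $\neg$ as a minor; the hypothesis $g \notin \clIOCI = \clIO \cup \clVaki$ forces $g \in \clII \setminus \clVaki$, and then Proposition~\ref{prop:II}\ref{prop:II:II} gives $\gen{g} = \clII$, so in particular $\{1, \mathord{\rightarrow}\} \subseteq \gen{f,g}$. Hence $G = \{1, \neg, \mathord{\rightarrow}\} \subseteq \gen{f,g}$. Now I would verify semibisectability of every nonconstant $\phi \in \clIX$: the $k$\hyp{}true condition \ref{helpful:k-true} is trivial since $1 \in G$; for the separation condition \ref{helpful:both}, given $\vect{a} \in \phi^{-1}(1)$ and $\vect{b} \in \phi^{-1}(0)$, note that $\vect{b} \neq \vect{0}$ because $\phi(\vect{0}) = 1$. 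If some coordinate $i$ satisfies $a_i = 0, b_i = 1$, then $\neg_i$ separates; otherwise $\vect{a} \geq \vect{b}$, so $\vect{a} \neq \vect{b}$ yields an $i$ with $a_i = 1, b_i = 0$, and $\vect{b} \neq \vect{0}$ yields a $j$ with $b_j = 1$ (forcing $a_j = 1$ by $\vect{a} \geq \vect{b}$), whence $\mathord{\rightarrow}_{ji}$ separates. Lemma~\ref{lem:helpful} then gives $\clIX \setminus \clVaki \subseteq \gen{G}$, and together with $\clVaki \subseteq \gen{g}$ and the fact that $\clIX$ is a $(\clIc,\clMcUk{k})$\hyp{}clonoid (Proposition~\ref{prop:stable-1}\ref{prop:stable-1:clones}) this yields $\gen{f,g} = \clIX$.

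For part (ii), the hypotheses force $f \in \clIO$, $g \in \clII \setminus \clVaki$, and $h \in \clVako$; part (i) gives $\clIX \subseteq \gen{f,g}$, Proposition~\ref{prop:constants}\ref{prop:constants:0} gives $\clVako \subseteq \gen{h}$, so $\gen{f,g,h} \supseteq \clIX \cup \clVak = \clIXC$, with the reverse inclusion holding because $\clIXC$ is a $(\clIc,\clMcUk{k})$\hyp{}clonoid (Proposition~\ref{prop:stable-1}\ref{prop:stable-1:01}). Parts (iii) and (iv) are obtained by applying the automorphism $K \mapsto K^\mathrm{n}$; the routine computations $\clIX^\mathrm{n} = \clXI$, $\clII^\mathrm{n} = \clII$, $\clIOCI^\mathrm{n} = \clOICI$, $\clIXC^\mathrm{n} = \clXIC$, and $\clIOC^\mathrm{n} = \clOIC$ show that the hypotheses and conclusions of (iii) and (iv) are precisely the $\mathrm{n}$\hyp{}images of those of (i) and (ii), and since inner negation is an involution on $\clAll$, every admissible input to (iii) or (iv) is the $\mathrm{n}$\hyp{}image of an admissible input to (i) or (ii). The only genuine work is the case analysis establishing semibisectability in part (i); everything else is bookkeeping, and no real obstacle is expected.
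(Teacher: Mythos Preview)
Your argument for part \ref{prop:IX:IX} has a gap: you invoke Proposition~\ref{prop:II}\ref{prop:II:II} to conclude $\gen{g} = \clII$ from $g \in \clII \setminus \clVaki$, but that proposition requires $g \notin \clReflII$, not merely $g \notin \clVaki$. If $g \in \clReflII \setminus \clVaki$ (for example $g = \mathord{\leftrightarrow}$), then by Proposition~\ref{prop:R}\ref{prop:R:R11} one gets only $\gen{g} = \clReflII$, and since $\mathord{\rightarrow} \notin \clReflII$ you have not shown $\mathord{\rightarrow} \in \gen{f,g}$. Thus your choice $G = \{1, \neg, \mathord{\rightarrow}\}$ is not justified in this case.

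The paper sidesteps this by not aiming for $\mathord{\rightarrow}$ specifically. From $g \in \clII \setminus \clVaki$ it extracts directly a binary minor $g' \leq g$ with $g'(0,0) = g'(1,1) = 1$ and $g'(0,1) = 0$ (identify arguments according to any false point of $g$), and takes $G = \{1, \neg, g'\}$. Your separation argument then goes through verbatim with $g'_{ij}$ in place of $\mathord{\rightarrow}_{ji}$, since only the values $g'(1,1) = 1$ and $g'(0,1) = 0$ are used. With this correction, part \ref{prop:IX:IX} is fine; your derivation of \ref{prop:IX:IXC} from \ref{prop:IX:IX} and of \ref{prop:IX:XI}, \ref{prop:IX:XIC} via the inner\hyp{}negation automorphism is correct and amounts to the paper's ``dual'' remark spelled out in detail.
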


\begin{proof}
\ref{prop:IX:IX}
We have $f \in \clIO$ and $g \in \clII \setminus \clVak$, so $\mathord{\neg} \leq f$, $1 \leq g$, and there exists a binary minor $g' \leq g$ such that $g'(0,0) = g'(1,1) = 1$ and $g'(0,1) = 0$.
It suffices to show that $\clIX \setminus \clVaki$ is $(G,k)$\hyp{}semibisectable with $G := \{1, \mathord{\neg}, g'\}$, because from this it follows by Lemma~\ref{lem:helpful} and Proposition~\ref{prop:constants}\ref{prop:constants:1} that
$\clIX = (\clIX \setminus \clVaki) \cup \clVaki \subseteq \gen{1, \mathord{\neg}, g'} \cup \gen{1} \subseteq \gen{f,g} \subseteq \clIX$.

Let $\theta \in \clIX \setminus \clVaki$.
Condition \ref{helpful:k-true} is obviously satisfied because $1 \in G$.
In order to verify condition \ref{helpful:both}, let $\vect{a} \in \theta^{-1}(1)$ and $\vect{b} \in \theta^{-1}(0)$.
Then $\vect{a} \neq \vect{b}$, so there is an $i$ such that $a_i \neq b_i$.
If $a_i = 0$ and $b_i = 1$, then $\mathord{\neg}_i(\vect{a}) = 1$ and $\mathord{\neg}_i(\vect{b}) = 0$.
Assume now that there is no $\ell$ such that $a_\ell = 0$ and $b_\ell = 1$; thus $a_i = 1$ and $b_i = 0$.
We have $\vect{b} \neq \vect{0}$, so there is a $j$ such that $b_j = 1$ and hence $a_j = 1$.
Therefore $g'_{ij}(\vect{a}) = 1$ and $g'_{ij}(\vect{b}) = 0$.

\ref{prop:IX:IXC}
We have $f \in \clIO$, $g \in \clII \setminus \clVak$, $h \in \clVako$, so $0 \leq h$.
It follows from part \ref{prop:IX:IX} and Proposition~\ref{prop:constants}\ref{prop:constants:0} that
$\clIXC = \clIX \cup \clVako = \gen{f,g} \cup \gen{0} \subseteq \gen{f,g,h} \subseteq \clIXC$.

\ref{prop:IX:XI}, \ref{prop:IX:XIC}
These statements follow from \ref{prop:IX:IX} and \ref{prop:IX:IXC} by taking inner negations and applying Lemma~\ref{lem:stability-nd}.
\end{proof}

\begin{proposition}
\label{prop:Eq}
Let $k \geq 2$.
For any $f, g, h \in \clEq$ with $f \notin \clOOC$, $g \notin \clIIC$, $h \notin \clRefl$, we have $\gen{f,g,h} = \clEq$.
\end{proposition}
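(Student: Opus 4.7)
The plan is to prove $\clEq \subseteq \gen{f,g,h}$ by generating $\clIIC$ and $\clOOC$ separately and using $\clEq = \clIIC \cup \clOOC$; the reverse inclusion is immediate because $\clEq$ is an $(\clIc,\clMcUk{k})$\hyp{}clonoid by Proposition~\ref{prop:stable-1}\ref{prop:stable-1:01}. The hypotheses unpack as $f \in \clII \setminus \clVaki$, $g \in \clOO \setminus \clVako$, and $h \in \clEq \setminus \clRefl$.

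First I would observe that the constants $1$ and $0$ arise as diagonal minors of $f$ and $g$ respectively, since $f(\vect{0}) = f(\vect{1}) = 1$ and $g(\vect{0}) = g(\vect{1}) = 0$. By Proposition~\ref{prop:II}\ref{prop:II:II} we have $\clII = \gen{f}$, and by Proposition~\ref{prop:constants}\ref{prop:constants:0} we have $\clVako = \gen{0} \subseteq \gen{g}$, so $\clIIC = \clII \cup \clVako \subseteq \gen{f,g,h}$.

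For $\clOOC$, I would apply Proposition~\ref{prop:OOC} with $1$ (playing the role of the $\clVaki$ generator) together with some $\varphi \in \gen{f,g,h} \cap (\clOO \setminus \clReflOOC)$; note that $\clOO \cap \clReflOOC = \clReflOO$ since $\clVako \subseteq \clReflOO$, so the required condition on $\varphi$ simplifies to $\varphi \in \clOO \setminus \clReflOO$. A case analysis on the status of $g$ and $h$ produces such a $\varphi$. If $g \notin \clReflOO$, take $\varphi := g$. Otherwise $g \in \clReflOO$; if additionally $h \in \clOO$, the hypothesis $h \notin \clRefl$ forces $h \in \clOO \setminus \clReflOO$ and we take $\varphi := h$. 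The remaining and crucial case is $g \in \clReflOO$ and $h \in \clII \setminus \clRefl$, where I would construct $\varphi = \mathord{\nrightarrow}$ explicitly. Pick a true point $\vect{b} \notin \{\vect{0}, \vect{1}\}$ of $g$ (available since $g \in \clOO \setminus \clVako$) and a point $\vect{a} \notin \{\vect{0}, \vect{1}\}$ with, WLOG, $h(\vect{a}) = 1$ and $h(\overline{\vect{a}}) = 0$. Form the binary minors $g'(x_1,x_2) := g(x_{\sigma(1)}, \dots, x_{\sigma(m)})$ with $\sigma(i) := 1$ if $b_i = 1$ and $\sigma(i) := 2$ otherwise, and analogously $h'(x_1,x_2)$ from $h$ via $\vect{a}$. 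Direct evaluation, combined with reflexivity of $g$ and the values $h(\vect{0}) = h(\vect{1}) = 1$, gives $g' = \mathord{+}$ and the function $h'$ with true-point set $\{(0,0),(1,0),(1,1)\}$, i.e., $h'(x_1,x_2) = x_1 \vee \neg x_2$. Since $\wedge \in \clMcUk{k}$ for every $k \geq 2$ (it is monotone, preserves $0$ and $1$, and $\wedge^{-1}(1) = \{\vect{1}\}$ makes the $\clUk{k}$ condition vacuous), the composition $(x \wedge y)(g', h')$ lies in $\gen{g,h}$ and evaluates to $\mathord{\nrightarrow}$.

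Having produced the witness $\varphi$, Proposition~\ref{prop:OOC} yields $\clOOC \subseteq \gen{1,\varphi} \subseteq \gen{f,g,h}$, and combining with $\clIIC \subseteq \gen{f,g,h}$ gives $\clEq \subseteq \gen{f,g,h}$. The main obstacle is the third sub-case: neither $g$ nor $h$ individually is a non-reflexive element of $\clOO$, so one must genuinely combine the reflexive pattern of $g$ with the antipodal asymmetry of $h$ to synthesise $\mathord{\nrightarrow}$; this is exactly where the hypothesis $h \notin \clRefl$ (beyond the conditions on $f$ and $g$) gets used.
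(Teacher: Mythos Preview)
Your modular decomposition $\clEq = \clIIC \cup \clOOC$ is a reasonable strategy, and the case analysis you carry out for $\clOOC$ (including the construction of $\mathord{\nrightarrow}$ from $g' = \mathord{+}$ and $h'$ via $\wedge \in \clMcUk{k}$) is correct. However, there is a genuine gap on the $\clIIC$ side.

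You invoke Proposition~\ref{prop:II}\ref{prop:II:II} to conclude $\gen{f} = \clII$, but that proposition requires $f \in \clII \setminus \clReflII$, whereas the hypothesis $f \notin \clOOC$ only yields $f \in \clII \setminus \clVaki$. If $f \in \clReflII \setminus \clVaki$ (for instance $f = \mathord{\leftrightarrow}$), then by Proposition~\ref{prop:R}\ref{prop:R:R11} you get $\gen{f} = \clReflII \subsetneq \clII$, and $\clIIC \subseteq \gen{f,g,h}$ does not follow from $f$ and $g$ alone. A symmetric case analysis is needed: if $f \notin \clReflII$ use Proposition~\ref{prop:II}\ref{prop:II:II} directly; if $f \in \clReflII$ and $h \in \clII$ then $h \in \clII \setminus \clReflII$ and Proposition~\ref{prop:II}\ref{prop:II:II} applies to $h$; the remaining hard sub\hyp{}case is $f \in \clReflII$ and $h \in \clOO \setminus \clRefl$. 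There you must manufacture an element of $\clII \setminus \clReflII$, and your $\wedge$ trick does not dualise because $\vee \notin \clMcUk{k}$. One way out is to observe that $\threshold{k+1}{k}(1,\dots,1,\mathord{\leftrightarrow},\mathord{\nrightarrow})$ (with $k-1$ copies of the constant $1$, available from $f$) equals $\mathord{\leftrightarrow} \vee \mathord{\nrightarrow}$, which is the binary function with false point set $\{(0,1)\}$ and hence lies in $\clII \setminus \clReflII$.

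By contrast, the paper's proof avoids this asymmetry entirely: it shows directly that $\clEq \setminus \clVak$ is $(G,k)$\hyp{}semibisectable for $G = \{1, f', g', h'\}$, where $f'$, $g'$, $h'$ are suitable binary minors witnessing $f \in \clII \setminus \clVaki$, $g \in \clOO \setminus \clVako$, and $h \notin \clRefl$. Condition~\ref{helpful:k-true} is immediate from $1 \in G$, and condition~\ref{helpful:both} is checked by a short index argument. This uniform approach sidesteps the reflexive/non\hyp{}reflexive case split altogether.
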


\begin{proof}
We have $f \in \clII \setminus \clVak$, $g \in \clOO \setminus \clVak$, $h \in \clEq \setminus \clRefl$.
Thus $1 \leq f$ and $0 \leq g$, and there exist minors $f' \leq f$, $g' \leq g$, $h' \leq h$ such that
$f'(0,0) = f(1,1) = 1$, $f'(0,1) = 0$, $g'(0,0) = g'(1,1) = 0$, $g'(0,1) = 1$, $h'(0,0) = h'(1,1) = h'(0,1) \neq h'(1,0)$.
It suffices to show that $\clEq \setminus \clVak$ is $(G,k)$\hyp{}semibisectable with $G := \{1, f', g', h'\}$, because from this it follows by Lemma~\ref{lem:helpful} and Proposition~\ref{prop:constants}\ref{prop:constants:both} that
$\clEq = (\clEq \setminus \clVak) \cup \clVak \subseteq \gen{1, f', g', h'} \cup \gen{0, 1} \subseteq \gen{f, g, h} \subseteq \clEq$.

Let $\theta \in \clEq \setminus \clVak$.
Condition \ref{helpful:k-true} is obviously satisfied because $1 \in G$.
In order to verify condition \ref{helpful:both}, let $\vect{a} \in \theta^{-1}(1)$ and $\vect{b} \in \theta^{-1}(0)$.
Then $\vect{a} \neq \vect{b}$, so there is an $i$ such that $a_i \neq b_i$.
Furthermore, $\{\vect{a}, \vect{b}\} \neq \{\vect{0}, \vect{1}\}$, so there exists a $j$ such that $a_i \neq a_j$ or $b_i \neq b_j$.
If $a_j \neq b_j$, then it holds that $a_i \neq a_j$ and $b_i \neq b_j$, and hence $\tau(\vect{a}) = 1$ and $\tau(\vect{b}) = 0$ for some $\tau \in \{h'_{ij}, h'_{ji}\}$.
If $a_j = b_j$, then we have $\tau(\vect{a}) = 1$ and $\tau(\vect{b}) = 0$ for some $\tau \in \{f'_{ij}, f'_{ji}, g'_{ij}, g'_{ji}\}$.
\end{proof}

\begin{proposition}
\label{prop:Eiio}
Let $k \geq 2$.
\begin{enumerate}[label=\textup{(\roman*)}]
\item\label{prop:Eiio:Eiio}
For any $f, g, h \in \clEiio$ with $f \notin \clOXC$, $g \notin \clXIC$, $h \notin \clEq$, we have $\gen{f, g, h} = \clEiio$.
\item\label{prop:Eiio:Eioi}
For any $f, g, h \in \clEioi$ with $f \notin \clXOC$, $g \notin \clIXC$, $h \notin \clEq$, we have $\gen{f, g, h} = \clEioi$.
\end{enumerate}
\end{proposition}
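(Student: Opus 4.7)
The plan is to establish part~(i) via the $(G, k)$-semibisectability framework of Section~\ref{sec:semibisectable}, and then to deduce part~(ii) from (i) by inner negation.

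Unpacking the assumptions for (i): since $\clOXC = \clOX \cup \clVak$, $\clXIC = \clXI \cup \clVak$, $\clEq = \clOO \cup \clII$, and $\clEiio = \clOO \cup \clOI \cup \clII$, the three membership conditions force $f \in \clII \setminus \clVak$, $g \in \clOO \setminus \clVak$, and $h \in \clOI$. Taking the diagonal minor $\phi(x, \dots, x)$ produces the unary constant $1$ from $f$, the unary constant $0$ from $g$, and $\id$ from $h$; Proposition~\ref{prop:constants}\ref{prop:constants:both} then yields $\clVak \subseteq \gen{f, g, h}$. Since $f$ is nonconstant with $f(\vect{0}) = f(\vect{1}) = 1$, identifying variables according to the $0/1$-pattern of some false point $\vect{c} \notin \{\vect{0}, \vect{1}\}$ of $f$ produces a binary minor $f' \leq f$ with $f'(0,0) = f'(1,1) = 1$ and $f'(0,1) = 0$; dually, a true point of $g$ yields a binary minor $g' \leq g$ with $g'(0,0) = g'(1,1) = 0$ and $g'(0,1) = 1$. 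Set $G := \{1, \id, f', g'\} \subseteq \gen{f, g, h}$.

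The main task is then to show that $\clEiio \setminus \clVak$ is $(G, k)$-semibisectable, which by Lemma~\ref{lem:helpful} yields $\clEiio \setminus \clVak \subseteq \gen{f, g, h}$ and completes the proof. Condition~\ref{helpful:k-true} of Definition~\ref{def:helpful} is immediate because $1 \in G$. For condition~\ref{helpful:both}, given $\theta \in \clEiio \setminus \clVak$ and $\vect{a} \in \theta^{-1}(1)$, $\vect{b} \in \theta^{-1}(0)$, I partition the coordinates into the four cells $S_{pq} := \{k : (a_k, b_k) = (p, q)\}$ and argue by cases. If $S_{10} \neq \emptyset$, pick $k \in S_{10}$ and use $\id_k$. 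If $S_{10} = \emptyset$ and $S_{00} \neq \emptyset$, then $\vect{a} \leq \vect{b}$ strictly, so $S_{01} \neq \emptyset$; pick $i \in S_{00}$, $j \in S_{01}$, and $f'_{ij}$ evaluates to $f'(0,0) = 1$ and $f'(0,1) = 0$. If $S_{00} = S_{10} = \emptyset$, then $\vect{b} = \vect{1}$, so $\theta(\vect{1}) = 0$ and hence $\theta(\vect{0}) = 0$ because $\theta \in \clEiio$; thus $\vect{a} \neq \vect{0}$, forcing $S_{11} \neq \emptyset$, and $g'_{ij}$ for $i \in S_{01}$, $j \in S_{11}$ gives $g'(0,1) = 1$ and $g'(1,1) = 0$. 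This four-case verification is the only nontrivial computation, and the potentially troublesome degenerate case $\vect{a} = \vect{0}$, $\vect{b} = \vect{1}$ is precisely what the hypothesis $\theta \in \clEiio$ rules out.

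Part~(ii) follows by duality. Since $\clIc^\mathrm{d} = \clIc$, Lemma~\ref{lem:stability-nd} says that $K \mapsto K^\mathrm{n}$ is an automorphism of $\closys{(\clIc, \clMcUk{k})}$. This map swaps $\clEiio \leftrightarrow \clEioi$, $\clOXC \leftrightarrow \clXOC$, $\clXIC \leftrightarrow \clIXC$, and fixes $\clEq$, so the hypotheses of (ii) on $(f, g, h)$ are equivalent to those of (i) on $(f^\mathrm{n}, g^\mathrm{n}, h^\mathrm{n})$; applying the automorphism to the conclusion of (i) gives $\gen{f, g, h} = \gen{f^\mathrm{n}, g^\mathrm{n}, h^\mathrm{n}}^\mathrm{n} = (\clEiio)^\mathrm{n} = \clEioi$.
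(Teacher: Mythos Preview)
Your proof is correct and follows essentially the same approach as the paper: extract the minors $1,\id,f',g'$, show $\clEiio\setminus\clVak$ is $(G,k)$-semibisectable (condition~\ref{helpful:k-true} via $1\in G$, condition~\ref{helpful:both} via a case split on which coordinate cells $S_{pq}$ are nonempty), and handle part~(ii) by inner-negation duality. Your case organization via the cells $S_{pq}$ is a slight repackaging of the paper's argument but covers the same ground; the only cosmetic slip is calling it a ``four-case verification'' when you actually present three cases, and in the third case you silently use $S_{01}\neq\emptyset$ (which is immediate from $\vect{a}\neq\vect{b}$).
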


\begin{proof}
\ref{prop:Eiio:Eiio}
We have $f \in \clII \setminus \clVak$, $g \in \clOO \setminus \clVak$, $h \in \clOI$, so $1 \leq f$, $0 \leq g$, $\id \leq h$, and there exist binary minors $f' \leq f$ and $g' \leq g$ such that $f'(0,0) = f'(1,1) = 1$, $f'(0,1) = 0$, and $g'(0,0) = g'(1,1) = 0$, $g'(0,1) = 1$.
It suffices to show that $\clEiio \setminus \clVak$ is $(G,k)$\hyp{}semibisectable with $G := \{1, \mathord{\id}, f', g'\}$, because from this it follows by Lemma~\ref{lem:helpful} and Proposition~\ref{prop:constants}\ref{prop:constants:both} that
$\clEiio = (\clEiio \setminus \clVak) \cup \clVak \subseteq \gen{1, \mathord{\id}, f', g'} \cup \gen{0, 1} \subseteq \gen{f, g, h} \subseteq \clEiio$.

Let $\theta \in \clEiio \setminus \clVak$.
Condition \ref{helpful:k-true} is obviously satisfied because $1 \in G$.
In order to verify condition \ref{helpful:both}, let $\vect{a} \in \theta^{-1}(1)$ and $\vect{b} \in \theta^{-1}(0)$.
Then $\vect{a} \neq \vect{b}$, so there is an $i$ such that $a_i \neq b_i$.
If there is a $j$ such that $a_j = 1$ and $b_j = 0$, then $\id_j(\vect{a}) = 1$ and $\id_j(\vect{b}) = 0$.
Assume now that there is no $j$ such that $a_j = 1$ and $b_j = 0$; thus $a_i = 0$ and $b_i = 1$.
Since $(\vect{a},\vect{b}) \neq (\vect{0},\vect{1})$, either there exists a $k$ such that $a_k = 1$ and hence $b_k = 1$, or there exists an $\ell$ such that $b_\ell = 0$ and hence $a_\ell = 0$.
In the former case, we have $g'_{ik}(\vect{a}) = 1$ and $g'_{ik}(\vect{b}) = 0$; in the latter case we have $f'_{\ell i}(\vect{a}) = 1$ and $f'_{\ell i}(\vect{b}) = 0$.

\ref{prop:Eiio:Eioi}
This follows from statement \ref{prop:Eiio:Eiio} by taking inner negations and applying Lemma~\ref{lem:stability-nd}.
\end{proof}

\begin{proposition}
\label{prop:All}
Let $k \geq 2$.
For any $f, g, h \in \clAll$ with $f \notin \clEiio$, $g \notin \clEioi$, $h \notin \clEiii$, we have $\gen{f, g, h} = \clAll$.
\end{proposition}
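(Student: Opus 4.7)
The plan is to use the $(G,k)$\hyp{}semibisectability criterion of Lemma~\ref{lem:helpful}. The hypothesis $f \notin \clEiio$ forces $f \in \clOO$, so every unary minor of $f$ is the constant~$0$; hence $0 \in \gen{f,g,h}$. Analogously, $g \notin \clEioi$ yields $g \in \clOI$, so $\id \in \gen{f,g,h}$, and $h \notin \clEiii$ yields $h \in \clII$, so $1 \in \gen{f,g,h}$. By Proposition~\ref{prop:constants}\ref{prop:constants:both} we therefore obtain $\clVak \subseteq \gen{f,g,h}$ at no additional cost.

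Next, I would show that $\clAll \setminus \clVak$ is $(G,k)$\hyp{}semibisectable, where $G$ is a set of minors of $f,g,h$ containing $1$, $\id$, and a function behaving like $\neg$ on the relevant pairs of inputs. Condition~\ref{helpful:k-true} of Definition~\ref{def:helpful} is trivially satisfied by $1 \in G$. For condition~\ref{helpful:both}, given $\vect{a}\in\theta^{-1}(1)$ and $\vect{b}\in\theta^{-1}(0)$ with $\theta \in \clAll \setminus \clVak$, pick a coordinate $i$ at which $\vect{a}$ and $\vect{b}$ disagree: if $a_i=1$ and $b_i=0$, then $\id_i$ separates them; otherwise, use a $\neg$\hyp{}like minor produced from $f$, $g$, or $h$.

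The main obstacle is producing this $\neg$\hyp{}like separating element inside $\gen{f,g,h}$. Since $f,g,h$ range over the full classes $\clOO$, $\clOI$, $\clII$ (not merely their monotone parts), I would proceed by case analysis on which of them admits a non\hyp{}monotone minor; such a minor, combined with $0$, $1$, $\id$ by a suitably chosen function in $\clMcUk{k}$, provides the desired witness. This is exactly the same strategy used in Propositions~\ref{prop:OIC}, \ref{prop:IOC}, \ref{prop:Eq}, and \ref{prop:Eiio} above, where the extra generator needed to escape a monotone sub\hyp{}clonoid was always obtained from a binary or ternary minor of one of the given functions.

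Once semibisectability is verified, Lemma~\ref{lem:helpful} gives $\clAll \setminus \clVak \subseteq \gen{f,g,h}$, and combining with $\clVak \subseteq \gen{f,g,h}$ yields $\gen{f,g,h} = \clAll$. The reverse inclusion is trivial. As with the earlier propositions in this subsection, checking that the three listed lower covers (namely $\clEiio$, $\clEioi$, $\clEiii$) are indeed proper subclonoids not containing $\gen{f,g,h}$ is a direct verification, since each hypothesis $f \notin \clEiio$, $g \notin \clEioi$, $h \notin \clEiii$ already ensures $\gen{f,g,h}$ cannot lie inside the corresponding cover.
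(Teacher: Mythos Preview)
Your argument rests on a misreading of $\clEiio$. By Definition~\ref{def:all}, $\clEiio = \clIntLeq{\clAll} = \{\,f \mid f(\vect{0}) \leq f(\vect{1})\,\} = \clOO \cup \clOI \cup \clII$, so $f \notin \clEiio$ forces $f \in \clIO$, not $f \in \clOO$. With the correct reading, the unary diagonal minor of $f$ is $\neg$, not the constant $0$; conversely, the constant $0$ is \emph{not} a minor of $f$. This error is not cosmetic: under your reading ($f \in \clOO$, $g \in \clOI$, $h \in \clII$), the triple $f = 0$, $g = \id$, $h = 1$ would be admissible, yet $\gen{0,\id,1} \subseteq \clM \subsetneq \clAll$, so the statement would be false. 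Your proposed ``case analysis on which of them admits a non\hyp{}monotone minor'' cannot be completed, because in that example all three functions are monotone.

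The paper's proof exploits the correct membership $f \in \clIO$ directly: it takes $G = \{\id, \neg, 1\}$ (with $\neg \leq f$, $\id \leq g$, $1 \leq h$), observes that condition~\ref{helpful:k-true} is handled by $1$ and condition~\ref{helpful:both} by either $\id_i$ or $\neg_i$ at any coordinate where $\vect{a}$ and $\vect{b}$ differ, and obtains $0$ not as a minor of $f$ but as $\mathord{\wedge}(\neg,\id) \in \clMcUk{k}(\{f,g\}\clIc)$. Once you correct the class of $f$, your outline collapses to exactly this argument, and the vague case analysis becomes unnecessary.
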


\begin{proof}
We have $f \in \clIO$, $g \in \clOI$, $h \in \clII$, so $\mathord{\neg} \leq f$, $\id \leq g$, $1 \leq h$.
Note also that $0 = \mathord{\wedge}(\mathord{\neg}, \mathord{\id}) \in \clMcUk{k} ( \{f, g\} \clIc ) = \gen{f, g}$ and hence $\clVako \subseteq \gen{f,g}$.
It suffices to show that $\clAll \setminus \clVak$ is $(G,k)$\hyp{}semibisectable with $G := \{\mathord{\id}, \mathord{\neg}, 1\}$, because from this it follows by Lemma~\ref{lem:helpful} and Proposition~\ref{prop:constants}\ref{prop:constants:both} that
$\clAll = (\clAll \setminus \clVak) \cup \clVako \cup \clVaki \subseteq \gen{\mathord{\id}, \mathord{\neg}, 1} \cup \gen{f, g} \cup \gen{1} \subseteq \gen{f, g, h} \subseteq \clAll$.

Let $\theta \in \clAll \setminus \clVak$.
Condition \ref{helpful:k-true} is obviously satisfied because $1 \in G$.
Condition \ref{helpful:both} is also satisfied, because if $\vect{a} \in \theta^{-1}(1)$ and $\vect{b} \in \theta^{-1}(0)$, then $\vect{a} \neq \vect{b}$, so there exists an $i$ such that $a_i \neq b_i$.
Then $\tau(\vect{a}) = 1$ and $\tau(\vect{b}) = 0$ for some $\tau \in \{\id_i, \mathord{\neg}_i\}$.
\end{proof}

\begin{proof}[Proof of Theorem~\ref{thm:IcMcUk-clonoids}]
The fact that the classes listed are $(\clIc,\clMcUk{k})$\hyp{}clonoids follows from Propositions~\ref{prop:stable-1} and \ref{prop:UTheta-stability} and from the fact that the intersection of $(\clIc,\clMcUk{k})$\hyp{}clonoids is a $(\clIc,\clMcUk{k})$\hyp{}clonoid.
That these are the only $(\clIc,\clMcUk{k})$\hyp{}clonoids follows from
Propositions~\ref{prop:UkT-gen:plain}--\ref{prop:UkT-gen:R} and
\ref{prop:empty}--\ref{prop:All}.
\end{proof}

\subsection{Remark: A few covering relations near the bottom of the lattice}

We now make more explicit some of the covering relations described in Theorem~\ref{thm:UkTC-covers}, in particular, ones involving minimal $(k,C)$\hyp{}closed $(\clIc,\clMcUk{k})$\hyp{}clonoids of the form $\clKlik{k}{\Theta}$ ($C \in \{\clXI, \clIX, \clM, \clMneg, \clRefl\}$).
This explains the covering relations near the bottom of the lattice that are shown in Figure~\ref{fig:McUk-stable}.

\begin{proposition}
\label{prop:Uk}
Let $k \geq 2$.
\begin{enumerate}[label=\textup{(\roman*)}]
\item\label{prop:Uk:McUk}
For any $f \in \clMcUk{k}$, we have $\gen{f} = \clMcUk{k}$.
\item\label{prop:Uk:MUk}
For any $f, g \in \clMUk{k}$ with $f \notin \clMcUk{k}$ and $g \notin \clVako$, we have $\gen{f,g} = \clMUk{k}$.
\item\label{prop:Uk:TcUk}
For any $f \in \clTcUk{k}$ with $f \notin \clMcUk{k}$, we have $\gen{f} = \clTcUk{k}$.
\item\label{prop:Uk:TcUkC0}
For any $f, g \in \clTcUkCO{k}$ with $f \notin \clTcUk{k}$ and $g \notin \clMUk{k}$, we have $\gen{f,g} = \clTcUkCO{k}$.
\item\label{prop:Uk:Uk}
For any $f, g \in \clUk{k}$ with $f \notin \clTcUkCO{k}$ and $g \notin \clUkOO{k}$, we have $\gen{f,g} = \clUk{k}$.
\end{enumerate}
\end{proposition}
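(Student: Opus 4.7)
The plan is to follow the template of Propositions~\ref{prop:UkT-gen:plain}--\ref{prop:UkT-gen:R}. For each of the five items, the inclusion $\gen{\cdot}\subseteq(\text{RHS})$ is free because the right\hyp{}hand side is a $(\clIc,\clMcUk{k})$\hyp{}clonoid (Proposition~\ref{prop:stable-1}), so only the reverse inclusion needs work. The uniform technique will be to show that the RHS (minus constants) is $(G,k)$\hyp{}semibisectable for a small set $G\subseteq\gen{\cdot}$ obtained as minors of the given generators, invoke Lemma~\ref{lem:helpful}, and absorb any constant $0$ via Proposition~\ref{prop:constants}\ref{prop:constants:0}.

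Part~\ref{prop:Uk:McUk} is the cleanest case: any $f\in\clMcUk{k}\subseteq\clOI$ has $\id$ as its unary minor, and $\clMcUk{k}$ is $(\{\id\},k)$\hyp{}semibisectable because condition~\ref{helpful:k-true} is the very definition of $\clUk{k}$ and condition~\ref{helpful:both} follows from monotonicity (for $\vect{a}\in\theta^{-1}(1)$ and $\vect{b}\in\theta^{-1}(0)$ some coordinate $i$ must satisfy $a_i=1$ and $b_i=0$). Parts~\ref{prop:Uk:MUk} and~\ref{prop:Uk:TcUkC0} are then direct reductions: for $k\geq 2$ the constant $1$ already fails the rank-$k$ separation in arity $2$, so $\clMUk{k}=\clMcUk{k}\cup\clVako$ and $\clTcUkCO{k}=\clTcUk{k}\cup\clVako$, and the missing generator necessarily lies in $\clVako$.

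The main content of part~\ref{prop:Uk:TcUk} is extracting a good minor from the assumed non\hyp{}monotonicity of $f\in\clTcUk{k}\setminus\clMcUk{k}$. Choose $\vect{u}<\vect{v}$ with $f(\vect{u})=1$, $f(\vect{v})=0$; the condition $f\in\clOI$ forces $\vect{u}\neq\vect{0}$ and $\vect{v}\neq\vect{1}$, so the three index sets $I_{11}=\{i\mid u_i=1\}$, $I_{01}=\{i\mid u_i=0,\,v_i=1\}$, $I_{00}=\{i\mid v_i=0\}$ are all nonempty. Identifying arguments of $f$ according to this partition produces a ternary minor $f'$ satisfying $f'(1,0,0)=1$ and $f'(1,1,0)=0$. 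I then plan to verify $(\{\id,f'\},k)$\hyp{}semibisectability of $\clTcUk{k}$: condition~\ref{helpful:k-true} is the $\clUk{k}$\hyp{}condition, and for condition~\ref{helpful:both} the case $\vect{a}\nleq\vect{b}$ is handled by some $\id_i$, while the case $\vect{a}\leq\vect{b}$ with $\theta\in\clOI$ produces the analogous three nonempty index sets so that a suitable $f'_\pi$ separates $\vect{a}$ from $\vect{b}$.

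Part~\ref{prop:Uk:Uk} is where the main obstacle sits. The key step is that $\nrightarrow$ is a binary minor of any $f\in\clUkOO{k}\setminus\clVako$ when $k\geq 2$: picking any true point $\vect{a}$ of $f$, the $1$\hyp{}separation condition of rank $k\geq 2$ forbids $\{\vect{a},\overline{\vect{a}}\}\subseteq f^{-1}(1)$ (because $\vect{a}\wedge\overline{\vect{a}}=\vect{0}$), hence $f(\overline{\vect{a}})=0$, and identifying the $1$\hyp{}positions of $\vect{a}$ with $x$ and the $0$\hyp{}positions with $y$ produces a binary minor whose only true point is $(1,0)$, i.e.\ $\nrightarrow$. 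Combining this with $\id\in\gen{g}$ (from $g\in\clTcUk{k}\subseteq\clOI$) and $0\in\gen{f}$ (unary minor of $f\in\clOO$), I will verify $(\{\id,\nrightarrow\},k)$\hyp{}semibisectability of $\clUk{k}\setminus\clVak$: (A) is again the defining property, and for (B) the case $\vect{a}\nleq\vect{b}$ uses $\id$, while $\vect{a}\leq\vect{b}$ combined with $\clUk{k}\subseteq\clOX$ (which forces $\vect{a}\neq\vect{0}$) yields indices $i$ with $a_i=b_i=1$ and $j$ with $a_j=0,\,b_j=1$, so that $\nrightarrow_{ij}$ separates $\vect{a}$ from $\vect{b}$. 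The most delicate point of the entire proposition is precisely this $\nrightarrow$\hyp{}extraction: for $k=1$ the binary minor one obtains could instead be $+$, and the semibisectability check for (B) would break down; everything else is either routine verification or a constant\hyp{}absorption reduction.
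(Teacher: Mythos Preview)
Your proposal is correct and follows essentially the same approach as the paper: for each item you extract the same small set of minors ($\id$ alone for \ref{prop:Uk:McUk}, $\{\id,f'\}$ with a non\hyp{}monotone ternary minor for \ref{prop:Uk:TcUk}, $\{\id,\nrightarrow\}$ for \ref{prop:Uk:Uk}), verify $(G,k)$\hyp{}semibisectability via the $\clUk{k}$ defining property for \ref{helpful:k-true} and the same case split on $\vect{a}\nleq\vect{b}$ versus $\vect{a}\leq\vect{b}$ for \ref{helpful:both}, and reduce \ref{prop:Uk:MUk} and \ref{prop:Uk:TcUkC0} to adjoining $\clVako$. Your write\hyp{}up is in fact slightly more explicit than the paper's in two places: you spell out why the ternary minor in \ref{prop:Uk:TcUk} exists (via the partition $I_{11},I_{01},I_{00}$ coming from a witness $\vect{u}<\vect{v}$ to non\hyp{}monotonicity), and you justify why the binary minor in \ref{prop:Uk:Uk} is exactly $\nrightarrow$ (using that $k\geq 2$ forces $f(\overline{\vect a})=0$), whereas the paper simply asserts these minors exist.
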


\begin{remark}
\label{rem:Uk}
The claimed covering relations follow from Proposition~\ref{prop:some-kC-closed} \ref{prop:some-kC-closed:AllEmpty}, \ref{prop:some-kC-closed:C0}, \ref{prop:some-kC-closed:Uk} and Theorem~\ref{thm:UkTC-covers}.
We provide an alternative proof.
\end{remark}

\begin{proof}
\ref{prop:Uk:McUk}
We have $\id \leq f$.
It suffices to show that $\clMcUk{k}$ is $(G,k)$\hyp{}semibisectable with $G := \{\id\}$, because from this it follows by Lemma~\ref{lem:helpful} that
\[
\clMcUk{k} \subseteq \gen{\id} \subseteq \gen{f} \subseteq \clMcUk{k}.
\]

Let $\theta \in \clMcUk{k}$.
In order to verify condition \ref{helpful:k-true}, let $\vect{a}_1, \dots, \vect{a}_k \in \theta^{-1}(1)$.
By Definition~\ref{def:separating}, $\vect{a}_1 \wedge \dots \wedge \vect{a}_k \neq \vect{0}$, so there exists an $i$ such that $a_{1i} = \dots = a_{ki} = 1$.
Consequently, $\id_i(\vect{a}_1) = \dots = \id_i(\vect{a}_k) = 1$.
For condition \ref{helpful:both}, let $\vect{a} \in \theta^{-1}(1)$ and $\vect{b} \in \theta^{-1}(0)$.
We have $\vect{a} \nleq \vect{b}$, so there exists an $i$ such that $a_i = 1$ and $b_i = 0$;
thus $\id_i(\vect{a}) = 1$ and $\id_i(\vect{b}) = 0$.

\ref{prop:Uk:MUk}
We have $f \in \clVako$ and $g \in \clMcUk{k}$.
It follows from part~\ref{prop:Uk:McUk} and Proposition~\ref{prop:constants}\ref{prop:constants:0} that
$\clMUk{k} = \clMcUk{k} \cup \clVako = \gen{g} \cup \gen{0} \subseteq \gen{f,g} \subseteq \clMUk{k}$.

\ref{prop:Uk:TcUk}
We have $\id \leq f$ and there is a ternary minor $f' \leq f$ such that $f'(0,0,0) = 0$, $f'(0,0,1) = 1$, $f'(0,1,1) = 0$, $f'(1,1,1) = 1$.
It suffices to show that $\clTcUk{k}$ is $(G,k)$\hyp{}semibisectable with $G := \{\id, f'\}$, because from this it follows by Lemma~\ref{lem:helpful} that
$\clTcUk{k} \subseteq \gen{\id, f'} \subseteq \gen{f} \subseteq \clTcUk{k}$.

Let $\theta \in \clTcUk{k}$.
In order to verify condition \ref{helpful:k-true}, let $\vect{a}_1, \dots, \vect{a}_k \in \theta^{-1}(1)$.
Then there exists an $i$ such that $a_{1i} = \dots = a_{ki} = 1$.
Consequently, $\id_i(\vect{a}_1) = \dots = \id_i(\vect{a}_k) = 1$.
For condition \ref{helpful:both}, let $\vect{a} \in \theta^{-1}(1)$ and $\vect{b} \in \theta^{-1}(0)$.
Since $\vect{a} \neq \vect{0}$, $\vect{b} \neq \vect{1}$, and $\vect{a} \neq \vect{b}$, there exist $i$, $j$, $k$ such that $a_i = 1$, $b_j = 0$, and $a_k \neq b_k$.
If $a_k = 1$ and $b_k = 0$, then $\id_k(\vect{a}) = 1$ and $\id_k(\vect{b}) = 0$.
Assume that there is no $\ell$ such that $a_\ell = 1$ and $b_\ell = 0$.
Then $a_k = 0$, $b_k = 1$, $b_i = 1$, $a_j = 0$, and we have $f'_{jki}(\vect{a}) = 1$ and $f'_{jki}(\vect{b}) = 0$.

\ref{prop:Uk:TcUkC0}
We have $f \in \clVako$ and $g \in \clTcUk{k} \setminus \clMUk{k}$.
It follows from part \ref{prop:Uk:TcUk} and Proposition~\ref{prop:constants}\ref{prop:constants:0} that
$\clTcUkCO{k} = \gen{g} \cup \gen{0} \subseteq \gen{f,g} \subseteq \clTcUkCO{k}$.

\ref{prop:Uk:Uk}
We have $f \in \clUkOO{k} \setminus \clVako$ and $g \in \clTcUk{k}$,
so $0 \leq f$, $\id \leq g$, and there is a binary minor $f' \leq f$ such that $f'(0,0) = f'(1,1) = f'(0,1) = 0$, $f'(1,0) = 1$, i.e., $f' = \mathord{\nrightarrow}$.
It suffices to show that $\clUk{k} \setminus \clVako$ is $(G,k)$\hyp{}semibisectable with $G := \{\id, \mathord{\nrightarrow}\}$, because from this it follows by Lemma~\ref{lem:helpful} and Proposition~\ref{prop:constants}\ref{prop:constants:0} that
$\clUk{k} = (\clUk{k} \setminus \clVako) \cup \clVako \subseteq \gen{\id, \mathord{\nrightarrow}} \cup \gen{0} \subseteq \gen{f,g} \subseteq \clUk{k}$.

Let $\theta \in \clUk{k} \setminus \clVako$.
In order to verify condition \ref{helpful:k-true}, let $\vect{a}_1, \dots, \vect{a}_k \in \theta^{-1}(1)$.
Then there exists an $i$ such that $a_{1i} = \dots = a_{ki} = 1$.
Consequently, $\id_i(\vect{a}_1) = \dots = \id_i(\vect{a}_k) = 1$.
For condition \ref{helpful:both}, let $\vect{a} \in \theta^{-1}(1)$ and $\vect{b} \in \theta^{-1}(0)$.
Since $\vect{a} \neq \vect{0}$ and $\vect{a} \neq \vect{b}$, there exist $i$ and $j$ such that $a_i = 1$ and $a_j \neq b_j$.
If $a_j = 1$ and $b_j = 0$, then $\id_j(\vect{a}) = \id_j(\vect{b})$.
Assume that there is no $\ell$ such that $a_\ell = 1$ and $b_\ell = 0$.
Then $a_j = 0$, $b_j = 1$, $b_i = 1$, and we have $\mathord{\nrightarrow}_{ij}(\vect{a}) = 1$ and $\mathord{\nrightarrow}_{ij}(\vect{b}) = 0$.
\end{proof}

\begin{proposition}
\label{prop:Wkneg}
Let $k \geq 2$.
\begin{enumerate}[label=\textup{(\roman*)}]
\item\label{prop:Wkneg:McWkneg}
For any $f \in \clMcWkneg{k}$, we have $\gen{f} = \clMcWkneg{k}$.
\item\label{prop:Wkneg:MWkneg}
For any $f, g \in \clMWkneg{k}$ with $f \notin \clMcWkneg{k}$ and $g \notin \clVako$, we have $\gen{f,g} = \clMWkneg{k}$.
\item\label{prop:Wkneg:TcWkneg}
For any $f \in \clTcWkneg{k}$ with $f \notin \clMcWkneg{k}$, we have $\gen{f} = \clTcWkneg{k}$.
\item\label{prop:Wkneg:TcWknegC0}
For any $f, g \in \clTcWknegCO{k}$ with $f \notin \clTcWkneg{k}$ and $g \notin \clMWkneg{k}$, we have \linebreak $\gen{f,g} = \clTcWknegCO{k}$.
\item\label{prop:Wkneg:Wkneg}
For any $f, g \in \clWkneg{k}$ with $f \notin \clTcWknegCO{k}$ and $g \notin \clWknegOO{k}$, we have \linebreak $\gen{f,g} = \clWkneg{k}$.
\end{enumerate}
\end{proposition}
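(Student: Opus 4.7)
The plan is to deduce Proposition~\ref{prop:Wkneg} from Proposition~\ref{prop:Uk} by applying the inner\hyp{}negation automorphism of $\closys{(\clIc,\clMcUk{k})}$. Since $\clIc = \clIc^\mathrm{d}$, Lemma~\ref{lem:stability-nd} shows that $K \mapsto K^\mathrm{n}$ is an automorphism of the lattice of $(\clIc,\clMcUk{k})$\hyp{}clonoids, and hence $\gen{F^\mathrm{n}} = \gen{F}^\mathrm{n}$ for every $F \subseteq \clAll$, where $F^\mathrm{n} := \{\, f^\mathrm{n} \mid f \in F \,\}$ and $K^\mathrm{n}$ denotes the image of a clonoid $K$ under the automorphism. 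Thus Proposition~\ref{prop:Wkneg} should follow from Proposition~\ref{prop:Uk} in exactly the same way that Proposition~\ref{prop:Mneg} follows from Proposition~\ref{prop:M}.

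First I would verify that inner negation carries each class appearing in Proposition~\ref{prop:Uk} onto the corresponding class in Proposition~\ref{prop:Wkneg}. Since $f^\mathrm{n}(\vect{a}) = f(\overline{\vect{a}})$, it follows directly from the definitions in Section~\ref{sec:Boolean} and de~Morgan's laws that $\clM^\mathrm{n} = \clMneg$, $\clOI^\mathrm{n} = \clIO$, $\clOO^\mathrm{n} = \clOO$, $\clVako^\mathrm{n} = \clVako$, and $\clUk{k}^\mathrm{n} = \clWkneg{k}$ (the last because the true points of $f^\mathrm{n}$ are the componentwise complements of the true points of $f$, so a $k$\hyp{}wise meet of true points of $f$ being $\neq \vect{0}$ translates into a $k$\hyp{}wise join of false points of $\overline{f^\mathrm{n}}$ being $\neq \vect{1}$). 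Intersecting these identities yields
\begin{align*}
(\clMcUk{k})^\mathrm{n} &= \clMcWkneg{k}, &
(\clMUk{k})^\mathrm{n} &= \clMWkneg{k}, &
(\clTcUk{k})^\mathrm{n} &= \clTcWkneg{k}, \\
(\clTcUkCO{k})^\mathrm{n} &= \clTcWknegCO{k}, &
(\clUkOO{k})^\mathrm{n} &= \clWknegOO{k}, &
(\clUk{k})^\mathrm{n} &= \clWkneg{k}.
\end{align*}

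With these identifications in place, each of the five parts of Proposition~\ref{prop:Wkneg} reduces immediately to the corresponding part of Proposition~\ref{prop:Uk} applied to the inner negations of the given functions. For instance, for part~\ref{prop:Wkneg:Wkneg}, if $f, g \in \clWkneg{k}$ with $f \notin \clTcWknegCO{k}$ and $g \notin \clWknegOO{k}$, then $f^\mathrm{n}, g^\mathrm{n} \in \clUk{k}$ with $f^\mathrm{n} \notin \clTcUkCO{k}$ and $g^\mathrm{n} \notin \clUkOO{k}$; Proposition~\ref{prop:Uk}\ref{prop:Uk:Uk} yields $\gen{f^\mathrm{n}, g^\mathrm{n}} = \clUk{k}$, and applying inner negation once more gives $\gen{f, g} = \gen{f^\mathrm{n}, g^\mathrm{n}}^\mathrm{n} = (\clUk{k})^\mathrm{n} = \clWkneg{k}$. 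Parts \ref{prop:Wkneg:McWkneg}--\ref{prop:Wkneg:TcWknegC0} are handled in exactly the same way.

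The only obstacle is the (purely mechanical) bookkeeping of the class correspondences above, which must be checked once. A fully explicit alternative would be to mimic the proof of Proposition~\ref{prop:Uk} verbatim, systematically replacing $\id$ by $\neg$, $\vect{0}$ by $\vect{1}$, and $\wedge$\hyp{}based witnesses by $\vee$\hyp{}based ones; but invoking Lemma~\ref{lem:stability-nd} is shorter and matches the style already adopted in the paper for earlier dual statements.
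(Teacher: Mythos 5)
Your proposal is correct, but it takes a different route from the paper: the paper proves Proposition~\ref{prop:Wkneg} directly, re-running the $(G,k)$\hyp{}semibisectability machinery of Lemma~\ref{lem:helpful} with $\neg$ playing the role that $\id$ plays in the proof of Proposition~\ref{prop:Uk} (so the witnesses are $\{\neg\}$, $\{\neg, f'\}$, $\{\neg, \mathord{\nrightarrow}\}$, etc., and the key separations use an index where all tuples agree on $0$ rather than on $1$). You instead invoke the inner\hyp{}negation automorphism of Lemma~\ref{lem:stability-nd}, which applies since $\clIc = \clIc^\mathrm{d}$, and reduce everything to Proposition~\ref{prop:Uk}; the identity $\gen{F^\mathrm{n}} = \gen{F}^\mathrm{n}$ does follow from the automorphism property (it commutes with intersections of clonoids and with the containment $F \subseteq K$), and your dictionary of class correspondences $(\clUk{k})^\mathrm{n} = \clWkneg{k}$, $(\clMcUk{k})^\mathrm{n} = \clMcWkneg{k}$, $(\clTcUkCO{k})^\mathrm{n} = \clTcWknegCO{k}$, $(\clUkOO{k})^\mathrm{n} = \clWknegOO{k}$ checks out against the definitions (and the first of these is already used in the paper's proof of Proposition~\ref{prop:stable-1}). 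Your approach is exactly the one the paper itself adopts for Propositions~\ref{prop:Mneg}, \ref{prop:IOC}, \ref{prop:XOC}, and \ref{prop:UkT-gen:0} (``This is the dual of\dots''), so it is shorter and stylistically consistent; what the paper's explicit proof buys is a self\hyp{}contained verification that does not require the reader to confirm the class correspondences under inner negation. Both arguments are sound.
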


\begin{proof}
This follows from Proposition~\ref{prop:Uk} by taking inner negations and applying Lemma~\ref{lem:stability-nd}.
\end{proof}

\begin{proposition}
\label{prop:UkWkneg}
Let $k \geq 2$.
For any $f \in \clUk{k} \cap \clWkneg{k}$ with $f \notin \clVako$, we have $\gen{f} = \clUk{k} \cap \clWkneg{k}$.
\end{proposition}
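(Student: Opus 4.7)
The plan is to show that $\mathord{\nrightarrow}$ is a minor of $f$, and then invoke the $(G,k)$-semibisectability machinery to show that $\gen{\mathord{\nrightarrow}} = \clUk{k} \cap \clWkneg{k}$. The reverse inclusion $\gen{f} \subseteq \clUk{k} \cap \clWkneg{k}$ is automatic because the class is a $(\clIc,\clMcUk{k})$-clonoid containing $f$ (by Proposition~\ref{prop:UTheta-stability}\ref{prop:UTheta-stability:stable} and Example~\ref{ex:classes}, since $\clUk{k} \cap \clWkneg{k} = \clKlik{k}{\{\mathord{\nrightarrow}\}}$).

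First observe that $\clUk{k} \cap \clWkneg{k} \subseteq \clOO$: if $h \in \clUk{k}$, the singleton $\{\vect{0}\} \subseteq h^{-1}(1)$ would force $\bigwedge\{\vect{0}\} = \vect{0} \neq \vect{0}$, so $h(\vect{0}) = 0$; dually $h(\vect{1}) = 0$ for $h \in \clWkneg{k}$. Since $f \notin \clVako$, there exists a true point $\vect{u}$ of $f$ with $\vect{u} \notin \{\vect{0}, \vect{1}\}$. Define $\tau \colon \nset{n} \to \nset{2}$ by $\tau(i) = 1$ if $u_i = 1$ and $\tau(i) = 2$ if $u_i = 0$. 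Then $f_\tau(0,0) = f(\vect{0}) = 0$, $f_\tau(1,1) = f(\vect{1}) = 0$, and $f_\tau(1,0) = f(\vect{u}) = 1$. The crucial value is $f_\tau(0,1) = f(\overline{\vect{u}})$: here we use $k \geq 2$ critically, as $\{\vect{u}, \overline{\vect{u}}\} \subseteq f^{-1}(1)$ would give $\vect{u} \wedge \overline{\vect{u}} = \vect{0}$, contradicting $f \in \clUk{k}$. Hence $f(\overline{\vect{u}}) = 0$, so $f_\tau = \mathord{\nrightarrow}$, and $\mathord{\nrightarrow} \in f\,\clIc \subseteq \gen{f}$. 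Moreover the unary constant $0 = \mathord{\nrightarrow}(x,x)$ is a minor of $\mathord{\nrightarrow}$, so by Proposition~\ref{prop:constants}\ref{prop:constants:0} we have $\clVako \subseteq \gen{0} \subseteq \gen{f}$.

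It remains to show that every nonconstant $h \in \clUk{k} \cap \clWkneg{k}$ lies in $\gen{\mathord{\nrightarrow}}$; this will be done via Lemma~\ref{lem:helpful} by verifying that $(\clUk{k} \cap \clWkneg{k}) \setminus \clVako$ is $(\{\mathord{\nrightarrow}\}, k)$-semibisectable. For condition~\ref{helpful:k-true}, given $\vect{a}_1, \dots, \vect{a}_k \in h^{-1}(1)$, membership $h \in \clUk{k}$ yields an index $i$ with $a_{1i} = \dots = a_{ki} = 1$, and $h \in \clWkneg{k}$ yields an index $j$ with $a_{1j} = \dots = a_{kj} = 0$; then $\mathord{\nrightarrow}_{ij}$ evaluates to $1$ on each $\vect{a}_\ell$. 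For condition~\ref{helpful:both}, given $\vect{a} \in h^{-1}(1)$ and $\vect{b} \in h^{-1}(0)$, since $\vect{a} \notin \{\vect{0}, \vect{1}\}$ there exist $p, q$ with $a_p = 1$ and $a_q = 0$; since $\vect{a} \neq \vect{b}$ there is an index $r$ with $a_r \neq b_r$. If $a_r = 1$ and $b_r = 0$, then $\mathord{\nrightarrow}_{rq}(\vect{a}) = 1$ and $\mathord{\nrightarrow}_{rq}(\vect{b}) = 0$; if $a_r = 0$ and $b_r = 1$, then $\mathord{\nrightarrow}_{pr}(\vect{a}) = 1$ and $\mathord{\nrightarrow}_{pr}(\vect{b}) = 0$.

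Combining these steps, $\clUk{k} \cap \clWkneg{k} = ((\clUk{k} \cap \clWkneg{k}) \setminus \clVako) \cup \clVako \subseteq \gen{\mathord{\nrightarrow}} \cup \gen{0} \subseteq \gen{f}$, giving the desired equality. The only genuine subtlety is Step~1, where the hypothesis $k \geq 2$ is essential to force $f(\overline{\vect{u}}) = 0$ so that the binary identification $f_\tau$ equals $\mathord{\nrightarrow}$ rather than~$\mathord{+}$; everything else is a straightforward application of the $(G,k)$-semibisectability framework already developed in Section~\ref{sec:semibisectable}.
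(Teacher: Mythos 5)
Your proof is correct and follows essentially the same route as the paper's: extract $\mathord{\nrightarrow}$ (and hence $0$) as a minor of $f$ using $f(\vect{0})=f(\vect{1})=f(\overline{\vect{u}})=0$, verify that $(\clUk{k} \cap \clWkneg{k}) \setminus \clVako$ is $(\{\mathord{\nrightarrow}\},k)$\hyp{}semibisectable, and conclude via Lemma~\ref{lem:helpful} and Proposition~\ref{prop:constants}. The only cosmetic difference is in condition~\ref{helpful:both}, where you split on the polarity of an arbitrary differing coordinate rather than first testing for an index with $a_\ell = 1$, $b_\ell = 0$; both case analyses are equivalent.
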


\begin{remark}
The fact that $\clVako$ is the unique lower cover of $\clUk{k} \cap \clWkneg{k}$ follows from Theorem~\ref{thm:UkTC-covers} by making use of the following facts:
\begin{itemize}
\item $\clVako = \clKlik{k}{\{0\}}$ and $\clUk{k} \cap \clWkneg{k} = \clKlik{k}{\{\mathord{\nrightarrow}\}}$,
\item $0$ is the least element and $\mathord{\nrightarrow}$ is the unique atom of $\posetAllleq{k}$ (see Proposition~\ref{prop:minmin-10-atom}),
\item ${\downarrow^{[\leq k]}} \{ \mathord{\nrightarrow} \}$ is not $(k,C)$\hyp{}closed for any $C \in \{\clXI, \clIX, \clM, \clMneg, \clRefl\}$
because each one of
$\mathord{\nrightarrow}^\clXI = \mathord{\nrightarrow}^\clM = \pr^{(2)}_1 \eqminmin \id$,
$\mathord{\nrightarrow}^\clIX = \mathord{\nrightarrow}^\clMneg = \neg(\pr^{(2)}_1) \eqminmin \neg$,
and
$\mathord{\nrightarrow}^\clRefl = \mathord{+}$
has at most $k$ true points and is strictly above $\nrightarrow$ in the minorant\hyp{}minor order (see Definitions~\ref{def:C-closure} and \ref{def:kC-closed}).
\end{itemize}
\end{remark}

\begin{proof}
Since $f \in (\clUk{k} \cap \clWkneg{k}) \setminus \clVako$, we have $f(\vect{0}) = f(\vect{1}) = 0$ and there exists an $\vect{a}$ with $f(\vect{a}) = 1$; since $f \in \clUk{k}$, we must have $f(\overline{\vect{a}}) = 0$.
Thus there is a binary minor $f' \leq f$ with $f'(0,0) = f'(1,1) = f'(0,1) = 0$ and $f'(1,0) = 1$; in other words, $f' = \mathord{\nrightarrow}$.
We also have $0 \leq f$.
It suffices to show that $(\clUk{k} \cap \clWkneg{k}) \setminus \clVako$ is $(G,k)$\hyp{}semibisectable with $G := \{0, \mathord{\nrightarrow}\}$, because from this it follows by Lemma~\ref{lem:helpful} and Proposition~\ref{prop:constants}\ref{prop:constants:0} that
$\clUk{k} \cap \clWkneg{k} = ((\clUk{k} \cap \clWkneg{k}) \setminus \clVako) \cup \clVako \subseteq \gen{\mathord{\nrightarrow}} \cup \gen{0} \subseteq \gen{f} \subseteq \clUk{k} \cap \clWkneg{k}$.

Let $\theta \in (\clUk{k} \cap \clWkneg{k}) \setminus \clVako$.
In order to verify condition \ref{helpful:k-true}, let $\vect{a}_1, \dots, \vect{a}_k \in \theta^{-1}(1)$.
Then there exist $i$ and $j$ such that $a_{1i} = \dots = a_{ki} = 1$ and $a_{1j} = \dots = a_{kj} = 0$.
Consequently, $\mathord{\nrightarrow}_{ij}(\vect{a}_1) = \dots = \mathord{\nrightarrow}_{ij}(\vect{a}_k) = 1$.
For condition \ref{helpful:both}, let $\vect{a} \in \theta^{-1}(1)$ and $\vect{b} \in \theta^{-1}(0)$.
Since $\vect{a} \notin \{\vect{0}, \vect{1}\}$ and $\vect{a} \neq \vect{b}$, there exist $i$, $j$, $k$ such that $a_i = 0$, $a_j = 1$, $a_k \neq b_k$.
If $a_k = 1$ and $b_k = 0$, then $\mathord{\nrightarrow}_{ki}(\vect{a}) = 1$ and $\mathord{\nrightarrow}_{ki}(\vect{b}) = 0$.
Assume now that there is no $\ell$ such that $a_\ell = 1$ and $b_\ell = 0$.
Then $a_k = 0$, $b_k = 1$, $b_j = 1$, and we have $\mathord{\nrightarrow}_{jk}(\vect{a}) = 1$ and $\mathord{\nrightarrow}_{jk}(\vect{b}) = 0$.
\end{proof}

\begin{proposition}
Let $k \geq 2$.
For any $f \in (\clKlik{k}{\{+\}} \cap \clReflOO) \setminus \clVako$, we have $\gen{f} = \clKlik{k}{\{+\}} \cap \clReflOO$.
\end{proposition}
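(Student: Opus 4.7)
The plan is to follow the $(G,k)$-semibisectable template used throughout Section~\ref{sec:IcMcUk}, as in Propositions~\ref{prop:UkWkneg} and \ref{prop:R}\ref{prop:R:R00C}. The target clonoid $\clKlik{k}{+} \cap \clReflOO$ contains $f$ and is a $(\clIc,\clMcUk{k})$-clonoid (since ${\downarrow^{[\leq k]}_\clRefl}\{+\} \in \Ideals(\posetAllleq{k}_\clRefl)$: minorants of $+$ with at most two true points, such as $\nrightarrow$, $\lambda_{30}$, $\lambda_{31}$, all have reflexive closure $\eqminmin$-equivalent to $+$ again), so $\gen{f} \subseteq \clKlik{k}{+} \cap \clReflOO$ is automatic; the work is the reverse inclusion.

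The first step is to extract the needed generators from $f$. Since $f \in \clReflOO \setminus \clVako$, pick $\vect{a} \in f^{-1}(1)$; then $\vect{a} \notin \{\vect{0}, \vect{1}\}$, and by reflexivity $f(\overline{\vect{a}}) = 1$. Define $\sigma \colon \nset{n} \to \nset{2}$ by $\sigma(j) = 1$ if $a_j = 1$ and $\sigma(j) = 2$ otherwise. A direct check gives $f_\sigma(1,0) = f(\vect{a}) = 1$, $f_\sigma(0,1) = f(\overline{\vect{a}}) = 1$, and $f_\sigma(0,0) = f(\vect{0}) = 0 = f(\vect{1}) = f_\sigma(1,1)$, so $f_\sigma = \mathord{+}$. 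Similarly the unary minor of $f$ obtained by identifying all arguments is the constant $0$, so $0 \minor f$, and hence $\mathord{+}, 0 \in \gen{f}$ and $\clVako \subseteq \gen{0} \subseteq \gen{f}$ by Proposition~\ref{prop:constants}\ref{prop:constants:0}.

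It suffices now to prove that every $g \in (\clKlik{k}{+} \cap \clReflOO) \setminus \clVako$ is $(\{+\},k)$-semibisectable, because Lemma~\ref{lem:helpful} will then give $g \in \gen{+} \subseteq \gen{f}$, finishing the proof. Condition~\ref{helpful:k-true} of Definition~\ref{def:helpful} is just a rewriting of $g \in \clKlik{k}{+}$: for any $\vect{a}_1, \dots, \vect{a}_k \in g^{-1}(1)$, membership in $\clKlik{k}{+}$ hands us indices $i, j$ with $(\vect{a}_\ell)_i \neq (\vect{a}_\ell)_j$ for every $\ell$, and then $\mathord{+}_{ij} \in \{+\}_n$ evaluates to $1$ at each $\vect{a}_\ell$.

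The main obstacle, and the only real content, is condition~\ref{helpful:both}: given $\vect{a} \in g^{-1}(1)$ and $\vect{b} \in g^{-1}(0)$, find $i,j$ with $a_i \neq a_j$ and $b_i = b_j$. Since $g \in \clReflOO$, $\vect{a} \notin \{\vect{0}, \vect{1}\}$. If $\vect{b} \in \{\vect{0}, \vect{1}\}$, any pair $(i,j)$ with $a_i \neq a_j$ works. Otherwise both $\vect{a}$ and $\vect{b}$ split $\nset{n}$ into exactly two nonempty classes according to coordinate value; the required pair $(i,j)$ fails to exist iff the partition induced by $\vect{b}$ refines that of $\vect{a}$, but since both partitions have exactly two blocks, refinement forces equality of partitions, which gives $\vect{a} \in \{\vect{b}, \overline{\vect{b}}\}$. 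The first case contradicts $g(\vect{a}) \neq g(\vect{b})$, and the second contradicts reflexivity of $g$ combined with $g(\vect{b}) = 0$. This contradiction supplies the desired $i,j$, completing the semibisectability verification and hence the proof.
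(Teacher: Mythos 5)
Your proposal is correct and follows essentially the same route as the paper: extract $\mathord{+}$ and $0$ as minors of $f$, then show that every nonconstant member of $\clKlik{k}{+} \cap \clReflOO$ is $(\{\mathord{+}\},k)$\hyp{}semibisectable and invoke Lemma~\ref{lem:helpful}. Your verification of condition~\ref{helpful:both} via the two\hyp{}block partition refinement argument is a cleaner packaging of the paper's explicit index case analysis, but the substance is identical.
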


\begin{proof}
Since $f \in \clReflOO \setminus \clVako$, we have $f(\vect{0}) = f(\vect{1}) = 0$ and there is a $\vect{u}$ such that $f(\vect{u}) = f(\overline{\vect{u}}) = 1$. (Clearly $\vect{u} \notin \{\vect{0}, \vect{1}\}$.)
Consequently, $\mathord{+} \minor f$ and also $0 \minor f$.
It suffices to show that
$(\clKlik{k}{\{+\}} \cap \clReflOO) \setminus \clVako$ is $(G,k)$\hyp{}semibisectable with $G := \{\mathord{+}\}$, because from this it follows by Lemma~\ref{lem:helpful} and Proposition~\ref{prop:constants}\ref{prop:constants:0} that
$\clKlik{k}{\{+\}} \cap \clReflOO
= ((\clKlik{k}{\{+\}} \cap \clReflOO) \setminus \clVako) \cup \clVako
\subseteq \gen{\mathord{+}} \cup \gen{0}
\subseteq \gen{f}
\subseteq \clKlik{k}{\{+\}} \cap \clReflOO$.

Let $\theta \in (\clKlik{k}{\{+\}} \cap \clReflOO) \setminus \clVako$.
In order to verify condition \ref{helpful:k-true}, let $T = \{\vect{a}_1, \dots, \vect{a}_k \} \subseteq \theta^{-1}(1)$.
Since $\theta \in \clKlik{k}{\{+\}}$, there exist $i$ and $j$ such that $\mathord{+}_{ij}(\vect{a}) = 1$ for all $\vect{a} \in T$.
This shows that condition \ref{helpful:k-true} is satisfied.
For condition \ref{helpful:both}, let $\vect{a} \in \theta^{-1}(1)$ and $\vect{b} \in \theta^{-1}(0)$.
Since $\vect{a} \notin \{\vect{0}, \vect{1}\}$ and $\vect{a} \neq \vect{b}$, there exist $i$, $j$, $k$ such that $a_i = 0$, $a_j = 1$, and $a_k \neq b_k$.
Since $\theta \in \clRefl$, we have $\vect{a} \neq \overline{\vect{b}}$, so there exists an $\ell$ such that $a_\ell = b_\ell$.
It is not difficult to verify that the indices $i$, $j$, $k$, $\ell$ can be chosen in such a way that $\{i, j\} = \{k, \ell\}$.
(If for all $p$ such that $a_p = 0$ we have $a_p = b_p$, then $a_k = 1$ and $b_k = 0$, and we can take $i = \ell$ and $j = k$.
If for all $p$ such that $a_p = 1$ we have $a_p = b_p$, then $a_k = 0$ and $b_k = 1$, and we can take $j = \ell$ and $i = k$.
Otherwise, there are $p$ and $q$ such that $a_p = 0$, $b_p = 1$, $a_q = 1$, $b_q = 0$.
If $a_\ell = b_\ell = 0$, then we can take $i = \ell$ and $j = k := q$.
If $a_\ell = b_\ell = 1$, then we can take $j = \ell$ and $i = k := p$.)
Then $a_i \neq a_j$ and $b_i = b_j$, and we have $\mathord{+}_{ij}(\vect{a}) = 1$ and $\mathord{+}_{ij}(\vect{b}) = 0$.
\end{proof}

\subsection{Special case $k = 2$}

Our main result, Theorem~\ref{thm:IcMcUk-clonoids}, is, regretfully, somewhat implicit, because the description of the $(\clIc,\clMcUk{k})$\hyp{}clonoids is given in terms of the lattice $\Ideals(\posetAllleq{k})$ of ideals of the minorant\hyp{}minor poset $\posetAllleq{k}$ of Boolean functions with at most $k$ true points and the $(k,C)$\hyp{}closed ideals. The minorant\hyp{}minor poset is not well understood (at least by the current author), and we do not have an explicit description of the ideal lattice $\Ideals(\posetAllleq{k})$, except for small values of $k$. In the case when $k = 2$, we do obtain an explicit description of the $(\clIc,\clMcUk{2})$\hyp{}clonoids with the help of the explicit description of the ideal lattice $\Ideals(\posetAllleq{2})$ (see Figure~\ref{fig:minmin2}). It only remains to identify the $(2,C)$\hyp{}closed ideals for each $C \in \{\clXI, \clIX, \clM, \clMneg, \clRefl\}$.
We leave it as an exercise to the reader to verify that
\begin{itemize}
\item the $(2,\clXI)$\hyp{}closed ideals are the downsets of $\{1\}$, $\{\id, \mathord{+}\}$, $\{\id, \lambda_{30}\}$, $\{\id\}$, $\{0\}$, and $\emptyset$;
\item the $(2,\clIX)$\hyp{}closed ideals are the downsets of $\{1\}$, $\{\neg, \mathord{+}\}$, $\{\neg, \lambda_{31}\}$, $\{\neg\}$, $\{0\}$, and $\emptyset$;
\item the $(2,\clM)$\hyp{}closed ideals are the downsets of $\{1\}$, $\{\id, \mathord{+}\}$, $\{\id\}$, $\{0\}$, and $\emptyset$;
\item the $(2,\clMneg)$\hyp{}closed ideals are the downsets of $\{1\}$, $\{\neg, \mathord{+}\}$, $\{\neg\}$, $\{0\}$, and $\emptyset$;
\item the $(2,\clRefl)$\hyp{}closed ideals are the downsets of $\{1\}$, $\{\mathord{+}\}$, $\{0\}$, and $\emptyset$.
\end{itemize}
We thus find all $(\clIc,\clMcUk{2})$\hyp{}clonoids; they are listed in Table~\ref{table:McU2-stable} and shown in Figure~\ref{fig:McU2-stable}.
This result is, of course, not new, as the $(\clIc,\clMcUk{2})$\hyp{}clonoids were described in our earlier paper \cite[Corollary~5.28(a)]{Lehtonen-SM}.
Note that the classes of the form $\clKlik{2}{\Theta}$, the yellow\hyp{}shaded part of the diagram, form a sublattice isomorphic to $\Ideals(\posetAllleq{2})$.
For these classes, we use the ``more familiar'' notation introduced in Section~\ref{sec:Boolean}; see Example~\ref{ex:classes} for a translation.

\begin{figure}
\begin{center}
\scalebox{0.28}{
\tikzstyle{every node}=[circle, draw, fill=black, scale=1, font=\huge]
\tikzstyle{minclosed}=[fill=red]
\tikzstyle{forallk}=[fill=blue]
\tikzstyle{Icl}=[fill=yellow]
\tikzstyle{Ocl}=[fill=orange]
\tikzstyle{Mcl}=[fill=pink]
\tikzstyle{Mnegcl}=[fill=purple]
\tikzstyle{Rcl}=[fill=green]
\pgfdeclarelayer{poset}
\pgfdeclarelayer{blobs}
\pgfsetlayers{blobs,poset}
\begin{tikzpicture}[baseline, scale=1]
\begin{pgfonlayer}{poset}
\coordinate (bottom) at (0,0);
\coordinate (sw) at ($(bottom)+(-20,9)$);
\coordinate (se) at ($(bottom)+(20,9)$);
\coordinate (top) at ($(bottom)+(0,50)$);
\coordinate (nw) at ($(top)+(0,-9)$);
\coordinate (ne) at ($(top)+(20,-9)$);
\node[minclosed] (empty) at ($(bottom)+(-9,0)$) {};
\draw ($(empty)+(270:0.7)$) node[draw=none,fill=none]{$\clEmpty$};
\node[minclosed] (D0C0) at ($(bottom)+(-13,4)$) {};
\draw ($(D0C0)+(270:0.7)$) node[draw=none,fill=none]{$\clVako$};
\node[forallk] (D0C1) at ($(bottom)+(16,4)$) {};
\draw ($(D0C1)+(270:0.7)$) node[draw=none,fill=none]{$\clVaki$};
\node[forallk] (D0) at ($(bottom)+(12,8)$) {};
\draw ($(D0)+(270:0.7)$) node[draw=none,fill=none]{$\clVak$};
\node[minclosed] (All) at ($(top)+(4,0)$) {};
\draw ($(All)+(90:0.7)$) node[draw=none,fill=none]{$\clAll$};
\node[forallk] (Eiio) at ($(top)+(4,-4)$) {};
\draw ($(Eiio)+(135:0.8)$) node[draw=none,fill=none]{$\clEiio$};
\node[minclosed] (Eiii) at ($(top)+(-8,-11)$) {};
\draw ($(Eiii)+(135:1)$) node[draw=none,fill=none]{$\clEiii$};
\node[forallk] (Eioi) at ($(top)+(12,-4)$) {};
\draw ($(Eioi)+(45:0.8)$) node[draw=none,fill=none]{$\clEioi$};
\node[forallk] (C0D0) at ($(top)+(0,-8)$) {};
\draw ($(C0D0)+(160:1.2)$) node[draw=none,fill=none]{$\clOXC$};
\node[forallk] (E1D0) at ($(top)+(4,-8)$) {};
\draw ($(E1D0)+(160:1.2)$) node[draw=none,fill=none]{$\clXIC$};
\node[forallk] (P0) at ($(top)+(8,-8)$) {};
\draw ($(P0)+(180:0.7)$) node[draw=none,fill=none]{$\clEq$};
\node[forallk] (E0D0) at ($(top)+(12,-8)$) {};
\draw ($(E0D0)+(0:1.3)$) node[draw=none,fill=none]{$\clXOC$};
\node[forallk] (C1D0) at ($(top)+(16,-8)$) {};
\draw ($(C1D0)+(20:1.2)$) node[draw=none,fill=none]{$\clIXC$};
\node[forallk] (C1) at ($(C1D0)+(2,-2)$) {};
\draw ($(C1)+(0:0.7)$) node[draw=none,fill=none]{$\clIX$};
\node[forallk] (E1) at ($(E1D0)+(0,-2)$) {};
\draw ($(E1)+(10:0.8)$) node[draw=none,fill=none]{$\clXI$};
\node[forallk] (C1E1D0) at ($(top)+(18,-16)$) {};
\draw ($(C1E1D0)+(80:1.2)$) node[draw=none,fill=none,rotate=284]{$\clIIC$};
\node[forallk] (C1E1) at ($(C1E1D0)+(2,-2)$) {};
\draw ($(C1E1)+(0:0.8)$) node[draw=none,fill=none]{$\clII$};
\node[forallk] (C0E0D0) at ($(E1)+1.5*(0,-2)$) {};
\draw ($(C0E0D0)+(180:1.25)$) node[draw=none,fill=none]{$\clOOC$};
\node[Icl] (C0E1) at ($(bottom)+(-14,19)$) {};
\draw ($(C0E1)+(90:0.8)$) node[draw=none,fill=none]{$\clOI$};
\node[Icl] (C0E1D000) at ($(C0E1)+(-1.2,1.2)$) {};
\draw ($(C0E1D000)+(102:1.1)$) node[draw=none,fill=none,rotate=77]{$\clOICO$};
\node[forallk] (C0E1D011) at ($(C0E1)+(1.2+0.5,1.2)$) {};
\draw ($(C0E1D011)+(350:1.4)$) node[draw=none,fill=none]{$\clOICI$};
\node[forallk] (C0E1D0) at ($(C0E1)+(0+0.5,2*1.2)$) {};
\draw ($(C0E1D0)+(102:1)$) node[draw=none,fill=none,rotate=77]{$\clOIC$};
\node[minclosed] (C0) at ($(C0E1D000)+4.5*(0.6,2.4)$) {};
\draw ($(C0)+(180:0.7)$) node[draw=none,fill=none]{$\clOX$};
\node[Icl] (SminC0E1) at ($(C0E1)+(-1.2-0.5,-1.2)$) {};
\draw ($(SminC0E1)+(280:0.7)$) node[draw=none,fill=none]{$\clSminOI$};
\node[Icl] (SminC0E1D000) at ($(SminC0E1)+(-1.2,1.2)$) {};
\draw ($(SminC0E1D000)+(102:1.1)$) node[draw=none,fill=none,rotate=77]{$\clSminOICO$};
\node[minclosed] (SminC0) at ($(SminC0E1D000)+4*(0.6,2.4)$) {};
\draw ($(SminC0)+(180:0.7)$) node[draw=none,fill=none]{$\clSminOX$};
\node[Icl] (TcU2) at ($(SminC0E1)+(-1.2-0.5,-1.2)$) {};
\draw ($(TcU2)+(345:0.8)$) node[draw=none,fill=none]{$\clTcU$};
\node[Icl] (TcU2D0) at ($(TcU2)+(-1.2,1.2)$) {};
\draw ($(TcU2D0)+(180:1.3)$) node[draw=none,fill=none]{$\clTcUCO$};
\node[minclosed] (U2) at ($(TcU2D0)+3.5*(0.6,2.4)$) {};
\draw ($(U2)+(180:0.6)$) node[draw=none,fill=none]{$\clU$};
\node[Mcl] (Mc) at ($(C0E1)+1.5*(0.6,-2.4)$) {};
\draw ($(Mc)+(300:0.8)$) node[draw=none,fill=none]{$\clMc$};
\node[Mcl] (M0) at ($(C0E1D000)+1.5*(0.6,-2.4)$) {};
\draw ($(M0)+(270:0.8)$) node[draw=none,fill=none]{$\clMo$};
\node[forallk] (M1) at ($(C0E1D011)+1.5*(0.6,-2.4)$) {};
\draw ($(M1)+(290:0.8)$) node[draw=none,fill=none]{$\clMi$};
\node[forallk] (M) at ($(C0E1D0)+1.5*(0.6,-2.4)$) {};
\draw ($(M)+(260:0.7)$) node[draw=none,fill=none]{$\clM$};
\node[Mcl] (MU2) at ($(TcU2D0)+1.5*(0.6,-2.4)$) {};
\draw ($(MU2)+(180:0.9)$) node[draw=none,fill=none]{$\clMU$};
\node[Mcl] (McU2) at ($(TcU2)+1.5*(0.6,-2.4)$) {};
\draw ($(McU2)+(350:1.1)$) node[draw=none,fill=none]{$\clMcU$};
\node[Ocl] (C1E0) at ($(bottom)+(8,19)$) {};
\draw ($(C1E0)+(75:0.8)$) node[draw=none,fill=none]{$\clIO$};
\node[forallk] (C1E0D011) at ($(C1E0)+(1.2+0.5,1.2)$) {};
\draw ($(C1E0D011)+(0:1.4)$) node[draw=none,fill=none]{$\clIOCI$};
\node[Ocl] (C1E0D000) at ($(C1E0)+(-1.2,1.2)$) {};
\draw ($(C1E0D000)+(140:1.4)$) node[draw=none,fill=none,rotate=315]{$\clIOCO$};
\node[forallk] (C1E0D0) at ($(C1E0)+(0+0.5,2*1.2)$) {};
\draw ($(C1E0D0)+(140:1.4)$) node[draw=none,fill=none,rotate=315]{$\clIOC$};
\node[minclosed] (E0) at ($(C1E0D000)+4.5*(-0.6,2.4)$) {};
\draw ($(E0)+(0:0.8)$) node[draw=none,fill=none]{$\clXO$};
\node[Ocl] (SminC1E0) at ($(C1E0)+(-1.2-0.5,-1.2)$) {};
\draw ($(SminC1E0)+(280:0.7)$) node[draw=none,fill=none]{$\clSminIO$};
\node[Ocl] (SminC1E0D000) at ($(SminC1E0)+(-1.2,1.2)$) {};
\draw ($(SminC1E0D000)+(140:1.4)$) node[draw=none,fill=none,rotate=315]{$\clSminIOCO$};
\node[minclosed] (SminE0) at ($(SminC1E0D000)+4*(-0.6,2.4)$) {};
\draw ($(SminE0)+(0:0.7)$) node[draw=none,fill=none]{$\clSminXO$};
\node[Ocl] (TcW2neg) at ($(SminC1E0)+(-1.2-0.5,-1.2)$) {};
\draw ($(TcW2neg)+(335:1.0)$) node[draw=none,fill=none]{$\clTcWneg$};
\node[Ocl] (TcW2negD0) at ($(TcW2neg)+(-1.2,1.2)$) {};
\draw ($(TcW2negD0)+(195:1.4)$) node[draw=none,fill=none]{$\clTcWnegCO$};
\node[minclosed] (W2neg) at ($(TcW2negD0)+3.5*(-0.6,2.4)$) {};
\draw ($(W2neg)+(0:0.7)$) node[draw=none,fill=none]{$\clWneg$};
\node[Mnegcl] (Mcneg) at ($(C1E0)+1.5*(0.6,-2.4)$) {};
\draw ($(Mcneg)+(280:0.8)$) node[draw=none,fill=none]{$\clMcneg$};
\node[forallk] (M0neg) at ($(C1E0D011)+1.5*(0.6,-2.4)$) {};
\draw ($(M0neg)+(10:0.8)$) node[draw=none,fill=none]{$\clMoneg$};
\node[Mnegcl] (M1neg) at ($(C1E0D000)+1.5*(0.6,-2.4)$) {};
\draw ($(M1neg)+(270:0.8)$) node[draw=none,fill=none]{$\clMineg$};
\node[forallk] (Mneg) at ($(C1E0D0)+1.5*(0.6,-2.4)$) {};
\draw ($(Mneg)+(250:0.8)$) node[draw=none,fill=none]{$\clMneg$};
\node[Mnegcl] (MW2neg) at ($(TcW2negD0)+1.5*(0.6,-2.4)$) {};
\draw ($(MW2neg)+(165:1.1)$) node[draw=none,fill=none]{$\clMWneg$};
\node[Mnegcl] (McW2neg) at ($(TcW2neg)+1.5*(0.6,-2.4)$) {};
\draw ($(McW2neg)+(320:1.0)$) node[draw=none,fill=none]{$\clMcWneg$};
\node[minclosed] (C0E0) at ($(C0)!0.5!(E0)+(0,-4)$) {};
\draw ($(C0E0)+(180:0.8)$) node[draw=none,fill=none]{$\clOO$};
\node[minclosed] (SminC0E0) at ($(C0E0)+(0,-3)+3*(-0.15,0)$) {};
\draw ($(SminC0E0)+(180:0.8)$) node[draw=none,fill=none]{$\clSminOO$};
\node[minclosed] (U2E0) at ($(SminC0E0)+(-2,-2)+2*(-0.15,0)$) {};
\draw ($(U2E0)+(200:0.7)$) node[draw=none,fill=none]{$\clUOO$};
\node[minclosed] (W2negC0) at ($(SminC0E0)+(2,-2)+2*(-0.15,0)$) {};
\draw ($(W2negC0)+(335:1.0)$) node[draw=none,fill=none]{$\clWnegOO$};
\node[minclosed] (U2W2neg) at ($(SminC0E0)+(0,-4)+4*(-0.15,0)$) {};
\draw ($(U2W2neg)+(340:1.3)$) node[draw=none,fill=none]{$\clUWneg$};
\node[minclosed] (Smin) at ($(SminC0)!0.5!(SminE0)+(-2,4)$) {};
\draw ($(Smin)+(270:0.6)$) node[draw=none,fill=none]{$\clSmin$};
\node[forallk] (X1P0) at ($(bottom)+(17,31)$) {};
\draw ($(X1P0)+(45:0.6)$) node[draw=none,fill=none]{$\clRefl$};
\node[forallk] (X1C0E0D0) at ($(X1P0)+2*(-1,-12/17)$) {};
\draw ($(X1C0E0D0)+(345:1.2)$) node[draw=none,fill=none]{$\clReflOOC$};
\node[Rcl] (X1C0E0) at ($(X1P0)+16*(-1,-12/17)$) {};
\draw ($(X1C0E0)+(180:0.8)$) node[draw=none,fill=none]{$\clReflOO$};
\node[forallk] (X1C1E1D0) at ($(X1P0)+2*(1,-12/17)$) {};
\draw ($(X1C1E1D0)+(80:1.2)$) node[draw=none,fill=none,rotate=284]{$\clReflIIC$};
\node[forallk] (X1C1E1) at ($(X1P0)+4*(1,-12/17)$) {};
\draw ($(X1C1E1)+(0:0.8)$) node[draw=none,fill=none]{$\clReflII$};
\foreach \u/\v in {
   empty/D0C0, empty/D0C1, D0C0/D0, D0C1/D0,
   D0/X1C0E0D0, D0/X1C1E1D0,
   D0C0/MU2,
   D0C0/U2W2neg, U2W2neg/U2E0, U2W2neg/W2negC0,
   X1C0E0/X1C0E0D0, X1C0E0D0/X1P0, X1C0E0/C0E0, X1C1E1/X1C1E1D0, X1C1E1D0/X1P0, X1C1E1/C1E1, 
   D0C0/X1C0E0, D0C1/X1C1E1,
   McU2/Mc, McU2/MU2, McU2/TcU2, MU2/M0, MU2/TcU2D0, TcU2/TcU2D0, TcU2D0/U2,
   Mc/M0, Mc/M1, M0/M, M1/M, U2E0/U2,
   McW2neg/Mcneg, McW2neg/MW2neg, McW2neg/TcW2neg, MW2neg/M1neg, MW2neg/TcW2negD0, TcW2neg/TcW2negD0, TcW2negD0/W2neg, Mcneg/M0neg, Mcneg/M1neg, M0neg/Mneg, M1neg/Mneg,
   W2negC0/W2neg,
   Mc/C0E1, U2/SminC0,
   U2E0/SminC0E0, W2negC0/SminC0E0, SminC0E0/C0E0,
   Mcneg/C1E0,
   W2neg/SminE0,
   M0/C0E1D000, M1/C0E1D011, M/C0E1D0, M0neg/C1E0D011, M1neg/C1E0D000, Mneg/C1E0D0,
   X1C0E0D0/C0E0D0, X1C1E1D0/C1E1D0,
   X1P0/P0,
   C0E0/C0, C0E0/E0, C0E1D000/C0, C0E1D011/E1,
   C1E0D011/C1, C1E0D000/E0,
   C1E1/C1, C1E1/E1,
   C0E0/C0E0D0, C1E1/C1E1D0, C0E1/C0E1D000, C0E1/C0E1D011, C0E1D000/C0E1D0, C0E1D011/C0E1D0, C1E0/C1E0D000, C1E0/C1E0D011, C1E0D000/C1E0D0, C1E0D011/C1E0D0,
   C0/C0D0, C1/C1D0, E0/E0D0, E1/E1D0,
   C0E0D0/C0D0, C0E0D0/E0D0, C1E1D0/C1D0, C1E1D0/E1D0, C0E1D0/C0D0, C0E1D0/E1D0, C1E0D0/C1D0, C1E0D0/E0D0,
   C0E0D0/P0, C1E1D0/P0,
   C0D0/Eiio, C1D0/Eioi, E0D0/Eioi, E1D0/Eiio,
   C0/Eiii, E0/Eiii,
   P0/Eioi,
   Eiio/All, Eiii/All,
   Eioi/All,
   TcU2/SminC0E1,
   TcW2neg/SminC1E0,
   TcU2D0/SminC0E1D000,
   TcW2negD0/SminC1E0D000,
   SminC0E0/SminC0, SminC0E0/SminE0,
   SminC0E1/SminC0E1D000,
   SminC1E0/SminC1E0D000,
   SminC0E1/C0E1, SminC0E1D000/C0E1D000, SminC0E1D000/SminC0,
   SminC1E0/C1E0, SminC1E0D000/C1E0D000, SminC1E0D000/SminE0,
   SminC0/C0, SminE0/E0,
   SminC0/Smin, SminE0/Smin,
   Smin/Eiii,
   D0/M, D0/Mneg,
   D0C0/MW2neg,
   empty/McU2, empty/McW2neg,
   D0C1/M1, D0C1/M0neg, D0/Mneg, P0/Eiio%
}
{
   \draw [thick] (\u) -- (\v);
}
\end{pgfonlayer}{poset}
\begin{pgfonlayer}{blobs}
    \draw[thin,draw=yellow!60!black,fill=yellow!25] plot[smooth cycle] coordinates{($(All)+(-1,1)$) ($(Eiii)+(-1.5,1)$) ($(U2)+(-2,-0.5)$) ($(U2E0)+(-1.5,-1.5)$) ($(D0C0)+(-1.5,-0.25)$) ($(empty)+(1,-1)$) ($(X1C0E0)+(-3,0)$) ($(W2neg)+(2.5,0)$) ($(E0)+(2,1)$) ($(C0D0)+(-2.75,-0.5)$) ($(All)+(1,-1)$)};
    \draw[thin,draw=green!60!black,fill=green!25] (X1C0E0) circle [radius=0.75];
    \draw[thin,draw=green!60!black,fill=green!25] (C0E0) circle [radius=0.75];
    \draw[thin,draw=green!60!black,fill=magenta!22] (D0C0) circle [radius=1];
    \draw[thin,draw=green!60!black,fill=magenta!22] (empty) circle [radius=1];
    \draw[thin,draw=green!60!black,fill=green!25] (D0C0) circle [radius=0.66];
    \draw[thin,draw=green!60!black,fill=green!25] (empty) circle [radius=0.66];
    \draw[thin,draw=blue!60!black,fill=blue!25] ($(C0E1)+(35:0.75)$) -- ($(C0E1D000)+(35:0.75)$) arc (35:135:0.75) -- ($(TcU2D0)+(135:0.75)$) arc (135:215:0.75) -- ($(TcU2)+(215:0.75)$) arc (215:315:0.75) -- ($(C0E1)+(315:0.75)$) arc (315:395:0.75);
    \draw[thin,draw=magenta!60!black,fill=magenta!22] ($(Mc)+(35:0.75)$) -- ($(M0)+(35:0.75)$) arc (35:135:0.75) -- ($(MU2)+(135:0.75)$) arc (135:215:0.75) -- ($(McU2)+(215:0.75)$) arc (215:315:0.75) -- ($(Mc)+(315:0.75)$) arc (315:395:0.75);
    \draw[thin,draw=blue!60!black,fill=blue!25] ($(C1E0)+(35:0.75)$) -- ($(C1E0D000)+(35:0.75)$) arc (35:135:0.75) -- ($(TcW2negD0)+(135:0.75)$) arc (135:215:0.75) -- ($(TcW2neg)+(215:0.75)$) arc (215:315:0.75) -- ($(C1E0)+(315:0.75)$) arc (315:395:0.75);
    \draw[thin,draw=magenta!60!black,fill=magenta!22] ($(Mcneg)+(35:0.75)$) -- ($(M1neg)+(35:0.75)$) arc (35:135:0.75) -- ($(MW2neg)+(135:0.75)$) arc (135:215:0.75) -- ($(McW2neg)+(215:0.75)$) arc (215:315:0.75) -- ($(Mcneg)+(315:0.75)$) arc (315:395:0.75);
    \draw[thin,draw=blue!60!black,fill=blue!25] plot[smooth cycle] coordinates{($(C0)+(135:1)$) ($(U2)+(135:1)$) ($(U2)+(225:1)$) ($(U2)+(315:1)$) ($(C0)+(315:1)$) ($(C0)+(405:1)$)};
    \draw[thin,draw=magenta!60!black,fill=magenta!22] (C0) circle [radius=0.75];
    \draw[thin,draw=magenta!60!black,fill=magenta!22] (U2) circle [radius=0.75];
    \draw[thin,draw=blue!60!black,fill=blue!25] plot[smooth cycle] coordinates{($(E0)+(155:1)$) ($(W2neg)+(155:1)$) ($(W2neg)+(245:1)$) ($(W2neg)+(325:1)$) ($(E0)+(325:1)$) ($(E0)+(425:1)$)};
    \draw[thin,draw=magenta!60!black,fill=magenta!22] (E0) circle [radius=0.75];
    \draw[thin,draw=magenta!60!black,fill=magenta!22] (W2neg) circle [radius=0.75];
\end{pgfonlayer}{blobs}
\end{tikzpicture}
}
\end{center}
\caption{$(\clIc,\protect\clMcUk{2})$\hyp{}stable classes.}
\label{fig:McU2-stable}
\end{figure}


\section{$(C_1,C_2)$\hyp{}stability for $C_2 \supseteq \mathsf{MU}^k_{01}$}
\label{sec:C1C2}

The complete description of $(\clIc,\clMcUk{k})$\hyp{}clonoids, Theorem~\ref{thm:IcMcUk-clonoids}, can be used to find also all $(C_1,C_2)$\hyp{}clonoids for other pairs of clones $C_1$ and $C_2$.

\subsection{Auxiliary results}

By the monotonicity of function class composition, if $C_1$ is an arbitrary clone and $C_2 \supseteq \clMcUk{k}$ for some $k \geq 2$, then every $(C_1,C_2)$\hyp{}clonoid is also a $(\clIc,\clMcUk{k})$\hyp{}clonoid.
Therefore, it is just a matter of checking the stability of each $(\clIc,\clMcUk{k})$\hyp{}clonoid under right and left composition with clones $C_1$ and $C_2$.
In fact, we only need to identify the largest clones $C_1$ and $C_2$ for which we have stability, as explained by the following result.

\begin{theorem}[{\cite[Proposition~2.8]{Lehtonen-SM}}]
\label{thm:CK1CK2}
For every minion $K \subseteq \mathcal{F}_{AB}$, there exist clones $C^K_1 \subseteq \mathcal{O}_A$ and $C^K_2 \subseteq \mathcal{O}_B$
such that for all clones $C_1 \subseteq \mathcal{O}_A$ and $C_2 \subseteq \mathcal{O}_B$ it holds that $K$ is $(C_1,C_2)$\hyp{}stable if and only if $C_1 \subseteq C^K_1$ and $C_2 \subseteq C^K_2$.
\end{theorem}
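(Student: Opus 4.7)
The plan is to take $C^K_1$ to be the largest clone $C$ on $A$ satisfying $KC \subseteq K$, and $C^K_2$ to be the largest clone $C$ on $B$ satisfying $CK \subseteq K$. Once these exist, the equivalence in the statement is automatic: if $C_1 \subseteq C^K_1$ and $C_2 \subseteq C^K_2$, then $KC_1 \subseteq KC^K_1 \subseteq K$ and $C_2K \subseteq C^K_2 K \subseteq K$ by monotonicity of composition, so $K$ is $(C_1,C_2)$\hyp{}stable; conversely, if $K$ is $(C_1,C_2)$\hyp{}stable, then $C_1$ and $C_2$ are clones witnessing the respective stability conditions, so by maximality $C_1 \subseteq C^K_1$ and $C_2 \subseteq C^K_2$.

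The real content is the existence of these largest clones. Consider the families
\[
\mathcal{F}_1 := \{\, C \subseteq \mathcal{O}_A : C \text{ is a clone and } KC \subseteq K \,\}, \qquad
\mathcal{F}_2 := \{\, D \subseteq \mathcal{O}_B : D \text{ is a clone and } DK \subseteq K \,\}.
\]
Both are nonempty: $\clProj{A} \in \mathcal{F}_1$ because $K$ is a minion, and $\clProj{B} \in \mathcal{F}_2$ trivially. I will show each family is directed (in fact, closed under the pairwise join in the clone lattice), so that the union $C^K_1 := \bigcup \mathcal{F}_1$ (resp.\ $C^K_2 := \bigcup \mathcal{F}_2$) is again a clone and is the desired maximum element.

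For $\mathcal{F}_2$, suppose $D, D' \in \mathcal{F}_2$. For the join $\langle D \cup D'\rangle$, I will invoke the Associativity Lemma in its weaker inclusion form: $(DD')K \subseteq D(D'K) \subseteq DK \subseteq K$, so $DD' \in \mathcal{F}_2$; iterating through the layers $D_0 = D \cup D' \cup \clProj{B}$, $D_{n+1} = D_n D_n$ (which exhaust $\langle D \cup D'\rangle$) yields $\langle D \cup D'\rangle \in \mathcal{F}_2$. For $\mathcal{F}_1$, suppose $C, C' \in \mathcal{F}_1$; here I need the equality version of the Associativity Lemma, which applies because the middle factor $C$ is minor\hyp{}closed (being a clone): $K(CC') = (KC)C' \subseteq KC' \subseteq K$, so $CC' \in \mathcal{F}_1$, and the same iterated construction shows $\langle C \cup C'\rangle \in \mathcal{F}_1$.

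The main obstacle is essentially just the bookkeeping in this inductive construction, and the delicate point is precisely that for the right\hyp{}composition side one must invoke the equality half of the Associativity Lemma (hence rely on clones being minor\hyp{}closed), whereas for the left\hyp{}composition side the inclusion half suffices. Once the families are known to be closed under joins, directedness is immediate, the unions $C^K_1$ and $C^K_2$ are clones, and $KC^K_1 = \bigcup_{C \in \mathcal{F}_1} KC \subseteq K$ (and dually $C^K_2 K \subseteq K$) finishes the argument.
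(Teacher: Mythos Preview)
This theorem is not proved in the present paper; it is quoted from \cite[Proposition~2.8]{Lehtonen-SM} without argument, so there is no proof here to compare against.

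Your strategy is the right one and the argument for $\mathcal{F}_2$ is correct, but there is a gap on the $\mathcal{F}_1$ side. When you invoke ``the same iterated construction'' with base $D_0 = C \cup C' \cup \clProj{A}$, you need $K D_0 \subseteq K$, and this does not follow from $KC \subseteq K$ and $KC' \subseteq K$: class composition distributes over unions in the \emph{outer} argument (so $(D \cup D')K = DK \cup D'K$, which is why your $\mathcal{F}_2$ base case goes through) but \emph{not} in the inner one. The set $K(C \cup C')$ contains composites $f(g_1,\dots,g_n)$ with the $g_i$ drawn freely from both $C$ and $C'$, and such mixed composites need not lie in $KC \cup KC'$. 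The simplest repair is to start the iteration at $E_0 := CC'$ instead of $C \cup C'$: you have already shown $K(CC') \subseteq K$; moreover $CC'$ is minor\hyp{}closed (so the equality form of the Associativity Lemma applies at each inductive step $K(E_n E_n) = (K E_n)E_n$) and contains $C \cup C' \cup \clProj{A}$ (since both factors are clones), so the layers $E_{n+1} := E_n E_n$ still exhaust $\langle C \cup C'\rangle$. Alternatively, Lemma~\ref{lem:right-stable} in the paper shows that, for a minion $K$, the condition $KC \subseteq K$ is equivalent to the pointwise condition ``$f \ast g \in K$ for all $f \in K$ and $g \in C$'', which is manifestly preserved under arbitrary unions in $C$; this yields $C^K_1 = \{\,g \in \mathcal{O}_A : f \ast g \in K \text{ for all } f \in K\,\}$ directly. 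Your closing equality $K C^K_1 = \bigcup_{C \in \mathcal{F}_1} KC$ is fine once directedness is established, because the finitely many inner functions of any composite then lie in a single member of $\mathcal{F}_1$.
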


Note that in the notations $C^K_1$ and $C^K_2$, the superscript $K$ indicates dependency from the minion $K$.

\begin{lemma}[{\cite[Lemma~5.3]{Lehtonen-SM}}]
\label{lem:intersections}
Let $K_1, K_2 \subseteq \mathcal{F}_{AB}$, $C_1, C_2 \subseteq \mathcal{O}_A$, and $D_1, D_2 \subseteq \mathcal{O}_B$.
\begin{enumerate}[label={\upshape(\roman*)}]
\item If $K_1 C_1 \subseteq K_1$ and $K_2 C_2 \subseteq K_2$, then $(K_1 \cap K_2)(C_1 \cap C_2) \subseteq K_1 \cap K_2$.
\item If $D_1 K_1 \subseteq K_1$ and $D_2 K_2 \subseteq K_2$, then $(D_1 \cap D_2)(K_1 \cap K_2) \subseteq K_1 \cap K_2$.
\end{enumerate}
\end{lemma}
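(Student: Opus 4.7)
The plan is a direct unfolding of the definition of function class composition applied to each part; there is no real obstacle, the only point is that membership in an intersection is witnessed by simultaneously checking both sides with the \emph{same} outer and inner functions.

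For part (i), I would take an arbitrary element of $(K_1 \cap K_2)(C_1 \cap C_2)$. By the definition of composition, it has the form $f(g_1, \dots, g_n)$ for some $f \in (K_1 \cap K_2)^{(n)}$ and $g_1, \dots, g_n \in (C_1 \cap C_2)^{(m)}$. Now view this single expression in two ways. First, since $f \in K_1$ and $g_1, \dots, g_n \in C_1$, the hypothesis $K_1 C_1 \subseteq K_1$ yields $f(g_1, \dots, g_n) \in K_1$. Second, since $f \in K_2$ and $g_1, \dots, g_n \in C_2$, the hypothesis $K_2 C_2 \subseteq K_2$ yields $f(g_1, \dots, g_n) \in K_2$. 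Therefore $f(g_1, \dots, g_n) \in K_1 \cap K_2$, which establishes the inclusion.

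Part (ii) is completely analogous with the roles reversed. An element of $(D_1 \cap D_2)(K_1 \cap K_2)$ has the form $f(g_1, \dots, g_n)$ with $f \in (D_1 \cap D_2)^{(n)}$ and $g_1, \dots, g_n \in (K_1 \cap K_2)^{(m)}$; applying $D_1 K_1 \subseteq K_1$ and $D_2 K_2 \subseteq K_2$ separately to the same expression places it in both $K_1$ and $K_2$, hence in their intersection.

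Since both arguments are essentially symmetric and involve no combinatorial content, I would present them together in a single short proof, remarking that it suffices to note that composition is monotone in each argument and that the two stability hypotheses are being witnessed by one and the same composed function.
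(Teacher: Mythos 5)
Your argument is correct: the paper cites this lemma from an earlier work without reproducing a proof, and your direct unfolding of the definition of class composition, observing that the single composed function $f(g_1,\dots,g_n)$ simultaneously witnesses membership via both stability hypotheses, is exactly the intended (and essentially only) argument. No gaps.
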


The task of describing the $(C_1,C_2)$\hyp{}clonoids for $C_2 \supseteq \clMcUk{k}$ was partly done in \cite{Lehtonen-SM}, where we determined all $(C_1,C_2)$\hyp{}clonoids for $C_2 \supseteq \clSM$.
In particular, the $(\clIc,\clMcUk{2})$\hyp{}clonoids are listed in Table~\ref{table:McU2-stable}; these are also $(\clIc,\clMcUk{k})$\hyp{}clonoids for every $k \geq 2$.
For each such clonoid $K$, the table also indicates the clones $C^K_1$ and $C^K_2$ as in Theorem~\ref{thm:CK1CK2}.

\begin{table}
\setlength{\tabcolsep}{3.9pt}
\scalebox{0.92}{\footnotesize%
\begin{tabular}{cccc@{\quad\qquad}cccc}
\toprule
$K$ & $K C \subseteq K$ & $C K \subseteq K$ & $K$ & $K C \subseteq K$ & $C K \subseteq K$  \\
& iff $C \subseteq \ldots$ & iff $C \subseteq \ldots$ & & iff $C \subseteq \ldots$ & iff $C \subseteq \ldots$ \\
\midrule
$\clAll$ & $\clAll$ & $\clAll$ & & & \\
$\clEiio$ & $\clOI$ & $\clM$ & $\clEioi$ & $\clOI$ & $\clM$ \\
$\clEiii$ & $\clOI$ & $\clU$ & & & \\
$\clEq$ & $\clOI$ & $\clAll$ & & & \\
$\clOXC$ & $\clOX$ & $\clM$ & $\clXOC$ & $\clXI$ & $\clM$ \\
$\clIXC$ & $\clOX$ & $\clM$ & $\clXIC$ & $\clXI$ & $\clM$ \\
$\clOX$ & $\clOX$ & $\clOX$ & $\clXO$ & $\clXI$ & $\clOX$ \\
$\clIX$ & $\clOX$ & $\clXI$ & $\clXI$ & $\clXI$ & $\clXI$ \\
$\clOOC$ & $\clOI$ & $\clM$ & $\clIIC$ & $\clOI$ & $\clM$ \\
$\clOO$ & $\clOI$ & $\clOX$ & $\clII$ & $\clOI$ & $\clXI$ \\
$\clOIC$ & $\clOI$ & $\clM$ & $\clIOC$ & $\clOI$ & $\clM$ \\
$\clOICO$ & $\clOI$ & $\clMo$ & $\clIOCI$ & $\clOI$ & $\clMi$ \\
$\clOICI$ & $\clOI$ & $\clMi$ & $\clIOCO$ & $\clOI$ & $\clMo$ \\
$\clOI$ & $\clOI$ & $\clOI$ & $\clIO$ & $\clOI$ & $\clOI$ \\
$\clSmin$ & $\clS$ & $\clU$ & & & \\
$\clSminOX$ & $\clSc$ & $\clU$ &
$\clSminXO$ & $\clSc$ & $\clU$ \\
$\clSminOICO$ & $\clSc$ & $\clMU$ & 
$\clSminIOCO$ & $\clSc$ & $\clMU$ \\
$\clSminOI$ & $\clSc$ & $\clTcU$ &
$\clSminIO$ & $\clSc$ & $\clTcU$ \\
$\clSminOO$ & $\clSc$ & $\clU$ & & & \\
$\clM$ & $\clM$ & $\clM$ & $\clMneg$ & $\clM$ & $\clM$ \\
$\clMo$ & $\clMo$ & $\clMo$ & $\clMineg$ & $\clMi$ & $\clMo$ \\
$\clMi$ & $\clMi$ & $\clMi$ & $\clMoneg$ & $\clMo$ & $\clMi$ \\
$\clMc$ & $\clMc$ & $\clMc$ & $\clMcneg$ & $\clMc$ & $\clMc$ \\
$\clU$ & $\clU$ & $\clU$ &
$\clWneg$ & $\clW$ & $\clU$ \\
$\clTcUCO$ & $\clTcU$ & $\clMU$ & 
$\clTcWnegCO$ & $\clTcW$ & $\clMU$ \\
$\clTcU$ & $\clTcU$ & $\clTcU$ &
$\clTcWneg$ & $\clTcW$ & $\clTcU$ \\
$\clMU$ & $\clMU$ & $\clMU$ &
$\clMWneg$ & $\clMW$ & $\clMU$ \\
$\clMcU$ & $\clMcU$ & $\clMcU$ &
$\clMcWneg$ & $\clMcW$ & $\clMcU$ \\
$\clUOO$ & $\clTcU$ & $\clU$ &
$\clWnegOO$ & $\clTcW$ & $\clU$ \\
$\clUWneg$ & $\clSM$ & $\clU$ & & & \\
$\clRefl$ & $\clS$ & $\clAll$ & & & \\
$\clReflOOC$ & $\clSc$ & $\clM$ & $\clReflIIC$ & $\clSc$ & $\clM$ \\
$\clReflOO$ & $\clSc$ & $\clOX$ & $\clReflII$ & $\clSc$ & $\clXI$ \\
$\clVak$ & $\clAll$ & $\clAll$ & & & \\
$\clVako$ & $\clAll$ & $\clOX$ & $\clVaki$ & $\clAll$ & $\clXI$ \\
$\clEmpty$ & $\clAll$ & $\clAll$ & & & \\
\bottomrule
\end{tabular}}
\bigskip
\caption{$(\clIc,\mathsf{MU}^2_{01})$\hyp{}clonoids and their stability under right and left composition with clones of Boolean functions.}
\label{table:McU2-stable}
\end{table}

It only remains to determine the $(C_1,C_2)$\hyp{}clonoids, where $C_1$ is arbitrary and $\clMcUk{k} \subseteq C_2 \subseteq \clUk{k}$, for each $k \geq 3$.
The $(\clIc,\clMcUk{2})$\hyp{}clonoids listed in Table~\ref{table:McU2-stable} are also $(\clIc,\clMcUk{k})$\hyp{}clonoids for every $k \geq 2$, so we only need to consider the remaining $(\clIc,\clMcUk{k})$\hyp{}clonoids, which are of the form $\clKlik{k}{\Theta}$ or $\clKlik{k}{\Theta} \cap C$,
where
\[
C \in \{\clOICO, \clIOCO, \clOI, \clIO, \clMo, \clMc, \clMineg, \clMcneg, \clReflOO\}.
\]

\subsection{Stability under left composition}

We start with stability under left composition with $\clMUk{k}$, $\clTcUk{k}$, and $\clUk{k}$, for $k \geq 3$.
We will make use of Lemmata~\ref{lem:right-stable} and \ref{lem:left-stable}
and the generating sets of the clones $\clMcUk{k}$, $\clMUk{k}$, $\clTcUk{k}$, and $\clUk{k}$ mentioned in Subsection~\ref{subsec:gen-nu}.

\begin{lemma}
\label{lem:left-stability-apu}
Let $k \in \IN \cup \{\infty\}$, and let $\Theta \subseteq \clAllleq{k}$ with $\Theta \neq \emptyset$.
Assume that $k$ is the least element $\ell \in \IN \cup \{\infty\}$ such that $\clKlik{k}{\Theta} = \clKlik{\ell}{\Theta'}$ for some $\Theta' \subseteq \clAllleq{\ell}$.
Assume that $k \geq 2$.
\begin{enumerate}[label={\upshape(\roman*)}]
\item\label{lem:left-stability-apu:1}
There exists a set $T \subseteq \{0,1\}^n$ for some $n$ with $\card{T} = k$ such that $\chi_T \notin {\downarrow} \Theta$ \textup{(}and hence $\chi_T \notin \clKlik{k}{\Theta}$\textup{)} but for all proper subsets $S$ of $T$, $\chi_S \in {\downarrow} \Theta$ \textup{(}and hence $\chi_S \in \clKlik{k}{\Theta}$\textup{)}.
\item\label{lem:left-stability-apu:2}
For the set $T$ prescribed in \ref{lem:left-stability-apu:1}, we have $\chi_T \in \clMcUk{k-1} \, \clKlik{k}{\Theta}$ but $\chi_T \notin \clKlik{k}{\Theta}$.
Consequently, $\clMcUk{k-1} \, \clKlik{k}{\Theta} \nsubseteq \clKlik{k}{\Theta}$.
\item\label{lem:left-stability-apu:C}
For $C \in \{\clXI, \clIX, \clM, \clMneg, \clRefl\}$,
if $\Theta$ is $(k,C)$\hyp{}closed, then, for the set $T$ prescribed in \ref{lem:left-stability-apu:1}, we have $\chi_T^C \notin \clKlik{k}{\Theta} \cap C$ and for every proper subset $S$ of $T$, $\chi_S^C \in \clKlik{k}{\Theta} \cap C$.
\item\label{lem:left-stability-apu:C2}
For $C \in \{\clXI, \clIX, \clM, \clMneg, \clRefl\}$,
if $\Theta$ is $(k,C)$\hyp{}closed, then $\clMcUk{k-1} (\clKlik{k}{\Theta} \cap C) \nsubseteq \clKlik{k}{\Theta} \supseteq \clKlik{k}{\Theta} \cap C$.
\end{enumerate}
\end{lemma}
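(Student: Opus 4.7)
The plan is to extract from the minimality hypothesis a minimal witness set $T$ in part~\ref{lem:left-stability-apu:1} and then exploit the identity $\chi_T = \threshold{k}{k-1}(\chi_{T_1}, \dots, \chi_{T_k})$, where $T = \{\vect{a}_1, \dots, \vect{a}_k\}$ and $T_i := T \setminus \{\vect{a}_i\}$, as the engine for the remaining parts. This identity holds because at each $\vect{a}_j \in T$ exactly $k-1$ of the values $\chi_{T_i}(\vect{a}_j)$ equal $1$, while outside $T$ all of them vanish. The threshold function $\threshold{k}{k-1}$ lies in $\clMcUk{k-1}$ by the generating sets listed in Section~\ref{subsec:gen-nu} (with the natural reading $\threshold{2}{1} = \mathord{\vee} \in \clMc$ covering the boundary case $k = 2$).

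For part~\ref{lem:left-stability-apu:1}, I would first establish $\clKlik{k}{\Theta} \subsetneq \clKlik{k-1}{\Theta}$. The inclusion $\subseteq$ is Lemma~\ref{lem:kllk-inclusion} with $\Theta' = \Theta$; strictness comes from the minimality hypothesis, since Lemma~\ref{lem:klik-atmostk} rewrites $\clKlik{k-1}{\Theta}$ as $\clKlik{k-1}{\Theta''}$ for some $\Theta'' \subseteq \clAllleq{k-1}$, so an equality would express $\clKlik{k}{\Theta}$ at level $k-1$, contradicting the minimality of $k$. Any $f \in \clKlik{k-1}{\Theta} \setminus \clKlik{k}{\Theta}$ then supplies a set $T \subseteq f^{-1}(1)$ with $\card{T} \leq k$ and $\chi_T \notin {\downarrow}\Theta$; because $f$ still satisfies the level-$(k-1)$ condition, one must have $\card{T} = k$ exactly, and $\chi_S \in {\downarrow}\Theta$ for every proper subset $S$ of $T$.

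Part~\ref{lem:left-stability-apu:2} now follows immediately: each $\chi_{T_i}$ lies in ${\downarrow}\Theta \subseteq \clKlik{k}{\Theta}$, so the threshold identity places $\chi_T$ in $\clMcUk{k-1}\,\clKlik{k}{\Theta}$, while $T \subseteq \chi_T^{-1}(1)$ of size $k$ combined with $\chi_T \notin {\downarrow}\Theta$ keeps $\chi_T$ out of $\clKlik{k}{\Theta}$. For part~\ref{lem:left-stability-apu:C}, note that any proper $S \subsetneq T$ satisfies $\chi_S \in \clKlik{k}{\Theta}$, because every subset of $S$ is itself a proper subset of $T$ of cardinality at most $k-1$; under the $(k,C)$-closedness hypothesis, Lemma~\ref{lem:k-1MR-closed}\ref{lem:k-1MR-closed:a} gives $(\clKlik{k}{\Theta})^C = \clKlik{k}{\Theta} \cap C$, whence $\chi_S^C \in \clKlik{k}{\Theta} \cap C$; meanwhile $\chi_T^C \geq \chi_T$ belongs to $C$, but $T \subseteq (\chi_T^C)^{-1}(1)$ with $\chi_T \notin {\downarrow}\Theta$ excludes $\chi_T^C$ from $\clKlik{k}{\Theta} \cap C$.

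Finally, part~\ref{lem:left-stability-apu:C2} combines the two preceding ideas: the function $g := \threshold{k}{k-1}(\chi_{T_1}^C, \dots, \chi_{T_k}^C)$ lies in $\clMcUk{k-1}(\clKlik{k}{\Theta} \cap C)$ by part~\ref{lem:left-stability-apu:C}, and since each $\chi_{T_i}^C$ majorizes $\chi_{T_i}$ we have $g \geq \chi_T$, so $T \subseteq g^{-1}(1)$ with $\card{T} = k$ and $\chi_T \notin {\downarrow}\Theta$ forces $g \notin \clKlik{k}{\Theta}$. The main obstacle I anticipate is the careful handling of the minimality hypothesis in part~\ref{lem:left-stability-apu:1}, where Lemma~\ref{lem:klik-atmostk} is silently but essentially needed to convert an equality at level $k-1$ into the $\Theta'$ form that the minimality rules out; once $T$ is in hand, the threshold composition and the closure formula carry the rest largely mechanically.
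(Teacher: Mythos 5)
Your proof is correct and, for parts \ref{lem:left-stability-apu:1}--\ref{lem:left-stability-apu:C}, follows the paper's argument essentially step for step: the same strict inclusion $\clKlik{k}{\Theta} \subsetneq \clKlik{k-1}{\Theta}$ extracted from the minimality hypothesis (you make explicit the appeal to Lemma~\ref{lem:klik-atmostk} that the paper leaves tacit, which is a worthwhile clarification), the same minimal witness $T$, the same identity $\chi_T = \threshold{k}{k-1}(\chi_{S_1},\dots,\chi_{S_k})$, and the same use of Lemma~\ref{lem:k-1MR-closed} to place the closures $\chi_S^C$ in $\clKlik{k}{\Theta} \cap C$. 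The one genuine divergence is in part \ref{lem:left-stability-apu:C2}: the paper establishes the exact identity $\threshold{k}{k-1}(\chi_{S_1}^C,\dots,\chi_{S_k}^C) = \chi_T^C$ via a five-way case analysis on $C \in \{\clXI,\clIX,\clM,\clMneg,\clRefl\}$, whereas you only show that this composite majorizes $\chi_T$ (immediate from $\chi_{S_i} \minorant \chi_{S_i}^C$ and the monotonicity of $\threshold{k}{k-1}$) and note that having $T$ among its true points with $\chi_T \notin {\downarrow}\Theta$ already expels it from $\clKlik{k}{\Theta}$. Since the lemma only asserts non-containment in $\clKlik{k}{\Theta}$, this weaker conclusion suffices, and your version eliminates the case distinction entirely; what it gives up is the exact identification of the composite as $\chi_T^C$, which the statement does not require. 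Both treatments share the same glossed-over edge case $k = \infty$ (where $k-1$ and $\threshold{k}{k-1}$ need interpretation), so no fault attaches to you there.
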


\begin{proof}
\ref{lem:left-stability-apu:1}
Because of the minimality of $k$, we have that $\clKlik{k}{\Theta} \subsetneq \clKlik{k-1}{\Theta}$, so there is a $\varphi \in \clKlik{k-1}{\Theta} \setminus \clKlik{k}{\Theta}$.
Thus, there exists a $T \subseteq \varphi^{-1}(1)$ with $\card{T} = k$ such that $\chi_T \notin {\downarrow} \Theta$ -- and hence $\chi_T \notin \clKlik{k}{\Theta}$ -- but for every proper subset $S$ of $T$, we have $\chi_S \in {\downarrow} \Theta$.

\ref{lem:left-stability-apu:2}
Let $T = \{\vect{a}_1, \dots, \vect{a}_k\}$ be the set prescribed in \ref{lem:left-stability-apu:1},
and for $i \in \nset{k}$, let $S_i := T \setminus \{\vect{a}_i\}$.
Therefore. $\chi_{S_i} \in {\downarrow} \Theta$ and hence $\chi_{S_1}, \dots, \chi_{S_k} \in \clKlik{k}{\Theta}$ for all $i \in \nset{k}$.
Let $\tau := \threshold{k}{k-1}(\chi_{S_1}, \dots, \chi_{S_k})$.
Because $\threshold{k}{k-1} \in \clMcUk{k-1}$, we have $\tau \in \clMcUk{k-1} \, \clKlik{k}{\Theta}$.
Moreover, it is easy to verify that $\tau = \chi_T$; hence $\tau \notin \clKlik{k}{\Theta}$.
This shows that $\clMcUk{k-1} \, \clKlik{k}{\Theta} \nsubseteq \clKlik{k}{\Theta}$.

\ref{lem:left-stability-apu:C}
If $\Theta$ is $(k,C)$\hyp{}closed, then
$\chi_T^C \notin \clKlik{k}{\Theta}$
but $\chi_{S_i}^C \in \clKlik{k}{\Theta} \cap C$ for all $i \in \nset{k}$
by Lemma~\ref{lem:k-1MR-closed}.

\ref{lem:left-stability-apu:C2}
We are going to show that $\threshold{k}{k-1}(\chi_{S_1}^C, \dots, \chi_{S_k}^C) = \chi_T^C$.
Since $\clonegen{\threshold{k}{k-1}} = \clMcUk{k-1}$ and
$\chi_{S_i}^C \in \clKlik{k}{\Theta} \cap C$ and
$\chi_T^C \notin \clKlik{k}{\Theta} \supseteq \clKlik{k}{\Theta} \cap C$ by \ref{lem:left-stability-apu:C},
it follows from Lemma~\ref{lem:left-stable} that $\clMcUk{k-1} (\clKlik{k}{\Theta} \cap C) \nsubseteq \clKlik{k}{\Theta} \cap C$.

The argument is slightly different for different clones $C$, and we need to consider different cases.

Case~1: $C = \clXI$.
Because $\chi_{S_i}^\clXI(\vect{1}) = 1$ and $\chi_{S_i}^\clXI(\vect{a}) = \chi_{S_i}(\vect{a})$ whenever $\vect{a} \neq \vect{1}$, it is easy to verify that $\threshold{k}{k-1}(\chi_{S_1}^\clXI, \dots, \chi_{S_k}^\clXI) = \chi_T^\clXI$.

Case~2: $C = \clIX$.
The argument is similar to Case~1.

Case~3: $C = \clM$.
Assume $\vect{b} \in (\chi_T^\clM)^{-1}(1)$.
Then there is an $\vect{a}_j \in T$ such that $\vect{a}_j \leq \vect{b}$.
But then we have for all $i \in \nset{k} \setminus \{j\}$ that $\chi_{S_i}(\vect{a}_j) = 1$, so $\chi_{S_i}^\clM(\vect{b}) = 1$.
Therefore $\threshold{k}{k-1}(\chi_{S_1}^\clM, \dots, \chi_{S_k}^\clM)(\vect{b}) = \threshold{k}{k-1}(1, \dots, 1, \chi_{S_j}^\clM(\vect{b}), 1, \dots, 1) = 1$.
Assume now that $\vect{b} \in (\chi_T^\clM)^{-1}(0)$.
Then $\vect{b} \nleq \vect{a}_i$ for all $\vect{a}_i \in T$.
Consequently, $\chi_{S_i}(\vect{b}) = 0$ for all $i \in \nset{k}$, and so $\threshold{k}{k-1}(\chi_{S_1}^\clM, \dots, \chi_{S_k}^\clM)(\vect{b}) = \threshold{k}{k-1}(0, \dots, 0) = 0$.

Case~4: $C = \clMneg$.
The argument is similar to Case~3.

Case~5: $C = \clRefl$.
Straightforward verification.
\end{proof}

\begin{proposition}
\label{prop:left-stability}
Let $k \in \IN \cup \{\infty\}$, and let $\Theta \subseteq \clAllleq{k}$ with $\Theta \neq \emptyset$.
Assume that $k$ is the least element $\ell \in \IN \cup \{\infty\}$ such that $\clKlik{k}{\Theta} = \clKlik{\ell}{\Theta'}$ for some $\Theta' \subseteq \clAllleq{\ell}$.
Assume that $k \geq 2$.
Let $C$ be a clone of Boolean functions.
\begin{enumerate}[label={\upshape(\roman*)}]
\item\label{prop:left-stability:Klik}
$C \clKlik{k}{\Theta} \subseteq \clKlik{k}{\Theta}$ if and only if $C \subseteq \clUk{k}$.

\item\label{prop:left-stability:Klik:XI}
Assume $\Theta$ is $(k,\clXI)$\hyp{}closed.
Then
\begin{enumerate}[label={\upshape(\alph*)}]
\item\label{prop:left-stability:Klik:XI:OICO}
$C (\clKlik{k}{\Theta} \cap (\clOICO)) \subseteq \clKlik{k}{\Theta} \cap (\clOICO)$ if and only if $C \subseteq \clMUk{k}$;
\item\label{prop:left-stability:Klik:XI:OI}
$C (\clKlik{k}{\Theta} \cap \clOI) \subseteq \clKlik{k}{\Theta} \cap \clOI$ if and only if $C \subseteq \clTcUk{k}$.
\end{enumerate}

\item\label{prop:left-stability:Klik:IX}
Assume $\Theta$ is $(k,\clIX)$\hyp{}closed.
Then
\begin{enumerate}[label={\upshape(\alph*)}]
\item\label{prop:left-stability:Klik:IX:IOCO}
$C (\clKlik{k}{\Theta} \cap (\clIOCO)) \subseteq \clKlik{k}{\Theta} \cap (\clIOCO)$ if and only if $C \subseteq \clMUk{k}$;
\item\label{prop:left-stability:Klik:IX:IO}
$C (\clKlik{k}{\Theta} \cap \clIO) \subseteq \clKlik{k}{\Theta} \cap \clIO$ if and only if $C \subseteq \clTcUk{k}$.
\end{enumerate}

\item\label{prop:left-stability:Klik:M}
Assume $\Theta$ is $(k,\clM)$\hyp{}closed.
Then
\begin{enumerate}[label={\upshape(\alph*)}]
\item\label{prop:left-stability:Klik:M:Mo}
$C (\clKlik{k}{\Theta} \cap \clMo) \subseteq \clKlik{k}{\Theta} \cap \clMo$ if and only if $C \subseteq \clMUk{k}$.
\item\label{prop:left-stability:Klik:M:Mc}
$C (\clKlik{k}{\Theta} \cap \clMc) \subseteq \clKlik{k}{\Theta} \cap \clMc$ if and only if $C \subseteq \clMcUk{k}$.
\end{enumerate}

\item\label{prop:left-stability:Klik:Mneg}
Assume $\Theta$ is $(k,\clMneg)$\hyp{}closed.
Then
\begin{enumerate}[label={\upshape(\alph*)}]
\item\label{prop:left-stability:Klik:Mneg:Mineg}
$C (\clKlik{k}{\Theta} \cap \clMineg) \subseteq \clKlik{k}{\Theta} \cap \clMineg$ if and only if $C \subseteq \clMUk{k}$.
\item\label{prop:left-stability:Klik:Mneg:Mcneg}
$C (\clKlik{k}{\Theta} \cap \clMcneg) \subseteq \clKlik{k}{\Theta} \cap \clMcneg$ if and only if $C \subseteq \clMcUk{k}$.
\end{enumerate}

\item\label{prop:left-stability:Klik:R}
Assume $\Theta$ is $(k,\clRefl)$\hyp{}closed.
Then
$C (\clKlik{k}{\Theta} \cap \clReflOO) \subseteq \clKlik{k}{\Theta} \cap \clReflOO$ if and only if $C \subseteq \clUk{k}$.
\end{enumerate}
\end{proposition}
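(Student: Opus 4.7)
The strategy is to prove each of the six biconditionals by two opposite inclusions, using Lemma~\ref{lem:left-stable} for the easy direction and Theorem~\ref{thm:CK1CK2} together with the lattice structure of clones of Boolean functions for the hard direction.

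For the $(\Leftarrow)$ direction of each part, assume $C$ is contained in the named clone $D$. By Lemma~\ref{lem:left-stable}, it suffices to verify that $K$ is minor-closed and that each generator of $D$ from the explicit list in Subsection~\ref{subsec:gen-nu} preserves $K$. Minor-closedness of $K$ follows from Proposition~\ref{prop:UTheta-stability}\ref{prop:UTheta-stability:minor} combined with the fact that $\clOI$, $\clIO$, $\clMo$, $\clMc$, $\clMineg$, $\clMcneg$, $\clReflOO$ are themselves minor-closed (being clones or intersections of clones with $\clVako$). Since $\clMcUk{k}$ is common to all the generating sets and already preserves $\clKlik{k}{\Theta}$ by Proposition~\ref{prop:UTheta-stability}\ref{prop:UTheta-stability:left-stable}, only the supplementary generators need checking. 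The constant $0$ lies in $\clKlik{k}{\Theta}$ because $\Theta \neq \emptyset$, and in all the auxiliary classes $\clOICO$, $\clIOCO$, $\clMo$, $\clMineg$, $\clReflOO$. For $\mathord{\nrightarrow}$, the true points of $\mathord{\nrightarrow}(f_1,f_2)$ form a subset of those of $f_1$, so preservation of $\clKlik{k}{\Theta}$ is automatic, and reflexivity of $f_1, f_2$ passes to $\mathord{\nrightarrow}(f_1,f_2)$ by a pointwise check. For $x \wedge (y \rightarrow z)$, a direct computation gives $(f_1 \wedge (f_2 \rightarrow f_3))(\vect c) = f_1(\vect c)$ for $\vect c \in \{\vect 0, \vect 1\}$ whenever $f_1, f_2, f_3 \in \clOI$, ensuring $\clOI$-stability.

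For the $(\Rightarrow)$ direction, Theorem~\ref{thm:CK1CK2} provides a maximal clone $C^K_2$ with $CK \subseteq K$ iff $C \subseteq C^K_2$. By the $(\Leftarrow)$ direction, $C^K_2 \supseteq D$; to show $C^K_2 = D$ it suffices to show that no clone strictly larger than $D$ stabilizes $K$, which by the lattice structure reduces to checking the immediate upper covers of $D$ in Post's lattice. From Figure~\ref{fig:Post}, these covers are the following: above $\clUk{k}$, just $\clUk{k-1}$ (or $\clOX$ when $k=2$); above $\clTcUk{k}$, the clones $\clTcUk{k-1}$ and $\clUk{k}$; above $\clMUk{k}$, the clones $\clMUk{k-1}$ and $\clUk{k}$; above $\clMcUk{k}$, the clones $\clMcUk{k-1}$, $\clMUk{k}$, and $\clTcUk{k}$. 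Any cover containing $\threshold{k}{k-1}$ fails via Lemma~\ref{lem:left-stability-apu}\ref{lem:left-stability-apu:2} or \ref{lem:left-stability-apu:C2}, which handles all covers lying in the ``rank $k-1$ direction''. For the remaining covers:
\begin{itemize}
\item $\clUk{k}$ above $\clMUk{k}$ or $\clTcUk{k}$: the witness is $\mathord{\nrightarrow}$ applied to two projections $\pr^{(2)}_1, \pr^{(2)}_2$, which lie in $K$ thanks to the structural richness of $K$ guaranteed by $(k,C)$-closedness of $\Theta$ (when projections themselves do not belong to $K$, one substitutes the characteristic functions $\chi_{S_i}$ from Lemma~\ref{lem:left-stability-apu}\ref{lem:left-stability-apu:1}). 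The output $\mathord{\nrightarrow}(\pr^{(2)}_1, \pr^{(2)}_2) = \mathord{\nrightarrow}$ lies in $\clOO \setminus \clVako$, hence outside $\clOICO$, $\clIOCO$, $\clMo$, $\clMineg$, $\clReflOO$.
\item $\clMUk{k}$ above $\clMcUk{k}$: applying the constant $0 \in \clMUk{k}$ to any element of $K \cap \clMc$ yields the constant $0$, which is outside $\clMc$; the case for $\clMcneg$ is symmetric.
\item $\clTcUk{k}$ above $\clMcUk{k}$: $x \wedge (y \rightarrow z)$ applied to projections produces the ternary function $x \wedge (y \rightarrow z)$ itself, which is not monotone in $y$ and so lies outside $\clMc$.
\end{itemize}

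The main obstacle, and the technically most delicate point, is ensuring that the witness functions used in the $(\Rightarrow)$ direction genuinely lie in $K$ for the specific $\Theta$ at hand. The minimality hypothesis on $k$ combined with $(k,C)$-closedness of $\Theta$ in the relevant parts is essential here; in particular, Lemma~\ref{lem:left-stability-apu}\ref{lem:left-stability-apu:1} supplies a canonical set $T$ of size $k$ whose proper subsets give characteristic functions that all lie in $K$, furnishing a uniform supply of witnesses independent of the finer structure of $\Theta$. Substituting these $\chi_{S_i}$ for projections (or for any other specific witness) yields the required counterexamples in the remaining cases via the same compositions.
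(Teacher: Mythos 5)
Your overall strategy coincides with the paper's: sufficiency via stability of the generators (the paper instead intersects part~\ref{prop:left-stability:Klik} with the known stability data via Lemma~\ref{lem:intersections}, but this is equivalent), and necessity via Theorem~\ref{thm:CK1CK2} reduced to the upper covers of the target clone in Post's lattice, with Lemma~\ref{lem:left-stability-apu} disposing of every cover that contains $\threshold{k}{k-1}$. Your witnesses for the remaining covers ($\mathord{\nrightarrow}(\pr_1,\pr_2)$, the constant $0$, and $x\wedge(y\rightarrow z)$ applied to projections) are simpler than the paper's, which builds modified minorants of an arbitrary nonconstant $f\in K$, and they do work.

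The one load-bearing step you leave unproved is that the projections actually lie in $K$. This is true, and you should prove it rather than appeal to ``structural richness'': by the minimality of $k$, $\Theta$ contains some $\theta$ with a true point $\vect{u}$, and then $\vect{1}\in(\theta^{\clXI})^{-1}(1)$ (resp.\ $(\theta^{\clM})^{-1}(1)$), so $(k,\clXI)$- or $(k,\clM)$-closedness forces $\chi_{\{\vect{1}\}}\eqminmin\id$ into $\Theta$; dually $\neg\in\Theta$ in the $\clIX$/$\clMneg$ cases. Hence $\pr_1,\pr_2\in{\downarrow}\Theta\cap\clOI\cap\clMc\subseteq K$ in every case where you need them. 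Your proposed fallback, however, is broken and should be deleted: the functions $\chi_{S_i}$ of Lemma~\ref{lem:left-stability-apu}\ref{lem:left-stability-apu:1} need not lie in $\clOICO$ or $\clMo$ at all (nothing forces $\vect{1}\in S_i$ or monotonicity), and even replacing them by $\chi_{S_i}^{C}$ one finds that $\mathord{\nrightarrow}(\chi_{S_1}^{\clXI},\chi_{S_2}^{\clXI})$ can be the constant $0$, which lies \emph{inside} $\clOICO$, so no contradiction results. Two smaller points: (1) for $k=2$ the clone $\clMcUk{k-1}$ is undefined, so Lemma~\ref{lem:left-stability-apu} does not literally apply; the paper sidesteps this by checking $k=2$ against Table~\ref{table:McU2-stable}, and you should either do likewise or observe that the construction $\threshold{2}{1}(\chi_{S_1},\chi_{S_2})=\chi_T$ with $\threshold{2}{1}=\vee$ still lands in every relevant upper cover ($\clOX$, $\clOI$, $\clMo$, $\clMc$). (2) Listing $\clReflOO$ among the classes excluded by $\mathord{\nrightarrow}$ is a red herring: in part~\ref{prop:left-stability:Klik:R} the target clone is $\clUk{k}$ itself, so no cover inside $\clUk{k}$ needs to be refuted there.
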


\begin{proof}
\ref{prop:left-stability:Klik}
First let us verify the claim for $k = 2$.
By reading off from Figure~\ref{fig:minmin1} and using the translation table from Example~\ref{ex:classes}, we see that the classes of the form $\clKlik{1}{\Theta}$ are
\begin{align*}
& \clKlik{1}{\{\id, \neg\}} = \clAll = \clKlik{2}{\{1\}}, &
& \clKlik{1}{\{\id\}} = \clOX = \clKlik{2}{\{\id, \mathord{+}\}}, &
& \clKlik{1}{\{\neg\}} = \clXO = \clKlik{2}{\{\neg, \mathord{+}\}}, \\
& \clKlik{1}{\{\mathord{\nrightarrow}\}} = \clOO = \clKlik{2}{\{\mathord{+}\}}, &
& \clKlik{1}{\{0\}} = \clVako = \clKlik{2}{\{0\}}, &
& \clKlik{1}{\emptyset} = \clEmpty = \clKlik{2}{\emptyset}.
\end{align*}
Now, by reading from Figure~\ref{fig:minmin2}, we see that the classes of the form $\clKlik{2}{\Theta}$ for which $2$ is the least number $\ell$ for which $\clKlik{2}{\Theta} = \clKlik{\ell}{\Theta'}$ for some $\Theta'$ are
\begin{align*}
& \clKlik{2}{\{\id, \neg, \mathord{+}\}} = \clEiii, &
& \clKlik{2}{\{\id, \neg\}} = \clSmin, &
& \clKlik{2}{\{\id, \lambda_{30}\}} = \clSminOX, &
& \clKlik{2}{\{\neg, \lambda_{31}\}} = \clSminXO, \\
& \clKlik{2}{\{\lambda_{30}, \lambda_{31}\}} = \clSminOO, &
& \clKlik{2}{\{\id\}} = \clUk{2}, &
& \clKlik{2}{\{\neg\}} = \clWkneg{2}, &
& \clKlik{2}{\{\lambda_{30}\}} = \clWknegOO{2}, \\
& \clKlik{2}{\{\lambda_{31}\}} = \clUkOO{2}, &
& \clKlik{2}{\{\mathord{\rightarrow}\}} = \clUk{2} \cap \clWk{2}.
\end{align*}
Now we can check from Table~\ref{table:McU2-stable} that for each of these classes $\clKlik{2}{\Theta}$, it holds that $C \clKlik{2}{\Theta} \subseteq \clKlik{2}{\Theta}$ if and only if $C \subseteq \clUk{2}$.

Assume now that $k \geq 3$.
Observe first that $\{\mathord{\nrightarrow}\} \clKlik{k}{\Theta} \subseteq \clKlik{k}{\Theta}$ because $\mathord{\nrightarrow}(f,g)$ is a minorant of $f$ and $\clKlik{k}{\Theta}$ is closed under minorants.
Moreover, $\clMcUk{k} \clKlik{k}{\Theta} \subseteq \clKlik{k}{\Theta}$ by Proposition~\ref{prop:UTheta-stability}.
Since $\clUk{k} = \clonegen{\clMcUk{k} \cup \{\mathord{\nrightarrow}\}}$, it now follows from Lemma~\ref{lem:left-stable} that $\clUk{k} \clKlik{k}{\Theta} \subseteq \clKlik{k}{\Theta}$.

Since every proper superclone of $\clUk{k}$ is a superclone of $\clMcUk{k-1}$, the claim will follow
if we show that $C \clKlik{k}{\Theta} \nsubseteq \clKlik{k}{\Theta}$ for every clone $C$ containing $\clMcUk{k-1}$.
By the monotonicity of function class composition, it suffices to show this for $C = \clMcUk{k-1}$; this holds by Lemma~\ref{lem:left-stability-apu}\ref{lem:left-stability-apu:2}.

\smallskip
Now, for the proof of statements \ref{prop:left-stability:Klik:XI}--\ref{prop:left-stability:Klik:R}, we can read from Table~\ref{table:McU2-stable} that
$C K \subseteq K$ if and only if $C \subseteq C^K$, where
\begin{itemize}
\item $C^K = \clMo$ for $K \in \{\clOICO, \clIOCO, \clMo, \clMineg\}$,
\item $C^K = \clOI$ for $K \in \{\clOI, \clIO\}$,
\item $C^K = \clMc$ for $K \in \{\clMc, \clMcneg\}$,
\item $C^K = \clOX$ for $K \in \{\clReflOO\}$.
\end{itemize}
By part~\ref{prop:left-stability:Klik} and Lemma~\ref{lem:intersections},
$C (\clKlik{k}{\Theta} \cap K) \subseteq \clKlik{k}{\Theta} \cap K$ whenever $C \subseteq \clUk{k} \cap C^K$.
This proves the sufficiency of the conditions.
It remains to prove necessity.
By Theorem~\ref{thm:CK1CK2}, we need to show that $C (\clKlik{k}{\Theta} \cap K) \nsubseteq \clKlik{k}{\Theta} \cap K$ whenever $C$ is a proper superclone of $C^K$; it suffices to show this for the upper covers of $C^K$ in Post's lattice.

Lemma~\ref{lem:left-stability-apu}\ref{lem:left-stability-apu:C2} shows that $\clMcUk{k-1} (\clKlik{k}{\Theta} \cap K) \subseteq \clKlik{k}{\Theta} \cap K$ and hence $C (\clKlik{k}{\Theta} \cap K) \subseteq \clKlik{k}{\Theta} \cap K$ for every superclone $C$ of $\clMcUk{k-1}$.
It only remains to consider the upper covers of $C^K$ contained in $\clUk{k}$.

\ref{prop:left-stability:Klik:R}:
There remains nothing to show for this statement.

\ref{prop:left-stability:Klik:XI}\ref{prop:left-stability:Klik:XI:OICO}\ref{prop:left-stability:Klik:XI:OI} and \ref{prop:left-stability:Klik:M}\ref{prop:left-stability:Klik:M:Mo}:
In order to show that $\clUk{k} (\clKlik{k}{\Theta} \cap K) \nsubseteq \clKlik{k}{\Theta} \cap K$ when $K$ equals $\clOICO$, $\clOI$, or $\clMo$, we apply Lemma~\ref{lem:left-stable} with the fact that $\clUk{k} = \clonegen{\threshold{k+1}{k}, \mathord{\nrightarrow}}$.
Let $f \in \clKlik{k}{\Theta} \cap K$ be a nonconstant function, and let $\vect{u}$ be a (minimal) true point of $f$.
We may assume that $\vect{u} \neq \vect{1}$ by introducting a fictitious argument.
Let now $f'$ be the minorant of $f$ with $f'(\vect{u}) = 0$ and $f'(\vect{a}) = f(\vect{a})$ for $\vect{a} \neq \vect{u}$.
Then $f' \in \clKlik{k}{\Theta} \cap K$.
Let $g := \mathord{\nrightarrow}(f,f')$.
We have
\begin{align*}
g(\vect{u}) &= f(\vect{u}) \nrightarrow f'(\vect{u}) = 1 \nrightarrow 0 = 1, \\
g(\vect{0}) &= f(\vect{0}) \nrightarrow f'(\vect{0}) = 0 \nrightarrow 0 = 0, \\
g(\vect{1}) &= f(\vect{1}) \nrightarrow f'(\vect{1}) = 1 \nrightarrow 1 = 0.
\end{align*}
Thus $g \in \clOO \setminus \clVako$; hence $g \notin K \supseteq \clKlik{k}{\Theta} \cap K$.

\ref{prop:left-stability:Klik:M}\ref{prop:left-stability:Klik:M:Mc}
In order to show that $C (\clKlik{k}{\Theta} \cap \clMc) \nsubseteq \clKlik{k}{\Theta} \cap \clMc$ when $C$ is $\clMUk{k}$ or $\clTcUk{k}$, we apply Lemma~\ref{lem:left-stable} with the fact that $\clMUk{k} = \clonegen{\threshold{k+1}{k}, 0}$ and $\clTcUk{k} = \clonegen{\threshold{k+1}{k}, x \wedge (y \rightarrow z)}$.
Firstly, $0(g_1, \dots, g_n) = 0 \notin \clMc \supseteq \clKlik{k}{\Theta} \cap \clMc$.
Let now $f \in \clKlik{k}{\Theta} \cap \clMc$ be a nonconstant function.
Then $f$ is monotone; let $\vect{u}$ be a minimal true point of $f$.
As above, we may assume that $\vect{u} \neq \vect{1}$ by adding a fictitious argument if necessary.
Now add a fictitious argument to $f$ to get a minor $f^*$ of $f$; we have $f^* \in \clKlik{k}{\Theta} \cap \clMc$.
Then $\vect{u}0$ is a minimal true point of $f^*$.
Let $f'$ and $f''$ be the minorants of $f^*$ with $f'(\vect{u}0) = f''(\vect{u}0) = 0$, $f'(\vect{u}1) = 1$, $f''(\vect{u}1) = 0$ and $f'(\vect{a}) = f''(\vect{a}) = f^*(\vect{a})$ for all $\vect{a} \notin \{\vect{u}0, \vect{u}1\}$.
Now $f', f'' \in \clKlik{k}{\Theta} \cap \clMc$.
Let $g := f^* \wedge (f' \rightarrow f'')$.
We have
\begin{align*}
g(\vect{u}0) &= f^*(\vect{u}0) \wedge (f'(\vect{u}0) \rightarrow f''(\vect{u}0)) = 1 \wedge (0 \rightarrow 0) = 1, \\
g(\vect{u}1) &= f^*(\vect{u}1) \wedge (f'(\vect{u}1) \rightarrow f''(\vect{u}1)) = 1 \wedge (1 \rightarrow 0) = 0.
\end{align*}
Consequently, $g \notin \clM \supseteq \clKlik{k}{\Theta} \cap \clMc$.

\ref{prop:left-stability:Klik:IX} and \ref{prop:left-stability:Klik:Mneg}:
These statements follow from \ref{prop:left-stability:Klik:XI} and \ref{prop:left-stability:Klik:M} by taking inner negations and applying Lemma~\ref{lem:stability-nd}.
\end{proof}

\subsection{Stability under right composition}

Unfortunately, we were not able to find such an explicit result on the stability of $(\clIc,\clMcUk{k})$\hyp{}clonoids of the form $\clKlik{k}{\Theta}$ under right composition with an arbitrary clone $C$.
We can, however, provide a necessary and sufficient condition in terms of $\Theta C$.

\begin{proposition}
Let $k \geq 2$, $\Theta \subseteq \clAllleq{k}$, and let $C$ be a clone.
We have $\clKlik{k}{\Theta} C \subseteq \clKlik{k}{\Theta}$ if and only if $({\downarrow} (\Theta C))^{[\leq k]} \subseteq ({\downarrow} \Theta)^{[\leq k]}$.
\end{proposition}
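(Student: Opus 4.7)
The plan is to prove the two implications separately, using the characterisation of $\clKlik{k}{\Theta}$ via true points (Definition~\ref{def:UkTheta}) together with the closure properties established in Proposition~\ref{prop:UTheta-stability} and the characterisation of $\mathord{\minmin}$ from Lemma~\ref{lem:minmin}\ref{lem:minmin:comp}.

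For the forward direction, I would start from an $f \in ({\downarrow}(\Theta C))^{[\leq k]}$ and produce a witness in ${\downarrow}\Theta$ with at most $k$ true points. Since every $\theta \in \Theta$ trivially lies in $\clKlik{k}{\Theta}$ (any $T \subseteq \theta^{-1}(1)$ gives $\chi_T \minorant \theta$, so $\chi_T \in {\downarrow}\Theta$), the hypothesis yields $\Theta C \subseteq \clKlik{k}{\Theta} C \subseteq \clKlik{k}{\Theta}$. Because $\clKlik{k}{\Theta}$ is closed under minors and minorants by Proposition~\ref{prop:UTheta-stability}\ref{prop:UTheta-stability:minorant}--\ref{prop:UTheta-stability:minor}, the relation $f \minmin g$ for some $g \in \Theta C$ forces $f \in \clKlik{k}{\Theta}$. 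As $f$ has at most $k$ true points, taking $T := f^{-1}(1)$ gives $f = \chi_T$, and the defining property of $\clKlik{k}{\Theta}$ yields $\chi_T \in {\downarrow}\Theta$, so $f \in ({\downarrow}\Theta)^{[\leq k]}$.

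For the backward direction, take $f \in \clKlik{k}{\Theta}$ of arity $n$ and $g_1,\dots,g_n \in C$ of a common arity $m$, and set $h := f(g_1,\dots,g_n)$. To show $h \in \clKlik{k}{\Theta}$, pick $T \subseteq h^{-1}(1)$ with $\card{T} \leq k$ and consider $T' := \{(g_1(\vect{a}),\dots,g_n(\vect{a})) \mid \vect{a} \in T\} \subseteq f^{-1}(1)$. Because $f \in \clKlik{k}{\Theta}$ and $\card{T'} \leq \card{T} \leq k$, there exist $\theta \in \Theta$ and $\sigma \colon \nset{\arity\theta} \to \nset{n}$ with $\theta(\vect{b}\sigma) = 1$ for all $\vect{b} \in T'$. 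Rewriting this inner composition shows that the function $\varphi := \theta(g_{\sigma(1)},\dots,g_{\sigma(\arity\theta)}) \in \Theta C$ satisfies $\varphi(\vect{a}) = 1$ for every $\vect{a} \in T$, so $\chi_T \minorant \varphi$ and hence $\chi_T \in {\downarrow}(\Theta C)$. Since $\card{T} \leq k$, the hypothesis $({\downarrow}(\Theta C))^{[\leq k]} \subseteq ({\downarrow}\Theta)^{[\leq k]}$ gives $\chi_T \in {\downarrow}\Theta$, proving $h \in \clKlik{k}{\Theta}$.

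The argument is essentially a bookkeeping exercise, and the only genuinely delicate step is the conversion $T \leadsto T'$ in the backward direction: one must be careful to extract from $f \in \clKlik{k}{\Theta}$ a single function in $\Theta C$ that witnesses $\chi_T \in {\downarrow}(\Theta C)$ for the original set $T$ (rather than only for $T'$). Fortunately, the composition $\theta(g_{\sigma(1)},\dots,g_{\sigma(\arity\theta)})$ achieves precisely this by turning the outer minor formation map $\sigma$ into a reindexing of the inner functions $g_i$, so no obstruction arises.
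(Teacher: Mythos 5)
Your proposal is correct and follows essentially the same route as the paper: the forward direction reduces to $\Theta C \subseteq \clKlik{k}{\Theta}$ together with the identity $(\clKlik{k}{\Theta})^{[\leq k]} = ({\downarrow}\Theta)^{[\leq k]}$, and the backward direction uses exactly the composition $\theta_\sigma \circ (g_1,\dots,g_n) = \theta(g_{\sigma(1)},\dots,g_{\sigma(\arity{\theta})}) \in \Theta C$ as the witness for $\chi_T \in {\downarrow}(\Theta C)$. The step you flag as delicate is handled the same way in the paper, so no changes are needed.
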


\begin{proof}
``$\Rightarrow$''
Assume $\clKlik{k}{\Theta} C \subseteq \clKlik{k}{\Theta}$.
Then
$\Theta C \subseteq \clKlik{k}{\Theta} C \subseteq \clKlik{k}{\Theta}$,
so
$({\downarrow} ( \Theta C ) )^{[\leq k]} \subseteq ( {\downarrow} (  \clKlik{k}{\Theta} ) )^{[\leq k]}$.
By Proposition~\ref{prop:UTheta-stability}, ${\downarrow} (  \clKlik{k}{\Theta} ) = \clKlik{k}{\Theta}$, so $({\downarrow} (  \clKlik{k}{\Theta} ))^{[\leq k]} = (\clKlik{k}{\Theta})^{[\leq k]}$.
Moreover, $(\clKlik{k}{\Theta})^{[\leq k]} = ({\downarrow} \Theta)^{[\leq k]}$.
(We have $\Theta \subseteq \clKlik{k}{\Theta}$, so ${\downarrow} \Theta \subseteq {\downarrow} \clKlik{k}{\Theta} = \clKlik{k}{\Theta}$; hence $({\downarrow} \Theta)^{[\leq k]} \subseteq (\clKlik{k}{\Theta})^{[\leq k]}$.
Conversely, if $f \in (\clKlik{k}{\Theta})^{[\leq k]}$, then $\card{f^{-1}(1)} \leq k$ and so $f = \chi_{f^{-1}(1)} \in {\downarrow} \Theta$; in fact, $f \in ({\downarrow} \Theta)^{[\leq k]}$.)
Putting these inclusions and equalities together, we obtain $({\downarrow} (\Theta C)^{[\leq k]}) \subseteq ({\downarrow} \Theta)^{[\leq k]}$.

``$\Leftarrow$''
Assume now that $({\downarrow} (\Theta C))^{[\leq k]} \subseteq ({\downarrow} \Theta)^{[\leq k]}$.
Let $h \in \clKlik{k}{\Theta} C$.
Then $h = f(g_1, \dots, g_n)$ for some $f \in \clKlik{k}{\Theta}$ and $g_1, \dots, g_n \in C$.
In order to show that $h \in \clKlik{k}{\Theta}$, let $T \subseteq h^{-1}(1)$ with $\card{T} \leq k$.
Then we have, for every $\vect{a} \in T$, that $g(\vect{a}) := (g_1(\vect{a}), \dots, g_n(\vect{a})) \in f^{-1}(1)$, so $g(T) \subseteq f^{-1}(1)$ and $\card{g(T)} \leq k$.
Since $f \in \clKlik{k}{\Theta}$, there exists a $\theta \in \Theta$ and a $\sigma \colon \nset{\arity \theta} \to \nset{m}$ such that for all $\vect{a} \in T$ it holds that $1 = \theta(g(\vect{a}) \sigma) = \theta_\sigma(g(\vect{a}) = (\theta_\sigma \circ g)(\vect{a})$.
Thus, $\chi_T \minorant \theta_\sigma \circ g$.
We have $\theta_\sigma \circ g \in \Theta \clIc C = \Theta C$, so $\chi_T \in ({\downarrow} \Theta C)^{[\leq k]} \subseteq ({\downarrow} \Theta)^{[\leq k]} \subseteq {\downarrow} \Theta$.
This shows that $h \in \kLoc {\downarrow} \Theta = \clKlik{k}{\Theta}$.
\end{proof}


\section{Final remarks}
\label{sec:final}
\numberwithin{theorem}{section}

The description of the $(\clIc,\clMcUk{k})$\hyp{}clonoids provided by Theorem~\ref{thm:IcMcUk-clonoids} is not entirely explicit because it relies on the minorant\hyp{}minor order and its ideals, which are not well understood (at least by the current author).
Understanding the minorant\hyp{}minor poset $(\clAll / \mathord{\eqminmin}, \mathord{\minmin})$ and its ideals (and their restriction to Boolean functions with at most $k$ true points) better would help understand also clonoids better.
Even if a complete description were unattainable, it might be possible to describe certain interesting parts of the ideal lattice $\Ideals(\posetAllleq{k})$, such as the $(k,C)$\hyp{}closed ideals for $C \in \{\clXI, \clIX, \clM, \clMneg, \clRefl\}$.

In this paper, we described the $(C_1,C_2)$\hyp{}clonoids of Boolean functions in the case when $C_1 = \clIc$ and $C_2 \supseteq \clMcUk{k}$ for some $k \geq 2$.
By making use of Lemma~\ref{lem:stable-monotonicity}, it would, in principle, be possible to identify the $(C_1,C_2)$\hyp{}clonoids, for an arbitrary $C_1$ and $C_2 \supseteq \clMcUk{k}$, among the $(\clIc,C_2)$\hyp{}clonoids.

More generally, we would be interested in describing the $(C_1,C_2)$\hyp{}clonoids for arbitrary pairs of clones $C_1$ and $C_2$ of Boolean functions.
If $C_2$ does not contain a near\hyp{}unanimity operation, the lattice $\closys{(C_1,C_2)}$ is infinite, and an explicit description may not be possible.
Nevertheless, finding the cardinality of $\closys{(C_1,C_2)}$ for arbitrary $C_1$ and $C_2$ would be a worthwhile goal, as this would offer a substantial extension of Sparks's Theorem~\ref{thm:Sparks}.

Our focus has been on $(C_1,C_2)$\hyp{}clonoids of Boolean functions.
Moving forward, the eventual goal of this line of research is to generalize these results and questions to clonoids on arbitrary finite sets.


\section*{Acknowledgments}

The author would like to thank Miguel Couceiro and Sebastian Kreinecker for inspiring discussions.



\begin{thebibliography}{99}

\bibitem{AicMay}
    \textsc{E. Aichinger,}  \textsc{P. Mayr,}
    Finitely generated equational classes,
    \textit{J. Pure Appl.\ Algebra} \textbf{220} (2016) 2816--2827.

\bibitem{BarBulKroOpr}
    \textsc{L. Barto,}  \textsc{J. Bul\'in,}  \textsc{A. Krokhin,}  \textsc{J. Opr\v{s}al,}
    Algebraic approach to promise constraint satisfaction,
    \textit{J. ACM} \textbf{68} (2021) Art.\ 28, 66 pp.

\bibitem{BulKroOpr}
    \textsc{J. Bul\'in,}  \textsc{A. Krokhin,}  \textsc{J. Opr\v{s}al,}
    Algebraic approach to promise constraint satisfaction,
    In:
    \textit{Proceedings of the 51st Annual ACM SIGACT Symposium on the Theory of Computing \textup{(}STOC '19\textup{)},
    June 23--26, 2019, Phoenix, AZ, USA,}
    ACM, New York, NY, USA,
    pp.~602--613.

\bibitem{CouFol-2005}
    \textsc{M. Couceiro,}  \textsc{S. Foldes,}
    On closed sets of relational constraints and classes of functions closed under variable substitution,
    \textit{Algebra Universalis} \textbf{54} (2005) 149--165.

\bibitem{CouFol-2007}
    \textsc{M. Couceiro,}  \textsc{S. Foldes,}
    Functional equations, constraints, definability of function classes, and functions of Boolean variables,
    \textit{Acta Cybernet.}\ \textbf{18} (2007) 61--75.

\bibitem{CouFol-2009}
    \textsc{M. Couceiro,}  \textsc{S. Foldes,}
    Function classes and relational constraints stable under compositions with clones,
    \textit{Discuss.\ Math.\ Gen.\ Algebra Appl.}\ \textbf{29} (2009) 109--121.

\bibitem{CouLeh-Lcstability}
    \textsc{M. Couceiro,}  \textsc{E. Lehtonen,}
    Stability of Boolean function classes with respect to clones of linear functions,
    \textit{Order} \textbf{41} (2024) 15--64.

\bibitem{Fioravanti-AU}
    \textsc{S. Fioravanti,}
    Closed sets of finitary functions between products of finite fields of coprime order,
    \textit{Algebra Universalis} \textbf{82}(4) (2021) Art.\ 61.

\bibitem{Fioravanti-IJAC}
    \textsc{S. Fioravanti,}
    Expansions of abelian square\hyp{}free groups,
    \textit{Internat.\ J. Algebra Comput.}\ \textbf{31}(4) (2021) 623--638.

\bibitem{Kreinecker}
    \textsc{S. Kreinecker,}
    Closed function sets on groups of prime order,
    \textit{J. Mult.\hyp{}Valued Logic Soft Comput.}\ \textbf{33} (2019) 51--74.

\bibitem{Lehtonen-SM}
    \textsc{E. Lehtonen,}
    Majority\hyp{}closed minions of Boolean functions,
    \textit{Algebra Universalis} \textbf{85} (2024) Art.\ 6, 48 pp.

\bibitem{MayWyn}
    \textsc{P. Mayr,}  \textsc{P. Wynne,}
    Clonoids between modules,
    arXiv:2307.00076.

\bibitem{Muroga}
    \textsc{S. Muroga,}
    \textit{Threshold Logic and Its Applications,}
    Wiley\hyp{}Interscience, New York, 1971.

\bibitem{Pippenger}
    \textsc{N. Pippenger,}
    Galois theory for minors of finite functions,
    \textit{Discrete Math.}\ \textbf{254} (2002) 405--419.

\bibitem{Post}
    \textsc{E. L. Post,}
    \textit{The Two-Valued Iterative Systems of Mathematical Logic,}
    Annals of Mathematics Studies, no.\ 5,
    Princeton University Press, Princeton, 1941.

\bibitem{SageMath}
    \textsc{The SageMath Developers,}
    SageMath, the Sage Mathematics Software System (Version 9.8),
    2023,
    \href{https://www.sagemath.org/}{https://www.sagemath.org/}.

\bibitem{Sparks-2019}
    \textsc{A. Sparks,}
    On the number of clonoids,
    \textit{Algebra Universalis} \textbf{80}(4) (2019) Art.\ 53, 10 pp.

\end{thebibliography}
\end{document}